\documentclass[11pt,twoside]{article}

\usepackage{amssymb,amsfonts,amsthm,color,mathrsfs}
\usepackage{amsmath}
\usepackage{mathtools} % dcases: such that every row is \displaystyle
\usepackage[Symbol]{upgreek}
\usepackage{txfonts}
\usepackage[nottoc,notlot,notlof]{tocbibind}
\usepackage[active]{srcltx}
\usepackage{hyperref}

\usepackage{citeref}  %Set the reference document to display the corresponding page number

\usepackage{cases} %Each case is numbered in the [case] statement.

\usepackage{xcolor} %Ability to add color

\usepackage[numbered,depth=subsubsection]{bookmark} %Bookmark and it's number

\usepackage{graphicx}
\usepackage{epsfig}

\usepackage{soul} % Highlight

\usepackage{txfonts}
\allowdisplaybreaks
\def\L{{\mathcal L}}

\def\cn{{\mathbb N}}

\def\cs{{\mathcal S}}

\def\cg{{\mathcal G}}

\def\ca{{\mathcal A}}

\def\az{\alpha}
\def\supp{{\mathop\mathrm{\,supp\,}}}
\def\dist{{\mathop\mathrm {\,dist\,}}}
\def\diam{{\mathop\mathrm {\,diam\,}}}

\def\kz{\kappa}

\def\r{\right}
\def\lf{\left}

\newtheorem{thm}{Theorem}[section]
\newtheorem{lem}[thm]{Lemma}
\newtheorem{prop}[thm]{Proposition}
\newtheorem{cor}[thm]{Corollary}

\newtheorem{rem}[thm]{Remark}

\numberwithin{equation}{section}

\begin{document}
\arraycolsep=1pt
\author{Bo Li, Tianjun Shen, Zihan Wang}
\title{{\bf Riesz transform on manifolds with mixed ends for $1<q<2$}
 \footnotetext{\hspace{-0.35cm} 2010 {\it Mathematics
Subject Classification}. Primary  42B20; Secondary  58J35, 43A85.
\endgraf{
{\it Key words and phrases: Riesz transform, heat kernel, bounded geometry, non-doubling measure}
\endgraf}}
\date{}}
\maketitle

\begin{center}
\begin{minipage}{11.5cm}\small
{\noindent{\bf Abstract}.
Let $M_1$, $\cdots$, $M_\ell$ be complete manifolds of the same dimension, where $2\le \ell\in\mathbb{N}$. Suppose that each $M_i$ satisfies two side Gaussian bounds. If some of these manifolds are parabolic
and there exists a constant
$1\le n_i\le 2$ such that for some  $x_i\in M_i$ with $1 \le r \le R < \infty$,
$$c_i\left(\frac{R}{r}\right)^{n_i}\le \frac{Vol_{M_i}(B(x_i,R))}{Vol_{M_i}(B(x_i,r))}\le C_i\left(\frac{R}{r}\right)^{n_i},$$
by assuming  that all the manifolds satisfy the relative connectedness of the annuli ($RCA$) condition introduced by Grigor'yan and Saloff-Coste, we show that the Riesz transform $\nabla \L^{-1/2}$
is bounded on $L^q(M)$ for each $1<q<2$ on the gluing manifold $M=M_1 \# M_2 \# \cdots \# M_\ell$.
}\end{minipage}
\end{center}

\tableofcontents

\section{Introduction} \hskip\parindent
The Riesz transform has played a central role in classical harmonic analysis.
Riesz transform bounds allow comparing the two corresponding first order homogeneous Sobolev spaces; see \cite{cd03}.
The aim of this paper is to give the $L^q$-boundedness of the Riesz transform for $1 < q < 2$, on a gluing manifold.
 Before describing our results in detail, let us introduce the setting.

\subsection{Setting} \hskip\parindent
In this paper, we consider a complete, connected and  non-compact Riemannian manifold
$M$, which is obtained by gluing  together several complete manifolds of the same dimension.

Let us recall the notion of the gluing manifold as in \cite{gri-sal09, gis18}.
Let $\left\{M_i\right\}_{i=1}^\ell$ be a finite family of non-compact Riemannian manifolds
and all $M_i$ are of the same dimension.
We say that a Riemannian manifold $M$ is a connected sum of the manifolds $M_i$, denoted by %write
$$
M=M_1 \# M_2 \# \cdots \# M_\ell,
$$
if, for some non-empty compact set $E_0 \subset M$ (called a central part of $M$),
the exterior $M \setminus E_0$ is a disjoint union of open sets $E_1, E_2, \ldots, E_\ell$,
such that each $E_i$ is isometric to $M_i \setminus K_i$, for some compact $K_i \subset M_i$.
In fact, we will always identify $E_i$ and $M_i \setminus K_i$. See  \cite[Figure 5]{gri-sal09}  and \cite[Figure 4]{gis18} for examples of the gluing manifold. %We say that the $E_i$'s are the ends of $M$ with respect to $E_0$.
We call the subsets $E_i$ the ends of $M$ with respect to $E_0$.
%Sometimes it is
%convenient to say that $M_i$'s are the ends of $M$.
%GS1999 said that "Sometimes it is
%convenient to say that M i ��s are the ends of M."

On the manifold $M$, we denote by $d$ the geodesic distance, by $\mu$ the Riemannian measure. We denote by $\L$ the  Laplace-Beltrami operator on $M$ which is non-negative and self-adjoint, and let $\{e^{-t \L}\}_{t>0}$ be the heat semigroup. The corresponding Riesz transform is then given by
$$
\nabla \L^{-1 / 2}=\frac{1}{\sqrt{\pi}} \int_{0}^{\infty} \nabla e^{-s \L} \frac{\,ds}{\sqrt{s}},
$$
where $\nabla$ denotes the Riemannian gradient.
Notice that as a consequence of integration by parts, the Riesz transform $\nabla \L^{-1 / 2}$ is always bounded on $L^2(M)$.

%Let us consider a gluing manifold $M$, given as
%$$
%    M=M_1 \# M_2 \# \cdots \# M_{\ell}=E_0 \cup \cup_{i=1}^{\ell} E_i,
%$$
%where each $M_i$ is a complete manifold and all $M_i$ are of the same dimension, $E_i=M_i \setminus K_i, K_i$ and $E_0$ are compact sub-manifolds. We say that the $E_i$'s are the ends of $M$ with respect to $E_0$. We refer the reader to \cite[\S 2.2]{gri-sal09}  and \cite[\S 2.2]{gis18} for figures and further details.
%\\--------------------------------\\

To move further, let us recall some basic notation. We denote by $B(x,r)$, $B_i(x_i,r)$ the open ball with centre $x\in M$, $x_i\in M_i$ and radius $r>0$ in $M$, $M_i$, and by $V(x,r)$, $V_i(x_i,r)$ their volume $\mu(B(x,r))$, $\mu_i(B_i(x_i,r))$, respectively.
We say that $M_i$ satisfies the volume  doubling property (in short is doubling)
if there exists  a constant  $C_{D}>1$ such that
$$ V_i(x,2r)\le C_{D}V_i(x,r),
\quad  \forall x \in M_i, \, r>0.
\eqno(D)$$
%for all  $x_i\in M_i$ and $r >0$.
 Let $\mathcal{L}_i$ denotes the Laplace-Beltrami operator on $M_i$.
The heat semigroup on $M_i$ has a smooth positive and symmetric kernel $h_{i,t}(x,y)$,  meaning that
$$e^{-t\mathcal{L}_i}f(x)=\int_{M_i} h_{i,t}(x,y)f(y)\,d\mu_i(y)$$
for each suitable  function $f$.
%One  says that the heat kernel satisfies the Li-Yau estimates  if
%there exist $C,c>0$ such that for all $t>0$ and $x,y\in M_i$,
%$$\frac{C^{-1}}{V_i(x,{\sqrt t})}\exp\lf\{-\frac{d^2(x,y)}{ct}\r\}\le h_{i,t}(x,y)\le
%  \frac{C}{V_i(x,{\sqrt t})}\exp\lf\{-\frac{d^2(x,y)}{ct}\r\}.\leqno(LY)
%$$

%We next deal with the case that some of these ends are parabolic.
We assume that
% each $M_i$
%is non-collapsed,
%satisfies
%the doubling condition and
the heat kernel $h_{i,t}(x,y)$ on $M_i$ satisfies both the upper and lower Gaussian bounds %. %That is,
%we assume that
%the heat  kernel $h_{i,t}(x,y)$ on $M_i$ satisfies  the Li-Yau estimates,
(also known as the Li-Yau estimates),
i.e.,
there exist constants $C,c>0$ such that %for all $x,y\in M_i$ and $t>0$,
%$$\frac{C^{-1}}{V_i(x,{\sqrt t})}\exp\lf\{-\frac{d(x,y)^2}{ct}\r\}
%%e^{-c\frac{d^2(x,y)}{t}}
%\le
%h_{i,t}(x,y)
%\le
%  \frac{C}{V_i(x,{\sqrt t})}\exp\lf\{-c\frac{d(x,y)^2}{t}\r\}.\eqno(LY)
%$$
$$\frac{C^{-1}}{V_i(x,{\sqrt t})} e^{-\frac{d(x,y)^2}{ct}}
%e^{-c\frac{d^2(x,y)}{t}}
\le
h_{i,t}(x,y)
\le
  \frac{C}{V_i(x,{\sqrt t})}e^{-c\frac{d(x,y)^2}{t}},
  \quad \forall x, y \in M_i, \, t>0.
  \eqno(LY)
$$
Notice that a two-sided Gaussian bound $(LY)$ for the heat kernel  is equivalent to the doubling property $(D)$ together with the $L^2$-Poincar\'{e} inequality $(P_2)$; see \cite{gri92,sal2} for more details.
Here the $L^2$-Poincar\'{e} inequality $(P_2)$ means that
there exists a constant $C_p >0$ such that for all balls $B=B(x,r)$ and $W^{1,2}(B)$ functions $f$, it holds
$$
\fint_{B}|f-f_B|^2\,d\mu\le
C_p r^2\fint_{B}|\nabla f|^2\,d\mu,\eqno(P_{2})$$
where $f_B$ denotes the mean (or average) of $f$ over $B$. Moreover,
it is well known that $(LY)$ is also equivalent to the parabolic Harnack inequality;
see \cite{sal92-2} for more details.

%\begin{defn}
We say that a manifold $M$ is
 non-collapsed, if the volume of each ball with radius one in $M$ has a positive bottom, i.e.,
$$\inf_{x \in M} V(x, 1) > 0.$$
%\end{defn}

Let us recall the relative connectedness of the annuli ($RCA$) condition as in \cite{gri-sal09}.
%\begin{defn}
Given a fixed reference point $o\in M$,
we say that a manifold $M$ satisfies the ($RCA$) condition, if there exists $A>1$ such that, for all $R>0$ large enough and for
any two points $x,y\in M$ both at distance $R$ from $o$, there is a continuous path $\gamma$ connecting $x$ to $y$ and staying in the annulus $B(o,AR)\setminus B(o,R/A)$.
%See \cite[Figures 2 and 3]{gis21} for typical positive and negative examples.

Here are some concrete examples. The Euclidean space $\mathbb{R}^n$ satisfies $(RCA)$ if and only if $n \ge 2$.
The manifolds $\mathcal{R}^n = \mathbb{R}^n \times \mathbb{S}^{Q-n}$ satisfy $(RCA)$ for all $1 \leq n \leq Q$, where $Q$ is a large integer.
By \cite[Theorem 2.5]{ca16}, $(RCA)$ can be deduced by an anchored Poincar\'{e} inequality and a reverse doubling condition.
%We refer the reader to \cite[Figures 2 and 3]{gis21} for typical positive and negative examples, and \cite[Examples 2.1 and 2.2]{gri-sal09} for more examples.
For typical positive and negative examples, see \cite[Figures 2 and 3]{gis21}. For more examples, we refer the reader to \cite[Examples 2.1 and 2.2]{gri-sal09}.

%We refer the reader to \cite{gri-sal09,gis21,ca16}.
 %Carron \cite[Theorem 2.5]{ca16} tells us that $(RCA)$ condition is ensured by an anchored Poincar\'{e} inequality and a reverse doubling condition.
  %  Finally, we would like to mention that $(RCA)$ can deduce $(LY)$ when the manifold has non-negative curvature; see \cite[Example 2.2]{gri-sal09}.
%�������⣬ ����֪������Щ�����£� ����������(RCA)�ġ�
%\end{defn}

\subsection{Main result and state of the art} \hskip\parindent

In the setting of the Euclidean space $\mathbb{R}^n$,
it is well known that the celebrated work of Riesz \cite{Ri28} (in one dimension) and
Calder\'{o}n and Zygmund \cite{CZ52} (in higher dimensions) showed that the Riesz transform is bounded on $L^p(\mathbb{R}^n)$
for $1 < p < \infty$, as well as of weak type $(1,1)$. %Their results implies that
%for all $p \in (1,+\infty)$,
%\begin{equation} \label{eq:Ep} \tag{$E_p$}
%C_p^{-1} \|\Delta^{1 / 2} f \|_p
%\leq \| |\nabla f| \|_p
%\leq C_p \|\Delta^{1 / 2} f\|_p,
%\quad \forall f \in \mathcal{C}_0^{\infty}(\mathbb{R}^n).
%\end{equation}
%This can be used to show the equivalence of different defined Sobolev norms
%$\|\Delta^{1 / 2} f \|_p$ and $\||\nabla f| \|_p$; see \cite{cd03}.
%%�ұߵ���Riesz�� ���ߵ��Ƿ����� Riesz�����Ƴ������� ��֮����Ȼ�� ��һ���������Է������ƣ� ��Auscher-Coulhon-2005-Lemma 0.1.
%%
%Note that the right-hand inequality of \eqref{eq:Ep} may be reformulated by saying that the Riesz transform $\nabla \Delta^{-1 / 2}$ is $L^p$ bounded.

It was asked by Strichartz \cite{str83} in 1983 on which non-compact Riemannian manifolds, and for which $p$, $1<p<\infty$,
the Riesz transform is still bounded. There are some typical results.
If the complete Riemannian manifolds has non-negative Ricci curvature, Bakry \cite{bak2} proves that the Riesz transform is $L^p$-bounded, $1<p<\infty$. Then, \cite{acdh} and \cite{CS10} improve this result to a complete non-compact Riemannian manifold satisfying volume doubling property $(D)$ and the gradient upper estimate of the heat kernel
$$
|\nabla_x p_t(x,y)| \leq \frac{C}{\sqrt t V(y, \sqrt t)},
\quad \forall x, y \in M, \, t>0.
\eqno(G)
$$

Another seminal result is due to Coulhon and Duong \cite{cd99} where they consider a complete
Riemannian manifold satisfying the volume doubling property and the Gaussian upper bound of the heat kernel
 \begin{equation} \tag{$UE$}
h_t(x, y) \lesssim \frac{1}{V(x, \sqrt{t})} e^{-c\frac{d(x, y)^2}{t}}, \quad \forall x, y \in M, \, t>0.
\end{equation}
 They prove that the Riesz transform is weak type (1,1) and $L^p$-bounded for all $1 < p \leq 2$.  It is therefore very natural to ask whether the above assumptions are necessary or not.
Chen et al. \cite{CCFR} showed further that the Gaussian upper bound can be relaxed; see also Li and Zhu \cite{lz17}.
For more research and development on the Riesz transform,
we
refer the readers to \cite{Al92,ac05,bf15,bk04,cch06,ch92,CMO15,cdl03,cl04,ji21,jl20,lh99,lx10,shz05,var88}
and references therein.

We are interested in the gluing manifold where the doubling condition fails
and focus on $L^q$ boundedness of $\nabla \L^{-1/2}$ for $1 < q <2$.
%Let us review some closely related results.
%Connected sums were first analysed in the context of heat kernel lower bounds by Benjamini, Chavel and Feldman [xxx];
%see previous results on connected sums in [xxx].
The study of the Riesz transform of connected sums of manifolds originates in Coulhon and Duong \cite[Section 5]{cd99}.
%They showed that the Riesz transform on the connected sum of two copies of $R^n$ , $n > 2$, is unbounded on $L^p$ for $p > n$.
For more related works on this topic, see \cite{ca07,cjks16,de15,hns19,hs19,Ni19}.

The following theorem is the main result of this paper.

\begin{thm}\label{main-result-parabolic}
  Let  $M=M_1\#M_2\#\cdots \# M_\ell$ ($2\leq \ell \in \mathbb{N}$) be a connected sum of manifolds, where
  $\{M_i\}_{1\le i\le\ell}$ is a sequence of complete, non-compact, connected and non-collapsed manifolds of the same dimension,
  and each $M_i$ satisfies $(LY)$ and $(RCA)$.
  % 1. Ӧ����ֻ��Ҫ���������㣨RCA��
  % 2. ��RCA�� ���Ƴ� ��LY�� ��Gri-Sal-09: Example 2.2 (ǰ��������M���зǸ�����)
  %3. (RCA) �����������㣨RCA�� �� Gri-Sal-09 p1970�� �ɴ˿�֪�� �ڱ���n_i>2�������µĶ˵�����Ȼ����(RCA)��
  %  Assume that each $M_i$ satisfies one of the following conditions:
  Assume that for each $M_i$, one of the following two conditions holds:
\begin{enumerate}
   \item[\rm{(i)}] There exist constants $n_i>2$ and $c_i>0$ such that, for some $x_i\in M_i$ and any $1 \le r \le R < \infty$,
$$c_i\left(\frac{R}{r}\right)^{n_i}\le \frac{V_i(x_i,R)}{V_{i}(x_i,r)}.$$
    %In this case, we say that $M_i$ is non-parabolic.
    \item[\rm{(ii)}] There exist constants $0\le n_i\le 2$, $c_i>0$ and $C_i>0$ such that, for some $x_i\in M_i$ and any $1 \le r \le R < \infty$,
$$c_i\left(\frac{R}{r}\right)^{n_i}\le \frac{V_i(x_i,R)}{V_i(x_i,r)}\le C_i\left(\frac{R}{r}\right)^{n_i}.$$
    %In this case, we say that $M_i$ is parabolic.
\end{enumerate}
%We assume that
%$$\max\{n_i:\,1\le i\le \ell\}>2,
%\quad
%1\le \min\{n_i:\,1\le i\le \ell\}<2,$$
%and there is some $i$ such that $n_i=2$.
Besides, suppose that there is some $1 \leq i \leq \ell$ such that $n_i =2$, and it holds
%$$\max\{n_i:\,1\le i\le \ell\}>2,
%\quad
%1\le \min\{n_i:\,1\le i\le \ell\}<2,
%\quad
%\{i:\, n_i = 2\} \neq \emptyset.
%$$
$$
1\le \min\{n_i:\,1\le i\le \ell\}<2
< \max\{n_i:\,1\le i\le \ell\}.
$$
%and there is some $i$ such that $n_i=2$.
Then the Riesz transform $\nabla \L^{-1/2}$
is bounded on $L^q(M)$ for each $1<q<2$.
\end{thm}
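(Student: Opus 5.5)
The plan is to split the Riesz transform into a low-energy and a high-energy part,
$$
\nabla \L^{-1/2}=\frac{1}{\sqrt{\pi}}\int_0^1 \nabla e^{-s\L}\frac{ds}{\sqrt s}+\frac{1}{\sqrt{\pi}}\int_1^\infty \nabla e^{-s\L}\frac{ds}{\sqrt s}.
$$
The two parts are handled by different tools.

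\textbf{Local part.} Each $M_i$ satisfies $(LY)$ and is non-collapsed, and $E_0$ is compact. Hence $M$ has bounded geometry at scale one: it satisfies local doubling and a local $L^2$-Poincar\'e inequality. The heat kernel therefore obeys $h_t(x,y)\lesssim V(x,\sqrt t)^{-1}e^{-c d(x,y)^2/t}$ for $0<t\le 1$. The localized Coulhon--Duong argument (local Calder\'on--Zygmund theory) then shows that the operator $\int_0^1\nabla e^{-s\L}\,ds/\sqrt s$ is weak type $(1,1)$ and bounded on $L^2$. Interpolation gives boundedness on $L^q$ for $1<q<2$.

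\textbf{Global part.} This part relies on the heat kernel estimates of Grigor'yan and Saloff-Coste for connected sums with mixed ends, which is where $(RCA)$ enters. In our mixed situation ($\max n_i>2$, so $M$ is non-parabolic), for $x\in E_i$, $y\in E_j$ and $t\ge1$ they give two-sided bounds of the form
$$
h_t(x,y)\approx \frac{1}{V_{i_0}(o,\sqrt t)}\Big(\text{product of factors } \frac{|x|^{2-n_i}}{\cdot}\wedge 1\Big)e^{-c\frac{d(x,y)^2}{t}} +\text{(Gaussian term when } i=j),
$$
where $i_0$ is an end of maximal volume growth and the factors are the harmonic-measure type corrections for passing through $E_0$. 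The hypothesis that some $n_i=2$ forces the logarithmic version of these factors on that end.

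From these bounds I would extract gradient bounds in integrated form, avoiding pointwise gradient bounds, which may fail. Using the Caccioppoli inequality together with the time-derivative bound $|\partial_t h_t|\lesssim t^{-1}h_{ct}$ (which follows from analyticity and the upper bound), one obtains weighted $L^2$ estimates
$$
\int |\nabla_x h_t(x,y)|^2 e^{d(x,y)^2/ct}\,d\mu(x)\lesssim \frac{1}{t}\,(\text{on-diagonal size}).
$$

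Next, decompose the kernel of the global part according to the ends containing $x$ and $y$. On each diagonal block $E_i\times E_i$, compare $\nabla h_t$ with the kernel of $M_i$. On $M_i$ the Riesz transform is bounded on $L^q$, $1<q\le2$, by Coulhon--Duong, since $(LY)$ gives $(D)$ and $(UE)$. The error caused by the gluing is a kernel supported away from the diagonal whose size is controlled by the bounds above.

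For the off-diagonal blocks ($x\in E_i$, $y\in E_j$, $i\neq j$), and for the error terms, the goal is an explicit kernel bound. Typically this is $\int_1^\infty|\nabla_x h_s(x,y)|\,ds/\sqrt s\lesssim \frac{1}{|x|\,V_i(x,|x|)}\cdot\frac{\text{(factor in }y)}{\cdot}$. One then checks $L^q$ boundedness directly through Schur-type or Hardy-type inequalities in $|x|$ and $|y|$ using the volume growth $r^{n_i}$. The condition $\min n_i\ge 1$ and the restriction $q<2$ are exactly what make these Hardy-type integrals $\int (r^{-1}V(r))^{\ldots}$ converge in the parabolic ends. For the end where $n_i=2$, the logarithmic factors need to be absorbed, which is still fine for $q<2$.

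\textbf{Main obstacle.} I expect the main difficulty to be the off-diagonal estimate on the parabolic ends. Pointwise gradient bounds are unavailable, so I would work with the weighted $L^2$ gradient estimate localized to annuli $\{2^k\le|x|<2^{k+1}\}$. Applying Hölder there converts $L^2$ gradient control into the needed $L^q\to L^q$ bound. The summation over $k$ uses the precise decay of the Grigor'yan--Saloff-Coste factors, and this is the step where $1\le n_i$ and $q<2$ must be tracked carefully.
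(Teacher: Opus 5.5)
You handle the small-time part the same way the paper does: Coulhon--Duong's local Riesz transform theorem, using local doubling and the small-time Gaussian bound. For the large-time part you take a different, strong-type route (explicit Riesz-kernel bounds on end-blocks plus Schur/Hardy tests), but as written it has genuine gaps.

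(a) The bound $|\partial_t h_t|\lesssim t^{-1}h_{ct}$ does not follow from analyticity and the upper bounds on this non-doubling manifold. The available tool, Theorem~\ref{davies}, only gives $|t\partial_t h_t(x,y)|\lesssim a^{\frac32\epsilon}b^{\frac32\epsilon}A^{1-3\epsilon}$. When $x$ lies at distance $\sim R$ in an end with $n_j>2$, $y$ lies near $E_0$ and $t\sim R^2$, the diagonal value at $y$ exceeds the off-diagonal size by a factor polynomial in $R$. This is the origin of the loss $(\widetilde V_j(R_j)/\widetilde V_0(R_j))^{\frac32\epsilon}$ in Proposition~\ref{time-heat-parabolic-2}. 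A global weighted $L^2$ gradient bound normalized by $h_{ct}(y,y)$ can be obtained from Grigor'yan's integrated estimates. However, it is too coarse on annuli in other ends. The annulus-localized Caccioppoli bounds your Schur test needs require exactly the time-derivative control you do not have, or a space-time Caccioppoli argument you do not set up.

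(b) Comparing with $M_i$ does not leave an error ``supported away from the diagonal''. For $t\gtrsim|x|^2$ the two kernels differ at leading order. In an end with $n_i<2$, (LY) on $M_i$ gives $h^{M_i}_t(x,x)\sim t^{-n_i/2}$ for $1\le|x|^2\le t$. By contrast, Theorem~\ref{thm:weighted-upper bound} and the estimates of Section~\ref{sec:vol-aux} give $h_t(x,x)\lesssim \frac{1}{t\log^2(2+t)}+(|x|^2/t)^{1-\frac{n_i}{2}}t^{-\frac{n_i}{2}}=o(t^{-n_i/2})$ as $|x|^2/t\to0$. So the ``error'' carries the whole large-time behaviour and is itself the hard term.

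(c) The decisive Schur/Hardy verification is only asserted, and its inputs are misstated. The normalizing volume in the Grigor'yan--Saloff-Coste bound is the minimal weighted volume $\widetilde V_0=\min_i\widetilde V_i\sim r^2\log^2(2+r)$, not the volume of the largest end. Ends of type (i) have only a lower growth bound, so $V_i(r)\sim r^{n_i}$ is unavailable there and the doubling exponents $N_i$ enter, as they do in $p_1$ and $p_0$. Finally, $\min n_i\ge1$ is automatic from non-collapsing (disjoint unit balls along a ray give $V_i(x_i,R)\gtrsim R$), so it cannot be what makes your integrals converge.

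The idea that lets the paper avoid global Riesz-kernel bounds altogether has two parts. First, it proves only weak type $(p,p)$ for $1<p<p_0$ and then interpolates with $L^2$. Second, it lets the geometry depend on $\lambda$: for $\lambda\le\|f\|_p$ it fixes $R_i$ by $\mu(F_i^{(R_i)})=100\ell\|f\|_p^p/\lambda^p$, so the set $F_i^{(R_i)}$ can be discarded outright.

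The piece $f\chi_{F_i^{(R_i)}}$ is controlled through the bounds $L^p(F_i^{(R_i)})\to L^2(E_j\setminus F_j^{(R_j)})$ for $e^{-t\L}$ and $t\L e^{-t\L}$ at $t\ge R_j^2$ (Propositions~\ref{map-heat-parabolic} and~\ref{mapping-time-heat-parabolic-1}), together with a Caccioppoli argument. There the Davies $\epsilon$-losses are absorbed by H\"older's inequality over $F_i^{(R_i)}$, after choosing $\epsilon$ small in terms of $p$, $n_k$ and $N_k$.

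The piece $f\chi_{E_i\setminus F_i^{(R_i)}}$ is Calder\'on--Zygmund decomposed on $M_i$ at height $\delta\lambda$. For $\delta$ large, the choice of $R_i$ forces $\dist(B_{ik},E_0)\ge\frac12(R_i+r_{ik})$. As a result the bad parts only see the doubling geometry of $M_i$: Davies--Gaffney estimates, the $L^1$ gradient bound away from the centre (Proposition~\ref{est-integral-diagonal-general}), and the summable factors $r_{ik}^2/d(x_{ik},o_i)^2$.
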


\begin{rem} \rm

  \begin{enumerate}
    \item[\rm{(i)}]
    Note that, for each $1 \leq i \leq \ell$,
    the assumption (i) in Theorem \ref{main-result-parabolic} implies
    the manifold $M_i$ is non-parabolic,
    while the assumption (ii) implies the manifold $M_i$ is parabolic.
    Here we say that a manifold is parabolic if any positive superharmonic function on the manifold is constant, and non-parabolic otherwise.
    %If $n_i >2$, $M_i$ is non-parabolic and if $n_i \le 2$, $M_i$ is parabolic.
    We refer the reader to \cite[Proposition 4.3]{gri-sal09} and \cite[p. 159]{gis18} for more about parabolicity.
    %See \cite{gri99, gri-sal09} for more details.
    %We refer the reader to \cite{gri99, gri-sal09} for detailed account of parabolicity. % The word in gs1999: Heat kernel on connected sums of Riemannian manifolds.
    \item[\rm{(ii)}]
    It is worth to note that $(RCA)$ is a very natural assumption. It implies that $M$ has exactly $\ell$ ``true ends''.
    Namely, for each manifold $M_i$ and any compact set $K \subset M_i$, $M_i \setminus K$ has only one unbounded connected component; see
    \cite[p. 1971]{gri-sal09} and \cite{ca16}.
    Since one of our key tools is the heat kernel estimates established in \cite[Theroem 6.6]{gri-sal09},
    each parabolic manifold $M_i$ satisfying $(RCA)$ is enough.  %for Theorem \ref{main-result-parabolic}.
    Nevertheless, it was shown that for any complete manifold satisfying $(LY)$ and the assumption (i) in Theorem \ref{main-result-parabolic},  satisfies $(RCA)$; see for instance \cite[p. 1970]{gri-sal09}.

    \item[\rm{(iii)}]
 Let us compare Theorem \ref{main-result-parabolic} with \cite[Therorem 1.3]{jiang-li-lin-2022}.
%    By assuming (stronger) two-side Gaussian bounds $(LY)$ of the heat kernel, and an additional property $(RCA)$ on each $M_i$,
    %By replacing the Gaussian upper bound $(UE)$ with the two-side Gaussian bounds $(LY)$, and assuming an additional property $(RCA)$,
 %   Theorem \ref{main-result-parabolic} extends the result of \cite{jiang-li-lin-2022} from $n_i \ge 2$ to $n_i \ge 1$.
%
%
Assuming each $M_i$ satisfies $(D)$ and $(UE)$, using the heat kernel estimates in \cite[Section 4]{gri-sal09}, Jiang, Li and Lin \cite{jiang-li-lin-2022} consider the case $n_i \ge 2$.
By assuming stronger conditions, in other words, an additional property $(RCA)$, and the two-sided Gaussian bounds $(LY)$ instead of the Gaussian upper bound $(UE)$ for the heat kernel, on each $M_i$,
  %the optimal heat kernel estimates in \cite[Section 6]{gri-sal09}  including the cases that some $n_i<2$ are available,
  %the optimal heat kernel estimates for the cases that some $n i < 2$ are  obtained in [31, Section 6] and available in our setting, and we will consider the case $n i �� 1$.
  the optimal heat kernel estimates for the cases that some $n_i < 2$ obtained in \cite[Section 6]{gri-sal09} are available,
%  and we shall consider the case $n_i \ge 1$.
  and we improve the range of index from $n_i \ge 2$ to $n_i \ge 1$.
  \end{enumerate}

\end{rem}

Throughout the paper, unless otherwise specified,
%we suppose
%the gluing manifold
%$M$ is assumed to satisfy the conditions in  Theorem \ref{main-result-parabolic}.
the gluing manifold
$M=M_1 \# M_2 \# \cdots \# M_\ell$ always satisfies the assumptions in  Theorem \ref{main-result-parabolic}.
%For each $1 \leq i\leq \ell$,
%Let $1 \leq i\leq \ell$.
%%Note that
%
%It can be verified that $M_i$ is non-parabolic if $n_i >2$, and parabolic if $1 \leq n_i \le 2$.
%
%
%Note that the manifold $M_i$ can be verified to be non-parabolic
%if it satisfies the assumption (i) in Theorem \ref{main-result-parabolic},
%and parabolic if it satisfies the assumption (ii) in Theorem \ref{main-result-parabolic}.
%
%
%Note that, for each $1 \leq i \leq \ell$, the manifold $M_i$ can be verified to be non-parabolic
%if it satisfies the assumption (i) in Theorem \ref{main-result-parabolic},
%and parabolic if it satisfies the assumption (ii) in Theorem \ref{main-result-parabolic}.
%
%

%

%Throughout the paper, unless otherwise specified, the manifold $M$ satisfies the assumption in  Theorem \ref{asp-1}.

\subsection{Plan of the paper} \hskip\parindent
%{\color{blue}The paper is organized as follows.
%%
%%{\color{blue}In Section 2, we study some basic estimates for the heat semigroup and the volume growth.
%% %and small time part of the Riesz transform.
%%%the heat kernel, and the local Riesz transform.
%%In Section 3, we provide some heat kernel estimates for later use,
%%and some results related to the heat kernel upper bounds.
%%}
%%
%In Sections 2 and 3, we provide some basic estimates for the heat kernel, %/semigroup,
%the volume growth %, the heat kernel
%and small time part of the Riesz transform.
%%the heat kernel, and the local Riesz transform.
%%In Section 3, we provide some heat kernel estimates for later use,
%%and some results related to the heat kernel upper bounds.
%%
%%
%In Section 4, we discuss the mapping properties for the heat semigroup and its time derivative.
%In Section 5, we give the proof of the main result.
%}
%
%
The paper is organized as follows.
In Section 2, we recall some basic notions and a priori heat kernel estimates.
In Section 3, we  study some basic properties for the the volume growth and give some estimates on the auxiliary function occurring in the priori heat kernel estimates.
In Section 4, we  give some further estimates on the heat kernel.
In Section 5, we collect some basic estimates for the heat semigroup, and in Section 6, we discuss the mapping properties for the heat semigroup and its time derivative.
In Section 7, we give the proof of the main result.

Throughout the paper,
%we denote by the letter $C$ (or $c$) a positive constant
the letters $C$, $c$, $c'$, $c''$  denote positive constants
which are independent of the
main parameters, but may vary from line to line.
When the value of a constant is significant, it will be explicitly stated.
%
%The symbol $A \lesssim B$ means that $A \leq C B$, $A \lesssim_{\alpha, \beta} B$ means that the implicit constant depends on $\alpha, \beta$,
%and $A \sim B$ means $c A \leq B \leq C A$, for some harmless constants $c, C>0$.
%
The symbol $A \lesssim B$ means that $A \leq C B$, and $A \sim B$ means $c A \leq B \leq C A$, for some harmless constants $c, C>0$,
Besides, the symbols $A \lesssim_{\alpha, \beta} B$ and $A \sim_{\alpha, \beta} B$ mean that the implicit constants depend on $\alpha, \beta$.
We use $\|\cdot\|_{p}$ to denote the $L^{p}(M)$ norm, and $\|\cdot\|_{p \rightarrow q}$ to denote the operator norm $\|\cdot\|_{L^{p}(M) \rightarrow L^{q}(M)}$.
Without mentioning it, we will repeatedly apply the inequality
%$r^{\eta}e^{-r} \lesssim_\eta 1$, $\eta \ge 0$, $r > 0$.
$r^{\alpha}e^{-r} \lesssim_\az 1$ for any $\alpha >0$ and all $r>0$.

\section{Preliminaries}\hskip\parindent
%In this section, we first provide the $L^p$-Davies-Gaffney estimates for the operators
%$e^{-t \L}$, $\nabla e^{-t \L}$ and $\L e^{-t \L}$.
%Then, some basic notation are introduced and some properties for the volume growth are established.
%
%
In this section, we first give some basic notation. Then,
%we shall introduce the weighted manifold and some heat kernel estimates established in \cite{gri-sal09}.
 following
\cite[Section 6]{gri-sal09}, we first construct a weighted manifold $\widetilde{M}$,
 and then state the optimal estimates for the heat kernel, %upper bound, % which is obtained with the help of $\widetilde{M}$.
 which is obtained
 %with the help of
 %through
 by this weighted manifold
 $\widetilde{M}$ and Doob's transform.
 %This bound is sharp up to the values of the constants.
Besides, some other heat kernel estimates in \cite{gri-sal09} are introduced.

\subsection{Some basic notation} \label{sec:not-basic}\hskip\parindent
 To begin with, we give some notation as in \cite[Subsections 4.3 and 4.4]{gri-sal09}.
 Let $B(x, r)$ (resp. $B_i(x, r)$ with $1 \leq i \leq \ell$) denote the geodesic ball in $M$ (resp. $M_i$).
 For each index $1 \leq  i \leq \ell$, fix a reference point $o_i \in \partial E_i$, and set
 \[V(x, r):=V(B(x, r)), \quad
 V_i(x, r):=V_i(B_i(x, r)), \quad
 V_i(r):=V_i(o_i, r), \quad
 V_0(r):=\min_{1\le i\le \ell} V_i(r),
 \]
% $V(x, r):=V(B(x, r))$,
% $V_i(x, r):=V_i(B_i(x, r))$,
% $V_i(r):=V_i(o_i, r)$,
% \[V_0(r):=\min_{1\le i\le \ell} V_i(r),
% \]
 %It will also be useful to set
% $
%    V_0(x,r) = V_0(r),
% $
% for all $x \in M$.
and
$$ F_i^{(r)} : = \{ x\in E_i: \dist(x, E_0) \leq 2r\}, \quad \forall  r\geq 1. $$%\, 1\leq i \leq \ell.$$

We also set
\[
 |x|:=\sup_{y \in E_0}\{d(x, y)\}, \quad \forall x \in M.
\]
%We can easily obtain
Observe that  $|x| \ge 1$. %; see \cite[p. 1922]{gri-sal09} for more details.
In fact, since $E_0$ is compact, there exist $x_0, y_0 \in E_0$ such that $d(x_0, y_0)=\diam(E_0)= 2$. Then, it holds by the triangle inequality that $d(x, x_0) \ge 1$ or $d(x, y_0) \ge 1$, which implies that $|x| \ge 1$.
%\href{https://math.stackexchange.com/questions/208943/the-diameter-of-a-compact-set}{stackexchange}.

%, i.e., $i_x = i$, if $x \in E_i$, $0 \leq i \leq \ell$.
%$$i_x = i, \quad \mbox{if} \, x \in E_i, \, 0 \leq i \leq \ell.$$
%\begin{equation*}
%    i_x
%    =
%    \begin{cases}
%        i, & \mbox{if} \, x \in E_i, i \geq 1, \\
%        0, & \mbox{if} \, x \in E_0.
%    \end{cases}
%\end{equation*}

In what follows, for simplicity of notions, we shall assume that
$\mbox{diam}(E_0)=2$, % ��ʦ�� ��������ֱ��������2�� ��������ʹ����Remark 2.5�������Ľ���$|x| \ge 1$, ����������$x$. ����������������������
$\mu(E_0)=1$ and
\begin{equation*}%\label{basic-assumption}
  1\le V_i(1)\le 10, \quad \forall\, 1\le i\le\ell,
\end{equation*}
and
\begin{equation*}
    1 \leq \mu(F_*) \leq 40 \ell,
\end{equation*}
where   $$F_*:= \{x \in M:  \dist(x ,E_0) \leq 100\}.$$ % ��ʦ�� ��������$\dist(x ,E_0) \leq 2$������$\dist(x ,E_0) \leq 2$. ��������ʹ�õ��Ľڿ�ͷ��$\mu(F^(R_i)_i)$��$R_i \ge 50$. ����$R_i \ge 50$ �Ļ��� ��7�ڣ���Ҫ������֤���� �������������ȽϷ��㡣 ����������������������

\subsection{Weighted manifold} \hskip\parindent
%We introduce the following harmonic function in \cite{gri-sal09}, which is used to constructing the weighted manifold.
%{\color{blue}We introduce the following harmonic function to construct the weighted manifold;
%see \cite{gri-sal09}.
%
%
%
%\begin{prop}%(\cite[Proposition 6.3]{gri-sal09})
% \label{prop:harmonic-function}
%  Let $M=M_1 \# \ldots \# M_\ell$ be a connected sum of complete manifolds $(M_i, \mu_i)$.
%  Assume that $M$ is non-parabolic and $M_i$ satisfies $(LY)$ for each $i=1, \ldots, \ell$.
%  Assume further that $M_i$ satisfies $(RCA)$ for each $i \in I$.
%  Then there exists a positive harmonic function $h$ on $M$ such that, for all $x \in M$,
%    $$
%        h(x) \sim 1+\left(\int_1^{|x|^2} \frac{ds}{V_{i_x}(\sqrt{s})}\right)_{+} .
%    $$
%\end{prop}
%%\begin{proof}
%%    For the proof, we refer the reader to \cite[Proposition 6.3]{gri-sal09}.
%%\end{proof}
%}
From \cite[Proposition 6.3]{gri-sal09}, we know that there exists a positive harmonic function $h$ on $M$ such that, for all $x \in M$,
    $$
        h(x) \sim 1+\left(\int_1^{|x|^2} \frac{ds}{V_{i_x}(\sqrt{s})}\right)_{+} .
    $$
%Let $(M=M_1 \# \ldots \# M_\ell, \mu)$ be a connected sum of complete non-compact manifolds $(M_i, \mu_i)$.
%Let $h$ be the above harmonic function.
%from Proposition \ref{prop:harmonic-function}.
We can then consider the weighted manifold $\widetilde{M}=(M, \tilde{\mu})$ where
$$
d \tilde{\mu}= h^2 d \mu .
$$
Moreover, by restricting $h$ to each $E_i=M_i \setminus K_i$ and extending the resulting function smoothly to a function $h_i$ defined on $M_i$, $1 \leq i \leq \ell$, %$i \in\{1, \ldots, \ell\}$,
we obtain the weighted manifold $\widetilde{M}_i=(M_i, \tilde{\mu}_i)$
such that
$$
\widetilde{M}=\widetilde{M}_1 \# \cdots \# \widetilde{M}_\ell,
$$
where $d \tilde{\mu}_i=h_i^2 d\mu_i$.
In the sequel, we will use a tilde $\enspace \widetilde{} \enspace $ to denote objects relative to the manifold $\widetilde{M}$.

\subsection{Some a priori heat kernel estimates} \hskip\parindent \label{sec:pri-es}
% ��ʦ�� ����������ԭ���� ��������ȷ��һ��a priori�Ƿ���ȷ. �ҿ���15��J. Math. Pures Appl.��ƪ�������ù�``Some a priori heat kernel estimates''��д��.
% Ȼ���Ҳ����£� ``a priori''������������, a��prioriһ�𹹳�һ�����ݴ�, ��ʾ�������ġ��� �����Խ����ġ�. ��Ϊ��������some�� ����a������Щ���֡� ���������������������ǶԵİɡ�
Define the function $\widetilde{H}(x,t)$ by
\begin{equation*} %\label{eq:def-H}
\widetilde{H}(x,t):=\min\left\{1, \, \frac{|x|^2}{\widetilde{V}_{i_x}(|x|)}+\left(\int_{|x|^2}^t\frac{\,d s}{\widetilde{V}_{i_x}(\sqrt s)}\right)_+\right\},
\end{equation*}
where $(\cdot)_+$ denotes the non-negative part and $i_x$ denotes the index of $E_i$ that $x$ belongs to.

For each $x, y \in M$, let us set
%$$d_+(x,y):= \inf\left\{\mbox{length}(\gamma_{xy}):\, \gamma_{xy}\cap E_0\neq \emptyset\right\},$$
$$d_+(x,y):= \inf\left\{\mbox{length}(\gamma):\, \gamma(0)=x, \, \gamma(1)=y, \, \gamma \cap E_0\neq \emptyset\right\},$$
and
$$d_\emptyset(x,y):=\begin{cases}
\inf\left\{\mbox{length}(\gamma):\, \gamma(0)=x, \, \gamma(1)=y, \, \gamma \cap E_0= \emptyset\right\},& \, x,y \,\mbox{belongs to the same end}, \\
\infty & \, \mbox{otherwise},
\end{cases}$$
where the infimum is taken over all curves $\gamma: \, [0,1] \rightarrow M$ connecting $x$ to $y$.

\begin{rem}\label{rem:new-dist} \rm
    The following statements hold (see \cite[p. 1948]{gri-sal09}).
    \begin{enumerate}
      \item [\rm{(i)}] For any $x, y \in M$, it holds  $d_{+}(x,y) \geq d(x,y)$, $d_{+}(x,y) \ge |x|+|y|-2 \diam(E_0)$ and $d_{\emptyset}(x,y) \geq d(x,y)$. %and moreover one of these inequalities must in fact be and equality.
            % $d_{+}(x,y) \ge |x|+|y|-2 \diam(E_0)$ should hold in any case. See \cite[p. 1952]{gri-sal09}
            % \cite[(2.3)]{jiang-li-lin-2022}, for $1\leq i \neq j \leq \ell$
            %See \cite[(2.14)]{jiang-li-lin-2022} and \cite[p. 1953]{gri-sal09}, for $1 \leq i \leq \ell$, $j=0$, also the following proof.
      \item [\rm{(ii)}]
      For any $1 \leq i, j\leq \ell$,  $i \neq j$, if $x \in E_i \cup E_0$, $y \in E_j \cup E_0$, then $d_+(x,y) = d(x,y)$ and $d_{\emptyset}(x,y) = \infty$.
    \end{enumerate}
\end{rem}
%\begin{proof}
%Pick a curve $\gamma_0$  connecting $x$ to $y$ such that $\mbox{length}(\gamma_0) = d_+(x,y)$.
%    Pick $x', y' \in E_0$ such that $|x| = \sup_{e \in E_0}\{d(x, e)\}= d(x, x')$ and $|y| = d(y, y')$. Fixed a point $z \in \gamma_0 \cap E_0$. The triangle inequality shows that
%    \begin{align*}
%        |x| + |y|
%        & = d(x, x') + d(y, y')
%        \leq d(x, z) + d(z, x') + d(y, z) + d(z, y') \\
%        & \leq d(x, z) + d(y, z) + 2 \mbox{diam}(E_0)
%        \leq \mbox{length}(\gamma_0) + 2 \mbox{diam}(E_0) \\
%        & \leq d_{+}(x,y) + 2 \mbox{diam}(E_0).
%    \end{align*}
%    Above, in the third inequality, we used the fact that
%    $d(x,z) \leq \mbox{length}(\gamma_{xz})$ for any curve $\gamma_{xz}$  connecting $x$ to $z$.
%    Similarly, we have $d(z, y) \leq \mbox{length}(\gamma_{yz})$ for any curve $\gamma_{yz}$  connecting $y$ to $z$. And, $\gamma_0$ is the connection of a curve connecting $x$ to $z$ and a curve connection $z$ to $y$. Therefore, we have
%    \[
%        d(x, z) + d(y, z) \leq \mbox{length}(\gamma_0).
%    \]
%%    On the other hand, we have
%%    \begin{align*}
%%        d_+(x,y) = d(x,z) + d(y,z)
%%        \leq |x| + |y|.
%%    \end{align*}
%    It completes the proof.
%\end{proof}

According to Grigor'yan and Saloff-Coste \cite[Proposition 6.5]{gri-sal09}, each weighted manifold $\widetilde{M}_i$ is a non-parabolic manifold and satisfies $(LY)$.
%Recall that a manifold is non-parabolic  if and only if at least one of its ends is non-parabolic (see \cite[Proposition 14.1]{gri99}).
Note that the gluing manifold $\widetilde{M}=\widetilde{M}_1 \# \cdots \# \widetilde{M}_\ell$ is non-parabolic if and only if
%there exists an $1 \leq i \leq \ell$ such that $\widetilde{M}_i$ is non-parabolic (see \cite[Proposition 14.1]{gri99}).
there exists at least one manifold $\widetilde{M}_i$ ($1 \leq i \leq \ell$) is non-parabolic (see \cite[Proposition 14.1]{gri99}).
%$\widetilde{M}_i$ is non-parabolic for some $1 \leq i \leq \ell$
%(see \cite[Proposition 14.1]{gri99}).
Therefore, with the help of \cite[Theorem 4.9]{gri-sal09}, one can obtain the estimates for the heat kernel $\tilde{h}_t(x,y)$ on $\widetilde{M}$,
which can be further transferred to the heat kernel $h_t(x,y)$ on $M$ by Doob's transform
 $h_t(x,y) = h(x) h(y) \tilde{h}_t(x,y);$
%see \cite[\S 6.2]{gri-sal09} for more details.%
%see \cite[(6.3)]{gri-sal09}.
see also \cite[Subsection 6.3]{gri-sal09}.

%Define a subset $I \subset\{1, \ldots, \ell\}$ by
%$$
%i \in I  \Leftrightarrow M_i \text { is parabolic}.
%%\Longleftrightarrow n_i \le 2.
%$$

\begin{thm}[\cite{gri-sal09}]
\label{thm:weighted-upper bound}
 Let $M=M_1 \# \cdots \# M_\ell$ be a connected sum of complete non-compact manifolds $M_i$.
 Assume that $M$ is non-parabolic and that $M_i$ satisfies $(LY)$ and $(RCA)$ for each $1 \leq i \leq \ell$.
  %Assume further that $M_i$ satisfies $(RCA)$ for each $i \in I$.
  Then the heat kernel ${h}_t(x, y)$ satisfies %for any $x, y \in M$ and $t >0$,
    %\begin{align*}%\label{heat-mixed}
%        h_t(x,y)
%        &\lesssim h(x)h(y)\left(\frac{\widetilde{H}(x,t)\widetilde{H}(y,t)}{\widetilde{V}_0(\sqrt t)}+\frac{\widetilde{H}(y,t)}{\widetilde{V}_{i_x}(\sqrt t)}+\frac{\widetilde{H}(x,t)}{\widetilde{V}_{i_y}(\sqrt t)}\right)
%        e^{-c\frac{d_+(x,y)^2}{t}} \nonumber\\
%       % \tag{\textsl{ME}}
%        & \ +\frac{h(x)h(y)}{\sqrt{\widetilde{V}_{i_x}(x,\sqrt t)\widetilde{V}_{i_y}(y,\sqrt t)}}
%        e^{-c\frac{d_{\emptyset}(x,y)^2}{t}}.
%    \end{align*}
%
    \begin{eqnarray*}%\label{heat-mixed}
        h_t(x,y)
        && \lesssim h(x)h(y)\left(\frac{\widetilde{H}(x,t)\widetilde{H}(y,t)}{\widetilde{V}_0(\sqrt t)}+\frac{\widetilde{H}(y,t)}{\widetilde{V}_{i_x}(\sqrt t)}+\frac{\widetilde{H}(x,t)}{\widetilde{V}_{i_y}(\sqrt t)}\right)
        e^{-c\frac{d_+(x,y)^2}{t}} \nonumber\\
       % \tag{\textsl{ME}}
        && \ +\frac{h(x)h(y)}{\sqrt{\widetilde{V}_{i_x}(x,\sqrt t)\widetilde{V}_{i_y}(y,\sqrt t)}}
        e^{-c\frac{d_{\emptyset}(x,y)^2}{t}},
        \quad \forall x, y \in M, \, t>0.
    \end{eqnarray*}
\end{thm}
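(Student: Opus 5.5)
The estimate is obtained from the non-parabolic heat kernel bound on the weighted manifold, followed by Doob's transform; essentially this is \cite[Theorem 6.6]{gri-sal09}, and the plan retraces that route.

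\textbf{Step 1: the weighted ends.} By \cite[Proposition 6.3]{gri-sal09} there is a positive harmonic function $h$ with the two-sided bound recalled above. This uses $(LY)$ on each $M_i$, and $(RCA)$ on the parabolic ends to control $h$ from both sides on spheres. I then use \cite[Proposition 6.5]{gri-sal09}. Each $\widetilde{M}_i=(M_i,h_i^2\,d\mu_i)$ satisfies $(LY)$, and it is non-parabolic. The key point is the volume growth $\widetilde V_i(r)\sim h_i(r)^2V_i(r)$. It gives $\int^\infty ds/\widetilde V_i(\sqrt s)<\infty$ even when $n_i\le 2$, because the weight $h^2$ grows like $\big(\int_1^{r^2}ds/V_i(\sqrt s)\big)^2$. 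In addition, each $\widetilde M_i$ still satisfies $(RCA)$, since the metric is unchanged.

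\textbf{Step 2: the gluing theorem on $\widetilde M$.} Since $\widetilde M$ has a non-parabolic end, \cite[Proposition 14.1]{gri99} shows that $\widetilde M$ is non-parabolic. I then apply \cite[Theorem 4.9]{gri-sal09} to $\widetilde M=\widetilde M_1\#\cdots\#\widetilde M_\ell$. That theorem needs each end to satisfy $(LY)$ and to be non-parabolic, together with $(RCA)$ or a comparable hypothesis. It yields the upper bound
$$
\tilde h_t(x,y)\lesssim\Big(\frac{\widetilde H(x,t)\widetilde H(y,t)}{\widetilde V_0(\sqrt t)}+\frac{\widetilde H(y,t)}{\widetilde V_{i_x}(\sqrt t)}+\frac{\widetilde H(x,t)}{\widetilde V_{i_y}(\sqrt t)}\Big)e^{-c\frac{d_+^2}{t}}+\frac{e^{-c\frac{d_\emptyset^2}{t}}}{\sqrt{\widetilde V_{i_x}(x,\sqrt t)\widetilde V_{i_y}(y,\sqrt t)}}.
$$
Here $\widetilde H$ is exactly the hitting probability of the central part, $\min\{1,\,|x|^2/\widetilde V(|x|)+\int_{|x|^2}^t ds/\widetilde V(\sqrt s)\}$, which is the form defined above. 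The main obstacle is to confirm that the hypotheses of Theorem 4.9 hold on the weighted ends. Bounded geometry or non-collapsing is also needed near $E_0$, where $h\sim1$, so that the small-time behaviour is unaffected. I would verify this by comparing $\widetilde V_i(x,r)$ with $h(x)^2V_i(x,r)$ via $(RCA)$ and the Harnack inequality for $h$.

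\textbf{Step 3: Doob's transform.} Since $h$ is harmonic, $\tilde\L=h^{-1}\circ\L\circ h$ on $L^2(\tilde\mu)$. Hence $\tilde h_t(x,y)=h_t(x,y)/(h(x)h(y))$ with respect to $\tilde\mu$. Multiplying the bound of Step 2 by $h(x)h(y)$ gives the stated estimate.
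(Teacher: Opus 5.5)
Your proposal is correct and follows the same route the paper indicates for this cited result: the harmonic function $h$ from \cite[Proposition 6.3]{gri-sal09}, $(LY)$ and non-parabolicity of each $\widetilde M_i$ from \cite[Proposition 6.5]{gri-sal09}, non-parabolicity of $\widetilde M$ via \cite[Proposition 14.1]{gri99}, \cite[Theorem 4.9]{gri-sal09} on $\widetilde M$, and then Doob's transform $h_t(x,y)=h(x)h(y)\tilde h_t(x,y)$. The extra check you flag in Step 2 is already supplied by Proposition 6.5, and no non-collapsing assumption near $E_0$ is needed; note also that the right volume comparison is $\widetilde V_i(x,r)\sim(\phi_i^2(|x|)+\phi_i^2(r))V_i(x,r)$ rather than $h(x)^2V_i(x,r)$, since the latter fails for $r\gg|x|$ on parabolic ends.
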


% \begin{align}
%        h_t(x,y)
%        \lesssim h(x)h(y)\left(\frac{\widetilde{H}(x,t)\widetilde{H}(y,t)}{\widetilde{V}_0(\sqrt t)}+\frac{\widetilde{H}(y,t)}{\widetilde{V}_{i_x}(\sqrt t)}+\frac{\widetilde{H}(x,t)}{\widetilde{V}_{i_y}(\sqrt t)}\right)
%        e^{-c\frac{d_+(x,y)^2}{t}}
%         +\frac{h(x)h(y)}{\sqrt{\widetilde{V}_{i_x}(x,\sqrt t)\widetilde{V}_{i_y}(y,\sqrt t)}}
%        e^{-c\frac{d_{\emptyset}(x,y)^2}{t}}.
%    \end{align}

    %\begin{proof}
%      For the proof, we refer to \cite[Theorem 6.6]{gri-sal09}.
%    \end{proof}

   % Besides, we will use repeatedly the following estimates, which can be deduced
%    from \cite[Section 4]{gri-sal09}.

%One says that the heat kernel satisfies a Gaussian upper bound
%if there exist constants $C, c>0$ such that
%%$(U E)$
%%$$
%%h_{i, t}(x, y) \leq \frac{C}{V_i(x, \sqrt{t})} \exp \left\{-\frac{d^2(x, y)}{C t}\right\}, \quad \forall t>0, x, y \in M_i .
%%$$
%\begin{equation} \tag{$UE$}
%h_{i, t}(x, y) \leq \frac{C}{V_i(x, \sqrt{t})} \exp\lf\{-c\frac{d^2(x, y)}{t}\r\}, \quad \forall t>0, \, x, y \in M_i.
%\end{equation}
    %Besides, we also collect the following heat kernel upper bounds deduced from \cite[Section 4]{gri-sal09}.
    We also collect some heat kernel upper bounds deduced from \cite[Section 4]{gri-sal09}.
\begin{thm}%[Grigor'yan \& Saloff-Coste]
\label{thm:sim-heat-es}
   Under the assumptions of Theorem \ref{thm:weighted-upper bound}, we have the following estimates.
 \begin{enumerate}
       \item [\rm{(i)}]
  % Assume that %for each $i = 1, \dots, \ell$,
%   %{\color{blue}each} manifold
%   each $M_i$ satisfies $(D)$ and $(UE)$. Then
The small time heat kernel Gaussian upper bound holds, namely
% ��ʦ�� ���ǲ�ȷ����������˵�Բ��ԡ� ��Ϊ֮ǰû�м��������ı�����
    \begin{equation*} %\label{eq:small-time}
        h_t(x,y) \lesssim
        \frac{1}{V(x, \sqrt t)} e^{-c\frac{d(x,y)^2}{t}},
        \quad \forall x,y \in M, \, 0 < t\leq 1.
    \end{equation*}
    %for all $0 < t\leq 1$ and $x,y \in M$.
   % Assume that for each $i = 1, \dots, \ell$, each manifold $M_i$ satisfies $(D)$ and
% the heat kernel satisfies the Gaussian upper bound
%\begin{equation} \tag{$UE$}
%h_{i, t}(x, y) \lesssim \frac{1}{V_i(x, \sqrt{t})} e^{-c\frac{d^2(x, y)}{t}}, \quad \forall t>0, \, x, y \in M_i.
%\end{equation}
%   Then the small time heat kernel Gaussian upper bound holds, namely
%    \begin{equation} \label{eq:small-time}
%        h_t(x,y) \lesssim
%        \frac{1}{V(x, \sqrt t)} e^{-c\frac{d(x,y)^2}{t}},
%        \quad \forall 0 < t\leq 1, \, x,y \in M.
%    \end{equation}
   \item [\rm{(ii)}]  %Under the assumptions of Theorem \ref{thm:weighted-upper bound},
      Let $1 \leq i \leq \ell$. Then it holds
        % If $x, y \in E_i$, $1 \leq i \leq \ell$, then for any $t > 0$,  %the heat kernel $\tilde{h}_t(x, y)$
         %satisfies
        \begin{equation*}%\label{eq:heat-es-same-end}
            \tilde{h}_t(x,y)
            \lesssim \frac{\widetilde{H}(x,t)\widetilde{H}(y,t)}{\widetilde{V}_0(\sqrt t)} e^{-c\frac{|x|^2+|y|^2}{t}}
             + \frac{1}{\widetilde{V}_i(x, \sqrt t)} e^{-c\frac{d(x,y)^2}{t}},
            \quad  \forall x, y \in E_i, \, t >0.
        \end{equation*}
   \item [\rm{(iii)}]
   Let $0 \leq i,j \leq \ell$ with $i \neq j$. Then it holds
      % Moveover,
%       if $x \in E_i$, $y \in E_j$,  $i \neq j$, $0 \leq i,j \leq \ell$,  %and $0\leq j  \leq \ell$,
%       then for any $t \ge 1$,
    \begin{equation*}%\label{eq:heat-es-dif-end}
      \tilde{h}_t(x,y)
      \lesssim \left(\frac{\widetilde{H}(x,t)\widetilde{H}(y,t)}{\widetilde{V}_0(\sqrt t)} + \frac{\widetilde{H}(y,t)}{\widetilde{V}_i(\sqrt t)} +       \frac{\widetilde{H}(x,t)}{\widetilde{V}_j(\sqrt t)}  \right) e^{-c\frac{|x|^2+|y|^2}{t}},
      \quad \forall  x \in E_i, \, y \in E_j, \, t \ge 1.
    \end{equation*}
   %\item [\rm{(ii)}]  %Under the assumptions of Theorem \ref{thm:weighted-upper bound},
%       For the above weighted manifolds $\widetilde{M}$, the heat kernel $\tilde{h}_t(x, y)$
%         satisfies that  for any $t >0$ and $x, y \in E_i$ with $1 \leq i \leq \ell$,
%        \begin{equation}\label{eq:heat-es-same-end}
%            \tilde{h}_t(x,y)
%            \lesssim \frac{\widetilde{H}(x,t)\widetilde{H}(y,t)}{\widetilde{V}_0(\sqrt t)} e^{-c\frac{|x|^2+|y|^2}{t}}
%             + \frac{1}{\widetilde{V}_i(x, \sqrt t)} e^{-c\frac{d^2(x,y)}{t}}.
%        \end{equation}
%       Moveover, it holds that for any $t \ge 1$, $x \in E_i$ and $y \in E_j$ with $1 \leq i \leq \ell$, $0\leq j  \leq \ell$ and $i \neq j$.
%    \begin{equation}\label{eq:heat-es-dif-end}
%      \tilde{h}_t(x,y)
%      \lesssim \left(\frac{\widetilde{H}(x,t)\widetilde{H}(y,t)}{\widetilde{V}_0(\sqrt t)} + \frac{\widetilde{H}(y,t)}{\widetilde{V}_i(\sqrt t)} +       \frac{\widetilde{H}(x,t)}{\widetilde{V}_j(\sqrt t)}  \right) e^{-c\frac{|x|^2+|y|^2}{t}}.
%    \end{equation}
    \end{enumerate}
\end{thm}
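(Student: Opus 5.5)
We derive the three bounds from the general theory of Grigor'yan and Saloff-Coste \cite[Section 4]{gri-sal09}, applied to the weighted manifold $\widetilde{M}=\widetilde{M}_1\#\cdots\#\widetilde{M}_\ell$ and transferred back to $M$ through the Doob transform $h_t(x,y)=h(x)h(y)\tilde h_t(x,y)$. The first step is to check the hypotheses of \cite[Theorem 4.9]{gri-sal09} for $\widetilde{M}$. By \cite[Proposition 6.5]{gri-sal09}, each $\widetilde{M}_i$ is non-parabolic and satisfies $(LY)$. Hence $\widetilde{M}$ is non-parabolic, and the ends are of the type for which Section 4 of \cite{gri-sal09} gives the two-sided estimate with the functions $\widetilde{H}(x,t)$ (these play the role of the hitting probabilities of $E_0$). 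I would quote the resulting estimate in its "same end" and "different ends" forms. This is the form displayed in Theorem \ref{thm:weighted-upper bound} after removing the factor $h(x)h(y)$.

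For (ii), take $x,y\in E_i$. The estimate of \cite[Theorem 4.9]{gri-sal09} has two terms.
\begin{enumerate}
\item[(a)] The $d_+$-term is controlled using Remark \ref{rem:new-dist}(i): $d_+(x,y)\ge |x|+|y|-4$, so $d_+(x,y)^2\ge \tfrac12(|x|^2+|y|^2)-C$. This converts $e^{-cd_+^2/t}$ into $e^{-c(|x|^2+|y|^2)/t}$, up to a harmless factor $e^{C/t}$.
\item[(b)] The $d_\emptyset$-term is already of the form $\widetilde{V}_i(x,\sqrt t)^{-1}e^{-cd(x,y)^2/t}$. This uses $d_\emptyset\ge d$ and doubling on $\widetilde{M}_i$, since by $(LY)$ we have $\widetilde V_i(x,\sqrt t)\sim\widetilde V_i(y,\sqrt t)$ up to a Gaussian factor.
\item[(c)] The mixed terms $\widetilde H(y,t)/\widetilde V_i(\sqrt t)$ must be absorbed. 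Since $\widetilde H\le 1$ and $\widetilde V_0\le\widetilde V_i$, for $|x|,|y|\lesssim\sqrt t$ they are dominated by the other terms. For large $|x|$, they are dominated by the Gaussian local term after comparing $\widetilde V_i(\sqrt t)$ with $\widetilde V_i(x,\sqrt t)$ via doubling, at the cost of a worse constant $c$.
\end{enumerate}
The factor $e^{C/t}$ from (a) is the main obstacle for small $t$. For $t\le1$, I would instead use the local Gaussian bound directly: the kernel is dominated by the second term, because for $t\le 1$ the whole bound is comparable to $\widetilde V_i(x,\sqrt t)^{-1}e^{-cd^2/t}$, and this comparison uses bounded geometry near $E_0$ together with non-collapsing.

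For (iii), take $x\in E_i$ and $y\in E_j$ with $i\neq j$. Here $d_\emptyset=\infty$, so only the $d_+$ part survives. By Remark \ref{rem:new-dist}(ii), $d_+=d$, and the inequality $d_+\ge|x|+|y|-4$ together with the restriction $t\ge1$ makes the loss $e^{C/t}$ a bounded constant. This yields the stated three-term bound. The case where $i$ or $j$ is $0$ is handled the same way, using $|x|\sim1$ on $E_0$ and $\widetilde H\sim1$ there.

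For (i), I would use localization rather than the global estimate. Each $M_i$ satisfies $(LY)$, and $M$ coincides with $M_i$ outside compacts. Thus $M$ has bounded geometry in the sense of a uniform local doubling and local Poincaré inequality at scales $\le1$, where non-collapsing is used near the gluing region $E_0$. The standard local theory (Saloff-Coste/Grigor'yan) then gives $h_t(x,y)\lesssim V(x,\sqrt t)^{-1}e^{-cd^2/t}$ for $0<t\le1$. Equivalently, for $t\le1$ one can compare with Theorem \ref{thm:weighted-upper bound}, noting $h\sim1$ and $\widetilde H\sim 1$ on bounded regions. I expect the delicate point overall to be the uniform comparison of $\widetilde V_{i}(x,\sqrt t)$ with $V(x,\sqrt t)h(x)^2$ near the central part.
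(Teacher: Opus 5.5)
\paragraph{Comparison with the paper's proof.}
Your treatment of (iii) is the paper's. It also applies \cite[Theorem 4.9]{gri-sal09} to $\widetilde M$ and uses Remark~\ref{rem:new-dist}. The restriction $t\ge1$ is exactly what absorbs the loss $e^{C/t}$ you identify when trading $e^{-cd_+^2/t}$ for $e^{-c(|x|^2+|y|^2)/t}$.

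For (i) and (ii) the paper only cites \cite[Corollary 4.7]{gri-sal09} and the same-end bound \cite[(4.45)]{gri-sal09}. You instead re-derive (ii) from the two-term estimate of Theorem~\ref{thm:weighted-upper bound}, divided by $h(x)h(y)$. This works, and it shows that (ii) needs nothing beyond that estimate, doubling of $\tilde\mu_i$, and (i). The price is a separate small-time argument. Steps (a) and (b) are fine.

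\paragraph{Step (c) needs a different justification.}
The reason you give, $\widetilde V_0\le\widetilde V_i$, points to domination by the first term. That fails because $\widetilde H$ can be small, e.g.\ $\widetilde H(x,t)\sim|x|^2/V_i(|x|)$ when $n_i>2$. The mixed terms are absorbed by the local term instead, and for $t\ge1$ no case split is needed. Let $\widetilde N$ be a doubling exponent of $\tilde\mu_i$. Then $\widetilde H\le1$, $\widetilde V_i(x,\sqrt t)\lesssim(1+|x|/\sqrt t)^{\widetilde N}\widetilde V_i(\sqrt t)$, and $e^{-cd_+^2/t}\le e^{-cd(x,y)^2/(2t)}e^{-c(|x|-4)_+^2/(2t)}$. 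Together these give the bound $\widetilde V_i(x,\sqrt t)^{-1}e^{-c'd(x,y)^2/t}$ directly.

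\paragraph{Small time in (ii).}
For $t<1$, rather than asserting that the whole bound is comparable to the local term, deduce (ii) from (i). Combine the Doob transform with $V(x,\sqrt t)\sim V_i(x,\sqrt t)$ for $x\in E_i$, $t<1$. Lemma~\ref{lem:V-tildeV} gives $\widetilde V_i(x,\sqrt t)\sim h(x)^2V_i(x,\sqrt t)$, because $\phi_i(\sqrt t)=1$. Hence $\tilde h_t(x,y)\lesssim\frac{h(x)}{h(y)}\widetilde V_i(x,\sqrt t)^{-1}e^{-cd(x,y)^2/t}$. Finally $h(x)/h(y)\lesssim(1+d(x,y))^{\alpha}\lesssim e^{c'd(x,y)^2/t}$, since $|x|\le|y|+d(x,y)$, $|y|\ge1$, and $\phi_i$ is monotone and doubling.

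\paragraph{Item (i).}
Your localization argument is essentially the content of \cite[Corollary 4.7]{gri-sal09}. Non-collapsing plays no role there: by Remark~\ref{rem:sma-ue}, $(D)$ and $(UE)$ on each $M_i$ suffice, with compactness handling the region near $E_0$. Your alternative via Theorem~\ref{thm:weighted-upper bound} would need more than $h\sim1$ on bounded sets. Item (i) is claimed for all $x,y\in M$, and far out one must use $\widetilde V_i(x,\sqrt t)\sim h(x)^2V_i(x,\sqrt t)$ to cancel the growth of $h$.
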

\begin{proof}
  The claim (i) follows from \cite[%Thereoem 4.2 and
  Corollary 4.7]{gri-sal09}.
  %The estimate \eqref{eq:heat-es-same-end}
  The claim (ii)
  comes from \cite[(4.45)]{gri-sal09}.
  To get the claim (iii), %\eqref{eq:heat-es-dif-end},
  it suffices to use
   \cite[Theorem 4.9]{gri-sal09} and Remark \ref{rem:new-dist}. This completes the proof.
    %see also \cite[Remark 4.10]{gri-sal09}.
\end{proof}

\begin{rem} \label{rem:sma-ue} \rm
  Referring to %the proof of Lemma \ref{lem:volume-growth} (iii) and
  the setting of \cite[Corollary 4.7]{gri-sal09}, the assumptions of
  Theorem \ref{thm:sim-heat-es} (i)
  can be relaxed to that
   each $M_i$ satisfies $(D)$ and  the heat kernel satisfies the Gaussian upper bound
\begin{equation} \tag{$UE$}
h_{i, t}(x, y) \lesssim \frac{1}{V_i(x, \sqrt{t})} e^{-c\frac{d(x, y)^2}{t}}, \quad \forall x, y \in M_i, \, t>0.
\end{equation}
\end{rem}

Using  \cite[(4.42) and (4.45)]{gri-sal09}, %\cite[Corollary 4.16]{gri-sal09},
we obtain the following on-diagonal upper bounds of the heat kernel.
\begin{lem}%[\cite{gri-sal09}]
\label{lem:for-ultra}
  %The following statements holds.
     Let $1 \leq i \leq \ell$. It holds
     that
     $$
         h_t(x,x)
         \lesssim \frac{1}{V_0(\sqrt t)},
         \quad \forall x\in E_0, \, t >0
     $$
     and
     $$
        h_t(x,x)
        \lesssim
        \frac{1}{V_0(\sqrt t)}  + \frac{1}{V_{i}(x,\sqrt t)},
        \quad \forall x \in E_i, \, t>0.
     $$
      % \begin{equation*}% \label{eq:ca1}
%         h_t(x,x)
%          \lesssim
%          \frac{1}{V_0(\sqrt t)} +
%          \begin{cases}
%            \hfil 0, & \, \mbox{if} \, x \in E_0, \\%&  \mbox{if} \, i=0,\\%x, y \in E_0\\
%            \frac{1}{V_{i}(x,\sqrt t)},
%            &\, \mbox{if}\, x \in E_i \, (1 \leq i \leq \ell).
%            %& \mbox{if}\, 1\leq i \leq \ell.%x, y \in E_i , \, 1\leq i \leq \ell.
%          \end{cases}
%    \end{equation*}
%     ===================\\
%     For any $t>0$, it holds
%       \begin{equation*}% \label{eq:ca1}
%         h_t(x,x)
%          \lesssim
%          \frac{1}{V_0(\sqrt t)} +
%          \begin{cases}
%            \hfil 0, & \, \mbox{if} \, x \in E_0, \\%&  \mbox{if} \, i=0,\\%x, y \in E_0\\
%            \frac{1}{V_{i}(x,\sqrt t)},
%            &\, \mbox{if}\, x \in E_i \, (1 \leq i \leq \ell).
%            %& \mbox{if}\, 1\leq i \leq \ell.%x, y \in E_i , \, 1\leq i \leq \ell.
%          \end{cases}
%    \end{equation*}
\end{lem}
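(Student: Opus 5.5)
The plan is to specialise the weighted estimates to the diagonal and then remove the weights via Doob's transform. By the Doob relation $h_t(x,x)=h(x)^2\tilde h_t(x,x)$, it suffices to bound $\tilde h_t(x,x)$ on $\widetilde M$ and then translate the weighted volumes back to unweighted ones.

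\emph{Case $x\in E_i$, $1\le i\le \ell$.} We take $y=x$ in Theorem \ref{thm:sim-heat-es} (ii). Since $\widetilde H\le 1$ and the exponentials are at most $1$, this gives
\[
h_t(x,x)\lesssim h(x)^2\Big(\frac{\widetilde H(x,t)^2}{\widetilde V_0(\sqrt t)}+\frac{1}{\widetilde V_i(x,\sqrt t)}\Big).
\]
This is exactly the form of \cite[(4.45)]{gri-sal09}. The second term is the local one. Because $h\sim$ constant on balls of radius comparable to $\sqrt t$ when $\sqrt t\lesssim |x|$ (by the explicit comparability of $h$ and Harnack), we get $\widetilde V_i(x,\sqrt t)\sim h(x)^2V_i(x,\sqrt t)$ in the regime $\sqrt t\le |x|$. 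This yields the term $1/V_i(x,\sqrt t)$. When $\sqrt t\ge |x|$, the ball is essentially centred at $o_i$, and the term is absorbed into the first one.

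The first term is the main obstacle. We must show that
\[
\frac{h(x)^2\widetilde H(x,t)^2}{\widetilde V_0(\sqrt t)}\lesssim \frac{1}{V_0(\sqrt t)}+\frac{1}{V_i(x,\sqrt t)}.
\]
The first thing to try is the \cite[(4.42)]{gri-sal09} comparison $\widetilde V_j(r)\sim h_j(r)^2V_j(r)$, where $h_j(r)\sim 1+\int_1^{r^2}ds/V_j(\sqrt s)$. Combined with the identity $\frac{d}{dt}\int^t ds/\widetilde V$, this gives $h(x)\widetilde H(x,t)\lesssim \min\{h(x),h_{i}(\sqrt t)\}$, up to the normalisation used by Grigor'yan--Saloff-Coste. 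We then separate two regimes.
\begin{itemize}
\item If $\sqrt t\le|x|$, Gaussian factors or the local term dominate.
\item If $\sqrt t\ge|x|$, we need $h_i(\sqrt t)^2/\widetilde V_0(\sqrt t)\lesssim 1/V_0(\sqrt t)$. This uses the fact that the minimum in $\widetilde V_0$ is attained at comparable ends. For non-parabolic ends $h$ is bounded; for parabolic ends $h_j^2V_j$ grows at least like $V_j$. These are checked exactly as in \cite[Section 4.4]{gri-sal09}.
\end{itemize}
Rather than recompute all of this, I would cite \cite[(4.42), (4.45)]{gri-sal09}, where this bound is written directly in unweighted form.

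\emph{Case $x\in E_0$.} Here $|x|\le 2$ and $h(x)\sim 1$. For $t\ge 1$, Theorem \ref{thm:sim-heat-es} (iii) with $y=x$ (or the same (4.42) estimate) gives $\tilde h_t(x,x)\lesssim 1/\widetilde V_0(\sqrt t)\cdot\widetilde H^2+\widetilde H/\widetilde V_j$, and this converts as above to $1/V_0(\sqrt t)$. For $t<1$, Theorem \ref{thm:sim-heat-es} (i) gives $h_t(x,x)\lesssim 1/V(x,\sqrt t)$. Since $V(x,\sqrt t)\le V(x,1)\lesssim 1$, we have $V_0(\sqrt t)\le V_0(1)\le 10$. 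By local bounded geometry near the compact set $E_0$, $V(x,\sqrt t)\gtrsim V_0(\sqrt t)$ for small $t$, using the non-collapsed assumption and doubling on each $M_i$. This completes both estimates.
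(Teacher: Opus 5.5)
The paper's own proof is just the citation of \cite[(4.42) and (4.45)]{gri-sal09}, so your closing fallback coincides with it. The self-contained derivation you sketch in its place, however, fails at the step you call the main obstacle.

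The bound $h(x)\widetilde H(x,t)\lesssim\min\{h(x),h_i(\sqrt t)\}$ discards the cancellation between $h$ and $\widetilde H$. The inequality you then say is needed, $h_i(\sqrt t)^2/\widetilde V_0(\sqrt t)\lesssim 1/V_0(\sqrt t)$, is false under the hypotheses of Theorem \ref{main-result-parabolic}. Take an end with $n_i=\min_k n_k<2$. Then $\phi_i(r)^2\sim r^{4-2n_i}$, $V_0(r)\sim r^{n_i}$ and $\widetilde V_0(r)\sim r^2\log^2(2+r)$. So the left side exceeds the right by a factor $\sim t^{(2-n_i)/2}/\log^2(2+\sqrt t)\to\infty$. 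Your regime $\sqrt t\le|x|$ has the same defect, since for $|x|\sim\sqrt t$ the factor $e^{-2c|x|^2/t}$ is not small.

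What you need is $h(x)\widetilde H(x,t)\lesssim 1$. This is Corollary \ref{cor:phi-H}, and it actually follows in general from the identity you mention. With $u(s)=\phi_i(\sqrt s)$ one has $u'(s)=1/V_i(\sqrt s)$, hence $\int_{|x|^2}^{t}ds/\widetilde V_i(\sqrt s)\sim\int_{|x|^2}^t u'u^{-2}\,ds\le 1/\phi_i(|x|)$. Also $|x|^2/\widetilde V_i(|x|)\lesssim 1/\phi_i(|x|)$, because $|x|^2/V_i(|x|)\lesssim\phi_i(|x|)$.

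With this, the first term is $\lesssim 1/\widetilde V_0(\sqrt t)\lesssim 1/V_0(\sqrt t)$, since $\phi_j\ge 1$ gives $\widetilde V_j\gtrsim V_j$ by \eqref{eq:V-genernal V-2}. The local term needs no case distinction or Harnack argument: \eqref{eq:V-genernal V} gives $\widetilde V_i(x,\sqrt t)\gtrsim\phi_i(|x|)^2V_i(x,\sqrt t)$ for every $t>0$.

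For $x\in E_0$, you cannot apply Theorem \ref{thm:sim-heat-es} (iii) with $y=x$, because it requires $i\neq j$. Use Theorem \ref{thm:weighted-upper bound} with $x=y\in E_0$ instead. There $h\lesssim 1$ and $\widetilde H\le 1$. Since $i_x=i_y=0$, every denominator is $\widetilde V_0(\sqrt t)$, and $d_\emptyset(x,x)=\infty$. This gives $h_t(x,x)\lesssim 1/\widetilde V_0(\sqrt t)\lesssim 1/V_0(\sqrt t)$ for all $t>0$ at once. Your separate small-time argument is therefore unnecessary; its opening upper bound on $V(x,\sqrt t)$ is irrelevant to the lower bound you need in any case.
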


\section{Estimates on the volume growth and auxiliary functions}  \hskip\parindent  \label{sec:vol-aux}
%Ϊ��XXX�� ��һ������XXX�Ⱥ˽硣 or ���Ⱥ��Ͻ�Ӧ����������ƪ���ĵ������¡� (in our setting)
%or �����Ⱥ��е�һЩ�����Ľ硣
%����һ���У� ���ǽ����ȸ���һЩ����some properties on the volume growth
%of $\mu_i$. Then, we shall estimates the harmonic function $h$ and the weighted volume $\tilde{\mu}_i$.
%���󣬸����������Ⱥ��Ͻ��и�������$\widetilde{H}(x,t)$�Ĺ��ơ�
%����Щ���Ƶİ����£� ���ǿ���XXXX��
%In this section,
We shall give some estimates on the terms which occur in the priori heat kernel estimates.
First, we shall  give some properties on the growth of the volume ${V}_i$.
Then, we shall estimates the harmonic function $h$ and the weighted volume $\widetilde{V}_i$.
Finally, we will gives an estimate of the auxiliary function $\widetilde{H}$. % that occurs in the upper bound of the heat kernel.
%With the help of these estimates, we can apply the upper bound of the heat heat kernel in our setting.
\subsection{Basic properties on the volume growth} \hskip\parindent
Note that the doubling condition on $M_i$ ($1 \leq i \leq \ell$) implies that, there exists $N_i >0$,
which is of course not less than $n_i$, such that
\[\frac{V_i(x, R)}{V_i(x, r)} \lesssim \left(\frac{R}{r}\right)^{N_i}, \quad \forall x \in M_i, \, 0<r \leq R <\infty.\]
 Throughout the paper, we set
\begin{equation*} %\label{N-infty}
N_{\infty}:=\max\{N_1,\cdots, N_\ell\}.
\end{equation*}
Let $1 \leq i \leq \ell$.
By the standard trick of doubling property, for any $c' > 0$, it holds
%and for all $u, v \in M_k$ and $t > 0$, it holds
%% The world in GS-2009 P1944
    \begin{equation} \label{eq:change-centre}
     \frac{ V_i(x, \sqrt t)}{V_i(y, \sqrt t)}
     \lesssim \frac{ V_i(y, \sqrt t + d(x,y))}{V_i(y, \sqrt t)}
     \lesssim \lf(\frac{\sqrt t + d(x,y)}{\sqrt t}\r)^{N_i}
     \lesssim_{c', N_i} e^{c'\frac{d(x,y)^2}{t}},
     \quad
     \forall x,y \in M_i, \, t>0.
    \end{equation}
Sometimes, we will apply this trick without mentioning it.
%\subsection{Estimates on the harmonic function $h$}
Besides, since each $M_i$ is a connected doubling manifold, the doubling condition implies that there exists a constant $\delta_i>0$ such that
%for any $x\in M_i$ and $R>r>0$, it holds
\begin{equation}\label{rev-d}
c \left(\frac{R}{r}\right)^{\delta_i}\le \frac{V_i(x,R)}{V_i(x,r)},
 \quad \forall x\in M_i, \, 0<r<R<\infty;
\end{equation}
see for instance \cite[Remark 8.1.15]{hkst}.
Note that $\delta_i$ might be different from $n_i$ but is not larger than $n_i$.

 Next, we shall introduce the following volume growth properties which will be used repeatedly.  %and may not be mentioned sometimes.
\begin{lem}  \label{lem:volume-growth}
Let $1 \leq i \leq \ell$. The following statements are valid.
\begin{enumerate}
  \item[\rm{(i)}]
  %Since for each $M_i$, there exists $x_i\in M_i$ and $n_i \geq 1$, such that for all $1 \le r \le R < \infty$
%\[
%    \begin{dcases}
%            c_i\left(\frac{R}{r}\right)^{n_i}\le \frac{V_i(x_i,R)}{V_i(x_i,r)}, & \, n_i > 2,  \\
%            c_i\left(\frac{R}{r}\right)^{n_i}\le \frac{V_i(x_i,R)}{V_i(x_i,r)} \le c_i\left(\frac{R}{r}\right)^{n_i}, & \, n_i \leq 2,
%    \end{dcases}
%\]
%We find that the following inequality holds for $o_i$ as
For any $1 \le r \le R < \infty$,  it holds that
%\begin{equation*}
%    \begin{dcases}
%            \frac{V_i(R)}{V_i(r)} \gtrsim \left(\frac{R}{r}\right)^{n_i}, &\mbox{if} \,  n_i > 2,  \\
%            \frac{V_i(R)}{V_i(r)} \sim \left(\frac{R}{r}\right)^{n_i}, &\mbox{if} \, n_i \leq 2,
%    \end{dcases}
%\end{equation*}
%%\begin{equation*}
%%    \begin{dcases}
%%            \left(\frac{R}{r}\right)^{n_i} \lesssim \frac{V_i(R)}{V_i(r)}, &\mbox{if} \,  n_i > 2,  \\
%%            \left(\frac{R}{r}\right)^{n_i} \lesssim \frac{V_i(R)}{V_i(r)} \lesssim \left(\frac{R}{r}\right)^{n_i}, &\mbox{if} \, n_i \leq 2,
%%    \end{dcases}
%%\end{equation*}
%\[
%    \frac{V_i(R)}{V_i(r)} \gtrsim \left(\frac{R}{r}\right)^{n_i},
%    \quad \mbox{if} \,  n_i > 2,
%    \qquad
%    \frac{V_i(R)}{V_i(r)} \sim \left(\frac{R}{r}\right)^{n_i}, \quad \mbox{if} \, n_i \leq 2,
%\]
\[
   \frac{V_i(R)}{V_i(r)} \gtrsim \left(\frac{R}{r}\right)^{n_i},
    \quad \mbox{if} \,  n_i > 2,
\]
and
\[
    \frac{V_i(R)}{V_i(r)} \sim \left(\frac{R}{r}\right)^{n_i}, \quad \mbox{if} \, n_i \leq 2.
\]
%for any $R \geq r \geq  1$ and $1 \leq i \leq \ell$.
% In particular, for any $R \ge 1$, we have $V_i(R) \gtrsim R^{n_i}$, if $n_i >2$,
%    and $V_i(R) \sim R^{n_i}$, if $n_i \leq 2$.
 In particular, if $n_i \ge 2$, then %$R^2 /V_i(R) \lesssim r^2/V_i(r)$.
  $$\frac{R^2}{V_i(R)} \lesssim \frac{r^2}{V_i(r)}.$$
  \item[\rm{(ii)}]
  We have
  \begin{equation*}
    \mu(F_i^{(r)}) \sim V_i(2r) \sim V_i(r),  \quad \forall r \geq 1. %\, 1\leq i \leq \ell.
  \end{equation*}
  \item[\rm{(iii)}] The gluing manifold $M$ satisfies the local doubling volume property
$$
\frac{V(x, 2 r)}{V(x, r)} \lesssim 1, \quad \forall x \in M, \, 0<r \leq 1,
$$
and the volume is of at most polynomial growth in the sense that
$$
\frac{V(x, r)}{V(x, 1)} \lesssim r^{N_{\infty}}, \quad \forall x \in M, \, r \geq 1.
$$
\end{enumerate}
\end{lem}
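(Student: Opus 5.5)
For (i), I will move the volume growth hypothesis from the point $x_i$ to the reference point $o_i$. Fix $1\le r\le R$. By (LY) each $M_i$ is doubling, so \eqref{eq:change-centre} with $\sqrt t$ replaced by $r\ge 1$ and $d(x_i,o_i)$ a fixed constant gives
$$V_i(x_i,r)\sim V_i(o_i,r) \quad\mbox{for all } r\ge 1,$$
with constants depending only on $d(x_i,o_i)$ and $N_i$. Indeed, $V_i(x_i,r)\le V_i(o_i,r+d(x_i,o_i))\lesssim (1+d(x_i,o_i)/r)^{N_i}V_i(o_i,r)$, the factor is bounded for $r\ge1$, and the argument is symmetric. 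Applying this comparison to both $R$ and $r$ in the hypothesis (i) or (ii) of Theorem \ref{main-result-parabolic} gives the two-sided (respectively one-sided) power bound for $V_i(R)/V_i(r)$.

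The ``in particular'' then follows case by case. If $n_i\ge2$ falls under assumption (i), or equals $2$ under (ii), then $V_i(R)/V_i(r)\gtrsim (R/r)^{n_i}\ge (R/r)^2$. Rearranging gives $R^2/V_i(R)\lesssim r^2/V_i(r)$.

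For (ii), I will compare $F_i^{(r)}$ with balls in $M_i$ centred at $o_i$, using the identification $E_i=M_i\setminus K_i$. Since $K_i$ and $E_0$ are compact and $o_i\in\partial E_i$, there is a constant $a\ge0$ (depending on the gluing) with
$$B_i(o_i,2r)\setminus K_i\subset F_i^{(r)}\subset B_i(o_i,2r+a).$$
For the second inclusion, a point at distance at most $2r$ from $E_0$ is at $M$-distance at most $2r+\diam(E_0)$ from $o_i$; a near-minimal path leaving $E_0$ through $\partial E_i$ then yields an $M_i$-path of comparable length, up to an additive constant accounting for the compact pieces. For the first inclusion, $d(x,E_0)\le d(x,o_i)$ up to a constant.

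This gives
$$V_i(2r)-\mu_i(K_i)\le \mu(F_i^{(r)})\le V_i(2r+a)\lesssim V_i(2r),$$
where the last step is doubling. Since $V_i(r)\ge V_i(1)\ge1$, the normalization, together with the reverse doubling \eqref{rev-d}, shows that $\mu_i(K_i)$ is at most a small fraction of $V_i(2r)$ once $r$ is large. For bounded $r\ge1$, all quantities are comparable to constants. Finally $V_i(2r)\sim V_i(r)$ by doubling.

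\emph{This step is where I expect the main care is needed}: the additive constants in the distances and the mass of $K_i$ must be absorbed uniformly in $r\ge1$. I would handle it by splitting into $r\le r_0$, which is covered by compactness and the normalizations, and $r>r_0$, which is covered by reverse doubling.

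For (iii), I will use local comparability. For $x\in E_i$ far from $E_0$, meaning $d(x,E_0)>2r$ with $r\le1$, the ball $B(x,r)$ coincides with $B_i(x,r)$, so local doubling is inherited from $(D)$ on $M_i$. For $x$ within distance $3$ of $E_0$, the set $F_*$ is a compact region of bounded geometry; non-collapsedness and the smooth structure give $V(x,r)\sim r^{\dim}$ for $r\le1$, uniformly by compactness, and this yields doubling.

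For polynomial growth with $r\ge1$, I cover $B(x,r)$ by $B(x,r)\cap E_0$ together with $B(x,r)\cap E_j$ for each $j$. If $B(x,r)$ meets $E_0$, each piece $B(x,r)\cap E_j$ lies in $B_j(o_j,r+C)$, and by doubling on $M_j$ (the bound with exponent $N_j$),
$$\mu(B(x,r)\cap E_j)\le V_j(o_j,r+C)\lesssim r^{N_\infty}V_j(1)\lesssim r^{N_\infty}.$$
In this case $V(x,1)\gtrsim1$ by non-collapsedness. If $B(x,r)$ does not meet $E_0$, it lies inside a single $E_i$ and equals $B_i(x,r)$, so the doubling estimate on $M_i$ applies directly.
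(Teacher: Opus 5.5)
Your proof is correct. For (i) it is essentially the paper's argument. The paper enlarges the radii by $d(x_i,o_i)$, applies the hypothesis between $r+d(x_i,o_i)$ and $R+d(x_i,o_i)$, and then absorbs factors such as $\bigl(R/(R+d(x_i,o_i))\bigr)^{N_i-n_i}$ using $r,R\ge 1$. You instead first show $V_i(x_i,\rho)\sim V_i(o_i,\rho)$ for all $\rho\ge1$ and then take ratios. This is the same doubling trick, packaged more cleanly.

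For (ii) and (iii) the paper gives no argument; it defers to \cite[Lemma 2.5]{jiang-li-lin-2022}. Your self-contained proofs are therefore a genuine addition, and they are valid. You use the inclusions $B_i(o_i,2r)\setminus K_i\subset F_i^{(r)}\subset B_i(o_i,2r+a)$, reverse doubling \eqref{rev-d} for large $r$, and a split into a compact region near $E_0$ and a part far from $E_0$.

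A few small points to tighten:
\begin{enumerate}
\item[(a)] For bounded $r$ in (ii) you need $\mu(F_i^{(1)})>0$. This holds because $B_i(o_i,2)\setminus K_i$ is a nonempty open subset of $F_i^{(1)}$.
\item[(b)] For local doubling in (iii), use the condition $d(x,E_0)>2r$ to get $B(x,2r)=B_i(x,2r)$, not just $B(x,r)=B_i(x,r)$.
\item[(c)] In the polynomial-growth case where $B(x,r)$ meets $E_0$, you only know $d(y,E_0)<2r$ for $y\in B(x,r)\cap E_j$. So the correct containment is $B(x,r)\cap E_j\subset F_j^{(r)}\subset B_j(o_j,2r+a)$ rather than $B_j(o_j,r+C)$. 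This is harmless by doubling.
\item[(d)] Non-collapsedness of $M$ itself is not a hypothesis; only the $M_i$ are assumed non-collapsed. It follows by the same split. If $d(x,E_0)\ge1$, then $B(x,1)=B_i(x,1)$. If $d(x,E_0)<1$, compactness gives a uniform lower bound.
\end{enumerate}
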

\begin{proof}
%For the proof of (ii), we refer to \cite[Lemma 2.5 (ii)]{jiang-li-lin-2022}.
%Here, we need only prove (i).
%We only need to prove the claim (i), as the claims (ii) and (iii) has  been proved in \cite[Lemma 2.5]{jiang-li-lin-2022}.
According to \cite[Lemma 2.5]{jiang-li-lin-2022}, we only need to prove the claim (i).
For any %$1 \leq i \leq \ell$ and
$1 \le r \le R < \infty$, by the doubling property, we have
\[
    V_i(x_i, R+d(x_i,o_i))
    \lesssim V_i(o_i ,R+d(x_i,o_i))
    \lesssim \lf(\frac{R+d(x_i,o_i)}{R}\r)^{N_i} V_i(o_i, R),
\]
and hence%, as $x_i$ is a fixed point, it holds
\begin{eqnarray*}
\frac{V_i(R)}{V_i(r)}
&&\gtrsim \left(\frac{R}{R+d(x_i,o_i)}\right)^{N_i}\frac{V_i(x_i,R+d(x_i,o_i))}{V_i(x_i,r+d(x_i,o_i))} \\
&& \gtrsim \left(\frac{R}{R+d(x_i,o_i)}\right)^{N_i}\left(\frac{R+d(x_i,o_i)}{r+d(x_i,o_i)}\right)^{n_i}\\
&& \sim \left(\frac{R}{r}\right)^{n_i}\left(\frac{R}{R+d(x_i,o_i)}\right)^{N_i-n_i}\left(\frac{r}{r+d(x_i,o_i)}\right)^{n_i}\\
% Sugar water inequality
&& \gtrsim \left(\frac{R}{r}\right)^{n_i}\left(\frac{1}{1+d(x_i,o_i)}\right)^{N_i-n_i}\left(\frac{1}{1+d(x_i,o_i)}\right)^{n_i}
 %Notice that $x_i$ is fixed
\gtrsim \left(\frac{R}{r}\right)^{n_i},
\end{eqnarray*}
%by the fact that
since $x_i$ and $o_i$ are fixed points. %in {\color{blue}Therorem \ref{main-result-parabolic}}.
Similarly, when $n_i \leq 2$,
%by the same argument as above,
by the doubling property,
we have
\[
    V_i(x_i, r+d(x_i,o_i))
    \lesssim V_i(o_i, r+d(x_i,o_i))
    \lesssim \lf(\frac{r+d(x_i,o_i)}{r}\r)^{N_i} V_i(o_i, r),
\]
and hence
\begin{eqnarray*}
    \frac{V_i(R)}{V_i(r)}
    && \lesssim \left(\frac{r+d(x_i,o_i)}{r}\right)^{N_i} \frac{V_i(x_i, R+d(x_i, o_i)) }{ V_i(x_i, r+d(x_i,o_i)) } \\
    && \lesssim \left(\frac{r+d(x_i,o_i)}{r}\right)^{N_i} \left(\frac{R+d(x_i, o_i) }{ r+d(x_i,o_i) }\right)^{n_i} \\
    && \sim \left(\frac{R}{r}\right)^{n_i} \left( \frac{r+d(x_i,o_i)}{r}\right)^{N_i-n_i} \left(\frac{R+d(x_i,o_i)}{R}\right)^{n_i}  \\
%    Reverse-Sugar water inequality
    && \lesssim \left(\frac{R}{r}\right)^{n_i} \left( 1+d(x_i,o_i) \right)^{N_i-n_i} \left(1+d(x_i,o_i)\right)^{n_i}
     \lesssim \left(\frac{R}{r}\right)^{n_i}.
\end{eqnarray*}
%(ii) We refer to \cite[Lemma 2.5 (ii)]{jiang-li-lin-2022}.
This completes the proof.
\end{proof}
\subsection{Estimates on the weighted volume} \hskip\parindent
%{\color{blue}xxxxxxxxx
In this section, we shall first give some estimates on the harmonic function $h$ and then estimate the weighted volume.
%}
%
\subsubsection{Estimates on the harmonic function $h$ and the auxiliary function $\phi_i$}\hskip\parindent
%For each $0 \leq i \leq \ell$ and any $r > 0$, set
%\begin{equation*}
%    \phi_i(r) := 1+\left(\int_1^{r^2}\frac{1}{V_{i}(\sqrt s)}\,ds\right)_+
%\end{equation*}
%and note that for any $x \in M$,
%\[
%    h(x) \sim \phi_{i_x}(|x|).
%\]
In order to estimate the harmonic function $h$, let us introduce the following auxiliary function $\phi_i$. For each $1 \leq i \leq \ell$,
set
\begin{equation*}
    \phi_i(r) := 1+\left(\int_1^{r^2}\frac{1}{V_{i}(\sqrt s)}\,ds\right)_+,
    \quad \forall r >0,
\end{equation*}
and note that
\[
    h(x) \sim \phi_{i_x}(|x|),
    \quad \forall  x \in M.
\]

\begin{lem}\label{lem:phi-r-equal}
    Let $1\leq i \leq \ell$. We can obtain that for any $ 0 < r < 1$,
    $\phi_i(r)= 1$, and for any $r \ge 1$,
    \begin{equation*}
        \phi_i(r)
        \sim
        \begin{cases}
           1, & \, \mbox{if} \, n_{i} >2, \\
           \log(2+r), & \, \mbox{if} \, n_{i} =2, \\
           r^{2-n_{i}},  &  \, \mbox{if} n_{i}< 2.
        \end{cases}
    \end{equation*}
%    Moreover, for any $r \ge 1$, it holds
%    %if $r > \min(c_0, 1)$, where $c_0$ is the constant in Lemma \ref{rem:x-separ}, then
%    \[
%        \phi_i(r) \sim r^{2-n_{i}},  \quad \mbox{if} \, n_{i}< 2.
%    \]
    %In conclusion, for any $r \geq \min(c_0, 1)$, it holds
%    \begin{equation*}
%        \phi_i(r)
%        \sim
%        \begin{cases}
%           1, & \, \mbox{if} \ n_{i} >2, \\
%           \log(2+r), & \, \mbox{if} \ n_{i} =2, \\
%            r^{2-n_{i}}, & \, \mbox{if} \ n_{i}< 2.
%        \end{cases}
%    \end{equation*}
   % In fact, we set $r$ has a positive lower bound is enough, such that $r\geq c$.
\end{lem}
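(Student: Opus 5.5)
For $0<r<1$ we have $r^2<1$, so the integral $\int_1^{r^2}$ runs over a reversed interval and is nonpositive. Its positive part therefore vanishes, which gives $\phi_i(r)=1$ immediately. For $r\ge1$ the integral is nonnegative, so
\[
\phi_i(r)=1+\int_1^{r^2}\frac{ds}{V_i(\sqrt s)} .
\]
The plan is to estimate the integrand using Lemma \ref{lem:volume-growth} (i) with the lower radius fixed at $1$, together with the normalization $1\le V_i(1)\le 10$. For $s\ge1$ we take $r=1$ and $R=\sqrt s$ there.

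\emph{Case $n_i>2$.} Here $V_i(\sqrt s)\gtrsim s^{n_i/2}V_i(1)\gtrsim s^{n_i/2}$. Hence
\[
1\le \phi_i(r)\le 1+C\int_1^\infty s^{-n_i/2}\,ds<\infty,
\]
because $n_i/2>1$, so $\phi_i(r)\sim1$. Only the lower volume bound is needed, which is consistent with assumption (i) of Theorem \ref{main-result-parabolic} providing only that bound.

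\emph{Case $n_i\le2$.} Here two-sided bounds are available, $V_i(\sqrt s)\sim s^{n_i/2}$ for $s\ge1$, again with constants fixed by $V_i(1)\sim1$. This gives
\[
\phi_i(r)\sim 1+\int_1^{r^2}s^{-n_i/2}\,ds .
\]
If $n_i=2$ the right side is $1+2\log r$. If $n_i<2$ it is
\[
1+\frac{2}{2-n_i}\bigl(r^{2-n_i}-1\bigr).
\]
It remains to compare these with $\log(2+r)$ and $r^{2-n_i}$ uniformly for $r\ge1$. For $n_i=2$, at $r=1$ both $1+2\log r$ and $\log(2+r)$ are comparable to constants, and for large $r$ both behave like $\log r$. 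Moreover $1+2\log r\ge1\gtrsim\log 3$ and $\log(2+r)\le\log(3r)\lesssim 1+\log r$, so they are comparable on all of $[1,\infty)$. For $0\le n_i<2$, $a:=2-n_i\in(0,2]$ and $r\ge1$ give $r^{a}\ge1$, so
\[
1+\tfrac{2}{a}\bigl(r^{a}-1\bigr)\le \max\{1,\tfrac{2}{a}\}\,r^{a}.
\]
The reverse bound also holds, because the left side is at least $\min\{1,2/a\}\,r^a$. To see this, use $1-\tfrac2a+\tfrac2a r^a\ge \min(1,\tfrac2a)r^a$, checking the two subcases $2/a\ge1$ and $2/a<1$ separately.

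The only delicate point is making sure the constants in Lemma \ref{lem:volume-growth} (i) really apply to the base point $o_i$ and down to radius $1$; this is exactly what that lemma provides. The remaining comparisons are elementary, so I expect no real obstacle.
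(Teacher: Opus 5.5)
Your proposal is correct and follows the same route as the paper. Both arguments read off $\phi_i(r)=1$ for $0<r<1$ from the definition, then use Lemma \ref{lem:volume-growth} (i) with lower radius $1$ and $V_i(1)\sim 1$ to compare the integrand with $s^{-n_i/2}$, and finish by evaluating $\int_1^{r^2}s^{-n_i/2}\,ds$ elementarily. You only spell out the final comparisons with $\log(2+r)$ and $r^{2-n_i}$ in more detail than the paper does; note that $a=2-n_i\le 2$ forces $2/a\ge 1$, so your subcase $2/a<1$ never occurs.
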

\begin{proof}
    For any $0 < r <1$, by the definition of $\phi_i$, it holds $\phi_i(r) =1$.
    It remains to consider the case $r \ge 1$.
    When $n_{i} >2$,
    %on the one hand,
    %note that $\phi_{i}(r) \geq 1$ holds by its definition,
%    and one has
%    $$  1+ \left(\int_1^{r^2} \frac{1}{V_{i}(\sqrt s)}\,ds \right)_+
%%    \leq  \int_1^{\infty}\frac{1}{V_{i}(\sqrt s)}\,ds
%    \leq 1+ \int_1^{\infty}\frac{1}{s^{n_i/2}}\,ds \lesssim 1.$$}
%%    Therefore, $\phi_{i}(r) \sim 1$.
    it holds that %for any $r>1$,
    $$ 1\leq \phi_i(r) = 1+ \int_1^{r^2} \frac{1}{V_{i}(\sqrt s)}\,ds
%    \leq  \int_1^{\infty}\frac{1}{V_{i}(\sqrt s)}\,ds
    \lesssim 1+ \int_1^{\infty}\frac{1}{s^{n_i/2}}\,ds \lesssim 1.$$
   When $n_{i}=2$, it follows that %for any $r \ge 1$,
    \begin{align*}
        \phi_i(r) = 1+ \int_1^{r^2}\frac{1}{V_{i}(\sqrt s)}\,ds
        \sim 1+ \int_1^{r^2} \frac{1}{s} \,ds
        %\sim 1+ \left. \log(s) \right|^{r^2}_1
        \sim 1+ \log r \sim \log(2+r).
    \end{align*}
    When $n_{i} < 2$, one has
     \begin{align*}
         \phi_i(r) = 1+  \int_1^{r^2}\frac{1}{V_{i}(\sqrt s)}\,ds
         \sim 1+ \int_1^{r^2} \frac{1}{s^{n_i/2}}\,ds
         %\sim 1 + \left.s^{1- n_i/2}\right|^{r^2}_1
         \sim r^{2-n_{i}}.
    \end{align*}
 %   =============DETAILS OF  $1+ \int_1^{r^2} \frac{1}{s^{n_i/2}}\,ds \gtrsim r^{2-n_{i}}$=============\\
%    Besides, it holds that for any $1 \leq r \leq 2$,
%    \[
%        r^{2-n_{i}} %\leq 2^{2}
%        \lesssim 1
%         \lesssim 1+\left( \int_1^{r^2}\frac{1}{V_{i}(\sqrt s)}\,ds \right)_+,
%    \]
%    and for any $r \ge 2$,
%    \[
%    r^{2-n_i}
%    \lesssim \int_{r^2/4}^{r^2} \frac{1}{s^{n_i/2}}\,ds
%  %  \lesssim \int_{r^2/4}^{r^2} \frac{1}{V_{i}(\sqrt s)}\,ds
%    \lesssim 1 + \left( \int_1^{r^2}\frac{1}{V_{i}(\sqrt s)}\,ds \right)_+.
%    \]
%    =====================END OF DETAILS===================================\\
  This completes the proof.
\end{proof}

\begin{rem}%\label{lem:doubling-phi}
    \rm
    For each $1 \leq i \leq \ell$, we can observe that the function $\phi_i$ satisfies the doubling property, i.e., there exists a constant $C>0$, such that for any $r > 0,$
    $$\phi_i(2r) \leq C \phi_i(r).$$
    Indeed,
%\begin{proof}
    when $r \leq 1$, by the definition of $\phi_i$, it holds $ \phi_i(2r) \leq C$ and $\phi_i(r) =1$, which implies that $ \phi_i(2r) \leq C \phi_i(r).$
   % When $0 < r < 2r \leq 1$, notice that $\phi_i(r) = \phi_i(2r)=1$. It holds that
%    $$\phi_i(2r) \leq \phi_i(r).$$
%
%    When $0 < r \leq 1 <2r$, notice that $\phi_i(r) =1$. By Lemma \ref{lem:phi-r-equal}, it holds
%    \[
%        \phi_i(2r) \leq \phi_i(2)
%        \leq C \leq C \phi_i(r).
%    \]
    %
   When $r \ge 1$,
   %note that
   %$  \log(2+2r) \leq  \log(4r) \leq 2\log2 +\log(r) \leq 3 \log(2+r)$.
   it follows from Lemma \ref{lem:phi-r-equal} that
   $ \phi_i(2r) \leq C \phi_i(r)$.
  % as desired.
   %which completes the proof.
   % When $r \ge 1$, by Lemma \ref{lem:phi-r-equal},
%    if $n_i <2$ or $n_i>2$,
%    \[
%        \phi_i(2r) \leq C_i \phi_i(r)
%    \]
%    holds.
%    As for $n_i =2$, we have
   % \[
%        \phi_i(2r)
%        \leq C_i \log(2+2r)
%        \leq C_i \log(4r)
%        \leq C_i (2\log2 +\log(r))
%        \leq C_i \log(2+r)
%        \leq C_i \phi_i(r),
%    \]
%    where $C_i$ is a positive constant depending on $i$. This completes the proof.
    %
%    Notice that when $r\leq 1$, $\phi_i(r)=1$.
%
%     When $r>1$, we have
%    \[
%        \phi_i(r) \sim
%        \begin{cases}
%            1, \quad & n_i>2, \\
%            \log(2+r), & n_i=2, \\
%            r^{2-n_i}, & n_i<2.
%        \end{cases}
%    \]
%
%
%
%
%
%
%    (i) $0 < r < 2r \leq 1$
%    $$\phi_i(2r)=1 \leq \phi_i(r).$$
%
%    (ii) $0 < r \leq 1 <2r$
%    $$\phi_i(2r) \leq \phi_i(2) \leq C \leq C \phi_i(r).$$
%
%    (iii) $1 <r <2r$
%
%    When $n_i \neq 2$, $\phi_i(2r) \leq C\phi_i(r)$ is clear.
%
%    When $n_i =2$, the logarithmic function is also doubling. Let us simply verify  the doubling property of the function $f(x) = \log(2+x)$, $x>0$.
%
%    If $x \geq 3$, then
%        \[
%            f(2x) = \log(2+2x) \leq \log(3x) \leq \log x^2 \leq 2 \log(2+x).
%        \]
%
%    If $x < 3$, then
%    \[
%        f(2x) = \log(2+2x) < \log8 \leq \frac{\log8}{\log2} \log2 \leq 3 \log(2+x).
%    \]
%    It completes the proof.
%\end{proof}
\end{rem}

%Let us start from the heat kernel estimate obtained by \cite[Section 6]{gri-sal09}.

\subsubsection{Estimates on the weighted volume $\widetilde{V}_i$} \hskip\parindent
 Let us give some estimates for  the weighted volume $\widetilde{V}_i$.
\begin{lem} \label{lem:V-tildeV}
Let $1\leq i \leq \ell$. For any $x \in M_i$ and $r>0$, it holds
\begin{equation}\label{eq:V-genernal V}
    \widetilde{V}_i(x,r) \sim (\phi^2_{i}(|x|) + \phi^2_i(r)) V_i(x,r)
\end{equation}
%\[
%    \widetilde{V}_i(x,r) \sim [\phi^2_{i}(|x|) + \phi^2_i(r)] V_i(x,r)
%\]
and
\begin{equation}\label{eq:V-genernal V-2}
    \widetilde{V}_i(r) \sim \phi^2_i(r) V_i(r).
\end{equation}
\end{lem}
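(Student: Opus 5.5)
The plan is to write $\widetilde{V}_i(x,r)=\int_{B_i(x,r)} h_i^2\,d\mu_i$ and use $h_i(y)\sim \phi_i(|y|)$ on $M_i$. On $E_i$ this is the estimate for $h$ recalled above. On the compact set $K_i$, $h_i$ is smooth and positive, so $h_i\sim 1$ there. Interpreting $|y|$ on $M_i$ as comparable to $1+d_i(y,o_i)$ is harmless, because Lemma \ref{lem:phi-r-equal} and the doubling of $\phi_i$ (the Remark above) make $\phi_i(|y|)$ insensitive to bounded changes in $|y|$. Thus the whole problem reduces to estimating
\[
I(x,r):=\int_{B_i(x,r)}\phi_i^2(|y|)\,d\mu_i(y),
\]
and \eqref{eq:V-genernal V-2} is the special case $x=o_i$, where $|o_i|\lesssim 1$ gives $\phi_i(|o_i|)\sim 1\lesssim\phi_i(r)$.

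\textbf{Upper bound.} For $y\in B_i(x,r)$ we have $|y|\le |x|+r+C\le 2\max\{|x|,r\}+C$. The function $\phi_i$ is nondecreasing, since it is $1$ plus a nonnegative integral with growing upper limit. Combining this with its doubling property,
\[
\phi_i(|y|)\lesssim \phi_i(\max\{|x|,r\})\le \phi_i(|x|)+\phi_i(r).
\]
Integrating over $B_i(x,r)$ gives $I\lesssim(\phi_i^2(|x|)+\phi_i^2(r))V_i(x,r)$.

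\textbf{Lower bound.} Since $\phi_i\ge1$ we need $I\gtrsim \phi_i^2(|x|)V_i(x,r)$ and $I\gtrsim\phi_i^2(r)V_i(x,r)$. Only the case $r\ge 1$ with $n_i\le 2$ is nontrivial: for $n_i>2$ or $r<1$ both $\phi_i(r)$ and $\phi_i(|y|)$ are comparable to $1$ or to $\phi_i(|x|)$.

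\emph{Case $|x|\ge 2r$.} Every $y\in B_i(x,r)$ has $|y|\ge |x|/2-C$, so $\phi_i(|y|)\gtrsim\phi_i(|x|)\gtrsim\phi_i(r)$ by monotonicity and doubling, and both bounds follow.

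\emph{Case $|x|<2r$.} Here $\phi_i(|x|)\lesssim\phi_i(r)$, so it suffices to prove $I\gtrsim \phi_i^2(r)V_i(x,r)$. Consider the annular region
\[
A:=B_i(x,r)\cap\{y:\, |y|\ge \epsilon r\}
\]
for a small $\epsilon>0$. On $A$ we have $\phi_i(|y|)\gtrsim\phi_i(\epsilon r)\gtrsim \phi_i(r)$, so it suffices to show $\mu_i(A)\gtrsim V_i(x,r)$.

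This volume bound is the main obstacle, and I plan to handle it as follows. The ball $B_i(x,r)$ contains a ball $B_i(z,r/4)$ whose center satisfies $|z|\ge r/2$. If $|x|\ge r/2$ take $z=x$. Otherwise take $z$ on a geodesic from $x$ of length about $r/2$ moving away from $o_i$; such a geodesic exists because $M_i$ is non-compact and complete. Then choosing $\epsilon<1/4$ gives $B_i(z,r/4)\subset A$, up to the additive constants in $|\cdot|$, which are absorbed for $r$ large; for bounded $r$ everything is comparable to $1$. Since $B_i(z,r/4)\subset B_i(x,r)\subset B_i(z,2r)$, the doubling property gives
\[
V_i(z,r/4)\gtrsim V_i(z,2r)\ge V_i(x,r),
\]
hence $\mu_i(A)\gtrsim V_i(x,r)$.

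Finally, the bounded-distance perturbations between $|y|$ on $E_i$ and $d_i(y,o_i)$ on $M_i$, and the contribution of $K_i$, are absorbed by the doubling of $\phi_i$ and by $h_i\sim1$ on $K_i$. Taking $x=o_i$ then yields \eqref{eq:V-genernal V-2}.
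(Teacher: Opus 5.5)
Your overall plan matches the paper's. You reduce to $\int_{B_i(x,r)}\phi_i^2(|y|)\,d\mu_i(y)$ and get the upper bound from $|y|\le |x|+r$ together with monotonicity and doubling of $\phi_i$. For the lower bound you split according to the size of $|x|$ relative to $r$, and exhibit a subregion of $B_i(x,r)$ of measure $\gtrsim V_i(x,r)$ on which $\phi_i(|y|)$ is large. Your handling of the bounded discrepancy between $|y|$ and $d_i(y,o_i)$, and of $h_i$ on $K_i$, is more careful than the paper's. The one step that does not work as written is the construction of $z$ in the subcase $|x|<r/2$.

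There you claim that a point $z$ at distance about $r/2$ from $x$, reached along a geodesic ``moving away from $o_i$'', satisfies $|z|\ge r/2$. Completeness and non-compactness give you a ray $\gamma$ from $x$, but along it the triangle inequality only yields $d_i(o_i,\gamma(s))\ge s-d_i(o_i,x)$. At $s=r/2$ this is $r/2-|x|$ up to constants, which is $\ll r$ when $|x|$ is close to $r/2$. Nothing forces $d_i(o_i,\cdot)$ to increase along the ray. For instance, if $x$ sits at the tip of a finger-like region of length close to $r/2$ emanating from near $o_i$, every point within distance $r/2$ of $x$ lies near $o_i$. Then $B_i(z,r/4)$ contains points with $|y|\lesssim 1$ and is not contained in $A$.

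The repair is to go farther from $x$ and use a smaller ball. Since $M_i$ is connected and unbounded, there is $z$ with $d_i(x,z)=7r/8$. Then $B_i(z,r/8)\subset B_i(x,r)\setminus B_i(x,3r/4)$, and every $y$ in it has $|y|\ge 3r/4-|x|\ge r/4$ up to additive constants. Doubling gives $V_i(z,r/8)\gtrsim V_i(z,2r)\ge V_i(x,r)$. This is the paper's own argument, which uses the annulus $B_i(x,r)\setminus B_i(x,3r/4)$ and appeals to reverse doubling; the explicit ball is what actually justifies that appeal. With this change your proof goes through.
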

\begin{proof}
     %The proof of this lemma was given in \cite[Theorem 4.8]{gri-sal02}; see also \cite[(6.6) \& (6.7)]{gri-sal09}.
     This lemma was given in \cite[(6.6) \& (6.7)]{gri-sal09}; see also \cite[Theorem 4.8]{gri-sal02}.
     Here we give a brief argument of this proof for completeness and the convenience of the reader.

    Notice that $h_i(y) \sim \phi_{i}(|y|)$ and
    $$\widetilde{V}_i(x,r) = \int_{B_i(x,r)} h^2_i(y) \, d\mu_i(y).$$
    %$$\widetilde{V}_i(x,r) = \int_{B_i(x,r)} h^2_i(y) \, d\mu_i(y),
%     %\quad \mbox{and} \quad
%     \qquad
%    h_i(y) \sim \phi_{i}(|y|).$$
%    %We first prove \eqref{eq:V-genernal V}.

 For any $y \in B_i(x,r)$,
% it holds $d(y,z) \leq d(x,z) + d(x,y) \leq d(x,z) + r$, $\forall z \in E_0$, and hence $|y| \leq |x| + r $.
 by the triangle inequality of the metric $d$, it holds
 $|y| \leq |x| + r $.
 By the doubling property of $\phi_i$, we have
\begin{align*}
    \widetilde{V}_i(x,r)
     %= \int_{B_i(x,r)} h^2_i(y) d\mu_i(y)
     \sim \int_{B_i(x,r)} \phi^2_i(|y|) \, d\mu_i(y)
    %& \leq C \phi_i^2(3|x|) V_i(x,r)
     \lesssim \phi^2_{i}(|x| + r) V_i(x,r)
    % \lesssim \phi^2_{i}(2\max\{|x|, r\}) V_i(x,r)
    \lesssim (\phi^2_{i}(|x|) + \phi^2_i(r)) V_i(x,r).
\end{align*}
One the other hand, if $|x| > r/2$, then for any $y \in B_i(x, r/4)$, it holds %$d(y,z) \geq d(x,z) - d(x,y) \geq d(x,z) - r/4$ for any $z \in E_0$, and hence
$|y| \geq |x| - r/4 \geq |x|/2$.
By the doubling properties of $\phi_i$ and $\mu_i$, %since $|x| > r/2$,
we have
\begin{align*}
    \widetilde{V}_i(x,r)
     %\geq \int_{B_i(x,\frac{r}{4})} h_i^2(y) d\mu_i(y)
    \gtrsim \int_{B_i(x, r/4)} \phi_i^2(|y|) \, d\mu_i(y)
    % \geq c \phi_i^2\left(\frac{|x|}{2}\right) V_i\left(x, \frac{r}{4}\right)
     \gtrsim \phi^2_{i}(|x|) V_i(x, r)
     \gtrsim (\phi^2_{i}(|x|) + \phi^2_i(r)) V_i(x, r).
\end{align*}
If $|x| \leq r/2$,
then for any $y \in B_i(x,r) \setminus B_i(x, 3r/4)$, it holds
%that
%$d(x,z) + d(y,z) \ge d(x,y) \ge 3r/4$ for any $z \in E_0$, and hence
$|y| \ge 3r/4 - |x| \geq r/4$.
By the doubling property of $\phi_i$ and the reverse doubling property of $\mu_i$ (see \eqref{rev-d}),
%since $|x| \leq r/2$,
we have
\begin{align*}
    \widetilde{V}_i(x,r)
     \gtrsim \int_{B_i(x,r) \setminus B_i(x, 3r/4)} \phi^2_i(|y|) \, d\mu_i(y)
     %\gtrsim \phi_i^2(r/4) (V_i(x,r) - V_i(x, 3r/4)) \\
     \gtrsim \phi_i^2(r)V_i(x,r)
     \gtrsim (\phi^2_{i}(|x|) + \phi^2_i(r)) V_i(x,r),
\end{align*}
and  hence %volume estimate
\eqref{eq:V-genernal V} follows readily.

Next, we turn to the proof of %will prove volume estimate
\eqref{eq:V-genernal V-2}.
%by the definition of $\phi_i$, we have $\phi_i(r) \geq 1$ for any $r>0$.
It holds
$
    |o_i| %= \sup_{y \in E_0} d(o_i, y)
    \leq \diam(E_0)=2,  %=2,
$
and hence
$
    \phi_i(|o_i|) \lesssim \phi_i(r)
$ for any $r>0$.
This together with \eqref{eq:V-genernal V} yields
$$\widetilde{V}_i(r) \sim (\phi^2_i(|o_i|) +\phi_i^2(r)) V_i(r) \sim \phi_i^2(r) V_i(r),$$
which gives the desired estimate  and completes the proof.
%
%By the definition of $\phi_i$, we have $\phi_i(r) \geq 1$ for any $r>0$.
%Combining this with
%$
%    |o_i| = \sup_{y \in E_0} d(o_i, y) \leq \diam(E_0) =2
%$
%%and the doubling property of $\phi_i$ (Lemma \ref{lem:doubling-phi})
%and \eqref{eq:V-genernal V}, one concludes that
%$$\widetilde{V_i}(r) \sim (\phi^2_i(|o_i|) +\phi_i^2(r)) V_i(r) \sim \phi_i^2(r) V_i(r),$$
%which finishes the proof.
%
%In the proof of \eqref{eq:V-genernal V}, we can deduce that
%if $r/2 \leq 1$, then $\widetilde{V_i}(r) \lesssim \phi^2(o_i) V_i(r)$. It together with $|o_i| \lesssim 1$ and $\phi(1) = 1 \lesssim \phi(r)$ gives us that
%$\widetilde{V_i}(r) \lesssim \phi_i^2(r) V_i(r)$. If $r/2 \geq 1$, then  $\widetilde{V_i}(r) \lesssim \phi^2(r) V_i(r)$.
%
%Therefore, $\widetilde{V_i}(r) \sim \phi^2(r) V_i(r)$ and it completes the proof.
\end{proof}

\begin{rem} \label{rem:thr}
Let $1 \leq i \leq \ell$. From Lemmas \ref{lem:phi-r-equal} and \ref{lem:V-tildeV}, we can observe the following simple facts.
  \begin{enumerate}
    \item[\rm(i)] \rm When $n_i > 2$, one has
%  \[
%    \widetilde{V}_i(x,r) \sim V_i(x,r)  \quad \mbox{and} \quad \widetilde{V}_i(r) \sim V_i(r),
%  \]
%  for any $x \in M_i$ and $r >0$.
  \[
    \widetilde{V}_i(x,r) \sim V_i(x,r),  \quad \forall x \in M_i, \, r >0.
  \]
  In particular, it holds $\widetilde{V}_i(r) \sim V_i(r)$ when $n_i >2$.
    \item[\rm(ii)]  %we have the following conclusion.
    When $r$ stays bounded and $x$ varies in a compact neighbourhood of $E_0$,
    it follows from \cite[(4.14)]{gri-sal09}  that
    \[
        V_i(x,r) \sim V(x,r) \sim V_0(r) \sim r^N
    \]
    and
    \[
        \widetilde{V}_i(x,r) \sim \widetilde{V}_i(r) \sim \widetilde{V}_0(r)  \sim r^N,
    \]
    where the positive integer $N$ is the topological dimension of the Riemannian manifold $M$.
    \item[\rm(iii)]  Since $\mu_i$ and $\phi_i$ is doubling,
   one can see that
   for any $x \in M_i$ and $r>0$,
    \[
        \widetilde{V}_i(x,2r) \leq C(\phi^2_{i}(|x|) + \phi^2_i(2r)) V_i(x,2r)
        \leq C (\phi^2_{i}(|x|) + \phi^2_i(r)) V_i(x,r) \leq C \widetilde{V}_i(x, r),
    \]
    which means
   that $\tilde \mu_i$ is a doubling measure on $\widetilde{M}_i$.
    %   {\color{blue} which means that %$\tilde{\mu}_i$ is doubling and
%    there exists a constant $\widetilde{N}_i$ such that
%    for any $x\in M_i$ and $0<r<R<\infty$, it holds
%\[
%\frac{\widetilde{V}_i(x,R)}{\widetilde{V}_i(x,r)}\le C_i\left(\frac{R}{r}\right)^{ \widetilde{N}_i}.
%\]}
%   This further implies that
    And the function $\widetilde{V}_0$ also satisfies the doubling property. %, i.e., there exists a constant $C >0$ such that for any $r>0$,
%    \[
%    \widetilde{V}_0(2r) \leq C \widetilde{V}_0(r).
%    \]
%    ``proof'': see gri-sal-2009, p.1945
%    \[
%        \widetilde{V}_0(2r) = \min\limits_{1\leq i\leq \ell} \widetilde{V}_i(2r)
%        \leq \min\limits_{1\leq i\leq \ell} C_i \widetilde{V}_i(r)
%        \leq \min\limits_{1\leq i\leq \ell} (\max\limits_{1\leq j\leq \ell} C_j \widetilde{V}_i(r))
%        \leq \max\limits_{1\leq j\leq \ell} C_j \min\limits_{1\leq i\leq \ell}  \widetilde{V}_i(r)
%        =  \max\limits_{1\leq j\leq \ell} C_j  \widetilde{V}_0(r).
%    \]
  \end{enumerate}
\end{rem}

Using the volume growth properties, the estimates of $\phi_i$, and the relation between the volume and the weighted volume, we obtain the following weighted volume growth properties which will be used repeatedly.
\begin{lem}\label{lem:tilde-V}
   Let $1\leq i \leq \ell$. %where $c_0$ is the constant in Remark \ref{rem:x-separ},
  The following statements hold true.
\begin{enumerate}
  \item[\rm(i)]
    For any $1 \le r \le R < \infty$, it holds that
    %\begin{eqnarray*}%\label{V-cond-general}
%    \frac{\widetilde{V}_{i}(R)}{\widetilde{V}_{i}(r)}
%     \sim \frac{\phi_i^2(R) V_{i}(R)}{\phi_i^2(r) V_{i}(r)}
%    \begin{cases}
%        \gtrsim \left(\frac{R}{r}\right)^{n_i} &\, \mbox{if} \, n_i>2, \\
%        \sim %\frac{R^{4-2n_i}}{r^{4-2n_i}} \left(\frac{R}{r}\right)^{n_i}=
%         \left(\frac{R}{r}\right)^{4-n_i} &\, \mbox{if} \, n_i<2, \\
%        \sim \frac{\log^2(2+R)}{\log^2(2+r)} \left(\frac{R}{r}\right)^2 &\, \mbox{if} \, n_i=2. \\
%    \end{cases}
%    \end{eqnarray*}
%   \begin{equation*}
%    \begin{dcases}
%       \frac{\widetilde{V}_{i}(R)}{\widetilde{V}_{i}(r)} \gtrsim \left(\frac{R}{r}\right)^{n_i}, &\, \mbox{if} \, n_i>2, \\
%       \frac{\widetilde{V}_{i}(R)}{\widetilde{V}_{i}(r)} \sim \frac{\log^2(2+R)}{\log^2(2+r)} \left(\frac{R}{r}\right)^2, &\, \mbox{if} \, n_i=2. \\
%       \frac{\widetilde{V}_{i}(R)}{\widetilde{V}_{i}(r)}\sim %\frac{R^{4-2n_i}}{r^{4-2n_i}} \left(\frac{R}{r}\right)^{n_i}=
%         \left(\frac{R}{r}\right)^{4-n_i}, &\, \mbox{if} \, n_i<2.
%    \end{dcases}
%    \end{equation*}
    \[
        \frac{\widetilde{V}_{i}(R)}{\widetilde{V}_{i}(r)} \gtrsim \left(\frac{R}{r}\right)^{n_i},
        \quad \mbox{if} \, n_i>2,
    \]
    and
    \[
       \frac{\widetilde{V}_{i}(R)}{\widetilde{V}_{i}(r)} \sim
        \begin{dcases}
        \frac{\log^2(2+R)}{\log^2(2+r)} \left(\frac{R}{r}\right)^2,
        & \, \mbox{if} \, n_i=2, \\
        %\frac{R^{4-2n_i}}{r^{4-2n_i}} \left(\frac{R}{r}\right)^{n_i}=
         \left(\frac{R}{r}\right)^{4-n_i},
         & \, \mbox{if} \, n_i<2.
        \end{dcases}
    \]
%    \[
%       \frac{\widetilde{V}_{i}(R)}{\widetilde{V}_{i}(r)} \sim
%        \frac{\log^2(2+R)}{\log^2(2+r)} \left(\frac{R}{r}\right)^2,
%        \mbox{if} \, n_i=2,
%    \]
%    and\[
%        \frac{\widetilde{V}_{i}(R)}{\widetilde{V}_{i}(r)} \sim
%         \left(\frac{R}{r}\right)^{4-n_i},
%          \mbox{if} \, n_i<2.
%    \]
  %   \begin{equation*}
%     \frac{\widetilde{V}_{i}(R)}{\widetilde{V}_{i}(r)}
%    \begin{cases}
%       \gtrsim \left(\frac{R}{r}\right)^{n_i}, &\, \mbox{if} \, n_i>2, \\
%       \sim \frac{\log^2(2+R)}{\log^2(2+r)} \left(\frac{R}{r}\right)^2, &\, \mbox{if} \, n_i=2. \\
%       \sim %\frac{R^{4-2n_i}}{r^{4-2n_i}} \left(\frac{R}{r}\right)^{n_i}=
%         \left(\frac{R}{r}\right)^{4-n_i}, &\, \mbox{if} \, n_i<2.
%    \end{cases}
%    \end{equation*}
  \item[\rm(ii)]
  For any $1 \le r \le R < \infty$, it holds that
    \begin{equation*}%\label{eq:R2/measure-R}
        %\frac{\widetilde{V}_{i}(R)}{\widetilde{V}_{i}(r)} \gtrsim \left(\frac{R}{r}\right)^2.
        \frac{R^2}{\widetilde{V}_{i}(R)} \lesssim \frac{r^2}{\widetilde{V}_{i}(r)}.
    \end{equation*}
  In particular, %when $r=1$,
  one has $R^2 / \widetilde{V}_i(R) \lesssim 1$ for all $R \ge 1$.
\end{enumerate}
\end{lem}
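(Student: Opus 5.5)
The plan is to combine \eqref{eq:V-genernal V-2} of Lemma \ref{lem:V-tildeV}, which gives $\widetilde{V}_i(r) \sim \phi_i^2(r) V_i(r)$, with the volume ratios of Lemma \ref{lem:volume-growth} (i) and the asymptotics of $\phi_i$ in Lemma \ref{lem:phi-r-equal}. For $1 \le r \le R$ this yields
\[
\frac{\widetilde{V}_i(R)}{\widetilde{V}_i(r)} \sim \frac{\phi_i^2(R)}{\phi_i^2(r)} \cdot \frac{V_i(R)}{V_i(r)},
\]
and claim (i) then follows by treating the three ranges of $n_i$ separately.

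\begin{enumerate}
\item[\rm(a)] If $n_i > 2$, then $\phi_i \sim 1$ on $[1,\infty)$, so the ratio is $\sim V_i(R)/V_i(r) \gtrsim (R/r)^{n_i}$. Alternatively, one can invoke Remark \ref{rem:thr} (i) directly.
\item[\rm(b)] If $n_i = 2$, then $\phi_i(r) \sim \log(2+r)$ and $V_i(R)/V_i(r) \sim (R/r)^2$. These give the stated two-sided equivalence $\frac{\log^2(2+R)}{\log^2(2+r)}\left(\frac{R}{r}\right)^2$.
\item[\rm(c)] If $n_i < 2$, then $\phi_i(r) \sim r^{2-n_i}$, so the ratio is $\sim (R/r)^{4-2n_i} (R/r)^{n_i} = (R/r)^{4-n_i}$.
\end{enumerate}

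Here one must check that Lemma \ref{lem:phi-r-equal} is applied only for $r \ge 1$, which holds by hypothesis. The implicit constants depend only on $i$.

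For claim (ii), we show in each case that $\widetilde{V}_i(R)/\widetilde{V}_i(r) \gtrsim (R/r)^2$, which is equivalent to $R^2/\widetilde{V}_i(R) \lesssim r^2/\widetilde{V}_i(r)$.

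\begin{enumerate}
\item[\rm(a)] If $n_i > 2$, then $(R/r)^{n_i} \ge (R/r)^2$ since $R/r \ge 1$.
\item[\rm(b)] If $n_i = 2$, the logarithmic factor satisfies $\log^2(2+R)/\log^2(2+r) \ge 1$ because $\log$ is increasing.
\item[\rm(c)] If $n_i < 2$, then $4 - n_i > 2$, so $(R/r)^{4-n_i} \ge (R/r)^2$.
\end{enumerate}

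The particular case follows by taking $r = 1$. By the normalization $1 \le V_i(1) \le 10$ and $\phi_i(1) = 1$, we have $\widetilde{V}_i(1) \sim 1$, hence $R^2/\widetilde{V}_i(R) \lesssim 1$.

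No step is a genuine obstacle. The only point needing care is the boundary range $r \in [1,2)$ in the $n_i = 2$ case, where $\log(2+r)$ is merely bounded above and below. Lemma \ref{lem:phi-r-equal} already accounts for this by stating its equivalences for all $r \ge 1$ with uniform constants.
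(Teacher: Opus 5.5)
Your proposal is correct and follows the paper's own proof. The paper likewise combines $\widetilde{V}_i(r)\sim\phi_i^2(r)V_i(r)$ from Lemma \ref{lem:V-tildeV} with Lemmas \ref{lem:volume-growth} and \ref{lem:phi-r-equal}; your case-by-case treatment of $n_i$ for (i) and (ii), and the normalization argument for $\widetilde{V}_i(1)\sim 1$, just write out the details the paper leaves implicit.
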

\begin{proof}
  %  Using Assumption \ref{asp-1}, Remark \ref{rem:change-r}, Lemma \ref{lem:phi-r-equal} and Lemma \ref{lem:V-tildeV}, we can get that
%    \begin{eqnarray*}
%    \frac{\widetilde{V}_{i}(R)}{\widetilde{V}_{i}(r)}
%     \sim \frac{\phi_i^2(R) V_{i}(R)}{\phi_i^2(r) V_{i}(r)}
%    \begin{cases}
%        \gtrsim \left(\frac{R}{r}\right)^{n_i} &\, \mbox{if} \, n_i>2, \\
%        \sim \frac{R^{4-2n_i}}{r^{4-2n_i}} \left(\frac{R}{r}\right)^{n_i}=
%         \left(\frac{R}{r}\right)^{4-n_i} &\, \mbox{if} \, n_i<2, \\
%        \sim \frac{\log^2(2+R)}{\log^2(2+r)} \left(\frac{R}{r}\right)^2 &\, \mbox{if} \, n_i=2. \\
%    \end{cases}
%\end{eqnarray*}
%    Therefore, we have
%    \begin{align*}
%    \frac{\widetilde{V}_{i}(R)}{\widetilde{V}_{i}(r)}
%    & \gtrsim\begin{cases}
%         \left(\frac{R}{r}\right)^{n_i} &\, \mbox{if} \, n_i>2, \\
%         \left(\frac{R}{r}\right)^{4-n_i} &\, \mbox{if} \, n_i<2, \\
%         \left(\frac{R}{r}\right)^2 &\, \mbox{if} \, n_i=2,
%    \end{cases}\\
%    &\gtrsim \left(\frac{R}{r}\right)^2,
%    \end{align*}
%    which deduces that
%   \[
%        \frac{\widetilde{V}_i(R)}{\widetilde{V}_i(1)} \gtrsim R^2
%   \]
%    and
%    $$\frac{R^2}{\widetilde{V}_i(R)} \lesssim 1.$$
%    This completes the proof.
%    \\--------------------\\
    It follows from Lemma \ref{lem:V-tildeV} that
    \[
    \frac{\widetilde{V}_{i}(R)}{\widetilde{V}_{i}(r)}
     \sim \frac{\phi_i^2(R) V_{i}(R)}{\phi_i^2(r) V_{i}(r)},
     \]
     which together with Lemmas \ref{lem:volume-growth} and \ref{lem:phi-r-equal} implies that (i) and (ii) hold.
\end{proof}

\begin{lem}\label{V0-general-equal}
%For any $r> \min\{c_0,1 \}$, we have
For any $r \ge 1$, we have
$$\widetilde{V}_0(r) \sim r^2\log^2(2+r).$$
\end{lem}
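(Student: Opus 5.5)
Since $\widetilde{V}_0(r)=\min_{1\le i\le \ell}\widetilde{V}_i(r)$, I will compute $\widetilde{V}_i(r)$ for $r\ge 1$ in each of the three regimes $n_i>2$, $n_i=2$, $n_i<2$, and then compare the sizes. The starting point is \eqref{eq:V-genernal V-2} in Lemma \ref{lem:V-tildeV}, namely $\widetilde{V}_i(r)\sim \phi_i^2(r)V_i(r)$, combined with Lemma \ref{lem:phi-r-equal} for $\phi_i$.

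First I need the size of $V_i(r)$ itself. Applying Lemma \ref{lem:volume-growth} (i) with $r$ replaced by $1$ and $R$ by $r$, together with the normalization $1\le V_i(1)\le 10$, gives $V_i(r)\gtrsim r^{n_i}$ when $n_i>2$ and $V_i(r)\sim r^{n_i}$ when $n_i\le 2$. Inserting these into \eqref{eq:V-genernal V-2}:
\begin{itemize}
\item if $n_i>2$, then $\phi_i\sim 1$ and $\widetilde{V}_i(r)\sim V_i(r)\gtrsim r^{n_i}$;
\item if $n_i=2$, then $\widetilde{V}_i(r)\sim r^2\log^2(2+r)$;
\item if $n_i<2$, then $\widetilde{V}_i(r)\sim r^{2(2-n_i)}r^{n_i}=r^{4-n_i}$.
\end{itemize}
By the standing hypothesis of Theorem \ref{main-result-parabolic}, some index has $n_i=2$. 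Hence $\widetilde{V}_0(r)\lesssim r^2\log^2(2+r)$ for all $r\ge 1$.

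For the reverse inequality I must check that every $\widetilde{V}_j(r)$ is $\gtrsim r^2\log^2(2+r)$ on $r\ge1$. In the case $n_j=2$ this is exact. In the case $n_j<2$ we have $4-n_j>2$, so $r^{4-n_j}=r^2\cdot r^{2-n_j}\gtrsim r^2\log^2(2+r)$, since a positive power beats $\log^2$ for $r\ge1$; the implied constant depends only on $n_j$. In the case $n_j>2$ similarly $r^{n_j}\gtrsim r^2\log^2(2+r)$. Taking the minimum over the finitely many $j$ then gives $\widetilde{V}_0(r)\gtrsim r^2\log^2(2+r)$, and with the upper bound this proves the lemma.

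There is no real obstacle here. The only care needed is to use the lower bound $V_i(r)\gtrsim r^{n_i}$ from Lemma \ref{lem:volume-growth} with constants uniform in $r\ge1$, and to remember that the existence of an end with $n_i=2$ is exactly what makes the logarithmic term, rather than a pure power, the minimum.
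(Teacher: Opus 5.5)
Your proposal is correct and follows essentially the paper's argument. The paper also computes $\widetilde{V}_i(r)$ for $r\ge 1$ in the three regimes $n_i>2$, $n_i=2$, $n_i<2$, obtaining $r^{n_i}$ as a lower bound, $r^2\log^2(2+r)$, and $r^{4-n_i}$ respectively. It cites Lemma \ref{lem:tilde-V} for this, which packages exactly your combination of $\widetilde{V}_i\sim\phi_i^2V_i$, Lemma \ref{lem:phi-r-equal} and Lemma \ref{lem:volume-growth}. It then takes the minimum, using the existence of an end with $n_i=2$, just as you do.
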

%\hl{if it holds, we have}
%
%    for any $r>0$,
%    $$\widetilde{V}_i(r) \gtrsim r^2\log^2(2+r)$$
%    and when $r>1$, the following equivalent characterization of $\phi(r)$ is given by $r > 1$,
%   \begin{eqnarray*}
%      V_i(r)
%       \sim \frac{1}{\phi^2(r)} \widetilde{V}_i(r)
%       \gtrsim
%   \begin{cases}
%         r^2\log^2(2+r) &\, \mbox{if} \, n_i>2, \\
%         r^2 &\, \mbox{if} \, n_i=2 \\
%         r^{2n_i-4}r^2\log^2(2+r)=r^{2n_i-2}\log^2(2+r)  &\, \mbox{if} \, 1\leq n_i<2, \mbox{then} \, 0\leq 2n_i-2 <2 \\
%    \end{cases}
%   \end{eqnarray*}
%   Since $n_i<2$, we have $n_i > 2n_i-2$. Therefore, above is reasonable.
%   \\When $r\leq 1$, $\phi(r) \sim 1$, therefore $$V_i(r) \sim \widetilde{V}_i(r) \sim r^2.$$
\begin{proof}
%When $r$ is separated from $0$, i.e., $r \gtrsim 1$,
By Lemma \ref{lem:tilde-V}, one has for any $r \ge 1$,
%\begin{eqnarray*}
%  \begin{cases}
%     \widetilde{V}_i(r) \gtrsim r^{n_i}, &\, \mbox{if} \, n_i>2, \\
%     \widetilde{V}_i(r) \sim  r^2 \log^2(2+r), &\, \mbox{if} \, n_i=2, \\
%     \widetilde{V}_i(r) \sim r^{4-n_i}, &\, \mbox{if} \, n_i<2,
%  \end{cases}
%\end{eqnarray*}
\[
     \widetilde{V}_i(r) \gtrsim r^{n_i}, \quad \mbox{if} \, n_i>2,
\]
and
\[
    \widetilde{V}_i(r) \sim
    \begin{cases}
       r^2 \log^2(2+r), & \, \mbox{if} \, n_i=2, \\
       r^{4-n_i}, & \, \mbox{if} \, n_i<2,
  \end{cases}
\]
which yields that $\widetilde{V}_0(r) \sim r^2 \log^2(2+r)$.
\end{proof}

Clearly, above Remark \ref{rem:thr} (ii) and Lemma \ref{V0-general-equal} lead to the following conclusion.

%\begin{cor}
%It can be seen that $\widetilde{V}_0$ is doubling, i.e., there exists a constant $C >0$, such that for any $r>0$
%\[
%    \widetilde{V}_0(2r) \leq C \widetilde{V}_0(r).
%\]
%{\color{blue} Doubling property can be seen by $\widetilde{V}_i(r)$ is doubling for any $i$; see \cite[P. 1945]{gri-sal09}.}
%\end{cor}
%\begin{proof}
%    When $1\leq r $, by Lemma \ref{V0-general-equal}, we get the desired estimate.
%
%    When $r  < 1$, it holds $2r < 2$. By Lemma \ref{rem:gs-4.14}, we get the desired estimate.
%\end{proof}

\begin{cor}\label{cor:vi<v0}
  Let $1 \leq i \leq \ell$. If $n_i \leq 2$, then we have $V_i(r) \lesssim \widetilde{V}_0(r)$ for any $r>0$.
\end{cor}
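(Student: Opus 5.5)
The plan is to split into the ranges $r\ge 1$ and $0<r<1$, and in each range compare $V_i(r)$ directly with the explicit size of $\widetilde{V}_0(r)$ already obtained.

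\emph{Large radii, $r\ge 1$.} Here Lemma \ref{V0-general-equal} gives $\widetilde{V}_0(r)\sim r^2\log^2(2+r)$, so it suffices to show $V_i(r)\lesssim r^2\log^2(2+r)$ whenever $n_i\le 2$. Applying Lemma \ref{lem:volume-growth} (i) with the pair $1\le r$ gives
\[
V_i(r)\sim V_i(1)\,r^{n_i}\lesssim r^{n_i}\le r^2\lesssim r^2\log^2(2+r),
\]
where we use the normalization $1\le V_i(1)\le 10$ and $n_i\le 2$. Alternatively, Lemma \ref{lem:V-tildeV} gives $V_i(r)\sim \widetilde{V}_i(r)/\phi_i^2(r)$. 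Since $\phi_i\ge 1$ and $\widetilde{V}_i(r)\sim r^2\log^2(2+r)$ or $r^{4-n_i}$, one can also compare with $\widetilde{V}_0$ this way, but the first route is more direct.

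\emph{Small radii, $0<r<1$.} First note that $V_i(r)\le \widetilde{V}_i(r)$ up to constants: by \eqref{eq:V-genernal V-2}, $\widetilde{V}_i(r)\sim\phi_i^2(r)V_i(r)$ with $\phi_i(r)=1$ by Lemma \ref{lem:phi-r-equal}, so in fact $\widetilde{V}_i(r)\sim V_i(r)$. By Remark \ref{rem:thr} (ii), for bounded $r$ and centre $o_i$ near $E_0$ we have $V_i(r)\sim r^N\sim\widetilde{V}_0(r)$, which gives the claim directly.

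If one prefers not to invoke Remark \ref{rem:thr} (ii), one can argue as follows. Because $\phi_j(r)=1$ for every $j$ when $r<1$, we get $\widetilde{V}_j(r)\sim V_j(r)$ for all $j$, hence $\widetilde{V}_0(r)\sim V_0(r)=\min_j V_j(r)$. It then remains to check $V_i(r)\lesssim V_j(r)$ for $r<1$ and all $j$. This holds because all the $o_j$ lie in the compact set $E_0$, where the local geometry is uniform.

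The only delicate point is this uniform comparability at small scales, and it is exactly what Remark \ref{rem:thr} (ii), taken from \cite[(4.14)]{gri-sal09}, provides. The large-$r$ case is just the elementary inequality $r^{n_i}\le r^2\log^2(2+r)$ for $r\ge 1$.
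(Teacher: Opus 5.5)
Your proof is correct and follows the paper's (one-line) argument. For $r\ge 1$ you combine Lemma~\ref{V0-general-equal} with the growth bound $V_i(r)\lesssim r^{n_i}\le r^2$ from Lemma~\ref{lem:volume-growth}~(i). For $0<r<1$ you use the small-scale comparability $V_i(r)\sim r^N\sim\widetilde{V}_0(r)$ of Remark~\ref{rem:thr}~(ii), which is exactly how the paper deduces the corollary.
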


%Besides, Lemmas \ref{lem:tilde-V} and \ref{V0-general-equal} allows us to obtain the following corollary.
%We need the following inequality between $\widetilde{V}_0$ and $\widetilde{V}_{i}$, which is a corollary of Lemmas \ref{lem:tilde-V} and \ref{V0-general-equal}.
 Lemmas \ref{lem:tilde-V} and \ref{V0-general-equal} allows us to conclude that the function $\widetilde{V}_0$ grows more slowly than all functions $\widetilde{V}_i$.
%
%Using Lemmas \ref{lem:tilde-V} and \ref{V0-general-equal}, it follows that the function $\widetilde{V}_0$ grows slower than all functions $\widetilde{V}_i$.
\begin{cor}\label{lem:4-terms}
    Let $1 \leq i \leq \ell$. For any  $0 < r \le R<\infty$, we have
    \begin{equation} \label{eq:4-terms}
         \frac{\widetilde{V}_0(R)}{\widetilde{V}_0(r)} \lesssim \frac{\widetilde{V}_i(R)}{\widetilde{V}_i(r)}.
    \end{equation}
\end{cor}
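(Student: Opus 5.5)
We would prove \eqref{eq:4-terms} by cases on the sizes of $r$ and $R$ relative to $1$, using Lemma \ref{V0-general-equal} for $\widetilde{V}_0$ when $r\ge 1$ and Remark \ref{rem:thr} (ii) in the small range.

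\emph{Case $1\le r\le R$.} By Lemma \ref{V0-general-equal},
\[
\frac{\widetilde{V}_0(R)}{\widetilde{V}_0(r)}\sim \left(\frac{R}{r}\right)^2\frac{\log^2(2+R)}{\log^2(2+r)}.
\]
We compare this with Lemma \ref{lem:tilde-V} (i) according to $n_i$.
\begin{itemize}
\item If $n_i=2$, the two ratios are comparable, so there is nothing to prove.
\item If $n_i<2$, then $4-n_i>2$, and the claim reduces to
\[
\frac{\log^2(2+R)}{\log^2(2+r)}\lesssim \left(\frac{R}{r}\right)^{2-n_i}.
\]
This holds because $t\mapsto \log(2+t)/t^{\varepsilon}$ is almost decreasing on $[1,\infty)$ for every $\varepsilon>0$; take $\varepsilon=(2-n_i)/2$.
\item If $n_i>2$, we use $\widetilde{V}_i(R)/\widetilde{V}_i(r)\gtrsim (R/r)^{n_i}$, and the same logarithm-versus-power comparison with $\varepsilon=(n_i-2)/2$ gives the claim.
\end{itemize}

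\emph{Case $0<r\le R\le 1$.} By Remark \ref{rem:thr} (ii), both $\widetilde{V}_0$ and $\widetilde{V}_i$ are $\sim \rho^N$ for $\rho\le 1$. Both ratios are therefore $\sim (R/r)^N$, and the claim holds.

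\emph{Case $0<r<1\le R$.} Split through the point $1$:
\[
\frac{\widetilde{V}_0(R)}{\widetilde{V}_0(r)}=\frac{\widetilde{V}_0(R)}{\widetilde{V}_0(1)}\cdot\frac{\widetilde{V}_0(1)}{\widetilde{V}_0(r)},
\]
and split the ratio for $\widetilde{V}_i$ the same way. The two factors are then controlled by the first two cases with the pairs $(1,R)$ and $(r,1)$, and multiplying the two bounds gives the claim.

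The main obstacle is making the logarithmic comparison uniform in $r$ and $R$. This is the step where $n_i\neq 2$ is used: the strict gap in the exponent, $|n_i-2|>0$, is what absorbs the $\log^2$ factor. One also has to check that the constants in Remark \ref{rem:thr} (ii) are uniform over $\rho\in(0,1]$, which holds since that remark applies to bounded radii.
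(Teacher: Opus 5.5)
Your proof is correct, and its core idea matches the paper's: for radii at least a fixed threshold, you compare the $\rho^2\log^2(2+\rho)$ growth of $\widetilde{V}_0$ (Lemma \ref{V0-general-equal}) with the growth of $\widetilde{V}_i$ (Lemma \ref{lem:tilde-V}), and the gap $|n_i-2|>0$ absorbs the logarithms. The differences are in how the cases are organized.

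The paper splits at $m_0=\max\{\exp(2/|n_i-2|): n_i\neq 2\}$ rather than at $1$. That threshold is chosen so that $g(t)=t^{|n_i-2|}/\log^2(2+t)$ is genuinely increasing on $(m_0,\infty)$, which a one-line derivative check confirms. You instead need $\log(2+t)/t^{\varepsilon}$ to be almost decreasing on all of $[1,\infty)$. That costs a small extra continuity argument on the initial segment $[1,e^{1/\varepsilon}]$ and gives a constant rather than exact monotonicity; the two are equivalent in effect.

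The more noticeable difference is the range $r\le m_0$. There the paper does not need your two small-scale cases. It combines $\widetilde{V}_0(r)\sim\widetilde{V}_i(r)$ for bounded $r$ (Remark \ref{rem:thr} (ii)) with the trivial bound $\widetilde{V}_0(R)=\min_k\widetilde{V}_k(R)\le\widetilde{V}_i(R)$, and this handles every $R\ge r$ at once. It therefore avoids both the $\rho^N$ asymptotics and your factorization through $1$ in the mixed case $r<1\le R$. Your factorization is valid, but the min-bound is the shorter route.
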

\begin{proof}
  Set $$m_0 = \max\lf\{\exp \frac{2}{|n_i-2|}: 1\leq i \leq \ell, n_i \neq 2\r\}.$$

  When $r \leq m_0$, it follows from Remark \ref{rem:thr} (ii) that
  $ \widetilde{V}_0(r) \sim \widetilde{V}_i(r),$ which together with the fact
  $
    \widetilde{V}_0(R) \le \widetilde{V}_i(R)
  $ yields \eqref{eq:4-terms} holds.

  When $r \ge m_0$, %it follows from Lemmas \ref{lem:tilde-V} and \ref{V0-general-equal} that/ one has via xxx that/ using xxx, one finds
  by Lemmas \ref{lem:tilde-V} and \ref{V0-general-equal}, we have
%  \[
%    \frac{\widetilde{V}_0(R)}{\widetilde{V}_0(r)} \sim \frac{R^2\log^2(2+R)}{r^2\log^2(2+r)}
%    , \quad
%    \quad
%    \frac{\widetilde{V}_{i}(R)}{\widetilde{V}_{i}(r)}
%    \gtrsim
%    \begin{dcases}
%          \frac{R^2\log^2(2+R)}{r^2\log^2(2+r)}, &\, \mbox{if} \, n_i=2, \\
%             \left(\frac{R}{r}\right)^{2+|n_i-2|}, &\, \mbox{if} \, n_i \neq 2.
%    \end{dcases}
%  \]
  \begin{equation} \label{eq:vRvr}
    \frac{\widetilde{V}_{i}(R)}{\widetilde{V}_{i}(r)}
    \gtrsim
    \begin{dcases}
          \frac{R^2\log^2(2+R)}{r^2\log^2(2+r)}, &\, \mbox{if} \, n_i=2, \\
             \left(\frac{R}{r}\right)^{2+|n_i-2|}, &\, \mbox{if} \, n_i \neq 2,
    \end{dcases}
  \end{equation}
  and
  \begin{equation} \label{eq:v0Rv0r}
    \frac{\widetilde{V}_0(R)}{\widetilde{V}_0(r)} \lesssim  \frac{R^2\log^2(2+R)}{r^2\log^2(2+r)}.
  \end{equation}
  When $n_i=2$, it follows from \eqref{eq:vRvr} and \eqref{eq:v0Rv0r} that
  \[
    \frac{\widetilde{V}_0(R)}{\widetilde{V}_0(r)}
    \lesssim  \frac{R^2\log^2(2+R)}{r^2\log^2(2+r)}
    \lesssim \frac{\widetilde{V}_{i}(R)}{\widetilde{V}_{i}(r)}.
  \]
  %which implies that \eqref{eq:4-terms} holds.
  Otherwise $n_i \neq 2$,
  by the simple fact that
  the function
%  $$g: (0, \infty) \mapsto \frac{t^{|n_i-2|}}{\log^2(2+t)}$$
  $$g(t) := \frac{t^{|n_i-2|}}{\log^2(2+t)}$$
  is increasing on $(m_0, \infty)$,
  one has
  \[
    \frac{r^{|n_i-2|}}{\log^2(2+r)}
    \leq \frac{R^{|n_i-2|}}{\log^2(2+R)},
  \]
  which together with \eqref{eq:vRvr} and \eqref{eq:v0Rv0r} implies that
  \[
    \frac{\widetilde{V}_0(R)}{\widetilde{V}_0(r)}
    \lesssim  \frac{R^2\log^2(2+R)}{r^2\log^2(2+r)}
    \lesssim  \left(\frac{R}{r}\right)^{2+|n_i-2|}
    \lesssim \frac{\widetilde{V}_{i}(R)}{\widetilde{V}_{i}(r)}.
  \]
  This completes the proof.
\end{proof}

In our discussion we will also need the following result.
\begin{lem} \label{lem:phi/v-e}
Let $1 \leq i \leq \ell$ and $\az >0$.  It holds that for any $r >0 $ and $s \ge 1$,
\begin{equation*} %\label{eq:phi/v-e}
   %     \frac{\phi_{i}(r)}{\widetilde{V}_{i}(t)} e^{-c\frac{r^\az}{t^\az}}
%        \lesssim_{\az} \frac{1}{t^2}.
%        \qquad
        \frac{\phi_{i}(r)}{\widetilde{V}_{i}(s)} \exp\lf\{ -c \lf(\frac{r}{s}\r)^{\az}\r\}
        \lesssim_{\az} \frac{1}{s^2}.
\end{equation*}
\end{lem}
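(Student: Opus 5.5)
Since $s \ge 1$, Lemma \ref{lem:tilde-V} (ii) gives $s^2/\widetilde{V}_i(s) \lesssim 1$. It therefore suffices to show
$$
\frac{\phi_i(r)}{\widetilde{V}_i(s)}\, e^{-c(r/s)^{\az}} \lesssim_{\az} \frac{1}{s^2}.
$$
We rewrite the left-hand side as
$$
\frac{\phi_i(r)}{\phi_i(s)}\cdot\frac{\phi_i(s)}{\widetilde{V}_i(s)}\cdot e^{-c(r/s)^{\az}},
$$
and by \eqref{eq:V-genernal V-2}, $\phi_i(s)/\widetilde{V}_i(s) \sim 1/(\phi_i(s)V_i(s))$. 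The plan is to treat two pieces separately: the ratio $\phi_i(r)/\phi_i(s)$, which the exponential will absorb, and the quantity $1/(\phi_i(s)V_i(s))$, which we show is $\lesssim s^{-2}$.

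\textbf{Step 1: the ratio $\phi_i(r)/\phi_i(s)$.} If $r \le s$, then since $\phi_i$ is nondecreasing (its integrand is positive), $\phi_i(r)/\phi_i(s) \le 1$. If $r > s \ge 1$, Lemma \ref{lem:phi-r-equal} gives the following in each case.
\begin{itemize}
\item[$\circ$] If $n_i > 2$, the ratio is $\lesssim 1$.
\item[$\circ$] If $n_i < 2$, the ratio is $\sim (r/s)^{2-n_i}$.
\item[$\circ$] If $n_i = 2$, the ratio is $\log(2+r)/\log(2+s) \lesssim 1+\log(r/s) \lesssim r/s$, using $\log(2+r) \le \log(2+s) + \log(r/s)$ for $r \ge s$.
\end{itemize}
In every case $\phi_i(r)/\phi_i(s) \lesssim (1+r/s)^{2}$. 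Since $x^{\beta}e^{-cx^{\az}} \lesssim_{\az,\beta} 1$ for $x > 0$, this polynomial growth is absorbed by $e^{-c(r/s)^\az}$.

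\textbf{Step 2: the bound $\phi_i(s)V_i(s) \gtrsim s^2$ for $s \ge 1$.} Equivalently, we need $\widetilde{V}_i(s)/\phi_i(s) \gtrsim s^2$. We check each case using Lemmas \ref{lem:volume-growth} (i) and \ref{lem:phi-r-equal}, together with $V_i(1) \ge 1$.
\begin{itemize}
\item[$\circ$] If $n_i > 2$, then $V_i(s) \gtrsim s^{n_i} \ge s^2$.
\item[$\circ$] If $n_i = 2$, then $\phi_i(s)V_i(s) \sim s^2\log(2+s) \gtrsim s^2$.
\item[$\circ$] If $n_i < 2$, then $\phi_i(s)V_i(s) \sim s^{2-n_i}s^{n_i} = s^2$.
\end{itemize}
This is the main point to check. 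The crude bound $1/\widetilde{V}_i(s) \lesssim s^{-2}$ would not be enough once the factor $\phi_i$ is present, so one has to use that exactly one factor of $\phi_i(s)$ cancels against $\widetilde{V}_i \sim \phi_i^2 V_i$.

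Combining Steps 1 and 2 gives
$$
\frac{\phi_i(r)}{\widetilde{V}_i(s)}\, e^{-c(r/s)^{\az}} \lesssim (1+r/s)^2 e^{-c(r/s)^\az}\, s^{-2} \lesssim_{\az} s^{-2},
$$
which is the claim.
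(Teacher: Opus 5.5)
Your proof is correct and follows essentially the paper's argument: both absorb the polynomial growth of $\phi_i(r)/\phi_i(s)$ into the exponential and then use $\widetilde V_i(s)\sim\phi_i^2(s)V_i(s)$ for a case check in $n_i$, and the paper's remaining factor $\widetilde V_i(s)^{-1/2}V_i(s)^{-1/2}$ is the same quantity as your $1/(\phi_i(s)V_i(s))$. Your opening sentence invoking Lemma \ref{lem:tilde-V} (ii) is never used and can be dropped.
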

\begin{proof}
It follows from Lemma \ref{lem:V-tildeV} that
%\[
%    \frac{\phi_{i}(|x|)}{\sqrt{\widetilde{V}_{i}(\sqrt t)}} e^{ -c\frac{|x|^2}{t}}
%    \leq \frac{\phi_{i}(|x|)}{\phi_i(\sqrt t)} \frac{\phi_i(\sqrt t)}{\sqrt{\widetilde{V}_{i}(\sqrt t)}} e^{ -c\frac{|x|^2}{t}}
%    \lesssim \frac{1}{\sqrt{V_i(\sqrt t)}}.
%\]
\[
    \frac{\phi_{i}(r)}{\widetilde{V}_{i}(s)^{1/2}} \exp\lf\{ -c\lf(\frac{r}{s}\r)^\az\r\}
    = \frac{\phi_{i}(r)}{\phi_i(s)} \frac{\phi_i(s)}{\widetilde{V}_{i}(s)^{1/2}} \exp\lf\{ -c\lf(\frac{r}{s}\r)^\az\r\}
    \lesssim_\az \frac{1}{V_i(s)^{1/2}},
\]
%by the trivial inequality $r^{\alpha}e^{-r} \lesssim_\az 1$ for any $\alpha >0$ and all $r>0$,
%and
by the fact that $\phi_{i}(r)/\phi_i(s)$ has at most polynomial growth w.r.t. $r/s$ from Lemma \ref{lem:phi-r-equal}.
Then using Lemmas \ref{lem:volume-growth} and \ref{lem:tilde-V}, it holds
\begin{eqnarray*}
    \frac{\phi_{i}(r)}{\widetilde{V}_{i}(s)} \exp\lf\{ -c\lf(\frac{r}{s}\r)^\az\r\}
     \lesssim_\az \frac{1}{\widetilde{V}_i(s)^{1/2}} \frac{1}{V_i(s)^{1/2}}
  %  \lesssim
%        \begin{cases}
%        \frac{1}{t^{n_k/4} t^{n_k/4}},  &\, \mbox{if} \, n_k >2, \\
%          \frac{1}{\log(2 +\sqrt t) t^{1/2}  t^{1/2}}, &\, \mbox{if} \, n_k =2, \\
%        \frac{1}{t^{1-n_k/4} t^{n_k/4} }, &\, \mbox{if} \, n_k <2,
%        \end{cases}
     \lesssim_\az
        \begin{dcases}
        \frac{1}{s^{n_i}},  &\, \mbox{if} \, n_i >2, \\
          \frac{1}{s^2\log(2 +s) }, &\, \mbox{if} \, n_i =2, \\
        \frac{1}{s^2}, &\, \mbox{if} \, n_i <2,
        \end{dcases}
    %&& \lesssim \frac{1}{t},
\end{eqnarray*}
which means that
\[
    \frac{\phi_{i}(r)}{\widetilde{V}_{i}(s)} \exp\lf\{ -c\lf(\frac{r}{s}\r)^\az\r\}
    \lesssim_\az \frac{1}{s^2}.
\]
This completes the proof.
\end{proof}

\subsection{Estimates on the auxiliary function $\widetilde{H}$} \hskip\parindent
Recall that
$$\widetilde{H}(x,t)=\min\left\{1, \, \frac{|x|^2}{\widetilde{V}_{i_x}(|x|)}+\left(\int_{|x|^2}^t\frac{\,d s}{\widetilde{V}_{i_x}(\sqrt s)}\right)_+\right\}.$$
We have the following estimates for $\widetilde{H}(x,t)$.
\begin{lem} \label{H-equal-general}
%By the definition of $\widetilde{H}(x,t)$, it holds
%\[
%    \widetilde{H}(x,t) \leq 1.
%\]
%More precisely,
Let $1\leq i \le \ell$. For any $x \in E_i$ and $t>0$, the following statements hold.
% Notice that when $x \in M_i$, the $i_x$ in $\widetilde{H}(x,t)$ may not be $i$
\begin{itemize}
  \item [(i)] If $n_{i} \neq 2$, then
        $$
        \widetilde{H}(x,t)
        \sim \frac{|x|^2}{\widetilde{V}_{i}(|x|)}
        \sim
        \begin{dcases}
            \frac{|x|^2}{{V}_{i}(|x|)}, & \, \mbox{if}\, n_{i} > 2,\\%x, y \in E_0\\
            |x|^{n_i -2} , & \, \mbox{if}\, n_{i} < 2.%x, y \in E_i , \, 1\leq i \leq \ell.
        \end{dcases}
        $$
  %\item [(i)] If $n_{i}>2$, then
%        $$\widetilde{H}(x,t) \sim \frac{|x|^2}{\widetilde{V}_{i}(|x|)} \sim \frac{|x|^2}{{V}_{i}(|x|)} .$$

  \item [(ii)] If $n_{i}=2$, then
        $$\widetilde{H}(x,t) \lesssim \frac{1}{\log(2+|x|)}.$$

  %\item [(iii)] If $n_{i}<2$, then
%        $$\widetilde{H}(x,t) \sim \frac{|x|^2}{\widetilde{V}_{i}(|x|)} \sim |x|^{{n_i}-2}.$$
\end{itemize}
\end{lem}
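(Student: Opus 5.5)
The plan is to estimate the two pieces of $\widetilde{H}$ separately. We use Lemma \ref{lem:tilde-V}(i) and Lemma \ref{lem:V-tildeV}, i.e.\ $\widetilde V_i(r)\sim\phi_i^2(r)V_i(r)$, together with Lemma \ref{lem:phi-r-equal}. Throughout, $x\in E_i$, so $|x|\ge 1$ and $i_x=i$. Set
\[
A:=\frac{|x|^2}{\widetilde{V}_i(|x|)}, \qquad B:=\left(\int_{|x|^2}^t\frac{ds}{\widetilde{V}_i(\sqrt s)}\right)_+ ,
\]
so that $\widetilde H(x,t)=\min\{1,A+B\}$. Lemma \ref{lem:tilde-V}(ii) gives $A\lesssim 1$. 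Hence it suffices to show $B\lesssim A$ in case (i), and $A+B\lesssim 1/\log(2+|x|)$ in case (ii). Indeed, $A\le A+B\lesssim A$ and $A\lesssim 1$ then give $\min\{1,A+B\}\sim A$.

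For (i) with $n_i>2$: Lemma \ref{lem:tilde-V}(i) gives $\widetilde V_i(\sqrt s)\gtrsim (s/|x|^2)^{n_i/2}\widetilde V_i(|x|)$ for $s\ge |x|^2$. Therefore
\[
B\lesssim \frac{|x|^{n_i}}{\widetilde V_i(|x|)}\int_{|x|^2}^\infty s^{-n_i/2}\,ds\sim \frac{|x|^2}{\widetilde V_i(|x|)},
\]
because $n_i/2>1$. Remark \ref{rem:thr}(i) then gives $\widetilde V_i(|x|)\sim V_i(|x|)$, which is the second equivalence.

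For (i) with $n_i<2$: here $\widetilde V_i(\sqrt s)\sim(s/|x|^2)^{(4-n_i)/2}\widetilde V_i(|x|)$ for $s\ge|x|^2$, and $(4-n_i)/2>1$, so the same integral computation gives $B\lesssim A$. Moreover, $\widetilde V_i(|x|)\sim\phi_i^2(|x|)V_i(|x|)\sim |x|^{4-2n_i}|x|^{n_i}=|x|^{4-n_i}$, using $V_i(r)\sim r^{n_i}V_i(1)$ with $1\le V_i(1)\le 10$. Hence $A\sim |x|^{n_i-2}$.

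For (ii) with $n_i=2$: we get $\widetilde V_i(r)\sim r^2\log^2(2+r)$ for $r\ge1$, so $A\sim 1/\log^2(2+|x|)\le 1/\log(2+|x|)$. For the integral, substitute $s=u^2$:
\[
B\lesssim\int_{|x|}^\infty\frac{2u\,du}{u^2\log^2(2+u)}\lesssim\int_{|x|}^\infty\frac{du}{u\log^2 u}+\cdots\lesssim\frac1{\log(2+|x|)}.
\]
For $|x|$ near $1$, where $\log u$ is small, we instead compare with $\log(2+u)$ directly and use a bounded constant; this is harmless.

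The only delicate point is this last tail integral: it converges only barely, with a logarithmic gain, which explains why case (ii) yields only an upper bound and not a two-sided equivalence. The remaining steps are routine comparisons.
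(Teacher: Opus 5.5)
Your proof is correct and follows essentially the same route as the paper. Like the paper, you bound $\widetilde H(x,t)$ by $\frac{|x|^2}{\widetilde V_i(|x|)}$ plus the full tail $\int_{|x|^2}^\infty \frac{ds}{\widetilde V_i(\sqrt s)}$, control the tail through the growth rates of $\widetilde V_i$ from Lemma \ref{lem:tilde-V}(i) (exponent $n_i$, $4-n_i$, or $2$ with a $\log^2$ factor), and get the lower bound in (i) from $\frac{|x|^2}{\widetilde V_i(|x|)}\lesssim 1$. The only difference is cosmetic, in case (ii): the paper skips your $\log^2 u$ comparison and its separate treatment of $|x|$ near $1$ by using $u\ge 1\Rightarrow \frac1u\le\frac{3}{2+u}$, which bounds the tail directly by $6\int_{|x|}^\infty\frac{du}{(2+u)\log^2(2+u)}=\frac{6}{\log(2+|x|)}$.
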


\begin{proof}
   If $n_{i} \neq 2$, then
we deduce from Lemma  \ref{lem:tilde-V} (i) that
\begin{eqnarray*}
      \widetilde{H}(x,t)
       && \leq \frac{|x|^2}{\widetilde{V}_{i}(|x|)} +  \int_{|x|^2}^{\infty}\frac{\,d s}{\widetilde{V}_{i}(\sqrt s)}       %& \lesssim \frac{|x|^2}{{\widetilde{V}}_{i}(|x|)} + \int_{|x|^2}^{\infty} \frac{\,d s}{\widetilde{V}_{i}(\sqrt s)} \\
        \leq \frac{|x|^2}{{\widetilde{V}}_{i}(|x|)}
        + \frac{1}{\widetilde{V}_{i}(|x|)} \int_{|x|^2}^{\infty} \frac{\widetilde{V}_{i}(|x|)}{\widetilde{V}_{i}(\sqrt s)}  \,ds \\
       && \lesssim \frac{|x|^2}{{\widetilde{V}}_{i}(|x|)}
          + \frac{1}{\widetilde{V}_{i}(|x|)} \int_{|x|^2}^{\infty} \left(\frac{|x|}{\sqrt s}\right)^{2+ |n_i-2|} \,d s
        \lesssim \frac{|x|^2}{{\widetilde{V}}_{i}(|x|)}.
\end{eqnarray*}
Combining this with $|x|^2 / \widetilde{V}_i(|x|) \lesssim 1$ (cf. Lemma \ref{lem:tilde-V} (ii)), one gets%finds
%$\widetilde{H}(x,t)  \sim |x|^2 / \widetilde{V}_{i}(|x|).$
$$\widetilde{H}(x,t)  \sim \frac{|x|^2}{\widetilde{V}_{i}(|x|)}.$$
Then, it follows from Remark \ref{rem:thr} (i) and Lemma \ref{lem:tilde-V} (i) that the claim (i) holds.

     If $n_i=2$,
     %then one concludes from Lemma \ref{lem:tilde-V} (i) that
     then we use Lemma \ref{lem:tilde-V} (i) to obtain
     %then by Lemma \ref{lem:tilde-V} (i) that %, one concludes that
    \begin{eqnarray*}
        \widetilde{H}(x,t)
       && \leq   \frac{|x|^2}{\widetilde{V}_{i}(|x|)} + \int_{|x|^2}^{\infty}\frac{\,d s}{\widetilde{V}_{i}(\sqrt s)}
           \lesssim  \frac{1}{\log^2(2+|x|)}  + \int_{|x|^2}^\infty \frac{1}{s \log^2(2+ \sqrt s)} \, ds \\
        && \lesssim \frac{1}{\log^2(2+|x|)} + \int_{|x|}^\infty \frac{1}{(2+ s) \log^2(2 + s)} \, ds
          \lesssim \frac{1}{\log(2+|x|)},
    \end{eqnarray*}
   % \[
%      \int_{|x|^2}^\infty \frac{1}{s \log^2(2+ \sqrt s)} \, ds
%      \lesssim \int_{|x|}^\infty \frac{1}{s \log^2(2 +  s)} \, ds
%      \lesssim \int_{|x|}^\infty \frac{1}{(2+s) \log^2(2 +  s)} \, ds
%     \lesssim \left.\frac{1}{\log(2+s)}\right|_\infty^{|x|}
%    \]
    which completes the proof of Lemma \ref{H-equal-general}.
\end{proof}
From the estimates of $\phi_i$ and $\widetilde{H}$,
i.e., Lemmas \ref{lem:phi-r-equal} and \ref{H-equal-general}, we have the following:
\begin{cor}\label{cor:phi-H}
Let $1 \leq i \leq \ell $. For any $x \in E_i$, it holds
\[
    \phi_{i}(|x|) \widetilde{H}(x,t) \lesssim 1.
\]
\end{cor}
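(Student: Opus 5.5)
The plan is a case analysis on $n_i$, multiplying the bound for $\phi_i(|x|)$ from Lemma \ref{lem:phi-r-equal} against the bound for $\widetilde{H}(x,t)$ from Lemma \ref{H-equal-general}. The only preliminary point is that $|x|\ge 1$ for every $x\in M$, as observed in Subsection \ref{sec:not-basic}. Hence the $r\ge 1$ regime of Lemma \ref{lem:phi-r-equal} applies with $r=|x|$, and the constants stay uniform in $x\in E_i$ and $t>0$.

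\emph{Case $n_i>2$.} Lemma \ref{lem:phi-r-equal} gives $\phi_i(|x|)\sim 1$. By definition $\widetilde{H}(x,t)\le 1$, so the product is $\lesssim 1$ directly. Alternatively, Lemma \ref{H-equal-general} (i) together with Lemma \ref{lem:tilde-V} (ii) gives $\widetilde{H}(x,t)\sim |x|^2/\widetilde{V}_i(|x|)\lesssim 1$.

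\emph{Case $n_i=2$.} Here $\phi_i(|x|)\sim \log(2+|x|)$, while Lemma \ref{H-equal-general} (ii) gives $\widetilde{H}(x,t)\lesssim 1/\log(2+|x|)$. The logarithms cancel and the product is $\lesssim 1$.

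\emph{Case $n_i<2$.} This case uses the full strength of the two-sided bound, although it is not really an obstacle. We have $\phi_i(|x|)\sim |x|^{2-n_i}$, and Lemma \ref{H-equal-general} (i) gives $\widetilde{H}(x,t)\sim |x|^{n_i-2}$. The powers cancel exactly, so $\phi_i(|x|)\widetilde{H}(x,t)\sim 1$.

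Since every case yields a bound independent of $x$ and $t$, the corollary follows. The only care needed is to invoke the $r\ge1$ forms of the lemmas, which is justified by $|x|\ge 1$.
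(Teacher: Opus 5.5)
Your proposal is correct and takes the same route as the paper, which obtains the corollary directly by combining Lemma \ref{lem:phi-r-equal} with Lemma \ref{H-equal-general}. Your case split in $n_i$, together with $|x|\ge 1$, simply writes out that combination explicitly, with constants uniform in $x\in E_i$ and $t>0$.
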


\section{Further estimates for the heat kernel}\hskip\parindent
%{\color{blue}In this section, we shall introduce the heat kernel upper bound
%from \cite[Subsection 6.3]{gri-sal09},
%%and then give some general formulas and estimates for computing the various terms in the upper bound.
%and then give some estimates for the various terms in this upper bound. %of the heat kernel.
%Besides, with the help of the heat kernel upper bound from \cite[Corollary 4.16]{gri-sal09}, we will give the ultracontractivity of the heat semigroup.
%In the end, %of this section,
%%by the small time heat kernel Gaussian upper bound,
%we shall show that the small time part of the Riesz transform is bounded on $L^p(M)$ for $1 < p<2$.
%
%We will deduce the estimates of the time derivative of the heat kernel.
%Besides, the $L^1$-estimate of the space derivative of the heat kernel is discussed.}
In this section, using the priori heat kernel estimates in Subsection \ref{sec:pri-es} and the results in Section \ref{sec:vol-aux}, we can  give some further estimates on the heat kernel and  its time derivative.
Besides, the $L^1$-estimate of the space derivative of the heat kernel is discussed.
\subsection{Heat kernel upper bounds}\label{sec:heat-ker-est} \hskip\parindent
 Theorem \ref{thm:sim-heat-es} allows us to obtain the following heat kernel upper bounds.
\begin{lem}\label{lem:es-away}
    Let $1 \leq i \leq \ell$ and $\beta \ge 16$. For any $\kz \geq 1$, it holds
    \[
        h_t(x,y) \lesssim_\beta \frac{1}{V_i(y, \sqrt t)} %\exp\left(-c\frac{d(x,y)^2}{t}\right),
        e^{-c\frac{d(x,y)^2}{t}},
        \quad \forall  x, y \in E_i, \, \dist(x,E_0) + \dist(y,E_0) \ge 2\kz, \, 0<t \leq \beta \kz^2.
    \]
   % where $F_i' = \{ x \in E_i; \dist(x, E_0) \leq R_i\}$.
\end{lem}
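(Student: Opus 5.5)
We use Doob's transform $h_t(x,y)=h(x)h(y)\tilde h_t(x,y)$ together with Theorem \ref{thm:sim-heat-es} (ii), which, since $x,y\in E_i$, gives two terms: a ``central'' term
\[
h(x)h(y)\frac{\widetilde H(x,t)\widetilde H(y,t)}{\widetilde V_0(\sqrt t)}e^{-c\frac{|x|^2+|y|^2}{t}}
\]
and a ``local'' term $h(x)h(y)\widetilde V_i(x,\sqrt t)^{-1}e^{-c d(x,y)^2/t}$. We estimate each by $V_i(y,\sqrt t)^{-1}e^{-c d(x,y)^2/t}$.

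\emph{Local term.} By \eqref{eq:V-genernal V}, $\widetilde V_i(x,\sqrt t)\gtrsim (\phi_i^2(|x|)+\phi_i^2(\sqrt t))V_i(x,\sqrt t)$. Since $h(x)\sim\phi_i(|x|)$ and $h(y)\sim\phi_i(|y|)$, we need $\phi_i(|y|)\lesssim \phi_i(|x|)\,e^{c'd(x,y)^2/t}$ up to the factor $\phi_i(\sqrt t)$. Because $|y|\le |x|+d(x,y)$ and $\phi_i$ is doubling with at most polynomial growth (Lemma \ref{lem:phi-r-equal}), we get
\[
\phi_i(|y|)\lesssim \phi_i(|x|)\Big(1+\frac{d(x,y)}{\max(|x|,\sqrt t)}\Big)^{2}\quad\text{or}\quad \phi_i(|y|)\lesssim\phi_i(\sqrt t)\Big(1+\frac{d(x,y)}{\sqrt t}\Big)^2 .
\]
Either way, $h(x)h(y)/\widetilde V_i(x,\sqrt t)\lesssim V_i(x,\sqrt t)^{-1}(1+d(x,y)/\sqrt t)^{2}$. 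We then switch the centre from $x$ to $y$ by \eqref{eq:change-centre}, absorbing the polynomial factors into a slightly smaller Gaussian constant. This term needs neither the hypothesis on $\kappa$ nor the restriction on $t$.

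\emph{Central term.} This is the main obstacle. Here we use the hypotheses $\dist(x,E_0)+\dist(y,E_0)\ge 2\kappa$ and $t\le\beta\kappa^2$. We have $|x|+|y|\ge 2\kappa$, and also $|x|,|y|\ge 1$. By Corollary \ref{cor:phi-H}, $h(x)\widetilde H(x,t)\lesssim1$ and likewise for $y$, so the term is bounded by $\widetilde V_0(\sqrt t)^{-1}e^{-c(|x|^2+|y|^2)/t}$. Since $d(x,y)\le |x|+|y|$, we have $e^{-c(|x|^2+|y|^2)/t}\le e^{-c'd(x,y)^2/t}\,e^{-c''(|x|^2+|y|^2)/t}$, and it remains to show
\[
\frac{1}{\widetilde V_0(\sqrt t)}e^{-c''\frac{|x|^2+|y|^2}{t}}\lesssim_\beta \frac{1}{V_i(y,\sqrt t)}e^{c'''\frac{d(x,y)^2}{t}} .
\]

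For $t\le 1$, Remark \ref{rem:thr} (ii) gives $\widetilde V_0(\sqrt t)\sim t^{N/2}$, and local doubling plus non-collapsing give $V_i(y,\sqrt t)\lesssim t^{N/2}\,V_i(y,1)$. This is not immediately comparable, so we instead use $V_i(y,\sqrt t)\lesssim V_i(o_i,\sqrt t+|y|)\lesssim (1+|y|/\sqrt t)^{N_i}t^{N/2}$ via \eqref{eq:change-centre} around a point of $E_0$ and Remark \ref{rem:thr} (ii). The polynomial factor in $|y|/\sqrt t$ is then killed by $e^{-c''|y|^2/t}$.

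For $1\le t\le\beta\kappa^2$, we have $(|x|^2+|y|^2)/t\ge 2\kappa^2/t\ge 2/\beta$, which gives only a bounded factor. What actually helps is writing $V_i(y,\sqrt t)\le V_i(o_i,|y|+\sqrt t)\lesssim (1+|y|/\sqrt t)^{N_i}V_i(\sqrt t)$ and then comparing $V_i(\sqrt t)$ with $\widetilde V_0(\sqrt t)\sim t\log^2(2+\sqrt t)$. When $n_i\le2$, Corollary \ref{cor:vi<v0} gives $V_i\lesssim\widetilde V_0$ directly. When $n_i>2$, $V_i(\sqrt t)$ may be larger than $\widetilde V_0(\sqrt t)$, and this is where the restriction $t\le\beta\kappa^2$ must enter. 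I expect the argument to use $\kappa\lesssim |x|+|y|$ and the Gaussian $e^{-c(|x|^2+|y|^2)/t}$ to trade $V_i(|x|+|y|)$ against $V_i(\sqrt t)$, with $\beta$-dependent constants from doubling over the bounded ratio range $\sqrt t\lesssim_\beta |x|+|y|$. The delicate point is that $|x|+|y|$ is bounded below but not above relative to $\sqrt t$; if this comparison fails I would fall back on the sharper form of Theorem \ref{thm:weighted-upper bound}, using $\widetilde H(x,t)\lesssim |x|^2/V_i(|x|)$ for $n_i>2$ (Lemma \ref{H-equal-general}) to gain the missing volume factor.
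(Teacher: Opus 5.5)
Your treatment of the local term is fine, and so is your treatment of the central term for $0<t\le 1$ and for $n_i\le 2$. For $t<1$ the paper instead invokes Theorem \ref{thm:sim-heat-es} (i) directly, but your route also works. The genuine gap is the central term when $n_i>2$ and $t\ge 1$, which is the only case where the hypotheses on $\kappa$ matter.

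\textbf{Why your main route fails.} Once you bound $h(x)\widetilde H(x,t)\lesssim 1$ and $h(y)\widetilde H(y,t)\lesssim 1$, you are left with $\widetilde V_0(\sqrt t)^{-1}e^{-c(|x|^2+|y|^2)/t}$. This quantity is not dominated by $V_i(y,\sqrt t)^{-1}e^{-c d(x,y)^2/t}$. Take $|x|\sim|y|\sim\kappa$ and $t=\beta\kappa^2$ with $\kappa\to\infty$. Then $d(x,y)\le |x|+|y|\lesssim \kappa$, so every exponential is bounded above and below by constants depending only on $\beta$. On the other hand, $\widetilde V_0(\sqrt t)\sim t\log^2(2+\sqrt t)$ by Lemma \ref{V0-general-equal}. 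Moreover, \eqref{eq:change-centre} with $d(o_i,y)\le|y|\lesssim\sqrt t$ gives $V_i(y,\sqrt t)\gtrsim V_i(\sqrt t)\gtrsim t^{n_i/2}$. The required inequality $t^{n_i/2}\lesssim_\beta t\log^2(2+\sqrt t)$ fails as $t\to\infty$. So no trade between volumes and the Gaussian can rescue this: the Gaussian only supplies the bounded factor $e^{-2c/\beta}$ you already noticed, while the loss $V_i(\sqrt t)/\widetilde V_0(\sqrt t)$ is unbounded uniformly in $\kappa$.

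\textbf{What your fallback must actually do.} Your fallback points in the right direction, but it does not need a sharper form of Theorem \ref{thm:weighted-upper bound}. The factor $\widetilde H$ is already present in Theorem \ref{thm:sim-heat-es} (ii); you must simply not discard it at the point far from $E_0$. Concretely:
\begin{enumerate}
\item[(a)] Let $z\in\{x,y\}$ satisfy $\dist(z,E_0)\ge\kappa$, and let $w$ be the other point. Then $|z|\ge\max\{1,\kappa\}\ge \max\{1,\sqrt{t/\beta}\}$.
\item[(b)] At $w$, use $\phi_i(|w|)\widetilde H(w,t)\lesssim 1$ (Corollary \ref{cor:phi-H}). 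At $z$, use $\phi_i(|z|)\sim 1$, which holds since $n_i>2$.
\item[(c)] Keep $\widetilde H(z,t)$. By Lemma \ref{H-equal-general}, then Lemma \ref{lem:volume-growth} (i) (valid as $n_i\ge 2$) with $r=\max\{1,\sqrt{t/\beta}\}\le|z|$, and then doubling, you get $\widetilde H(z,t)\lesssim |z|^2/V_i(|z|)\lesssim r^2/V_i(r)\lesssim_\beta t/V_i(\sqrt t)$.
\item[(d)] The factor $t$ cancels the $t$ in $\widetilde V_0(\sqrt t)\sim t\log^2(2+\sqrt t)$. Hence the central term is $\lesssim_\beta V_i(\sqrt t)^{-1}e^{-c(|x|^2+|y|^2)/t}$.
\item[(e)] Finally, \eqref{eq:change-centre} together with $d(x,y)\le |x|+|y|$ converts this into $V_i(y,\sqrt t)^{-1}e^{-c d(x,y)^2/t}$.
\end{enumerate}
Step (c) is exactly where $t\le\beta\kappa^2$ enters: without $|z|\gtrsim_\beta\sqrt t$, the quantity $|z|^2/V_i(|z|)$ could be far larger than $t/V_i(\sqrt t)$. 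This is the paper's argument.
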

\begin{proof}
    Let us first estimate $h_t(x,y)$.
    Notice that  $V(y, \sqrt t)=V_i(y, \sqrt t)$ whenever $B_i(y, \sqrt t) \subset E_i$, otherwise $V(y, \sqrt t) \sim V_i(y, \sqrt t)$ for all  $y \in E_i$ and $0<t  < 1$; see \cite[p. 1945]{gri-sal09}. Hence according to Theorem \ref{thm:sim-heat-es} (i), we may assume that $t \geq 1$.
%
%    If $t \le 1$, by \cite[Corollary 4.7]{gri-sal09} and the standard track of doubling property (cf. \eqref{eq:change-centre}), we have
%    \[
%        h_t(x, y)
%        \lesssim \frac{1}{\sqrt{V(x, \sqrt t) V(y, \sqrt t)}} e^{-c \frac{d(x,y)^2}{t}}
%        \lesssim \frac{1}{V(y, \sqrt t) } e^{-c \frac{d(x,y)^2}{t}}.
%        %\lesssim \frac{1}{V_i(y, \sqrt t) } e^{-c \frac{d(x,y)^2}{t}}
%    \]
%    %where in the last inequality, we used the fact that $B(y,t) \subset E_i$.
%    Since $t \leq 1$, $y \in E_i \setminus F_i$, it holds $B(y,t) \subset E_i$  and hence
%    \[
%        h_t(x, y) \lesssim \frac{1}{V_i(y, \sqrt t) } e^{-c \frac{d(x,y)^2}{t}}.
%    \]
%
    %where $x,y \in E_i$ and $1 \leq t \leq \beta \kz^2$.
    Without loss of generality we may assume that $\dist(y,E_0) \ge \kz$.
    By Theorem \ref{thm:sim-heat-es} (ii) and Doob's transform,
    it holds that %it holds
    \begin{eqnarray*}
    h_{t}(x,y)
     \lesssim
               \frac{\phi_{i}(|x|) \phi_{i}(|y|) \widetilde{H}(x,t)\widetilde{H}(y,t)}{\widetilde{V}_0(\sqrt t)}   e^{-c\frac{|x|^2+|y|^2}{t}}
                        +   \frac{\phi_{i}(|x|) \phi_{i}(|y|)}{\widetilde{V}_{i}(x,\sqrt t)} e^{-c\frac{d(x,y)^2}{t}}
     =: \mathrm{I} + \mathrm{II}.
    \end{eqnarray*}
%    We begin with the estimate of the term $\mathrm{I}$.
    Aim now at the term $\mathrm{I}$.
    When $n_i > 2$,  the facts $\phi_i(|x|) \widetilde{H}(x,t) \lesssim 1$ and $\phi_i(|y|) \sim 1$
    (cf. Corollary \ref{cor:phi-H} and Lemma \ref{lem:phi-r-equal}) yield that
    \[
        \mathrm{I} \lesssim \frac{\widetilde{H}(y,t)}{\widetilde{V}_0(\sqrt t)}   e^{-c\frac{|x|^2+|y|^2}{t}}.
    \]
    By Lemma \ref{H-equal-general}, the fact $|y| \ge \dist(y, E_0) \ge \kz \gtrsim_\beta \sqrt t$
    and Lemma \ref{lem:volume-growth} (i), one obtains
    \[
    \widetilde{H}(y,t) \lesssim \frac{|y|^2}{V_i(|y|)}
         \lesssim_\beta \frac{t}{V_i(\sqrt t)},
    \]
    which together with Lemma \ref{V0-general-equal}  and \eqref{eq:change-centre} gives that
    \[
        \mathrm{I} \lesssim_\beta \frac{1}{t\log^2(2+\sqrt t)} \frac{t}{V_i(\sqrt t)} e^{-c\frac{|x|^2+|y|^2}{t}}
        \lesssim_\beta \frac{1}{V_i(\sqrt t)} e^{-c\frac{|x|^2+|y|^2}{t}}
        \lesssim_\beta \frac{1}{V_i(y, \sqrt t)} e^{-c\frac{d(x,y)^2}{t}}.
    \]
    %where in the last inequality we used \eqref{eq:change-centre}.
    When $n_i \leq 2$, one easily deduces from Corollary \ref{cor:vi<v0}, Corollary \ref{cor:phi-H}  and \eqref{eq:change-centre} that
    \[
        \mathrm{I}
         \lesssim  \frac{1}{V_i(\sqrt t)}  e^{-c\frac{|x|^2+|y|^2}{t}}
         \lesssim \frac{1}{V_{i}(y,\sqrt t)} e^{-c\frac{d(x,y)^2}{t}}.
    \]
    Next, for the term $\mathrm{II}$,
    the standard trick of doubling property (cf. \eqref{eq:change-centre}) together with Lemma \ref{lem:V-tildeV} implies
    \[
        \mathrm{II}
        \lesssim  \frac{\phi_{i}(|x|) \phi_{i}(|y|)}{\widetilde{V}_{i}(x,\sqrt t)^{1/2} \widetilde{V}_{i}(y,\sqrt t)^{1/2}} e^{-c\frac{d(x,y)^2}{t}}
       % \lesssim \frac{1}{{V}_{i}(x,\sqrt t)^{1/2} {V}_{i}(y,\sqrt t)^{1/2}} e^{-c\frac{d(x,y)^2}{t}}
        \lesssim \frac{1}{{V}_{i}(y,\sqrt t)} e^{-c\frac{d(x,y)^2}{t}}.
    \]
    Combining the above estimates of the terms $\mathrm{I}$ and $\mathrm{II}$, one has %concludes that
    \[
        h_{t}(x,y)
         \lesssim_\beta \frac{1}{V_{i}(y,\sqrt t)} e^{-c\frac{d(x,y)^2}{t}}.
        % \quad     \forall x,y \in E_i, \, \dist(x,E_0) + \dist(y,E_0) \ge2 \kz , \, 0 <t <\beta \kz^2.
    \]
    This completes the proof.
\end{proof}

\subsection{Estimate on the time derivative of the heat kernel} \label{sec:time-der-heat-es} \hskip\parindent
For each $1 \leq i \leq \ell$, recall that %we let $F_i^{(r)}$ be the set $\{x\in E_i:\,\dist(x, E_0)\le 2r\}$.
$F_i^{(r)} = \{x\in E_i:\,\dist(x, E_0)\le 2r\}$, $r \ge 1$.
If $\gamma$ is large enough, saying $\gamma > 100 \ell$, then we set in the sequel,
\[
    R_i:=R_i(\gamma) \geq 1
    \text{ such that }
    \mu(F_i^{(R_i)}) = \gamma.
\]
%for each $i$.
%Since we assumed $\mu(F_*) \leq 40 \ell$,
%%in \S \ref{sec:not-basic},
%%(cf. \S \ref{sec:not-basic}),  %and $\gamma > 100 \ell$,
%it follows that $R_i(\gamma) \geq 50$.
%For simplicity, we always write $F_i^{(R_i)}$ as $F_i$.
%
%Note that
It follows from the convention $\mu(F_*) \leq 40 \ell$
that $R_i(\gamma) \geq 50$.
% ��ʦ�� ����������ԭ���ǡ� ������2.1���ж�$\mu(F_*)$�Ķ��������޸ģ� Ȼ���ᵼ�µõ�����$R_i \ge 50$.
%By Lemma \ref{lem:volume-growth} (ii), for each $1 \leq i, j \leq \ell$, it holds that
For each $1 \leq i, j \leq \ell$, it holds by Lemma \ref{lem:volume-growth} (ii) that
\begin{equation}\label{eq:fifj-con}
    V_i(R_i) \sim \mu(F^{(R_i)}_i) =  \mu(F^{(R_j)}_j) \sim V_j(R_j), \quad
    \forall \gamma \gg 1. %\, 1 \leq i, j \leq \ell.
\end{equation}
See Figure 1.
\begin{figure}[ht]
\centerline{ \epsfig{file=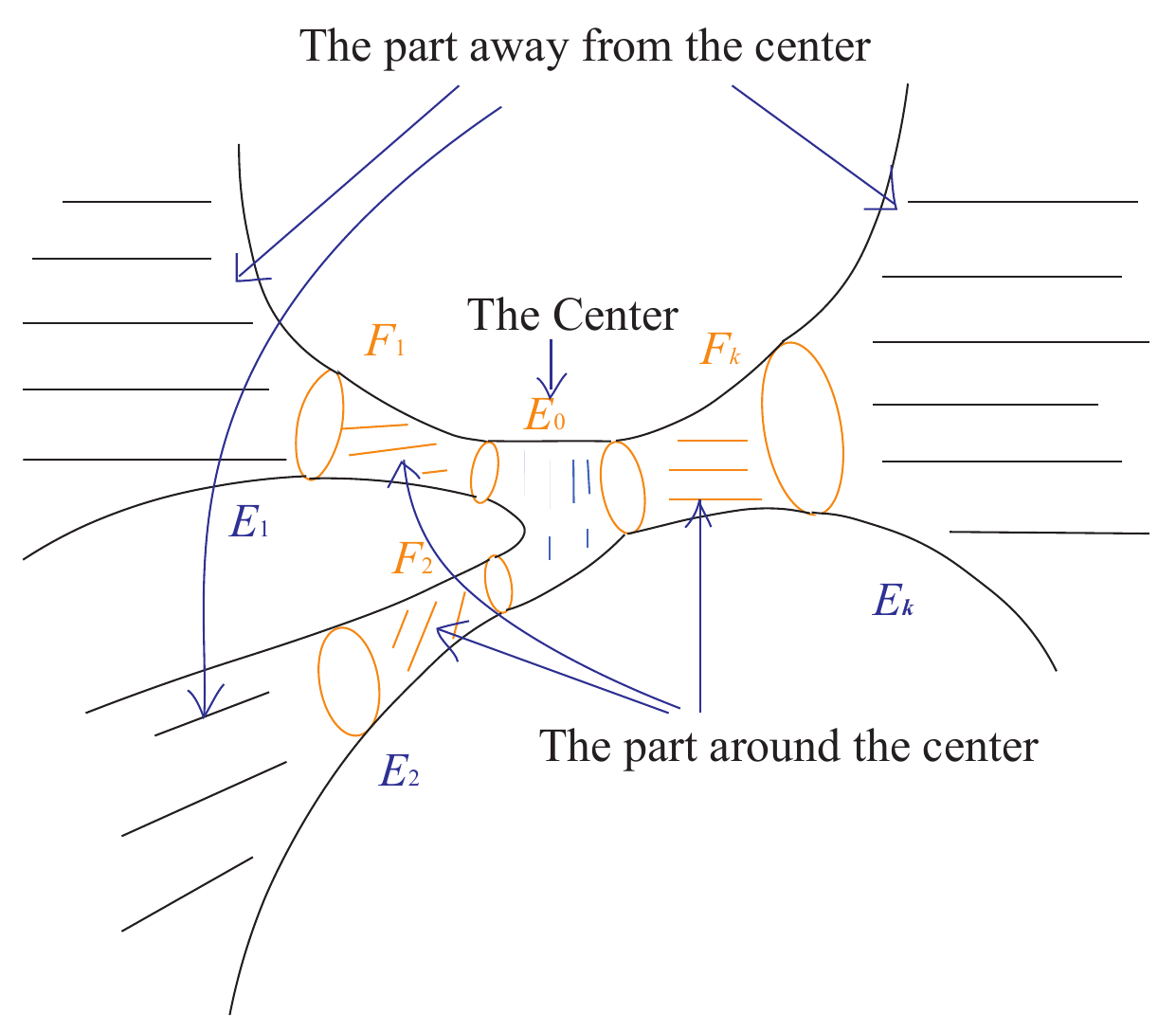, scale=0.6}
              }
             \caption{{Partition of the manifold}}
\end{figure}
% ��ʦ�� ����ͼ��������ԭ�����¡� ���ǲ�ȷ����The Center�� �е���ĸ"C"�Ƿ�ҪСд��
% Ȼ����֪������$F_i$�Ƿ���Ҫ�����ϽǱ�$(R_i)$. ��ʦ�� �Ҳ�֪������ͼ������ʲô�������ģ� ���Ծ�û���޸ġ�
%
%In this subsection, we will estimate the time derivative of the heat kernel. This will play a key
%role in the study of mapping properties of $t \L e^{-t\L}$. %(see Proposition \ref{mapping-time-heat-parabolic-1} in subsection \ref{sec:der-mapping-pr}).

In general setting, we take  the heat kernel $k_t(x,y)$ to be associated with a non-negative self-adjoint operator on a Riemannian
manifold. %,  but make no assumption of completeness or bounded geometry.
%We shall begin with
Let us give the following estimates for the time derivative of the heat kernel $k_t(x,y)$ %in general setting
obtained by Davies \cite[Theorem 4]{davies97}, with a minor modification of a symbol. %notation/symbol. %/character.

\begin{thm} \label{davies}
Suppose that $0 < \delta <1$, $0 < \epsilon < 1/8$, $x,y\in M$ and $t>0$.
Let $a,b,A$ be positive constants such that $0 <  A \leq \sqrt{ab}$ and
$$k_{(1-\delta) t}(x,x)\le a,\quad k_{(1-\delta)t}(y,y)\le b,\quad k_{s}(x,y)\le A$$
for all $ (1-\delta)t <s < (1+\delta)t$. Then for any $m\in\cn$, it holds
$$\left|\frac{\partial^m}{\partial t^m}k_t(x,y)\right|\le \frac{m!}{(\epsilon\delta t)^m} a^{\frac32 \epsilon} b^{\frac32 \epsilon} A^{1-3\epsilon}. $$
\end{thm}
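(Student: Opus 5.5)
The statement is Davies' estimate \cite[Theorem 4]{davies97}. My plan is the standard complex-time argument: extend $t\mapsto k_t(x,y)$ holomorphically to the right half-plane, bound the extension in two ways, combine the two bounds with a two-constants (harmonic measure) argument, and finish with the Cauchy estimates. Write $L$ for the non-negative self-adjoint operator generating $k_t$. For $\operatorname{Re} z>0$ the kernel $k_z(x,y)$ of $e^{-zL}$ is holomorphic in $z$, and the semigroup property together with symmetry gives, for $z=s+iu$,
$$k_z(x,y)=\big\langle e^{-iuL}k_{s/2}(\cdot,x),\,k_{s/2}(\cdot,y)\big\rangle .$$
Since $e^{-iuL}$ is unitary, Cauchy--Schwarz yields
$$|k_{s+iu}(x,y)|\le \sqrt{k_s(x,x)\,k_s(y,y)} .$$
Moreover $s\mapsto k_s(x,x)=\|k_{s/2}(\cdot,x)\|_2^2$ is non-increasing, by the spectral theorem. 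Hence $|k_z(x,y)|\le\sqrt{ab}$ on the whole strip $\operatorname{Re} z\ge(1-\delta)t$. On the real segment $I=((1-\delta)t,(1+\delta)t)$ the hypothesis gives the better bound $|k_z(x,y)|\le A$.

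Next I would work in the disc $D=\{|z-t|<\delta t\}$, which lies in that strip and whose real diameter is $I$. The function $u(z)=\log|k_z(x,y)|$ is subharmonic on $D$. It satisfies $u\le \tfrac12\log(ab)$ on $D$ and $u\le\log A$ on $I$. Split $D$ into its upper and lower half-discs and let $\omega(z)$ denote the harmonic measure of the diameter $I$ in the relevant half-disc. The maximum principle (two-constants theorem) then gives
$$|k_z(x,y)|\le A^{\omega(z)}(ab)^{\frac{1-\omega(z)}{2}} .$$

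The key quantitative step, and the one I expect to be the main obstacle, is to show $\omega(z)\ge 1-3\epsilon$ whenever $|z-t|\le \epsilon\delta t$. It suffices to bound the harmonic measure of the semicircular arc. After rescaling to the unit half-disc, I would map it conformally to a quadrant by $w\mapsto\frac{1+w}{1-w}$ and then to a half-plane by squaring. In that picture the harmonic measure of the arc at a point of modulus $\le\epsilon$ is explicitly computable, and it is at most $\tfrac{4}{\pi}\epsilon\cdot(1+O(\epsilon))$. With $\epsilon<1/8$ this should be $\le 3\epsilon$; the constant is what I would have to check carefully.

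Given that bound, the assumption $A\le\sqrt{ab}$ means the estimate only gets worse as $\omega$ decreases. Replacing $\omega(z)$ by $1-3\epsilon$ therefore gives, on the circle $|z-t|=\epsilon\delta t$,
$$|k_z(x,y)|\le A^{1-3\epsilon}(ab)^{\frac32\epsilon}.$$
Finally, the Cauchy integral formula for the $m$-th derivative on this circle gives
$$\Big|\frac{\partial^m}{\partial t^m}k_t(x,y)\Big|\le \frac{m!}{(\epsilon\delta t)^m}\,a^{\frac32\epsilon}b^{\frac32\epsilon}A^{1-3\epsilon},$$
which is the claim.
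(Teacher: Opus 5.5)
The paper does not prove this statement. It quotes it from Davies \cite[Theorem 4]{davies97}, so there is no in-paper proof to compare against. Your argument is a correct, self-contained proof along the lines of Davies' complex-time method. Its ingredients are: the holomorphic extension of $z\mapsto k_z(x,y)$; the bound $|k_{s+iu}(x,y)|\le\sqrt{k_s(x,x)k_s(y,y)}$ together with monotonicity of $s\mapsto k_s(x,x)$; a two-constants estimate on the half-discs of $D$; and Cauchy's inequality on the circle $|z-t|=\epsilon\delta t$.

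The step you flagged as the main obstacle goes through, with room to spare. In the rescaled upper half-disc, the harmonic measure of the arc at $w$ is exactly $\frac{2}{\pi}\arg\frac{1+w}{1-w}$. The image of $\{|w|\le\epsilon\}$ under $w\mapsto\frac{1+w}{1-w}$ is the disc centred at $\frac{1+\epsilon^2}{1-\epsilon^2}$ with radius $\frac{2\epsilon}{1-\epsilon^2}$. Its points have argument at most $\arcsin\frac{2\epsilon}{1+\epsilon^2}=2\arctan\epsilon$. Hence $1-\omega\le\frac{4}{\pi}\arctan\epsilon\le\frac{4}{\pi}\epsilon<3\epsilon$ for every $\epsilon\in(0,1)$. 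So no $(1+O(\epsilon))$ correction is needed, and your route does not even use the restriction $\epsilon<1/8$.

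Two small points should be stated explicitly.
\begin{itemize}
\item On $I$ you use $|k_s(x,y)|\le A$. This follows from the stated hypothesis $k_s(x,y)\le A$ only because the heat kernel is non-negative. For a general non-negative self-adjoint operator the hypothesis must be read with absolute values.
\item The maximum principle at the two corners of each half-disc needs a word. It is harmless: $k$ is continuous on $\overline{D}\subset\{\mathrm{Re}\,z\ge(1-\delta)t\}$, and the majorant $\omega\log A+(1-\omega)\tfrac12\log(ab)$ is $\ge\log A$ everywhere, because $A\le\sqrt{ab}$.
\end{itemize}
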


The heat kernel upper bound (Lemma \ref{lem:es-away}) and the above theorem yields the following
estimates for the time derivative of the heat kernel on manifold $M$.

\begin{lem} \label{lem:es-away-time-der} %for any {\color{blue}$x \in E_i \setminus F^{(\kz)}_i$, $y \in E_i \setminus F^{(\kz)}_i$, $0 < t \leq \beta \kappa^2$}, it holds
   Let $1 \leq i \leq \ell$ and $\beta \ge 16$. For any $\kz \geq 1$, it holds
    \[
        |t\partial_th_t(x,y)| \lesssim_\beta \frac{1}{ V_i(y, \sqrt t)} %        \exp\left(-c\frac{d(x,y)^2}{t}\right),
        e^{-c\frac{d(x,y)^2}{t}},
        \quad \forall  x,y \in E_i \setminus F^{(\kz)}_i, \,
        0<t \leq \beta \kz^2.
    \]
    \end{lem}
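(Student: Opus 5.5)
We plan to apply Davies' Theorem \ref{davies} with $m=1$, $\delta=1/2$ and a fixed $\epsilon\in(0,1/8)$. The inputs $a,b,A$ will come from Lemma \ref{lem:es-away}, applied with a slightly enlarged parameter $\beta'=2\beta$, since we need bounds at times $s\in(t/2,3t/2)$ and $3t/2\le \beta'\kz^2$. Note that $x,y\in E_i\setminus F_i^{(\kz)}$ means $\dist(x,E_0)>2\kz$ and $\dist(y,E_0)>2\kz$. Hence each pair $(x,x)$, $(y,y)$ and $(x,y)$ satisfies $\dist(\cdot,E_0)+\dist(\cdot,E_0)\ge 2\kz$, and Lemma \ref{lem:es-away} applies to all three.

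First, Lemma \ref{lem:es-away} gives $h_{t/2}(x,x)\lesssim_\beta V_i(x,\sqrt{t/2})^{-1}$ and $h_{t/2}(y,y)\lesssim_\beta V_i(y,\sqrt{t/2})^{-1}$. By doubling these are $\lesssim V_i(x,\sqrt t)^{-1}$ and $\lesssim V_i(y,\sqrt t)^{-1}$, and we take $a$ and $b$ to be these right-hand sides. Second, for $s\in(t/2,3t/2)$ we have
$$h_s(x,y)\lesssim_\beta \frac{1}{V_i(y,\sqrt s)}e^{-c\frac{d(x,y)^2}{s}}\lesssim \frac{1}{V_i(y,\sqrt t)}e^{-c'\frac{d(x,y)^2}{t}}=:A_0.$$
To ensure $A\le\sqrt{ab}$, we set $A:=\min\{A_0,\sqrt{ab}\}$. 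The bound $h_s(x,y)\le\sqrt{h_s(x,x)h_s(y,y)}$ (Cauchy--Schwarz together with the semigroup property), combined with the monotonicity of $s\mapsto h_s(x,x)$, gives $h_s(x,y)\le \sqrt{h_{t/2}(x,x)h_{t/2}(y,y)}\le\sqrt{ab}$. So this choice of $A$ is admissible, up to constants absorbed into $a,b$.

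Davies then yields
$$|t\partial_t h_t(x,y)|\lesssim (ab)^{\frac32\epsilon}A^{1-3\epsilon}\le (ab)^{\frac32\epsilon}A_0^{1-3\epsilon}.$$
Next, \eqref{eq:change-centre} gives $V_i(x,\sqrt t)^{-1}\lesssim V_i(y,\sqrt t)^{-1}e^{c''d(x,y)^2/t}$, with $c''$ as small as we like. Therefore
$$(ab)^{\frac32\epsilon}\lesssim V_i(y,\sqrt t)^{-3\epsilon}e^{\frac32\epsilon c''\frac{d(x,y)^2}{t}},$$
and the product is bounded by
$$V_i(y,\sqrt t)^{-1}\exp\Big\{-\big(c'(1-3\epsilon)-\tfrac32\epsilon c''\big)\frac{d(x,y)^2}{t}\Big\}.$$
Choosing $c''$ small makes the exponent negative, which gives the claim.

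The main obstacle is the bookkeeping of the time window. We must check that Lemma \ref{lem:es-away} applies at all times $s\le 3t/2\le \frac32\beta\kz^2$, which it does with $\beta'=2\beta\ge16$, the implicit constant then depending only on $\beta$. We must also make sure that the Gaussian factor survives the change of centre from $x$ to $y$; this is handled by taking the constant $c'$ in \eqref{eq:change-centre} small relative to $c$.
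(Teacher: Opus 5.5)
Your proposal is correct and follows the paper's route: the on-diagonal bounds at time $t/2$ and the off-diagonal bound for $s\in(t/2,3t/2)$ from Lemma \ref{lem:es-away} are fed into Davies' Theorem \ref{davies}. You also make explicit bookkeeping the paper leaves implicit, namely applying Lemma \ref{lem:es-away} with an enlarged $\beta$, checking the admissibility condition $A\le\sqrt{ab}$, and absorbing the change of centre into the Gaussian factor, and you handle all three correctly.
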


\begin{proof}
    %Similarly,
    %Moreover,
    From Lemma \ref{lem:es-away}, we obtain that for any $x,y \in E_i \setminus F^{(\kappa)}_i$, $0 < t \leq \beta \kz^2$ and $t/2 < s < 3t/2$,
    $$
    h_{t/2}(x,x) \lesssim_\beta \frac{1}{V_{i}(x,\sqrt t)},
    \quad
    h_{t/2}(y,y) \lesssim_\beta \frac{1}{V_{i}(y,\sqrt t)},
    \quad
    h_s(x,y)
    \lesssim_\beta \frac{1}{V_{i}(y,\sqrt t)} e^{-c\frac{d(x,y)^2}{t}},
    $$
 %
%
%
%    \[
%    h_s(x,y)
%     +
%     h_{t/2}(x,x)
%     +
%    h_{t/2}(y,y)
%    \lesssim_\beta \frac{1}{V_{i}(y,\sqrt t)} e^{-c\frac{d(x,y)^2}{t}}.
%    \]
    which together with Theorem \ref{davies} implies
\begin{equation*}
    |t\partial_t h_t(x,y)| \lesssim_\beta  \frac{1}{V_{i}(y,\sqrt t)}%\exp\left(-\frac{d(x,y)^2}{ct}\right),
    e^{-c\frac{d(x,y)^2}{t}}.
    % \quad \forall x,y \in E_i \setminus F^{(\kz)}_i , \, 0<t \leq \beta \kz^2.
\end{equation*}
This completes the proof.
\end{proof}
Using the heat kernel upper bound (Theorem \ref{thm:sim-heat-es}) and Theorem \ref{davies},
we obtain the following estimates for the time derivative of the heat kernel on weighted manifold $\widetilde{M}$,
which will be used to deduce the mapping properties of the time derivative of the heat semigroup in Subsection \ref{sec:der-mapping-pr}.
\begin{prop}\label{time-heat-parabolic-2}
Let $1 \leq i , j \leq \ell$ and $0 < \epsilon < 1/8$. If $x\in  E_j\setminus F^{(R_j)}_j $, $y\in F^{(R_i)}_i$ and $t \ge R_j^2$,
then the following statements hold true.
\begin{enumerate}
\item[\rm{(i)}]
    When $n_i \neq 2$, $n_j \neq 2$ and $i=j$, it holds
   \begin{equation*}
   |t\partial_t\tilde h_t(x,y)|
    \lesssim_\epsilon
        \frac{\widetilde{V}_0(R_i)}{\widetilde{V}_0(\sqrt t) \widetilde{V}_i(R_i)}
        \left[1+ \left(\frac{\widetilde{V}_i(R_i)}{\widetilde{V}_0(R_i)}\right)^{\frac{3}{2}\epsilon}
\left(\frac{|y|^2}{\widetilde{V}_i(|y|)}\right)^{\frac{3}{2}\epsilon} \right]  e^{-c\frac{d(x,y)^2}{t}}.
\end{equation*}
\item[\rm{(ii)}]
    When $n_i \neq 2$, $n_j \neq 2$ and $i\neq j$, it holds
   \begin{equation*}
   |t\partial_t\tilde h_t(x,y)|
    \lesssim_\epsilon
       {\frac{\widetilde{V}_0(R_j)}{\widetilde{V}_0(\sqrt t)\widetilde{V}_j(R_j)}}\left(\frac{\widetilde{V}_j(R_j)}{\widetilde{V}_0(R_j)}\right)^{\frac 32\epsilon}\left(\frac{|y|^2}{\widetilde{V}_i(|y|)}+\frac{R_j^2}{\widetilde{V}_i(R_j)}\right)^{1-3\epsilon} e^{-c\frac{|x|^2+|y|^2}{t}}.
\end{equation*}
\item[\rm{(iii)}] When $n_i=2$ and $n_j<2$, it holds
\begin{equation*}
   |t\partial_t\tilde h_t(x,y)|
    \lesssim_\epsilon
     \frac{1}{\widetilde{V}_0(\sqrt t)} e^{-c\frac{|x|^2+|y|^2}{t}}.
\end{equation*}
\item[\rm{(iv)}] When $n_i=2$ and $n_j=2$, it holds
    \begin{equation*}
   |t\partial_t\tilde h_t(x,y)|
    \lesssim_\epsilon
        \frac{1}{\widetilde{V}_0(\sqrt t)} e^{-c\frac{d(x,y)^2}{t}}.
    \end{equation*}
\item[\rm{(v)}] When $n_i=2$ and $n_j>2$, it holds
       % \begin{aligned}
%            & \frac{R_j^2}{t} \frac{\log(2+R_j)}{ \log(2+\sqrt t)\log^{3\epsilon}(2+R_j)} \frac{1}{ \widetilde V_j(R_j)} \left(\frac{\widetilde V_j(R_j)}{R_j^2}\right)^{\frac 32\epsilon} \\
%            & \ \times \left(\frac{1}{\log(2+R_j)}+\frac{1}{\log(2+|y|)}\right)^{1-3\epsilon}e^{-c\frac{|x|^2+|y|^2}{t}},
%        \end{aligned}
\begin{align*}
   |t\partial_t\tilde h_t(x,y)|
    \lesssim_\epsilon
 \frac{\widetilde{V}_0(R_j)}{\widetilde{V}_0(\sqrt t)\widetilde V_j(R_j)}\left(\frac{\widetilde V_j(R_j)}{\widetilde{V}_0(R_j)}\right)^{\frac 32\epsilon} \left(\frac{1}{\log(2+R_j)}+\frac{1}{\log(2+|y|)}\right)^{1-3\epsilon} e^{-c\frac{|x|^2+|y|^2}{t}}.
\end{align*}
\item[\rm{(vi)}] When $n_i\neq 2$ and $n_j=2$, it holds
\begin{equation*}
          |t\partial_t\tilde h_t(x,y)|
    \lesssim_\epsilon
     \frac{1}{\widetilde{V}_0(\sqrt t)} \left(\frac{|y|^2}{\widetilde V_i(|y|)}+\frac{\widetilde{V}_0(R_j)}{\widetilde V_i(R_j)}\right)^{\frac 32 \epsilon}
      \left( \frac{|y|^2}{\widetilde V_i(|y|)} + \frac{\widetilde{V}_0(R_j)}{\log(2+R_j)\widetilde{V}_i(R_j)} \right)^{1-3\epsilon}
       e^{-c\frac{|x|^2+|y|^2}{t}}.
\end{equation*}
\end{enumerate}
\end{prop}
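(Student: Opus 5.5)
The plan is to apply Davies' Theorem \ref{davies} to the heat kernel $\tilde h_t$ of $\widetilde M$, with $\delta=1/2$ and the given $\epsilon$. We need three quantities: $a$ bounding $\tilde h_{t/2}(x,x)$, $b$ bounding $\tilde h_{t/2}(y,y)$, and $A$ bounding $\tilde h_s(x,y)$ uniformly for $t/2<s<3t/2$. Davies' theorem then gives
\[
|t\partial_t \tilde h_t(x,y)|\lesssim_\epsilon a^{\frac32\epsilon}b^{\frac32\epsilon}A^{1-3\epsilon}.
\]
The constraint $A\le\sqrt{ab}$ is harmless. Since $s\mapsto\tilde h_s(z,z)$ is non-increasing, the semigroup property and Cauchy--Schwarz give $\tilde h_s(x,y)\le \sqrt{\tilde h_{t/2}(x,x)\tilde h_{t/2}(y,y)}\le\sqrt{ab}$ for $s\ge t/2$. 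So we may take $A$ to be the minimum of the off-diagonal bound and $\sqrt{ab}$.

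\textbf{Off-diagonal bound $A$.} For $i=j$ we use Theorem \ref{thm:sim-heat-es} (ii); for $i\neq j$ we use (iii), which applies because $t\ge R_j^2\ge 1$. We then insert the information on $\widetilde H$.

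\emph{The factor at $x$.} Since $x\in E_j\setminus F_j^{(R_j)}$, we have $|x|\gtrsim R_j$ and $t\ge R_j^2$. Lemma \ref{H-equal-general} gives $\widetilde H(x,s)\sim |x|^2/\widetilde V_j(|x|)$ when $n_j\neq2$, and Lemma \ref{lem:tilde-V} (ii) gives $|x|^2/\widetilde V_j(|x|)\lesssim R_j^2/\widetilde V_j(R_j)$. When $n_j=2$ we have instead $\widetilde H(x,s)\lesssim 1/\log(2+R_j)$.

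\emph{The factor at $y$.} Since $y\in F_i^{(R_i)}$, the quantity $\widetilde H(y,s)$ is $\sim |y|^2/\widetilde V_i(|y|)$ if $n_i\ne 2$ and $\lesssim 1/\log(2+|y|)$ if $n_i=2$.

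\emph{Converting to the stated prefactor.} The terms $1/\widetilde V_j(\sqrt s)$ and $1/\widetilde V_i(\sqrt s)$ are turned into the prefactor $\widetilde V_0(R_j)/(\widetilde V_0(\sqrt t)\widetilde V_j(R_j))$ by Corollary \ref{lem:4-terms} with $r=R_j\le R=\sqrt t$. This gives $\widetilde V_j(R_j)/\widetilde V_j(\sqrt t)\lesssim \widetilde V_0(R_j)/\widetilde V_0(\sqrt t)$. We also use Lemma \ref{V0-general-equal}, $\widetilde V_0(R_j)\sim R_j^2\log^2(2+R_j)$, to exchange $R_j^2$ with $\widetilde V_0(R_j)$ up to logarithms. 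Those logarithms are exactly what produce the $\log(2+R_j)$ factors in (v) and (vi). Finally, the doubling of $\widetilde V_0$ lets us replace $s$ by $t$ throughout.

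\emph{Gaussian factor.} When $i\ne j$, Remark \ref{rem:new-dist} shows that the exponential is $e^{-c(|x|^2+|y|^2)/t}$. When $i=j$ the term with $d(x,y)$ appears. In cases (iii) and (iv), every summand is already $\lesssim 1/\widetilde V_0(\sqrt t)$. Here we use Corollary \ref{cor:phi-H} and the fact that $\widetilde V_0\le\widetilde V_k$, together with $\widetilde H\le1$.

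\textbf{Diagonal bounds $a$, $b$.} These use the same estimates with $x=y$ and carry no Gaussian factor. For $b$ we have $|y|\lesssim R_i\sim$ comparable scale, so $\widetilde V_i(y,\sqrt t)$ is handled by Lemma \ref{lem:V-tildeV} and doubling. Then $b$ is bounded by $\widetilde V_0(R_j)/(\widetilde V_0(\sqrt t)\widetilde V_j(R_j))$ times $(\widetilde V_j(R_j)/\widetilde V_0(R_j))$-type factors. Raised to the power $\frac32\epsilon$, these give the bracketed $\epsilon$-powers in (i), (ii), (v) and (vi). For $a$, the local term $1/\widetilde V_j(x,\sqrt t)$ has a centre that may be far from $o_j$. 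If $|x|\le\sqrt t$, then $B(x,2\sqrt t)\supset B(o_j,\sqrt t)$ and doubling suffices. If $|x|>\sqrt t$, the change of centre \eqref{eq:change-centre} costs at most $(|x|/\sqrt t)^{N_\infty}$. Since $a$ enters only to the power $\frac32\epsilon$, this polynomial loss is absorbed by $A^{1-3\epsilon}$, which contains $e^{-c|x|^2/t}$, at the price of a smaller $c$.

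\textbf{Main obstacle.} The hardest part is the case bookkeeping, specifically matching powers so that the three contributions combine into exactly the displayed expressions. This is most delicate in the mixed cases (ii), (v) and (vi), where one must decide which of $|y|^2/\widetilde V_i(|y|)$ and $R_j^2/\widetilde V_i(R_j)$ (respectively $\widetilde V_0(R_j)/\widetilde V_i(R_j)$) dominates, and how the logarithms from $n=2$ enter. I would first carry out case (ii) carefully. The other cases follow by substituting the corresponding $\widetilde H$ bounds from Lemma \ref{H-equal-general}, and in the $n_i=2$ cases by dropping the refined factors in favour of Corollary \ref{cor:phi-H}.
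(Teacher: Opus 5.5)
Your scheme matches the paper's. You use Theorem~\ref{davies} with $\delta=1/2$, the off-diagonal bound $A$ from Theorem~\ref{thm:sim-heat-es}, the $\widetilde{H}$ bounds of Lemma~\ref{H-equal-general}, the conversion via Corollary~\ref{lem:4-terms} and Lemma~\ref{V0-general-equal}, and a change of centre at $x$ whose loss is absorbed by the Gaussian in $A^{1-3\epsilon}$. Your observation that the hypothesis $A\le\sqrt{ab}$ can always be arranged is correct, via $\tilde h_s(x,y)\le\sqrt{\tilde h_{t/2}(x,x)\,\tilde h_{t/2}(y,y)}$ for $s\ge t/2$, and it covers a point the paper skips. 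However, your treatment of the diagonal bound $b$ fails as written.

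You assume that $|y|\lesssim R_i$ is on the scale of $\sqrt t$. On that basis you control the local term $1/\widetilde{V}_i(y,\sqrt t)$ in the bound for $\tilde h_{t/2}(y,y)$ by $1/\widetilde{V}_i(\sqrt t)$, using Lemma~\ref{lem:V-tildeV} and doubling.

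\begin{itemize}
\item The radii are linked only by $V_i(R_i)\sim V_j(R_j)$ (see \eqref{eq:fifj-con}), and $t$ may be as small as $R_j^2$.
\item In case (v), $R_i^2\sim V_j(R_j)\gtrsim R_j^{n_j}$ with $n_j>2$, so $R_i\gtrsim R_j^{n_j/2}\gg R_j$.
\item The same happens in (ii) when, for instance, $n_i<2<n_j$, and in (vi) when $n_i<2$, where $R_i\sim R_j^{2/n_i}$.
\end{itemize}

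Hence $|y|/\sqrt t$ is unbounded. Moving the centre from $y$ to $o_i$ then costs a factor of order $(1+|y|/\sqrt t)^{N_i}$. Doubling cannot avoid this, and in general neither can the weight $\phi_i^2(|y|)+\phi_i^2(\sqrt t)$ from Lemma~\ref{lem:V-tildeV}.

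The remedy is the device you already use for $a$. Allow $b$ a factor $e^{c'|y|^2/t}$, as the paper does in its Cases 2, 5 and 6, and absorb $b^{\frac32\epsilon}$ into $A^{1-3\epsilon}$. This works because all of these cases have $i\neq j$, so $A$ carries $e^{-c(|x|^2+|y|^2)/t}$.

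The bound $|y|\lesssim\sqrt t$ is legitimate only in the following cases, and even there you should say why:
\begin{itemize}
\item in (i), because $R_i=R_j$;
\item in (iii), (iv), and (vi) with $n_i>2$, because $R_i\lesssim R_j$ follows from \eqref{eq:fifj-con} and Lemma~\ref{lem:volume-growth}.
\end{itemize}

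A minor point: Corollary~\ref{cor:phi-H} plays no role for $\tilde h$. In (iii) and (iv) you only need $\widetilde{H}\le1$ and $\widetilde{V}_0\le\widetilde{V}_k$.
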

%
%
%======================End of this version==================================

\begin{proof}
  %It follows from Theorem \ref{thm:sim-heat-es} (ii), Theorem  \ref{thm:sim-heat-es} (iii) and Remark \ref{rem:thr} (iii)
It follows from Theorem \ref{thm:sim-heat-es} and Remark \ref{rem:thr} (iii)
  that for any $x \in E_j$, $y \in E_i$ and
  $t/2 \leq s  \leq 3t/2$,
 %  \begin{equation*} %label{eq:xx}
%    \tilde{h}_{t/2}(x,x)
%     \lesssim \frac{\widetilde{H}(x,t/2)}{\widetilde{V}_0(\sqrt{t})} + \frac{1}{\widetilde{V}_j(x, \sqrt t)},
%    \end{equation*}
%    \begin{equation*} %\label{eq:yy}
%      \tilde{h}_{t/2}(y,y)
%     \lesssim \frac{\widetilde{H}(y,t/2)}{\widetilde{V}_0(\sqrt{t})} + \frac{1}{\widetilde{V}_i(y, \sqrt t)}
%    \end{equation*}
%  \[
%      \tilde{h}_{r}(x,y)
%      \lesssim
%      \lf(\frac{\widetilde{H}(x,r) \widetilde{H}(y,r)}{\widetilde{V}_0(\sqrt{t})}  + \frac{1}{\widetilde{V}_i(x, \sqrt t)}\r) e^{-c\frac{d(x,y)^2}{t}},
%      \quad \mbox{if} \, i=j,
%  \]
%  and
%  \[
%    \tilde{h}_{r}(x,y)
%      \lesssim
%     \left(\frac{\widetilde{H}(x,r)\widetilde{H}(y,r)}{{\widetilde{V}}_0(\sqrt t)} + \frac{\widetilde{H}(y,r)}{\widetilde{V}_{j}(\sqrt t)}
%            + \frac{\widetilde{H}(x,r)}{\widetilde{V}_{i}(\sqrt t)} \right) e^{-c\frac{|x|^2+|y|^2}{t}},
%     \quad \mbox{if} \, i \neq j
%  \]
  \begin{equation*} %\label{eq:xy}
      \tilde{h}_{s}(x,y)
      \lesssim
       \begin{dcases}
            \lf(\frac{\widetilde{H}(x,s) \widetilde{H}(y,s)}{\widetilde{V}_0(\sqrt{t})}  + \frac{1}{\widetilde{V}_i(x, \sqrt t)}\r) e^{-c\frac{d(x,y)^2}{t}},
            & \,\mbox{if} \, i=j, \\
            \left(\frac{\widetilde{H}(x,s)\widetilde{H}(y,s)}{{\widetilde{V}}_0(\sqrt t)} + \frac{\widetilde{H}(y,s)}{\widetilde{V}_{j}(\sqrt t)}
            + \frac{\widetilde{H}(x,s)}{\widetilde{V}_{i}(\sqrt t)} \right) e^{-c\frac{|x|^2+|y|^2}{t}},
            & \,\mbox{if} \, i \neq j,
       \end{dcases}
    \end{equation*}
    which together with the facts $\widetilde{H}(x,t/2)\leq 1$ and $\widetilde{H}(y,t/2) \leq 1$ (see the definition of $\widetilde{H}$) further implies that
      \begin{equation*} %label{eq:xx}
    \tilde{h}_{t/2}(x,x)
     \lesssim \frac{\widetilde{H}(x,t/2)}{\widetilde{V}_0(\sqrt{t})} + \frac{1}{\widetilde{V}_j(x, \sqrt t)}
    \end{equation*}
    and
    \begin{equation*} %\label{eq:yy}
      \tilde{h}_{t/2}(y,y)
     \lesssim \frac{\widetilde{H}(y,t/2)}{\widetilde{V}_0(\sqrt{t})} + \frac{1}{\widetilde{V}_i(y, \sqrt t)}.
    \end{equation*}
%   Let us collect some estimates for the above terms %in the right-hand side of the above inequality.
%   in the upper bounds of $\tilde{h}_{t/2}(x,x)$,
%     $\tilde{h}_{t/2}(y,y)$ and $\tilde{h}_{r}(x,y)$.
   Assume now that $x\in  E_j\setminus F^{(R_j)}_j $, $y\in F^{(R_i)}_i$.
As a simple consequence of  %the estimates in Subsection \ref{sec:es-terms} including
%     Lemmas \ref{lem:tilde-V}, \ref{V0-general-equal},  \ref{lem:4-terms} and \ref{H-equal-general},
    Corollary \ref{lem:4-terms}, Lemmas \ref{H-equal-general}, \ref{V0-general-equal}, and the fact $|x| \ge R_j$,
     it holds that
       \begin{equation} \label{eq:1/w-V}
         \frac{1}{\widetilde{V}_k(\sqrt t)}
       \lesssim \frac{\widetilde{V}_0(R_j)}{\widetilde{V}_0(\sqrt t) \widetilde{V}_k(R_j)},
       %\lesssim \frac{1}{\widetilde{V}_0(\sqrt t)},
       \quad 1 \leq k \leq \ell,  \qquad \qquad \qquad \qquad \qquad
       \end{equation}
              \begin{equation*} \label{eq:hxr}
            \qquad \widetilde{H}(x,r)
            \lesssim
            \begin{dcases}
              \frac{R_j^2}{\widetilde{V}_j(R_j)}
             \lesssim \frac{\widetilde{V}_0(R_j)}{\log(2+R_j)\widetilde{V}_j(R_j)}
              \lesssim \frac{\widetilde{V}_0(R_j)}{\widetilde{V}_j(R_j)}
              \lesssim 1, &\, \mbox{if} \, n_j \neq 2, \\
             \frac{1}{\log(2+R_j)}
              \lesssim 1, &\, \mbox{if} \, n_j =2,
            \end{dcases}
            %\quad \forall r>0.
        \end{equation*}
        and
        \begin{equation*} \label{eq:hyr}
            \widetilde{H}(y,r)
            \lesssim
            \begin{dcases}
              \frac{|y|^2}{\widetilde{V}_i(|y|)} \lesssim 1, & \mbox{if} \, n_i \neq 2, \\
              \frac{1}{\log(2+|y|)} \lesssim 1, & \mbox{if} \, n_i =2.
            \end{dcases}   \qquad \qquad \qquad \quad \qquad
            %\quad \forall r>0,
          \end{equation*}
%Let $1\leq k \leq \ell$,
%it follows from  Lemma \ref{lem:4-terms} that
%for any $r >0$. Note that $\widetilde{H}(x,r)$ and $\widetilde{H}(y,r)$
%%obtain several bounds. /
%are controlled by several bounds.
%In the first case, when using \eqref{eq:hxr} and \eqref{eq:hyr}, we will specify which bound is used.
%Since the argument is the same,
%in the remaining cases,
%\eqref{eq:1/w-V}-\eqref{eq:hyr} are used without specification
%for concision.
%%are argued in the same method
%
%We shall use \eqref{eq:xy}-\eqref{eq:hyr} to control the terms  $\tilde{h}_{t/2}(x,x)$,
%     $\tilde{h}_{t/2}(y,y)$ and $\tilde{h}_{s}(x,y)$, and then get the estimates of $|t\partial_t\tilde h_t(x,y)|$ by Theorem \ref{davies}.
%
Below, we shall repeatedly apply the above  estimates %apply \eqref{eq:xx}-\eqref{eq:hyr}
without mentioning them. %Let $t/2 < s <3t/2$.
We will control %/estimate %control the terms
$\tilde{h}_{t/2}(x,x)$,
     $\tilde{h}_{t/2}(y,y)$ and $\tilde{h}_{s}(x,y)$, and then get the upper bounds of $|t\partial_t\tilde h_t(x,y)|$ by Theorem \ref{davies}.
 %    {\color{red} �����ǹ���a,b,c��ʱ���� ������ʽ��e, f, g���ᱻʹ��. �ⲻ�ᱻ��ʾ����������
%     �����ڹ���H(x,t), H(y,t)ʱ�� ���ǵļ������е�һ�����ᱻʹ�á�
%     �����棬 ���ǽ�ʹ��a, b, cȥ����xxx.
%     When estimating $\widetilde{H}(x,r)$ and $\widetilde{H}(y,r)$, one of their several different bounds will by used.
%     }
%Based on the different ranges of $n_i$ and $n_j$,
In the rest of the proof, %the letter $c'$ is a small enough positive constant.
the constant $c'>0$ is small enough.
Let us consider the following six cases,
%which are classified depending on the different ranges of $n_i$ and $n_j$.
%which are
which are distinguished by the varying ranges of $n_i$ and $n_j$.

\textbf{Case 1: \boldmath $n_i \neq 2$ and $n_j \neq 2$ with $i = j$.} %Let us begin with the proof of.
%\\  ================\\
%  By the standard trick of doubling property which will be used repeatedly, we have
%  \begin{align} \label{eq:doub-exa}
%    \frac{1}{\widetilde{V}_i(x, \sqrt t)}
%     &= \frac{1}{\widetilde{V}_i(o_i, \sqrt t)}  \frac{\widetilde{V}_i(o_i, \sqrt t)}{\widetilde{V}_i(x, \sqrt t)}
%    \leq \frac{1}{\widetilde{V}_i(o_i, \sqrt t)}  \frac{\widetilde{V}_i(x, \sqrt t + d(x, o_i))}{\widetilde{V}_i(x, \sqrt t)}  \notag \\
%     & \lesssim \frac{1}{\widetilde{V}_i(o_i, \sqrt t)} \lf( \frac{\sqrt t + d(x, o_i)}{\sqrt t} \r)^{N_\infty},
%    %\lesssim \frac{1}{\widetilde{V}_i(o_i, \sqrt t)} = \frac{1}{\widetilde{V}_i(\sqrt t)}.
%  \end{align} which together with
%   $
%    d(x,o_i) \leq d(x, E_0) + \diam{E_0} \leq 2R_i +1 \leq 3\sqrt t
%   $
%  and the fact $V_i(\sqrt t) \lesssim \widetilde{V}_0(R_i) / (\widetilde{V}_0(\sqrt t) \widetilde{V}_i(R_i))$ implies that
%  for any $x \in F^{(R_i)}_i$, $t \ge R_j^2$ and $i  = j$, it holds
%  \begin{equation} \label{eq:v_i}
%    \frac{1}{\widetilde{V}_i(x, \sqrt t)}
%    \lesssim \frac{1}{\widetilde{V}_i(\sqrt t)}
%    \lesssim \frac{1}{\widetilde{V}_0(\sqrt t)} \frac{\widetilde{V}_0(R_i)}{\widetilde{V}_i(R_i)}.
%  \end{equation}
%========================\\
  %For any $x \in  F^{(R_i)}_i$ and $t \ge R_i^2$,
  %By the fact that   $ d(x,o_i) \leq d(x, E_0) + \diam(E_0) \leq 2R_i +1 \leq 3\sqrt t,$
   Note that in this case, it holds $$ d(y,o_i) \leq \dist(y, E_0) + \diam(E_0) \leq 2R_i+2 \lesssim \sqrt t,$$
   and hence by the doubling property of $\tilde{\mu}_i$ and \eqref{eq:1/w-V}, % and the standard trick of doubling property (cf. \eqref{eq:change-centre}),
   we have
     \begin{equation} \label{eq:v_i}
    \frac{1}{\widetilde{V}_i(y, \sqrt t)}
    \lesssim \frac{1}{\widetilde{V}_i(\sqrt t)}
    \lesssim   \frac{\widetilde{V}_0(R_i)}{\widetilde{V}_0(\sqrt t) \widetilde{V}_i(R_i)}.
  \end{equation}
  %where the last inequality is due to \eqref{eq:1/w-V}.
    %For all $x \in  F^{(R_i)}_i$, $y \in E_i \setminus F^{(R_i)}_i$, $t \ge R_i^2$ and $s \in (t/2, 3t/2)$,
From this, %and applying the estimates $\widetilde{H}(x,t/2) \lesssim \widetilde{V}_0(R_i)/\widetilde{V}_i(R_i)$ and
%$\widetilde{H}(y, t/2) \lesssim |y|^2 / \widetilde{V}_i(|y|)$,
one deduces
%\begin{align*}%\label{eq:n-2-yy}
%      \tilde{h}_{t/2}(x,x)
%    &% \lesssim \frac{\widetilde{H}(x,t/2)^2}{\widetilde{V}_0(\sqrt{t})} + \frac{1}{\widetilde{V}_i(x, \sqrt t)}
%     \lesssim \frac{1}{\widetilde{V}_0(\sqrt{t})} \frac{\widetilde{V}_0(R_i)}{\widetilde{V}_i(R_i)} + \frac{1}{\widetilde{V}_i(y, \sqrt t)} e^{c'\frac{d(x,y)^2}{t}} \notag\\
%    & \lesssim \frac{\widetilde{V}_0(R_i)}{\widetilde{V}_0(\sqrt t) \widetilde{V}_i(R_i)}  e^{c'\frac{d(x,y)^2}{t}}
%      =: a(x,t).
%  \end{align*}
  \[
      \tilde{h}_{t/2}(x,x)
    % \lesssim \frac{\widetilde{H}(x,t/2)^2}{\widetilde{V}_0(\sqrt{t})} + \frac{1}{\widetilde{V}_i(x, \sqrt t)}
     \lesssim \frac{1}{\widetilde{V}_0(\sqrt{t})} \frac{\widetilde{V}_0(R_i)}{\widetilde{V}_i(R_i)} + \frac{1}{\widetilde{V}_i(y, \sqrt t)} e^{c'\frac{d(x,y)^2}{t}}
     \lesssim \frac{\widetilde{V}_0(R_i)}{\widetilde{V}_0(\sqrt t) \widetilde{V}_i(R_i)}  e^{c'\frac{d(x,y)^2}{t}}
      =: a(x,t)
  \]
  and
%  We deduce from  the fact $\widetilde{H}(y,t/2) \lesssim |y|^2/\widetilde{V}_i(|y|)$ and \eqref{eq:v_i} that
     \begin{equation*}%\label{eq:n-2-xx}
    \tilde{h}_{t/2}(y,y)
 %    \lesssim \frac{\widetilde{H}(y,t/2)^2}{\widetilde{V}_0(\sqrt{t})} + \frac{1}{\widetilde{V}_i(y, \sqrt t)}
    % \lesssim \frac{1}{\widetilde{V}_0(\sqrt t)} \frac{|y|^2}{\widetilde{V}_i(|y|)}
%        + \frac{1}{\widetilde{V}_i(\sqrt t)}  \\
     \lesssim \frac{1}{\widetilde{V}_0(\sqrt t)} \lf(\frac{|y|^2}{\widetilde{V}_i(|y|)}
              + \frac{\widetilde{V}_0(R_i)}{\widetilde{V}_i(R_i)}\r)
    =: b(y,t).
  \end{equation*}
     %Here and in the rest of the proof, the constant $c' >0$ is small enough.
  Similarly, %the facts  $\widetilde{H}(x,s) \lesssim \widetilde{V}_0(R_i) / \widetilde{V}_i(R_i)$ and  $\widetilde{H}(y,s)\lesssim 1$
    %one has via \eqref{eq:heat-es-same-end} that for any $s \in (t/2, 3t/2)$,
    %yield that
    using \eqref{eq:v_i} again
    %and applying the estimates $\widetilde{H}(x,s) \lesssim \widetilde{V}_0(R_i) / \widetilde{V}_i(R_i)$ and  $\widetilde{H}(y,s)\lesssim 1$
    , it holds
    \begin{equation*}%\label{eq:n-2-xy}
    \tilde{h}_{s}(x,y)
     %\lesssim \lf(\frac{\widetilde{H}(x,s) \widetilde{H}(y,s)}{\widetilde{V}_0(\sqrt{t})}  + \frac{1}{\widetilde{V}_i(y, \sqrt t)}\r) e^{-c\frac{d(x,y)^2}{t}}
    %\lesssim \frac{1}{\widetilde{V}_0(\sqrt{t})} \frac{|y|^2}{\widetilde{V}_i(|y|)} e^{\frac{-d(x,y)^2}{c't}} + \frac{1}{\widetilde{V}_i(\sqrt t)}e^{\frac{-d(x,y)^2}{c't}}\\
     \lesssim \frac{\widetilde{V}_0(R_i)}{\widetilde{V}_0(\sqrt t) \widetilde{V}_i(R_i)}  e^{-c\frac{d(x,y)^2}{t}}
     =: A(x,y,t).
  \end{equation*}
  Combining these estimates above with Theorem \ref{davies}, it follows that
 \begin{eqnarray*}
|t\partial_t\tilde{h}_t(x,y)|
&&\lesssim_\epsilon a(x,t)^{\frac{3}{2}\epsilon} b(y,t)^{\frac{3}{2}\epsilon} A(x,y,t)^{1-3\epsilon} \\
&&\lesssim_\epsilon
       \left(\frac{\widetilde{V}_0(R_i)}{\widetilde{V}_0(\sqrt t)\widetilde{V}_i(R_i)}\right)^{\frac{3}{2}\epsilon}
       \left[\frac{1}{\widetilde{V}_0(\sqrt t)}\left(\frac{|y|^2}{\widetilde{V}_{i}(|y|)}+\frac{\widetilde{V}_0(R_i)}{\widetilde{V}_i(R_i)}\right)\right]^{\frac 32\epsilon}
        \left(\frac{\widetilde{V}_0(R_i)}{\widetilde{V}_0(\sqrt t)\widetilde{V}_i(R_i)}\right)^{1-3\epsilon}  e^{-c\frac{d(x,y)^2}{t}}
        \\
&& \lesssim_\epsilon
\left(\frac{\widetilde{V}_0(R_i)}{\widetilde{V}_0(\sqrt t)\widetilde{V}_i(R_i)}\right)^{\frac{3}{2}\epsilon}
\left(\frac{\widetilde{V}_0(R_i)}{\widetilde{V}_0(\sqrt t)\widetilde{V}_i(R_i)}\right)^{\frac{3}{2}\epsilon}
\left( \frac{\widetilde{V}_i(R_i)}{\widetilde{V}_0(R_i)}
\frac{|y|^2}{\widetilde{V}_i(|y|)} + 1 \right)^{\frac{3}{2}\epsilon} \\
&& \ \times
        \left(\frac{\widetilde{V}_0(R_i)}{\widetilde{V}_0(\sqrt t)\widetilde{V}_i(R_i)}\right)^{1-3\epsilon}  e^{-c\frac{d(x,y)^2}{t}}\\
&&\lesssim_\epsilon  \frac{\widetilde{V}_0(R_i)}{\widetilde{V}_0(\sqrt t) \widetilde{V}_i(R_i)}
\left[1+ \left(\frac{\widetilde{V}_i(R_i)}{\widetilde{V}_0(R_i)}\right)^{\frac{3}{2}\epsilon}
\left(\frac{|y|^2}{\widetilde{V}_i(|y|)}\right)^{\frac{3}{2}\epsilon} \right]
e^{-c\frac{d(x,y)^2}{t}}.
%&& \lesssim_\epsilon  \frac{R_i^2}{t \widetilde{V}_i(R_i)}\left[ \left(\frac{\widetilde{V}_i(R_i)}{R_i^2}\right)^{\frac{3}{2}\epsilon}
%\left(\frac{|y|^2}{\widetilde{V}_i(|y|)}\right)^{\frac{3}{2}\epsilon} + 1\right]
%e^{-c\frac{d(x,y)^2}{t}},
\end{eqnarray*}

\textbf{Case 2: \boldmath $n_i \neq 2$ and $n_j \neq 2$ with $i \neq j$.}
%{\color{blue}  Now we turn to the proof of $i\neq j$}.
%In a very similar way,  for all $x \in F^{(R_i)}_i$, $y \in E_j \setminus F^{(R_j)}_j$, $t \ge R_j^2$ and $ s \in (t/2, 3t/2)$,
%$V_i(\sqrt t) \lesssim \widetilde{V}_0(R_j) / (\widetilde{V}_0(\sqrt t) \widetilde{V}_i(R_j))$ and
%$V_j(\sqrt t) \lesssim \widetilde{V}_0(R_j) / (\widetilde{V}_0(\sqrt t) \widetilde{V}_j(R_j))$
%[Using $\widetilde{H}(x,s) \lesssim |x|^2/ \widetilde{V}_i(|x|)$,
%$\widetilde{H}(y,s) \lesssim R_j^2 / \widetilde{V}_j(R_j)$ and
%$\widetilde{H}(y,s) \lesssim \widetilde{V}_0(R_j) / \widetilde{V}_j(R_j)$,]
Using the standard trick of doubling property (cf. \eqref{eq:change-centre}), it holds
\begin{equation}\label{c2xx}
    \tilde{h}_{t/2}(x,x)
   %  \lesssim \frac{\widetilde{H}(x,t/2)^2}{\widetilde{V}_0(\sqrt{t})} + \frac{1}{\widetilde{V}_j(x, \sqrt t)}
     \lesssim \frac{1}{\widetilde{V}_0(\sqrt{t})} \frac{\widetilde{V}_0(R_j)}{\widetilde{V}_j(R_j)} + \frac{1}{\widetilde{V}_j(\sqrt t)} e^{c'\frac{|x|^2}{t}} \\
     \lesssim \frac{\widetilde{V}_0(R_j)}{\widetilde{V}_0(\sqrt t) \widetilde{V}_j(R_j) }  e^{c'\frac{|x|^2}{t}}
      %:= b_*(x,t).
\end{equation}
%\begin{eqnarray}\label{c2xx}
%    \tilde{h}_{t/2}(x,x)
%    && \lesssim \frac{\widetilde{H}(x,t/2)^2}{\widetilde{V}_0(\sqrt{t})} + \frac{1}{\widetilde{V}_j(x, \sqrt t)}
%     \lesssim \frac{1}{\widetilde{V}_0(\sqrt{t})} \frac{\widetilde{V}_0(R_j)}{\widetilde{V}_j(R_j)} + \frac{1}{\widetilde{V}_j(\sqrt t)} e^{c'\frac{|x|^2}{t}} \nonumber \\
%    && \lesssim \frac{\widetilde{V}_0(R_j)}{\widetilde{V}_0(\sqrt t) \widetilde{V}_j(R_j) }  e^{c'\frac{|x|^2}{t}}
%      %:= b_*(x,t).
%\end{eqnarray}
%
and
%By the fact ${\widetilde{H}^2(y,t/2)}  \lesssim \widetilde{H}(y,t/2) \lesssim \widetilde{V}_0(R_j)/\widetilde{V}_j(R_j)$,
%\eqref{eq:change-centre} and  $\widetilde{V}_j(\sqrt t) \lesssim \widetilde{V}_0(R_j) / (\widetilde{V}_0(\sqrt t) \widetilde{V}_j(R_j))$, one concludes
%and %from ${\widetilde{H}^2(y,t/2)}  \lesssim \widetilde{H}(y,t/2) \lesssim \widetilde{V}_0(R_j)/\widetilde{V}_j(R_j)$ together with \eqref{eq:1/w-V} that
\begin{equation}\label{c2yy}
    \tilde{h}_{t/2}(y,y)
    % \lesssim \frac{\widetilde{H}(y,t/2)^2}{\widetilde{V}_0(\sqrt{t})} + \frac{1}{\widetilde{V}_i(y, \sqrt t)}
     \lesssim \frac{1}{\widetilde{V}_0(\sqrt t)}     + \frac{1}{\widetilde{V}_i(\sqrt t)} e^{c'\frac{|y|^2}{t}}  \\
     \lesssim  \frac{1}{\widetilde{V}_0(\sqrt t)} e^{c'\frac{|y|^2}{t}}.
    %:= a_*(y,t),
\end{equation}
Besides, one has
%One has via \eqref{eq:xy} that
\begin{eqnarray*}
h_s(x,y)
%&& \lesssim \left(\frac{\widetilde{H}(x,s)\widetilde{H}(y,s)}{{\widetilde{V}}_0(\sqrt t)} + \frac{\widetilde{H}(y,s)}{\widetilde{V}_{j}(\sqrt t)}
%    + \frac{\widetilde{H}(x,s)}{\widetilde{V}_{i}(\sqrt t)} \right) e^{-c\frac{|x|^2+|y|^2}{t}}\\
%&& \lesssim \left(\frac{1}{{\widetilde{V}}_0(\sqrt t)}  \frac{|y|^2}{\widetilde{V}_j(|y|)} \frac{|x|^2}{\widetilde{V}_i(|x|)} + \frac{1}{\widetilde{V}_{i}(\sqrt t)} \frac{|y|^2}{\widetilde{V}_j(|y|)} + \frac{1}{\widetilde{V}_{j}(\sqrt t)} \frac{|x|^2}{\widetilde{V}_i(|x|)}\right) e^{-\frac{|x|^2+|y|^2}{ct}}\\
&&\lesssim \left(\frac{1}{\widetilde{V}_0(\sqrt t)}  \frac{ \widetilde{V}_0(R_j)}{  \widetilde{V}_j(R_j)} \frac{|y|^2}{\widetilde{V}_i(|y|)}
   + \frac{1}{\widetilde{V}_{j}(\sqrt t)} \frac{|y|^2}{\widetilde{V}_i(|y|)}
   + \frac{1}{\widetilde{V}_{i}(\sqrt t)} \frac{R_j^2}{\widetilde{V}_j(R_j)}
    \right) e^{-c\frac{|x|^2+|y|^2}{t}} \\
&&\lesssim \left(\frac{1}{\widetilde{V}_0(\sqrt t)}  \frac{ \widetilde{V}_0(R_j)}{  \widetilde{V}_j(R_j)} \frac{|y|^2}{\widetilde{V}_i(|y|)}
   + \frac{\widetilde{V}_0(R_j)}{\widetilde{V}_0(\sqrt t)\widetilde{V}_{j}(R_j)} \frac{|y|^2}{\widetilde{V}_i(|y|)}
   + \frac{\widetilde{V}_0(R_j) }{\widetilde{V}_0(\sqrt t)\widetilde{V}_{i}(R_j)} \frac{R_j^2}{\widetilde{V}_j(R_j)}
    \right) e^{-c\frac{|x|^2+|y|^2}{t}}\\
&&\lesssim  \frac{\widetilde{V}_0(R_j)}{\widetilde{V}_0(\sqrt t)\widetilde{V}_j(R_j)}\left(  \frac{|y|^2}{\widetilde{V}_i(|y|)} + \frac{R_j^2}{\widetilde{V}_i(R_j)}  \right) e^{-c\frac{|x|^2+|y|^2}{t}}.
%&&\lesssim \left(\frac{1}{\widetilde{V}_0(\sqrt t)}  \frac{|x|^2}{\widetilde{V}_i(|x|)} \frac{ \widetilde{V}_0(R_j)}{  \widetilde{V}_j(R_j)}
%   + \frac{\widetilde{V}_0(R_j) }{\widetilde{V}_0(\sqrt t)\widetilde{V}_{i}(R_j)} \frac{R_j^2}{\widetilde{V}_j(R_j)}
%   + \frac{\widetilde{V}_0(R_j)}{\widetilde{V}_0(\sqrt t)\widetilde{V}_{j}(R_j)} \frac{|x|^2}{\widetilde{V}_i(|x|)} \right) e^{-\frac{|x|^2+|y|^2}{ct}}\\
%&&\lesssim  \frac{\widetilde{V}_0(R_j)}{\widetilde{V}_0(\sqrt t)\widetilde{V}_j(R_j)}\left(  \frac{|x|^2}{\widetilde{V}_i(|x|)} + \frac{R_j^2}{\widetilde{V}_i(R_j)}  \right) e^{-c\frac{|x|^2+|y|^2}{t}}
%:= B_*(x,y,t),
\end{eqnarray*}
Then, it follows from Theorem \ref{davies} that
\begin{eqnarray*}
|t\partial_t\tilde{h}_t(x,y)|
%&&\lesssim  (a_*(x,t)b_*(y,t))^{\frac{3}{2}\epsilon} B_*(x,y,t)^{1-3\epsilon}  \\
&&\lesssim_\epsilon
         \left(\frac{\widetilde{V}_0(R_j)}{\widetilde{V}_0(\sqrt t)\widetilde{V}_j(R_j)}\right)^{\frac{3}{2}\epsilon}
        \left(\frac{1}{\widetilde{V}_0(\sqrt t)}\right)^{\frac{3}{2}\epsilon}
         \left[\frac{\widetilde{V}_0(R_j)}{\widetilde{V}_0(\sqrt t)\widetilde{V}_j(R_j)} \left(\frac{|y|^2}{\widetilde{V}_i(|y|)} + \frac{R_j^2}{\widetilde{V}_i(R_j)}\right)\right]^{1-3\epsilon}
         e^{-c\frac{|x|^2+|y|^2}{t}}\\
%&& \lesssim_\epsilon\frac{\widetilde{V}_0(R_j)}{\widetilde{V}_0(\sqrt t)\widetilde{V}_j(R_j)}
%\left(\frac{|x|^2}{\widetilde{V}_i(|x|)} + \frac{R_j^2}{\widetilde{V}_i(R_j)}\right)^{1-3\epsilon}
%\left(\frac{\widetilde{V}_j(R_j)}{\widetilde{V}_0(R_j)}\right)^{\frac{3}{2}\epsilon}  e^{-c\frac{|x|^2+|y|^2}{t}}\\
&&\lesssim_\epsilon{\frac{\widetilde{V}_0(R_j)}{\widetilde{V}_0(\sqrt t)\widetilde{V}_j(R_j)}}\left(\frac{\widetilde{V}_j(R_j)}{\widetilde{V}_0(R_j)}\right)^{\frac 32\epsilon}\left(\frac{|y|^2}{\widetilde{V}_i(|y|)}+\frac{R_j^2}{\widetilde{V}_i(R_j)}\right)^{1-3\epsilon}
e^{-c\frac{|x|^2+|y|^2}{t}}.
%&& \lesssim_\epsilon{\frac{R_j^2}{t\widetilde{V}_j(R_j)}}\left(\frac{\widetilde{V}_j(R_j)}{R_j^2}\right)^{\frac 32\epsilon}\left(\frac{|y|^2}{\widetilde{V}_i(|y|)}+\frac{R_j^2}{\widetilde{V}_i(R_j)}\right)^{1-3\epsilon}
%e^{-c\frac{|x|^2+|y|^2}{t}}.
\end{eqnarray*}
%by Lemma \ref{V0-general-equal} and the fact $t \ge R_j^2$.
%which together with Lemma \ref{V0-general-equal} implies
%\[
%|t\partial_t\tilde{h}_t(x,y)|
%\lesssim {\frac{R_j^2}{t\widetilde{V}_j(R_j)}}\left(\frac{\widetilde{V}_j(R_j)}{R_j^2}\right)^{\frac 32\epsilon}\left(\frac{|x|^2}{\widetilde{V}_i(|x|)}+\frac{R_j^2}{\widetilde{V}_i(R_j)}\right)^{1-3\epsilon}
%e^{-c\frac{|x|^2+|y|^2}{t}}.
%\]
%Above, in the second term and last term of third inequality's right hand side, we used Remark \ref{rem:4-terms};
%This completes the proof of Proposition \ref{time-heat-parabolic}.

\textbf{Case 3: \boldmath $n_i=2$ and $n_j<2$.} In this case, by \eqref{eq:fifj-con} and Lemma \ref{lem:volume-growth}, it holds $R_i \lesssim R_j$,  and hence $$d(y,o_i) \leq \dist(y,E_0) + \diam(E_0) \lesssim \sqrt t.$$
%Notice that $\widetilde{H}(z,r)\lesssim 1$ for any $z \in M$ and $r>0$.
%We have for all $x \in F^{(R_i)}_i$, $y \in E_j \setminus F^{(R_j)}_j$, $t \ge R_j^2$ and $s \in(t/2, 3t/2)$,
We can easily deduce %from \eqref{eq:xx}-\eqref{eq:xy}
that
\[
          \tilde{h}_{t/2}(x,x)
     \lesssim
       \frac{1}{\widetilde{V}_0(\sqrt{t})} e^{c'\frac{|x|^2}{t}},
        \quad
     \tilde{h}_{t/2}(y,y)
     \lesssim \frac{1}{\widetilde{V}_0(\sqrt t)},
    \quad
        \tilde{h}_{s}(x,y)
     \lesssim
       \frac{1}{\widetilde{V}_0(\sqrt{t})} e^{-c\frac{|x|^2+ |y|^2}{t}},
\]
%
%\[
%    \tilde{h}_{s}(x,y)
%     \lesssim
%      \left(\frac{\widetilde{H}(x,s)\widetilde{H}(y,s)}{{\widetilde{V}}_0(\sqrt t)} + \frac{\widetilde{H}(y,s)}{\widetilde{V}_{i}(\sqrt t)}
%    + \frac{\widetilde{H}(x,s)}{\widetilde{V}_{j}(\sqrt t)} \right) e^{-c\frac{|x|^2+|y|^2}{t}}
%    \lesssim
%       \frac{1}{\widetilde{V}_0(\sqrt{t})} e^{-c\frac{|x|^2+ |y|^2}{t}}.
%\]
%By the standard trick of doubling property (cf. \eqref{eq:change-centre}), it holds
%\[
%     \tilde{h}_{t/2}(x,x)
%     \lesssim \frac{\widetilde{H}^2(x,t/2)}{\widetilde{V}_0(\sqrt{t})} + \frac{1}{\widetilde{V}_i(x, \sqrt t)}
%     \lesssim \frac{1}{\widetilde{V}_0(\sqrt t)},
%\]
%and
%\[
%      \tilde{h}_{t/2}(y,y)
%     \lesssim \frac{\widetilde{H}^2(y,t/2)}{\widetilde{V}_0(\sqrt{t})} + \frac{1}{\widetilde{V}_j(y, \sqrt t)}
%     \lesssim
%       \frac{1}{\widetilde{V}_0(\sqrt{t})} e^{c'\frac{|y|^2}{t}}.
%\]
%where $c' > 0$ is a small enough constant.
which together with Theorem \ref{davies} implies that
\begin{eqnarray*}
  |t\partial_t\tilde h_t(x,y)|
  \lesssim_\epsilon  \frac{1}{\widetilde{V}_{0}(\sqrt t)} e^{-c\frac{|x|^2+|y|^2}{t}}.
%  \sim_\epsilon  \frac{1}{t \log^2(2+\sqrt t)} e^{-c\frac{|x|^2+|y|^2}{t}}.
\end{eqnarray*}

\textbf{Case 4: \boldmath $n_i=n_j =2$.} In this case, by \eqref{eq:fifj-con} and Lemma \ref{lem:volume-growth}, it holds $R_i \sim R_j$, and hence $$d(y,o_i) \leq \dist(y,E_0) + \diam(E_0) \lesssim \sqrt t.$$
%Note that $|x| \lesssim \sqrt t$. Similar as \eqref{eq:v_i}, it holds
%$\widetilde{V}_i(x, \sqrt t) \gtrsim \widetilde{V}_i(\sqrt t) \gtrsim \widetilde{V}_0(\sqrt t)$. One has via \eqref{eq:heat-es-same-end} and \eqref{eq:heat-es-dif-end} that
%{\color{blue}In a very similar way},
%Using the standard trick of doubling property (cf. \eqref{eq:change-centre}),
By a simple calculation, one has %via \eqref{eq:xy}-\eqref{eq:yy} that %\eqref{eq:heat-es-dif-end} and \eqref{eq:heat-es-same-end} that
%$
%     \tilde{h}_{t/2}(x,x)
%     \lesssim 1/\widetilde{V}_0(\sqrt t),
%$
%\[
%      \tilde{h}_{t/2}(y,y)
%     \lesssim
%       \frac{1}{\widetilde{V}_0(\sqrt{t})} e^{c'\frac{d(x,y)^2}{t}}, \quad  \mbox{if} \, i=j,
%       \qquad
%       \tilde{h}_{t/2}(y,y)
%     \lesssim
%       \frac{1}{\widetilde{V}_0(\sqrt{t})} e^{c'\frac{|y|^2}{t}}, \quad \mbox{if} \, i \neq j.
%\]
%\[
%    \tilde{h}_{s}(x,y)
%     \lesssim
%       \frac{1}{\widetilde{V}_0(\sqrt{t})} e^{-c\frac{d(x,y)^2}{t}},
%    \quad \mbox{if} \,  i=j,
%    \qquad
%       \tilde{h}_{s}(x,y)
%     \lesssim  \frac{1}{\widetilde{V}_0(\sqrt{t})} e^{-c\frac{|x|^2+ |y|^2}{t}},
%      \quad \mbox{if} \, i \neq j,
%\]
\[
     \tilde{h}_{t/2}(y,y)
     \lesssim \frac{1}{\widetilde{V}_0(\sqrt t)},  \qquad \qquad \quad \,
\]
\[
    \qquad  \tilde{h}_{t/2}(x,x)
     \lesssim
       \begin{dcases}
       \frac{1}{\widetilde{V}_0(\sqrt{t})} e^{c'\frac{d(x,y)^2}{t}}, & \,  \mbox{if} \, i=j, \\
       \frac{1}{\widetilde{V}_0(\sqrt{t})} e^{c'\frac{|x|^2}{t}}, & \, \mbox{if} \, i \neq j,
       \end{dcases}
 \]
 and
 \[
    \qquad \tilde{h}_{s}(x,y)
     \lesssim
     \begin{dcases}
       \frac{1}{\widetilde{V}_0(\sqrt{t})} e^{-c\frac{d(x,y)^2}{t}}, & \mbox{if} \,  i=j, \\
       \frac{1}{\widetilde{V}_0(\sqrt{t})} e^{-c\frac{|x|^2+ |y|^2}{t}}, & \mbox{if} \, i \neq j.
     \end{dcases}
\]
Combining the estimates of $h_{t/2}(x,x)$, $h_{t/2}(y,y)$ and $h_{s}(x,y)$,
we can apply Theorem \ref{davies} to obtain
\[
    |t\partial_t\tilde h_t(x,y)|
    \lesssim_\epsilon  \frac{1}{\widetilde{V}_0(\sqrt t)} e^{-c\frac{d(x,y)^2}{t}}.
%    \lesssim_\epsilon  \frac{1}{t \log^2(2+\sqrt t)} e^{-c\frac{d(x,y)^2}{t}}.
\]

\textbf{Case 5: \boldmath $n_i=2$, $n_j>2$.}
%For all $x \in F^{(R_i)}_i$, $y \in E_j \setminus F^{(R_j)}_j$, $t \ge R_j^2$ and $s \in (t/2,3t/2)$,
%From $\widetilde{H}(x,s) \lesssim 1/\log(2+|x|) \lesssim 1$ and
%$\widetilde{H}(y,s) \lesssim \widetilde{V}_0(R_j)/(\log(2+R_j) \widetilde{V}_j(R_j))$,
Following the same argument as in \eqref{c2xx} and \eqref{c2yy}, one has
\[
     \quad \quad \tilde{h}_{t/2}(x,x)
     \lesssim \frac{\widetilde{V}_0(R_j)}{\widetilde{V}_0(\sqrt t) \widetilde V_j(R_j)} e^{c'\frac{|x|^2}{t}}
\]
and
\[
       \tilde{h}_{t/2}(y,y)
     \lesssim \frac{1}{\widetilde{V}_0(\sqrt{t})} e^{c'\frac{|y|^2}{t}}. \]
%One has via \eqref{eq:xy} that
Besides, it holds
\begin{eqnarray*}
     \tilde{h}_{s}(x,y)
    % &  \lesssim
%      \left(\frac{\widetilde{H}(x,s)\widetilde{H}(y,s)}{{\widetilde{V}}_0(\sqrt t)} + \frac{\widetilde{H}(y,s)}{\widetilde{V}_{j}(\sqrt t)}
%    + \frac{\widetilde{H}(x,s)}{\widetilde{V}_{i}(\sqrt t)} \right) e^{-c\frac{|x|^2+|y|^2}{t}} \notag \\
      &&  \lesssim
      \left(\frac{\widetilde{H}(x,s)}{{\widetilde{V}}_0(\sqrt t)} + \frac{\widetilde{H}(y,s)}{\widetilde{V}_{j}(\sqrt t)} \right) e^{-c\frac{|x|^2+|y|^2}{t}} \notag \\
    %& \lesssim \left(\frac{1}{ \widetilde{V}_0(\sqrt t)} \frac{\widetilde{V}_0(R_j)}{\log(2+R_j) \widetilde{V}_{j}(R_j)}
%        +  \frac{1}{\widetilde{V}_j(\sqrt t)} \frac{1}{\log(2+|y|)} \right. \\
%    &  \ \left. + \frac{1}{ \widetilde{V}_i(\sqrt t)} \frac{\widetilde{V}_0(R_j)}{\log(2+R_j) \widetilde{V}_{j}(R_j)}
%        \right)e^{-c\frac{|x|^2+|y|^2}{t}}  \\
     &&\lesssim \left(\frac{1}{ \widetilde{V}_0(\sqrt t)} \frac{\widetilde{V}_0(R_j)}{\log(2+R_j) \widetilde{V}_{j}(R_j)}
        % + \frac{1}{ \widetilde{V}_0(\sqrt t)} \frac{\widetilde{V}_0(R_j)}{\log(2+R_j) \widetilde{V}_{j}(R_j)}
        +  \frac{\widetilde{V}_0(R_j)}{\widetilde{V}_0(\sqrt t) \widetilde{V}_j(R_j)} \frac{1}{\log(2+|y|)} \right)e^{-c\frac{|x|^2+|y|^2}{t}} \nonumber\\
     &&\lesssim \frac{\widetilde{V}_0(R_j)}{\widetilde{V}_0(\sqrt t)\widetilde V_j(R_j)} \left(\frac{1}{\log(2+R_j)} + \frac{1}{\log(2+|y|)} \right)e^{-c\frac{|x|^2+|y|^2}{t}}.
     % \nonumber\\
%     &\quad \lesssim \frac{\widetilde{V}_0(R_j)}{\widetilde{V}_0(\sqrt t)\widetilde V_j(R_j)} \left(\frac{1}{\log(2+R_j)} + \frac{1}{\log(2+|x|)} \right)e^{-c\frac{|x|^2+|y|^2}{t}},
\end{eqnarray*}
Therefore, %if we choose suitably large constant in the estimate of $h_{t/2}(x,x)$ and $h_{t/2}(y,y)$,
we can use Theorem \ref{davies} to conclude that
\begin{eqnarray*}
|t\partial_t\tilde h_t(x,y)|
&& \lesssim_\epsilon
\left(\frac{ \widetilde{V}_0(R_j)}{\widetilde{V}_0(\sqrt t)\widetilde V_j(R_j)} \right)^{\frac 32\epsilon}
\left(\frac{1}{\widetilde{V}_0(\sqrt t)} \right)^{\frac 32\epsilon} \\
&& \ \times \left( \frac{\widetilde{V}_0(R_j)}{\widetilde{V}_0(\sqrt t)\widetilde V_j(R_j)} \right)^{1-3\epsilon}
 \left(\frac{1}{\log(2+R_j)}+\frac{1}{\log(2+|y|)} \right)^{1-3\epsilon} e^{-c\frac{|x|^2+|y|^2}{t}}\\
&&  \lesssim_\epsilon\frac{\widetilde{V}_0(R_j)}{\widetilde{V}_0(\sqrt t)\widetilde V_j(R_j)}\left(\frac{\widetilde V_j(R_j)}{\widetilde{V}_0(R_j)}\right)^{\frac 32\epsilon}\left(\frac{1}{\log(2+R_j)}+\frac{1}{\log(2+|y|)}\right)^{1-3\epsilon}e^{-c\frac{|x|^2+|y|^2}{t}}.
%&&  \lesssim_\epsilon\frac{R_j^2}{t} \frac{\log(2+R_j)}{\log(2+\sqrt t) \log^{3\epsilon}(2+R_j)} \frac{1}{ \widetilde V_j(R_j)} \left(\frac{\widetilde V_j(R_j)}{R_j^2}\right)^{\frac 32\epsilon} \\
%&& \ \times \left(\frac{1}{\log(2+R_j)}+\frac{1}{\log(2+|y|)}\right)^{1-3\epsilon}e^{-c\frac{|x|^2+|y|^2}{t}}. \\
%&&  \lesssim_\epsilon \frac{R_j^2}{t} \frac{\log(2+R_j)}{\log(2+\sqrt t)}
%\frac{\log^{3\epsilon-1}(2+R_j) + \log^{3\epsilon-1}(2+|y|)}{\log^{3\epsilon}(2+R_j)}
%\frac{1}{ \widetilde V_j(R_j)} \left(\frac{\widetilde V_j(R_j)}{R_j^2}\right)^{\frac 32\epsilon} e^{-c\frac{|x|^2+|y|^2}{t}}.
\end{eqnarray*}

\textbf{Case 6: \boldmath $n_i\neq 2$ and $n_j=2$.}
%for all $x \in F^{(R_i)}_i$, $y \in E_j \setminus F^{(R_j)}_j$, $t \ge R_j^2$ and $s \in (t/2, 3t/2)$,
%From $\widetilde{H}(x, s) \lesssim |x|^2 / \widetilde{V}_i(|x|)$, $\widetilde{H}(y,s) \lesssim 1$ and
%and $\widetilde{H}(y, s) \lesssim  \widetilde{V}_0(R_j)/ (\log(2+R_j)\widetilde{V}_i(R_j))$,
Using the standard trick of doubling property (cf. \eqref{eq:change-centre}), one has
\[
      \tilde{h}_{t/2}(x,x)
     %\lesssim \frac{\widetilde{H}(x,t/2)^2}{\widetilde{V}_0(\sqrt{t})} + \frac{1}{\widetilde{V}_j(x, \sqrt t)}
    \lesssim \frac{1}{ \widetilde{V}_0(\sqrt{t})} e^{c'\frac{|x|^2}{t}}
\]
%\begin{align*}
%     \tilde{h}_{t/2}(x,x)
%     & \lesssim \frac{\widetilde{H}^2(x,t/2)}{\widetilde{V}_0(\sqrt{t})} + \frac{1}{\widetilde{V}_i(x, \sqrt t)}
%     \lesssim \frac{1}{\widetilde{V}_0(\sqrt{t})} \frac{|x|^2}{\widetilde V_i(|x|)} + \frac{1}{\widetilde V_{i}(\sqrt t)} e^{c'\frac{|x|^2}{t}} \\
%     & \lesssim  \frac{1}{\widetilde{V}_0(\sqrt t)}\left(\frac{|x|^2}{\widetilde V_i(|x|)}+\frac{\widetilde{V}_0(R_j)}{\widetilde V_i(R_j)}\right)e^{c'\frac{|x|^2}{t}},
%\end{align*}
%By the fact $\widetilde{H}^2(y, t/2) \leq 1$ and \eqref{eq:change-centre}, it holds
and
\begin{eqnarray*}
     \tilde{h}_{t/2}(y,y)
      %\lesssim \frac{\widetilde{H}(y,t/2)^2}{\widetilde{V}_0(\sqrt{t})} + \frac{1}{\widetilde{V}_i(y, \sqrt t)}
     \lesssim \frac{1}{\widetilde{V}_0(\sqrt{t})} \frac{|y|^2}{\widetilde V_i(|y|)} + \frac{1}{\widetilde V_{i}(\sqrt t)} e^{c'\frac{|y|^2}{t}}
      \lesssim  \frac{1}{\widetilde{V}_0(\sqrt t)}\left(\frac{|y|^2}{\widetilde V_i(|y|)}+\frac{\widetilde{V}_0(R_j)}{\widetilde V_i(R_j)}\right)e^{c'\frac{|y|^2}{t}}.
\end{eqnarray*}
Besides, it holds
\begin{eqnarray*}
    \tilde{h}_{s}(x,y)
%     & \lesssim
%      \left(\frac{\widetilde{H}(x,s)\widetilde{H}(y,s)}{{\widetilde{V}}_0(\sqrt t)} + \frac{\widetilde{H}(y,s)}{\widetilde{V}_{i}(\sqrt t)}
%    + \frac{\widetilde{H}(x,s)}{\widetilde{V}_{j}(\sqrt t)} \right) e^{-c\frac{|x|^2+|y|^2}{t}} \\
     && \lesssim
      \left(\frac{\widetilde{H}(y,s)}{{\widetilde{V}}_0(\sqrt t)} + \frac{\widetilde{H}(x,s)}{\widetilde{V}_{i}(\sqrt t)} \right) e^{-c\frac{|x|^2+|y|^2}{t}} \\
    && \lesssim
      \left(\frac{1}{{\widetilde{V}}_0(\sqrt t)}\frac{|y|^2}{\widetilde V_i(|y|)}
           + \frac{\widetilde{V}_0(R_j)}{\widetilde{V}_{0}(\sqrt t) \widetilde{V}_i(R_j)} \frac{1}{\log(2+R_j)} \right) e^{-c\frac{|x|^2+|y|^2}{t}} \\
    &&\lesssim \frac{1}{\widetilde{V}_0(\sqrt t)}\left(\frac{|y|^2}{\widetilde V_i(|y|)} + \frac{\widetilde{V}_0(R_j)}{ \log(2+R_j)\widetilde{V}_i(R_j)} \right) e^{-c\frac{|x|^2+|y|^2}{t}}.
\end{eqnarray*}
%By the fact $\widetilde{H}^2(x, t/2) \lesssim \widetilde{H}(x, t/2) \lesssim |x|^2 / \widetilde{V}_i(|x|)$, \eqref{eq:change-centre} and
%$\widetilde{V}_i(\sqrt t) \lesssim \widetilde{V}_0(R_j) / (\widetilde{V}_0(\sqrt t) \widetilde{V}_i(R_j))$, one concludes
%where $c' > 0$ is a small enough constant.
Then, we deduce from Theorem \ref{davies} that
\begin{eqnarray*}%\label{eq:two-cases}
 |t\partial_t\tilde h_t(x,y)|
%&&\ \lesssim_\epsilon\left[\frac{1}{\widetilde{V}_0(\sqrt t)}\left(\frac{|x|^2}{\widetilde V_i(|x|)}+\frac{\widetilde{V}_0(R_j)}{\widetilde V_i(R_j)}\right)\right]^{\frac 32 \epsilon}
%\left(\frac{1}{\widetilde{V}_{0}(\sqrt t)}\right)^{\frac 32\epsilon} \\
%&&\quad\times
%\left[\frac{1}{\widetilde{V}_0(\sqrt t)}\left(\frac{|y|^2}{\widetilde V_i(|y|)} + \frac{\widetilde{V}_0(R_j)}{\log(2+R_j)\widetilde{V}_i(R_j)}  \right)\right]^{1-3\epsilon} e^{-c\frac{|y|^2+|y|^2}{t}} \nonumber\\
&& \lesssim_\epsilon\frac{1}{\widetilde{V}_0(\sqrt t)} \left(\frac{|y|^2}{\widetilde V_i(|y|)}+\frac{\widetilde{V}_0(R_j)}{\widetilde V_i(R_j)}\right)^{\frac 32 \epsilon}
 \left( \frac{|y|^2}{\widetilde V_i(|y|)} + \frac{\widetilde{V}_0(R_j)}{\log(2+R_j)\widetilde{V}_i(R_j)} \right)^{1-3\epsilon}e^{-c\frac{|x|^2+|y|^2}{t}}.
 %&& \ \lesssim_\epsilon
% \frac{1}{t\log^2(2+\sqrt t)}
% \left(\frac{|y|^2}{\widetilde V_i(|y|)} + \frac{R_j^2 \log^2(2+R_j)}{\widetilde V_i(R_j)}\right)^{\frac 32 \epsilon}
%\left( \frac{|y|^2}{\widetilde V_i(|y|)} + \frac{R_j^2 \log(2+R_j)}{\widetilde{V}_i(R_j)} \right)^{1-3\epsilon}e^{-c\frac{|y|^2+|y|^2}{t}} \\
%&&  \lesssim_\epsilon
% \frac{\log^{3\epsilon}(2+R_j) \log^{1-3\epsilon}(2+R_j)}{t\log^2(2+\sqrt t)}
% \left(\frac{|y|^2}{\widetilde V_i(|y|)} + \frac{R_j^2 }{\widetilde V_i(R_j)}\right)^{\frac 32 \epsilon} \\
%&& \ \times \left( \frac{|y|^2}{\widetilde V_i(|y|)} + \frac{R_j^2 }{\widetilde{V}_i(R_j)} \right)^{1-3\epsilon}e^{-c\frac{|x|^2+|y|^2}{t}}  \\
%&&  \lesssim_\epsilon
% \frac{1}{t\log(2+\sqrt t)}
% \left(\frac{|y|^2}{\widetilde V_i(|y|)} + \frac{R_j^2 }{\widetilde V_i(R_j)}\right)^{1-\frac 32 \epsilon} e^{-c\frac{|x|^2+|y|^2}{t}}.
\end{eqnarray*}

 Collecting the estimates above, we finish the proof.
\end{proof}

%{\color{blue}
%\begin{rem}
%  In fact, from the proof above, in Propositions \ref{time-heat-parabolic} and \ref{time-heat-parabolic-2},
%  provided x is near the center, i.e., $x \in c'F_i$, y is away from the center, i.e., $y \in E_j \setminus c'F_j$  and $t \ge c'R_j$ for some given positive constant $c'$,
%  the estimate of the time derivative of the heat kernel still holds.
%  The same is valid for the following mapping properties of the heat kernel, i.e., Propositions \ref{map-heat-parabolic} and \ref{mapping-time-heat-parabolic-1}.
%\end{rem}
%}

\subsection{$L^1$-estimate of the space derivative of the heat kernel } \label{sec:heat-ker-away-center-est}\hskip\parindent
%In this subsection, following an argument as in \cite[Lemma 2.4]{cd99},
%we obtain the $L^1$-estimate of the space derivative of the heat kernel,
%which is a counterpart to the main result in \cite[Subsection 2.3]{cd99}.
%Our primary tools are the heat kernel upper bound %\eqref{heat-mixed}
%and the Caccioppoli inequality which controls the spatial derivative part through the time derivative part; see for instance \cite{jl22}.
%It is worth to note that the method of \cite[Proposition 3.7]{jiang-li-lin-2022}, which  relies on the main results in \cite{gri95,gri97}, is also valid.
%
%In this subsection, following an argument as in \cite[Subsection 2.3]{cd99},
%we obtain the weighted estimate for the space derivative of the heat kernel, i.e., Proposition \ref{est-integral-diagonal-general},
%which can be considered as a counterpart to \cite[Lemma 2.4]{cd99}.
%
In this subsection, following an argument as in \cite[Lemma 2.4]{cd99},
we obtain the weighted estimate for the space derivative of the heat kernel.
Our primary tool %are the heat kernel upper bound %\eqref{heat-mixed}
%and
is the Caccioppoli inequality which controls the spatial derivative part through the time derivative part; see also \cite[Proposition 5.2]{jl22}.
It is worth to note that the method in the proof of \cite[Proposition 3.7]{jiang-li-lin-2022}, which  relies on the main results in \cite{gri95,gri97}, is also valid.

We have the following $L^1$-estimate of the space derivative of the heat kernel.

\begin{prop}\label{est-integral-diagonal-general}
Let $1 \leq i \leq \ell$ and $\beta \ge 16$. For any  $\kappa \ge 2$, it holds
\begin{eqnarray*}
\int_{E_i\setminus (F^{(\kz)}_i\cup B(y,r))}|\nabla_x h_t(x,y)|\,d\mu(x) \lesssim_\beta \frac{1}{\sqrt t}e^{-c\frac{r^2}{t}}, \quad
\forall y\in E_i\setminus F^{(\kappa)}_i, \, 0<t \le \beta \kappa^2, \, r>0.
\end{eqnarray*}
\end{prop}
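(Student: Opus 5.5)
We follow the scheme of \cite[Lemma 2.4]{cd99}: first prove a weighted $L^2$ bound for $\nabla_x h_t(\cdot,y)$ via the Caccioppoli inequality, then pass to $L^1$ with Cauchy--Schwarz. Fix $y\in E_i\setminus F^{(\kz)}_i$ and $0<t\le\beta\kz^2$, and set $\Omega:=E_i\setminus F^{(\kz/2)}_i$, so that $F^{(\kz)}_i\subset F^{(\kz/2)}_i$ and $\Omega$ stays a distance $\gtrsim \kz$ from $E_0$. On $\Omega$ (in fact on $E_i\setminus F^{(\kz/2)}_i$) the gluing is invisible, and Lemmas \ref{lem:es-away} and \ref{lem:es-away-time-der} (applied with $\kz/2\ge1$ and $4\beta$ in place of $\beta$) give the Gaussian bounds
\[
h_s(x,y)+|s\partial_s h_s(x,y)|\lesssim_\beta \frac{1}{V_i(y,\sqrt s)}e^{-c\frac{d(x,y)^2}{s}},\qquad x\in\Omega,\ 0<s\le t.
\]
The target intermediate estimate is
\[
\int_{E_i\setminus F^{(\kz)}_i}|\nabla_x h_t(x,y)|^2 e^{\gamma\frac{d(x,y)^2}{t}}\,d\mu(x)\lesssim_\beta \frac{1}{t\,V_i(y,\sqrt t)}
\]
for some small $\gamma>0$.

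To obtain it, take a Lipschitz cutoff $\chi$ with $\chi=1$ on $E_i\setminus F^{(\kz)}_i$, $\supp\chi\subset\Omega$, and $|\nabla\chi|\lesssim 1/\kz\lesssim_\beta 1/\sqrt t$. Set $u=h_t(\cdot,y)$, which solves $\partial_t u=-\L u$. Integrate by parts against $\chi^2 e^{\gamma d(x,y)^2/t}u$ (the weighted Caccioppoli/Davies--Gaffney argument, as in \cite[Proposition 5.2]{jl22}). This gives
\[
\int\chi^2 e^{\gamma d^2/t}|\nabla u|^2\lesssim \int\chi^2 e^{\gamma d^2/t}|u||\L u|+\int\bigl(|\nabla\chi|^2+\tfrac{\gamma^2 d^2}{t^2}\chi^2\bigr)e^{\gamma d^2/t}u^2 .
\]
Here $\L u=-\partial_t u$, so $|\L u|\lesssim t^{-1}V_i(y,\sqrt t)^{-1}e^{-cd^2/t}$. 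If $\gamma<c$, each term on the right is bounded by
\[
\frac{1}{t\,V_i(y,\sqrt t)^2}\int_{E_i} e^{-c'\frac{d(x,y)^2}{t}}\,d\mu(x)\lesssim\frac{1}{t\,V_i(y,\sqrt t)}.
\]
The volume integral is estimated by the standard annular decomposition, using the doubling of $\mu_i$ on $E_i\subset M_i$, since $\mu=\mu_i$ on $E_i$. The $\gamma^2 d^2/t^2$ factor is absorbed into the Gaussian.

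Finally, apply Cauchy--Schwarz on $E_i\setminus(F^{(\kz)}_i\cup B(y,r))$:
\[
\int|\nabla_x h_t|\,d\mu\le\Bigl(\int|\nabla_x h_t|^2e^{\gamma d^2/t}\Bigr)^{1/2}\Bigl(\int_{E_i\setminus B_i(y,r)}e^{-\gamma d^2/t}\,d\mu_i\Bigr)^{1/2}.
\]
By doubling, the second factor is $\lesssim V_i(y,\sqrt t)^{1/2}e^{-c r^2/t}$. Combining the two factors gives $t^{-1/2}e^{-cr^2/t}$.

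The main obstacle is the Caccioppoli step. The boundary terms produced by the cutoff must be controlled using only the local Gaussian bounds, which hold near the support of $\nabla\chi$ because that region still lies at distance $\gtrsim\kz$ from $E_0$. The derivative of the weight $e^{\gamma d^2/t}$ must be absorbed, which needs $\gamma$ small. One must also justify the integration by parts in the $\L$-time-derivative form, by working with $u\,\partial_tu$ at fixed $t$, so the scheme never needs information for $s$ much smaller than $t$ beyond what Lemma \ref{lem:es-away-time-der} already provides.
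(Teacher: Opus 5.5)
Your proposal is correct and follows essentially the same route as the paper: a weighted Caccioppoli estimate with a cutoff equal to $1$ on $E_i\setminus F^{(\kappa)}_i$ and supported in $E_i\setminus F^{(\kappa/2)}_i$, then the local Gaussian bounds for $h_t$ and $t\partial_t h_t$ from Lemmas \ref{lem:es-away} and \ref{lem:es-away-time-der}, then the annular bound $\int_{M_i}e^{-\alpha d(x,y)^2/t}\,d\mu\lesssim V_i(y,\sqrt t)$, and finally Cauchy--Schwarz. When you invoke those lemmas, your explicit rescaling $\kappa\mapsto\kappa/2$, $\beta\mapsto 4\beta$ is precisely the bookkeeping the paper leaves implicit.
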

\begin{proof}
Let $\psi_i$ be a Lipschitz function on $M$ with $\supp \psi_i\subset \{x \in E_i: \dist(x,E_i\setminus F^{(\kz)}_i)< \kz\}$ such that $\psi_i\equiv 1$ on $E_i\setminus F^{(\kz)}_i$ and $|\nabla \psi_i|\leq C/\kz$.
Note that it holds
$\dist(\supp \psi_i,\,E_0)>\kz,$
%Recall $F^{(\kz/2)}_i=\{x\in E_i:\,\dist(x,\,E_0)\le \kz\}$.
and hence $$\supp \psi_i \subset E_i \setminus F^{(\kz/2)}_i.$$
% ��ʦ�� �������ǲ�ȷ���Ƿ������ڶ���psi��ʱ��ֱ��˵֧���������������档
Repeating the Caccioppoli
argument, we have
\begin{eqnarray}\label{eq:caci-1}
\int_{M}|\nabla_x h_t(x,y)|^2\psi_i(x)^2 e^{c'\frac{d(x,y)^2}{t}}\,d\mu(x)
&& =\int_M \nabla_x h_t(x,y) \cdot \nabla_x  (\psi_i(x)^2e^{c'\frac{d(x,y)^2}{t}}  h_t(x,y))\,d\mu(x) \nonumber \\
&& \ -\int_M \nabla_x h_t(x,y) \cdot \nabla_x  (\psi_i(x)^2e^{c'\frac{d(x,y)^2}{t}}) h_t(x,y)\,d\mu(x) \nonumber \\
&& \le \int_M \L h_t(x,y) \psi_i(x)^2e^{c'\frac{d(x,y)^2}{t}}h_t(x,y)\,d\mu(x)
    \nonumber \\
&& \ +  \int_M |\nabla_x h_t(x,y)|  |\nabla_x  (\psi_i(x)^2e^{c'\frac{d(x,y)^2}{t}})| h_t(x,y) \,d\mu(x).
\end{eqnarray}
%\begin{eqnarray}\label{eq:caci-1}
%&& \int_{M}|\nabla_x h_t(x,y)|^2\psi_i(x)^2 e^{c'\frac{d(x,y)^2}{t}}\,d\mu(x) \nonumber \\
%&& \ =\int_M \nabla_x h_t(x,y) \cdot \nabla_x  \Big(\psi_i(x)^2e^{c'\frac{d(x,y)^2}{t}}  h_t(x,y)\Big)\,d\mu(x) \nonumber \\
%&& \ \ -\int_M \nabla_x h_t(x,y) \cdot \nabla_x  \Big(\psi_i(x)^2e^{c'\frac{d(x,y)^2}{t}}\Big) h_t(x,y)\,d\mu(x) \nonumber \\
%&& \ \le \int_M \L h_t(x,y) \psi_i(x)^2e^{c'\frac{d(x,y)^2}{t}}h_t(x,y)\,d\mu(x)
%    \nonumber \\
%&& \ \ +  \int_M |\nabla_x h_t(x,y)|  |\nabla_x  (\psi_i(x)^2e^{c'\frac{d(x,y)^2}{t}})| h_t(x,y) \,d\mu(x).
%\end{eqnarray}
%
%\begin{eqnarray}\label{eq:caci-1}
%&&\int_{M}|\nabla_x h_t(x,y)|^2\psi_i(x)^2 e^{c'\frac{d(x,y)^2}{t}}\,d\mu(x) \nonumber \\
%&&\ =\int_M \nabla_x h_t(x,y) \cdot \nabla_x  (\psi_i(x)^2e^{c'\frac{d(x,y)^2}{t}}  h_t(x,y))\,d\mu(x)
% -\int_M \nabla_x h_t(x,y) \cdot \nabla_x  (\psi_i(x)^2e^{c'\frac{d(x,y)^2}{t}}) h_t(x,y)\,d\mu(x) \nonumber \\
%&&\ \le \int_M \L h_t(x,y) \psi_i(x)^2e^{c'\frac{d(x,y)^2}{t}}h_t(x,y)\,d\mu(x)
%+  \int_M |\nabla_x h_t(x,y)|  |\nabla_x  (\psi_i(x)^2e^{c'\frac{d(x,y)^2}{t}}) h_t(x,y)| \,d\mu(x).
%\end{eqnarray}
By the basic inequality $2ab \leq a^2 + b^2$ for all $a, b>0$, it holds
\begin{eqnarray*}%\label{eq:caci-2}
&&   \int_M |\nabla_x h_t(x,y)|  |\nabla_x  (\psi_i(x)^2e^{c'\frac{d(x,y)^2}{t}}) | h_t(x,y) \,d\mu(x) \nonumber \\
&& \ \leq   2 \int_M |\nabla_x h_t(x,y)|  |\psi_i(x)|   \left(|\nabla_x\psi_i(x)| e^{c'\frac{d(x,y)^2}{t}}+ |\psi_i(x)| \frac{d(x,y)}{t}e^{c'\frac{d(x,y)^2}{t}}\right) h_t(x,y) \,d\mu(x)\nonumber  \\
&& \ \leq \frac{1}{2}\int_{M}|\nabla_x h_t(x,y)|^2\psi_i(x)^2e^{c'\frac{d(x,y)^2}{t}}\,d\mu(x) \nonumber \\
&& \ \ +  2\int_{M}\left(|\nabla_x\psi_i(x)| + |\psi_i(x)| \frac{d(x,y)}{t} \right)^2 h_t(x,y)^2 e^{c'\frac{d(x,y)^2}{t}} \,d\mu(x). %\nonumber  \\
%&&\ \le \frac{1}{2}\int_{M}|\nabla_x h_t(x,y)|^2\psi_i(x)^2e^{c'\frac{d(x,y)^2}{t}}\,d\mu(x)
%  + 4\int_{M}|\nabla_x \psi_i(x)|^2 h_t(x,y)^2e^{c'\frac{d(x,y)^2}{t}}\,d\mu(x)  \nonumber  \\
%&& \ \ +  4\int_{M}\frac{d(x,y)^2}{t^2}  \psi_i(x)^2 h_t(x,y)^2e^{c'\frac{d(x,y)^2}{t}}\,d\mu(x).
\end{eqnarray*}
Combining this estimate with \eqref{eq:caci-1}, we obtain that
\begin{eqnarray*}
\int_{M}|\nabla_x h_t(x,y)|^2\psi_i(x)^2 e^{c'\frac{d(x,y)^2}{t}}\,d\mu(x)
&&  \le 2 \int_M \L h_t(x,y) h_t(x,y) \psi_i(x)^2e^{c'\frac{d(x,y)^2}{t}} \,d\mu(x) \\
%&& \ +4\int_{M}\left(|\nabla_x\psi_i(x)| e^{c'\frac{d(x,y)^2}{t}}+ |\psi_i(x)| \frac{d(x,y)}{t}e^{c'\frac{d(x,y)^2}{t}}\right)^2 |h_t(x,y)|^2 \,d\mu(x),
&& \ +4\int_{M}\left(|\nabla_x\psi_i(x)| + |\psi_i(x)| \frac{d(x,y)}{t}\right)^2 h_t(x,y)^2 e^{c'\frac{d(x,y)^2}{t}} \,d\mu(x),
%&& \le 2\left(\int_{E_i\setminus F^{(\kz/2)}_i}|\L h_t(x,y)|^2 e^{c'\frac{d(x,y)^2}{t}}\psi_i(x)^2\,d\mu(x)\right)^{1/2}
%        \left(\int_{E_i\setminus F^{(\kz/2)}_i}|\psi_i(x) h_t(x,y)|^2 e^{c'\frac{d(x,y)^2}{t}} \,d\mu(x)\right)^{1/2} \\
%&& \ + 8 \int_{M}|\nabla_x \psi_i(x)|^2 h_t(x,y)^2e^{c'\frac{d(x,y)^2}{t}}\,d\mu(x)
%        + 8\int_{M}\frac{d(x,y)^2}{t^2}  \psi_i(x)^2 h_t(x,y)^2e^{c'\frac{d(x,y)^2}{t}}\,d\mu(x),
\end{eqnarray*}
which  together with the properties of $\psi_i$
%the facts $|\psi_i(x)| \lesssim 1$, $|\nabla \psi_i(x)| \lesssim 1/\kz \lesssim 1/\sqrt t$ and $\supp \psi_i(x) \subset E_i \setminus F^{(\kz/2)}_i$
implies that
\begin{eqnarray*}
 \int_{E_i\setminus F^{(\kz)}_i}|\nabla_xh_t(x,y)|^2 e^{c'\frac{d(x,y)^2}{t}}\,d\mu(x)
&&    \le \int_{M}|\nabla_xh_t(x,y) |^2\psi_i(x)^2 e^{c'\frac{d(x,y)^2}{t}}\,d\mu(x)\nonumber \\
%&& \ \lesssim
%     \frac{1}{t} \int_{E_i\setminus F^{(\kz)}_i} |t\partial h_t(x,y)| |h_t(x,y)| e^{c'\frac{d(x,y)^2}{t}} \,d\mu(x)
%  +
%   \int_{E_i\setminus F^{(\kz)}_i}\left( \frac{1}{\kz} e^{c'\frac{d(x,y)^2}{t}}+ \frac{d(x,y)}{t} e^{c'\frac{d(x,y)^2}{t}}\right)^2 |h_t(x,y)|^2 \,d\mu(x) \\
&&  \lesssim
     \frac{1}{t} \int_{E_i\setminus F^{(\kz/2)}_i} |t\partial_t h_t(x,y)| h_t(x,y) e^{c'\frac{d(x,y)^2}{t}} \,d\mu(x) \\
&& \  +
     \int_{E_i\setminus F^{(\kz/2)}_i} \lf(\frac{1}{t} + \frac{d(x,y)^2}{t^2}\r) h_t(x,y)^2   e^{c'\frac{d(x,y)^2}{t}} \,d\mu(x).
\end{eqnarray*}
% This fact, together with Lemma \ref{lem:es-away}, yields that for small enough $c'>0$  and any $\kz \ge 2$,
%This combined with Lemma \ref{lem:es-away} yields that, if $c'>0$ is small enough, then% and any $\kz \ge 2$,
From this, Lemma \ref{lem:es-away} and Lemma \ref{lem:es-away-time-der}, we get that, if $c'>0$ is small enough, then
%
%$0<t \le \beta \kappa^2$,  $y\in E_i\setminus F^{(\kappa)}_i$, %and any $y \in  E_i \setminus F^{(\kz)}_i$,
%\begin{eqnarray*}%\label{est-on-diagonal-gradient}
%&& \int_{E_i\setminus F^{(\kz)}_i}|\nabla_xh_t(x,y)|^2 e^{c'\frac{d(x,y)^2}{t}}\,d\mu(x) \nonumber \\
%&& \ \lesssim_\beta  \frac{1}{t} \int_{E_i \setminus F^{(\kz/2)}_i} \frac{1}{V_i(y,\sqrt t)^2} e^{-c''\frac{d(x,y)^2}{t}} \, d\mu(x)
%   +  \int_{E_i \setminus F^{(\kz/2)}_i} \lf(\frac{1}{t} + \frac{d(x,y)^2}{t^2}\r) \frac{1}{V_i(y,\sqrt t)^2} e^{-c''\frac{d(x,y)^2}{t}}\, d\mu(x) \notag \\
%%&& \leq C \int_{E_i \setminus F^{(\kz/2)}_i} \frac{1}{V_i(y,\sqrt t)^2} e^{-\frac{d(x,y)^2}{ct}} d\mu(x)\left(\frac 1t+\frac{1}{\kz^2}\right) \nonumber \\
%&& \ \lesssim_\beta \frac{1}{tV_i(y,\sqrt t)^2} \int_{M_i}  e^{-c''\frac{d(x,y)^2}{t}} \, d\mu(x).
%   % \lesssim_\beta \frac{1}{tV_i(y,\sqrt t)},
%%    \quad
%%\forall 0<t \le \beta \kappa^2, \, y\in E_i\setminus F^{(\kappa)}_i,
%\end{eqnarray*}
\begin{eqnarray*}%\label{est-on-diagonal-gradient}
 \int_{E_i\setminus F^{(\kz)}_i}|\nabla_xh_t(x,y)|^2 e^{c'\frac{d(x,y)^2}{t}}\,d\mu(x)
&&  \lesssim
     \int_{E_i \setminus F^{(\kz/2)}_i} \lf(\frac{1}{t} + \frac{d(x,y)^2}{t^2}\r) \frac{1}{V_i(y,\sqrt t)^2} e^{-c''\frac{d(x,y)^2}{t}}\, d\mu(x) \notag \\
%&& \leq C \int_{E_i \setminus F^{(\kz/2)}_i} \frac{1}{V_i(y,\sqrt t)^2} e^{-\frac{d(x,y)^2}{ct}} d\mu(x)\left(\frac 1t+\frac{1}{\kz^2}\right) \nonumber \\
&&  \lesssim_\beta \frac{1}{tV_i(y,\sqrt t)^2} \int_{M_i}  e^{-c''\frac{d(x,y)^2}{t}} \, d\mu(x).
   % \lesssim_\beta \frac{1}{tV_i(y,\sqrt t)},
%    \quad
%\forall 0<t \le \beta \kappa^2, \, y\in E_i\setminus F^{(\kappa)}_i,
\end{eqnarray*}
%for any $0<t \le \beta \kappa^2$,  $y\in E_i\setminus F^{(\kappa)}_i$, where we used the trivial inequality $e^{-c'' \frac{d(x,y)^2}{t}} \lesssim \frac{t}{d(x,y)^2}$.
%{\color{gray}and the fact that
%\begin{align}\label{eq:1/v-e}
%     \int_{M_i}  e^{-c''\frac{d(x,y)^2}{t}} d\mu(x)
%     & \leq \int_{B_i(y, \sqrt t)} e^{-c''\frac{d(x,y)^2}{t}} d\mu(x)
%          + \sum_{i=1}^{\infty} \int_{B_i(y, 2^i \sqrt t) \setminus B_i(y, 2^{i-1} \sqrt t)} e^{-c''\frac{d(x,y)^2}{t}} d\mu(x) \\
%     & \lesssim V_i(y, \sqrt t).
%\end{align}}
Note that from a standard method of decomposition in annuli, it holds %for any $\az >0$,
\begin{eqnarray}\label{eq:1/v-e}
     \int_{M_i}  e^{-\az \frac{d(x,y)^2}{t}} d\mu(x)
     %&\leq \int_{B_i(y, \sqrt t)} e^{-c''\frac{d(x,y)^2}{t}} d\mu(x)
%          + \sum_{i=1}^{\infty} \int_{B_i(y, 2^i \sqrt t) \setminus B_i(y, 2^{i-1} \sqrt t)} e^{-c''\frac{d(x,y)^2}{t}} d\mu(x) \notag \\
     && \lesssim_{\az} V_i(y, \sqrt t),
     \quad \forall \az >0.
\end{eqnarray}
Hence, one has
\begin{equation*}% \label{gra}
    \int_{E_i\setminus F^{(\kz)}_i}|\nabla_xh_t(x,y)|^2 e^{c'\frac{d(x,y)^2}{t}}\,d\mu(x)
    \lesssim_\beta \frac{1}{tV_i(y,\sqrt t)}.
  %  \quad
%\forall 0<t \le \beta \kappa^2, \, y\in E_i\setminus F^{(\kappa)}_i.
\end{equation*}
%where in the penultimate inequality we used the fact that %a standard method of decomposition to deduce that
%for any $c >0$,
%\begin{align}\label{eq:1/v-e}
%     \int_{M_i} \frac{1}{V_i(y,\sqrt t)} e^{-c\frac{d(x,y)^2}{t}} d\mu(x)
%    & \lesssim   \int_{M_i} \frac{1}{V_i(y,\sqrt t+d(x,y))} \left(\frac{\sqrt t+d(x,y)}{\sqrt t}\right)^{N_i} e^{-c\frac{d(x,y)^2}{t}} d\mu(x) \nonumber \\
%    &  \lesssim  \int_{M_i} \frac{1}{V_i(y,\sqrt t+d(x,y))} \frac{\sqrt t}{\sqrt t+d(x,y)}  d\mu(x)
%   % & \quad \lesssim   \int_{B_i(y,\sqrt t)} \frac{1}{V_i(y,\sqrt t+d(x,y))} \frac{\sqrt t}{\sqrt t+d(x,y)}  d\mu(x) \nonumber \\
%%    & \quad \quad    + \frac{1}{V_i(y,\sqrt t)}  \sum_{k=1}^{\infty} \int_{B_i(y,2^k\sqrt t) \setminus B_i(y,2^{k-1}\sqrt t)} \frac{1}{V_i(y,\sqrt t+d(x,y))} \frac{\sqrt t}{\sqrt t+d(x,y)}  d\mu(x) \nonumber \\
%      \lesssim 1.
%\end{align}
%Next, %since  $d(x,y) \geq r$ provided $x \in E_i \setminus (F^{(\kz)}_i \cup B(y,r))$,
%for any $y \in E_i \setminus F^{(\kz)}_i$ and $x \in E_i \setminus (F^{(\kz)}_i \cup B(y,r))$,
%we write%decompose $e^{-c'\frac{d(x,y)^2}{t}}$ as
%$$e^{-c'\frac{d(x,y)^2}{t}} = e^{-c\frac{d(x,y)^2}{t}}e^{-c\frac{d(x,y)^2}{t}} \leq e^{-c\frac{r^2}{t}} e^{-c\frac{d(x,y)^2}{t}}.$$
%By the H\"older inequality, \eqref{est-on-diagonal-gradient} and a standard method of decomposition (cf. \eqref{eq:1/v-e}), we have for any $y \in  E_i \setminus F^{(\kz)}_i$,
Combining this estimate with the H\"older inequality,
we conclude that for any $r>0$,
\begin{eqnarray*}
&&\int_{E_i\setminus (F^{(\kz)}_i\cup B(y,r))}|\nabla_xh_t(x,y)| \,d\mu(x) \\
&& \ \le \left(\int_{E_i\setminus F^{(\kz)}_i}|\nabla_xh_t(x,y)|^2 e^{c'\frac{d(x,y)^2}{t}}\,d\mu(x)\right)^{1/2}
\left(\int_{E_i\setminus (F^{(\kz)}_i\cup B(y,r))} e^{-c'\frac{d(x,y)^2}{t}}\,d\mu(x)\right)^{1/2}\\
&& \ \lesssim_\beta \frac{1}{\sqrt{tV_i(y, \sqrt t)}}  e^{-c\frac{r^2}{t}}
          \left(\int_{M_i}  e^{-c\frac{d(x,y)^2}{t}}\,d\mu(x)\right)^{1/2}
 \lesssim_\beta \frac{1}{\sqrt t} e^{-c\frac{r^2}{t}},
 %\quad \forall 0<t \le \beta \kappa^2, \, y\in E_i\setminus F^{(\kappa)}_i,
\end{eqnarray*}
%where in the last inequality we used \eqref{eq:1/v-e}.
where the last inequality is due to \eqref{eq:1/v-e}.
%where we have used in the second inequality
%$e^{-c'\frac{d(x,y)^2}{t}} = e^{-c\frac{d(x,y)^2}{t}}e^{-c\frac{d(x,y)^2}{t}} \leq e^{-c\frac{r^2}{t}} e^{-c\frac{d(x,y)^2}{t}},$
%\[
%\int_{E_i\setminus (F^{(\kz)}_i\cup B(y,r))} e^{-c'\frac{d(x,y)^2}{t}}\,d\mu(x)
%=
%\int_{E_i\setminus (F^{(\kz)}_i\cup B(y,r))} e^{-c\frac{d(x,y)^2}{t}}e^{-c\frac{d(x,y)^2}{t}}d\mu(x)
%\leq e^{-c\frac{r^2}{t}}
%\int_{E_i\setminus (F^{(\kz)}_i\cup B(y,r))} e^{-c\frac{d(x,y)^2}{t}}\,d\mu(x),\]
%and \eqref{eq:1/v-e} in the last one.
This finishes the proof.
\end{proof}

%{\color{blue}
%\begin{rem}
% In fact, from the above proof,  provided that
%   $y \in E_i\setminus F_i^{(\delta)},$ $\sqrt t \le C \delta$ and $\delta > 2$, then
%    $$\int_{E_i\setminus (F_i^{(\delta)} \cup B(y,r))}|\nabla_x h_t(x,y)|\,d\mu(x) \lesssim \frac{1}{\sqrt t}e^{-\frac{r^2}{ct}}$$
%  still holds.
%  Indeed,
%  We just need to replace Lemma \ref{lem:es-away} with Remark \ref{rem:es-away-1} in the proof
%  and reconstruct $\psi_i$ as follows:
%  $\psi_i\equiv 1$ on $E_i\setminus F_i^{(\delta)}$ with $\supp \psi_i\subset \{x\in E_i;\dist(x,E_i\setminus F_i)< \delta\}$ and $|\nabla \psi_i|\le C/\delta$.
% \end{rem}
%}

%\subsection{Estimates of terms in the upper bound} \label{sec:es-terms} \hskip\parindent
%    %In this subsection, we will give some estimates for the terms in the heat kernel upper bound \eqref{heat-mixed}.
%    In this subsection, we will give some estimates
%  %  for the terms in the right-hand side of \eqref{heat-mixed}.
%    for the terms in the heat kernel upper bounds. %upper bound of heat kernel in Theorem \ref{thm:weighted-upper bound}.
%
%
%\subsubsection{Estimates of $\widetilde{H}(x,t)$} \hskip\parindent

\section{Basic estimates for the heat semigroup} \hskip\parindent
%In this section, we first provide the $L^p$-Davies-Gaffney estimates for the operators
%$e^{-t \L}$, $\nabla e^{-t \L}$ and $\L e^{-t \L}$.
%Then, we will give the ultracontractivity of the heat semigroup.
 In this section, we collect some basic estimates for the heat semigroup. %, including the Davies-Gaffney estimates and the ultracontractivity.
\subsection{Davies-Gaffney estimates} \label{sec:DG-es} \hskip\parindent
 In this part, the $L^p$-Davies-Gaffney estimates for the operators
$e^{-t \L}$, $\nabla e^{-t \L}$ and $\L e^{-t \L}$ are provided.
For $1\le p<\infty$, we say that an operator $T$ satisfies the $L^p$-Davies-Gaffney estimate, if there exist constants $C,c>0$ such that
for any closed sets $E,F\subset M$, it holds
\begin{eqnarray*}
\|T(f\chi_E)\|_{L^p(F)}\le C
%\exp\left(-c\frac{\dist(E,F)^2}{t}\right)
e^{-c\frac{\dist(E,F)^2}{t}}
\|f\|_{L^p(E)}.
\end{eqnarray*}
When $p=2$, we shall say that $T$ satisfies the Davies-Gaffney estimate for short.

%We need the following observation, which is a direct consequence of
In what follows we will need the following observation, which is a direct consequence of \cite[Therorem 4.1]{cd03} and \cite[Propposition 3.6]{CS10}.
%The following result is a direct consequence of \cite[Therorem 4.1]{cd03} and \cite[Propposition 3.6]{CS10}, which is needed in what follows.
\begin{prop}\label{mapping-gradient-heat}
The operator $\nabla e^{-t\L}$ is bounded on $L^p(M)$ for $1<p\le 2$ with
$$
\|\nabla e^{-t\L}\|_{p\to p}
\lesssim_p \frac{1}{\sqrt t},
\quad
\forall  t>0.$$
\end{prop}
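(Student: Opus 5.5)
The proof reduces the bound to $L^2$ information available on any complete manifold, followed by interpolation. The whole argument uses only the self-adjointness and sub-Markovian character of the heat semigroup, not any Gaussian bound on $M$. This matters because $M$ is not doubling globally, so the Coulhon--Duong machinery of \cite{cd99} cannot be used directly.

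\textbf{Step 1: the case $p=2$.} By integration by parts and spectral theory,
$$\|\nabla e^{-t\L}f\|_2^2=\langle \L e^{-t\L}f,e^{-t\L}f\rangle=\|\L^{1/2}e^{-t\L}f\|_2^2\le \sup_{\lambda\ge0}\lambda e^{-2t\lambda}\,\|f\|_2^2\lesssim \frac{1}{t}\|f\|_2^2 .$$

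\textbf{Step 2: the range $1<p<2$.} Here we invoke \cite[Proposition 3.6]{CS10}, which is essentially Stein's argument. For a non-negative $f$ (reduce to this by splitting $f=f^+-f^-$ and real and imaginary parts), set $u_t=e^{-t\L}f$. One has the pointwise identity
$$(\partial_t+\L)(u^p)=-p(p-1)u^{p-2}|\nabla u|^2 .$$
Integrating over $M$, and using that $\int \L(u^p)\,d\mu=0$ in the weak sense (sub-Markovian property, with a standard truncation), gives
$$\int_M u_t^{p-2}|\nabla u_t|^2\,d\mu=-\frac{1}{p(p-1)}\frac{d}{dt}\|u_t\|_p^p\lesssim_p \|u_t\|_p^{p-1}\,\|\L e^{-t\L}f\|_p .$$
Then H\"older's inequality yields
$$\|\nabla u_t\|_p^p\le \Big(\int u_t^{p-2}|\nabla u_t|^2\Big)^{p/2}\Big(\int u_t^p\Big)^{1-p/2}.$$
The remaining input is the $L^p$-analyticity of the heat semigroup, $\|t\L e^{-t\L}\|_{p\to p}\lesssim_p 1$, which holds for every symmetric sub-Markovian semigroup and $1<p<\infty$ (Stein's theorem). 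Substituting this and contractivity, $\|u_t\|_p\le\|f\|_p$, gives $\|\nabla e^{-t\L}f\|_p\lesssim_p t^{-1/2}\|f\|_p$.

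\textbf{Alternative route via \cite[Theorem 4.1]{cd03}.} That result states the equivalence $\|\nabla e^{-t\L}\|_{p\to p}\lesssim t^{-1/2}$ $\Leftrightarrow$ a multiplicative inequality $\||\nabla f|\|_p^2\lesssim \|f\|_p\|\L f\|_p$, valid for $1<p\le2$ on any complete manifold. Combined with analyticity, it gives the same conclusion.

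The main obstacle is the rigor of the Stein-type identity on a non-compact, non-doubling manifold: the behaviour of $u^{p-2}$ near zeros of $u$ and the vanishing of boundary terms. I would handle this by replacing $u$ with $u+\varepsilon$ and using cut-off functions (completeness gives exhaustion functions with bounded gradient), then letting $\varepsilon\to0$. This is exactly what the cited results already carry out, so citing them suffices.
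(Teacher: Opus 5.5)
Your proposal is correct and is essentially the paper's proof: the paper argues exactly as in your ``alternative route,'' applying the Coulhon--Duong inequality $\||\nabla g|\|_p^2\lesssim_p\|g\|_p\|\L g\|_p$ of \cite[Theorem 4.1]{cd03} to $g=e^{-t\L}f$, and then using the $L^p$-analyticity bound $\|t\L e^{-t\L}\|_{p\to p}\lesssim_p 1$ from Littlewood--Paley--Stein theory. Your Step 2 unpacks Stein's computation behind that inequality for nonnegative heat-evolved data, which suffices because the semigroup preserves positivity; this makes the argument self-contained, but it rests on the same two ingredients.
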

\begin{proof}By \cite[Theorem 4.1]{cd03}, for any $f\in C^\infty_0(M)$ and $1 < p \leq 2$, we have
$$\|\nabla e^{-t\L}f\|_{p}^2
\lesssim_p \|e^{-t\L}f\|_{p}\|\L e^{-t\L}f\|_{p}
\lesssim_p \frac{1}{t}\|f\|_{p}^2,$$
%which completes the proof.%implies the proposition.
where the last inequality is due to the classical Littlewood-Paley-Stein theory (cf. \cite{ste70}).
This completes the proof.
\end{proof}
%We have
%$$\|e^{-t\L}\|_{m \rightarrow m} \lesssim 1, $$ for any $1\leq m \leq \infty$;
%see Davies [Heat kernels and Spectral theory, Theorem 1.3.3].

The following result was proved in \cite[p. 930, (3.1)]{acdh}; see also \cite{CS}.

\begin{prop}\label{davies-gaffney-off}
The operators  $e^{-t\L}$, $\sqrt t\nabla e^{-t\L}$ and $t\L e^{-t\L}$ satisfy the Davies-Gaffney estimate.
\end{prop}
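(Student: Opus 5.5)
The plan is to derive all three Davies--Gaffney estimates from the single $L^2$ statement for $e^{-t\L}$. That statement comes from the classical integrated maximum principle (Davies' perturbation) argument, which needs only that $\L$ is the nonnegative self-adjoint Friedrichs extension of the Laplace--Beltrami operator on a complete manifold. No volume or heat kernel bounds are used, so the non-doubling character of $M$ is irrelevant.

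\textbf{Step 1: the semigroup $e^{-t\L}$.} Fix closed sets $E,F$, let $f$ be supported in $E$, and set $u(s)=e^{-s\L}f$. Take a bounded Lipschitz function $\eta$ with $|\nabla\eta|\le 1$; later we choose $\eta=\dist(\cdot,E)$, truncated. For $\lambda>0$, differentiate $I(s)=\int_M u(s)^2e^{2\lambda\eta}\,d\mu$. Integration by parts, justified by completeness via cutoffs, gives
$$I'(s)=-2\int |\nabla u|^2e^{2\lambda\eta}-4\lambda\int u\,\nabla u\cdot\nabla\eta\, e^{2\lambda\eta}\le 2\lambda^2 I(s),$$
where the last step uses Cauchy--Schwarz. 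Hence $I(t)\le e^{2\lambda^2t}I(0)=e^{2\lambda^2 t}\|f\|_2^2$, since $\eta=0$ on $E$. On $F$ we have $\eta\ge\dist(E,F)=:\rho$, so
$$\|e^{-t\L}f\|_{L^2(F)}\le e^{\lambda^2t-\lambda\rho}\|f\|_2.$$
Choosing $\lambda=\rho/(2t)$ yields the factor $e^{-\rho^2/(4t)}$.

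\textbf{Step 2: $t\L e^{-t\L}$.} I would use analyticity. The same computation with complex time $z$ in a sector, $|\arg z|<\pi/4$ say, gives $\|e^{-z\L}f\|_{L^2(F)}\le e^{-c\rho^2/|z|}\|f\|_2$. This follows either by rerunning the argument with $\mathrm{Re}\,z$ in place of $s$, or by Phragmén--Lindelöf interpolation between the bound $1$ on the sector and the real-axis Gaussian bound, as in Davies' and \cite{acdh}'s treatment. Then apply the Cauchy formula
$$t\L e^{-t\L}=-\frac{t}{2\pi i}\oint_{|z-t|=t/2}\frac{e^{-z\L}}{(z-t)^2}\,dz.$$
On this circle $|z|\sim t$, so the Gaussian factor $e^{-c\rho^2/t}$ survives with a smaller constant.

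\textbf{Step 3: $\sqrt t\nabla e^{-t\L}$.} This is the main obstacle, because the gradient must be localized without any pointwise kernel bounds. I would use a Caccioppoli argument in $L^2$ modeled on the proof of Proposition \ref{est-integral-diagonal-general}. Let $\psi$ be a Lipschitz cutoff equal to $1$ on $F$, supported in $\{\dist(\cdot,F)<\rho/2\}$, with $|\nabla\psi|\lesssim 1/\rho$, and let $u=e^{-t\L}f$. Testing $\L u$ against $\psi^2u$ gives
$$\int|\nabla u|^2\psi^2\le 2\int|\L u|\,|u|\,\psi^2+4\int|\nabla\psi|^2u^2.$$
Every integrand on the right lives on the set $F'=\{\dist(\cdot,F)\le\rho/2\}$, and $\dist(E,F')\ge\rho/2$. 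By Steps 1 and 2 applied to $F'$, the right-hand side is bounded by
$$\big(t^{-1}+\rho^{-2}\big)e^{-c\rho^2/t}\|f\|_2^2.$$
Since $\rho^{-2}e^{-c\rho^2/t}\lesssim t^{-1}e^{-c'\rho^2/t}$, multiplying by $t$ finishes the estimate. The degenerate case $\rho\lesssim\sqrt t$ is covered by the uniform bound $\|\sqrt t\nabla e^{-t\L}\|_{2\to2}\le 1$, which comes from $\|\nabla g\|_2^2=\langle\L g,g\rangle$ and spectral calculus.
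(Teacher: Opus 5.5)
Your argument is correct, and it takes a different route from the paper in the trivial sense that the paper gives no argument at all. It quotes the estimate from \cite[p.~930, (3.1)]{acdh}, with a pointer to \cite{CS}. There the bound for $e^{-t\L}$ is the classical Gaffney--Davies estimate, which \cite{CS} relates to finite propagation speed for the wave equation. The bounds for $t\L e^{-t\L}$ and $\sqrt t\nabla e^{-t\L}$ are treated there as standard consequences.

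You instead reconstruct everything from the form structure of $\L$:
\begin{itemize}
\item the exponential-weight computation for the semigroup;
\item its extension to a sector, followed by the Cauchy formula for the time derivative;
\item a Caccioppoli localization for the gradient.
\end{itemize}
The last step is the same device the paper itself uses in Proposition \ref{est-integral-diagonal-general} and in the estimate of $\ca^{ij}_2$.

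Your version buys transparency: it shows explicitly that only self-adjointness and the Dirichlet form are involved, so the estimate is insensitive to the failure of doubling on $M$. The citation buys brevity, and through \cite{CS} the connection with finite propagation speed.

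Two small precisions.
\begin{itemize}
\item In the sector step, the clean way to rerun the computation is along the ray $z=se^{i\theta}$. This gives $I'(s)\le 2\lambda^2 I(s)/\cos\theta$ and hence the factor $e^{-\rho^2\cos\theta/(4|z|)}$. The circle $|z-t|=t/2$ stays in $|\arg z|\le\pi/6$, so this suffices.
\item The inequality $\rho^{-2}e^{-c\rho^2/t}\lesssim t^{-1}e^{-c'\rho^2/t}$ holds only when $\rho\gtrsim\sqrt t$; it blows up as $\rho\to0$. Your closing case split, which uses $\|\sqrt t\nabla e^{-t\L}\|_{2\to2}\le1$ for $\rho\lesssim\sqrt t$, is therefore a necessary half of the argument rather than a degenerate afterthought. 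With it, the proof is complete.
\end{itemize}
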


Using the Riesz-Thorin interpolation theorem, we deduce the following $L^p$-Davies-Gaffney estimates.
\begin{cor}\label{davies-operators}
The operators $e^{-t\L}$, $\sqrt t\nabla e^{-t\L}$ and $t\L e^{-t\L}$  satisfy $L^p$-Davies-Gaffney estimate for $1<p\le 2$.
\end{cor}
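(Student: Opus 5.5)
We interpolate between $p=2$ and a Davies--Gaffney-type bound at an endpoint near $1$, since at $p=1$ the statement may fail for $\sqrt t\nabla e^{-t\L}$. Fix $1<p<2$ and closed sets $E,F\subset M$, and write $T_t$ for any one of the three operators $e^{-t\L}$, $\sqrt t\nabla e^{-t\L}$, $t\L e^{-t\L}$. Consider the localized operator $f\mapsto \chi_F T_t(\chi_E f)$. By Proposition \ref{davies-gaffney-off} its $L^2\to L^2$ norm is $\lesssim e^{-c\,\dist(E,F)^2/t}$.

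The second ingredient is a uniform $L^{p_0}\to L^{p_0}$ bound for some $1<p_0<p$, with no decay. For $\sqrt t\nabla e^{-t\L}$ this is Proposition \ref{mapping-gradient-heat}, which gives $\|\sqrt t\nabla e^{-t\L}\|_{p_0\to p_0}\lesssim_{p_0}1$. For $e^{-t\L}$ the semigroup is sub-Markovian, so it is contractive on every $L^{p_0}$, $1\le p_0\le\infty$. For $t\L e^{-t\L}$ uniform boundedness on $L^{p_0}$, $1<p_0<\infty$, follows from analyticity of the symmetric Markov semigroup on $L^{p_0}$ (Stein's Littlewood--Paley--Stein theory, as already used in the proof of Proposition \ref{mapping-gradient-heat}). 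Multiplying by $\chi_E$ and $\chi_F$ does not increase norms, so the localized operator has $L^{p_0}$ norm $\lesssim 1$.

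Next apply the Riesz--Thorin theorem to the linear operator $f\mapsto\chi_F T_t(\chi_E f)$; for the vector-valued gradient case one can use the vector-valued version, or pair against vector fields. Choose $\theta\in(0,1)$ with $1/p=(1-\theta)/p_0+\theta/2$. This gives
\[
\|T_t(f\chi_E)\|_{L^p(F)}\lesssim e^{-c\theta\,\dist(E,F)^2/t}\|f\|_{L^p(E)},
\]
which is the $L^p$-Davies--Gaffney estimate with constant $c\theta$. The case $p=2$ is Proposition \ref{davies-gaffney-off} itself.

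The only subtle point is the endpoint: $p_0$ must be strictly greater than $1$, because $\nabla e^{-t\L}$ and $\L e^{-t\L}$ need not be bounded on $L^1$ here. This is handled by picking any $p_0\in(1,p)$, so $\theta>0$ and the constants depend only on $p$.
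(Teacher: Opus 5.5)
Your proposal is correct and follows the paper's argument: both apply Riesz--Thorin between uniform $L^{p_0}$ bounds for $1<p_0<p$ (via Proposition \ref{mapping-gradient-heat} for the gradient) and the $L^2$ Davies--Gaffney estimate of Proposition \ref{davies-gaffney-off}. Your explicit choice of $p_0\in(1,p)$ and your remark on vector-valued interpolation spell out what the paper leaves implicit. Also, where the paper's proof writes $t\nabla e^{-t\L}$, you correctly use $\sqrt t\,\nabla e^{-t\L}$.
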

\begin{proof}
Note that $e^{-t\L}$  and $t\L e^{-t\L}$ are $L^p$-bounded for all $1 < p < \infty$,
and by Proposition \ref{mapping-gradient-heat}, $t\nabla e^{-t\L}$ is bounded on $L^p(M)$ for $1<p\le 2$.
From this, together with the Riesz-Thorin theorem and Proposition \ref{davies-gaffney-off}, we deduce the desired
estimate for $1<p<2$ as
\begin{eqnarray*}
&&\|e^{-t\L}(f\chi_E)\|_{L^p(F)}+\|\sqrt t\nabla e^{-t\L}(f\chi_E)\|_{L^p(F)}+\|t\L e^{-t\L}(f\chi_E)\|_{L^p(F)}\\
&& \ \le
%C(p)\exp\left(-c(p)\frac{\dist(E,F)^2}{t}\right)\|f\|_{L^p(E)}.
C(p) e^{-c(p)\frac{\dist(E,F)^2}{t}} \|f\|_{L^p(E)}.
\end{eqnarray*}
This completes the proof.
\end{proof}
From Corollary \ref{davies-operators} and \cite[Proposition 3.1]{Au07},
it follows that the composition of $\sqrt{t} \nabla e^{-t \L}$ and $t \L e^{-t \L}$ satisfies:

\begin{cor} \label{cor:com-DG}
Let $1<p \leq 2$. The operator $t^{3 / 2} \nabla \mathcal{L} e^{-t \mathcal{L}}$ satisfies the $L^p$-Davies-Gaffney estimate.
\end{cor}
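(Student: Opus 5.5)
We will write $t^{3/2}\nabla\L e^{-t\L}$ as a composition of two operators that each satisfy $L^p$-Davies-Gaffney estimates, and then show that this property is preserved under composition. Explicitly,
\[
t^{3/2}\nabla \L e^{-t\L} = 2^{3/2}\,\Bigl(\sqrt{t/2}\,\nabla e^{-\frac t2\L}\Bigr)\circ\Bigl(\tfrac t2\L e^{-\frac t2\L}\Bigr),
\]
which follows from the semigroup property $e^{-t\L}=e^{-\frac t2\L}e^{-\frac t2\L}$ and the fact that $\L$ commutes with the semigroup. Corollary \ref{davies-operators}, applied at time $t/2$, says that each factor satisfies the $L^p$-Davies-Gaffney estimate for $1<p\le 2$. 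The change of time only alters the constant $c$ by a factor of $2$.

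It then suffices to prove the composition lemma of \cite[Proposition 3.1]{Au07}. Suppose $S_t$ and $T_t$ satisfy $L^p$-Davies-Gaffney estimates. Fix closed sets $E,F$ and put $d=\dist(E,F)$. We may assume $d>0$, since otherwise only uniform $L^p$-boundedness is needed, and that follows from the case $E=F=M$. Let $G=\{z:\dist(z,F)\le d/2\}$, and split the intermediate function as $T_t(f\chi_E)=\chi_{G}T_t(f\chi_E)+\chi_{M\setminus G}T_t(f\chi_E)$.

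For the first piece, $\dist(E,G)\ge d/2$. The estimate for $T_t$ gives $\|T_t(f\chi_E)\|_{L^p(G)}\lesssim e^{-c d^2/(4t)}\|f\|_{L^p(E)}$. We then apply $S_t$ using only its uniform $L^p$ bound.

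For the second piece, the support lies in the closure of $M\setminus G$, which is at distance $\ge d/2$ from $F$. The estimate for $S_t$ gives $\|S_t(\chi_{M\setminus G}T_t(f\chi_E))\|_{L^p(F)}\lesssim e^{-cd^2/(4t)}\|T_t(f\chi_E)\|_{p}$. We finish with the uniform $L^p$ bound of $T_t$, i.e. the Davies-Gaffney estimate with $E=F=M$.

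Adding the two pieces gives $\|S_tT_t(f\chi_E)\|_{L^p(F)}\lesssim e^{-c' d^2/t}\|f\|_{L^p(E)}$ with $c'=c/4$. Since the gradient is vector-valued, the $L^p$ norms of $\sqrt t\nabla e^{-t\L}$ should be read as norms of $|\cdot|$. The argument is unaffected, because it uses only the triangle inequality and locality of the supports.

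The only point needing care is a technicality, not a real obstacle. We should check that the splitting is legitimate for $f\in L^p$, i.e. that $T_t(f\chi_E)$ is an $L^p$ function. This holds because the $E=F=M$ case of Corollary \ref{davies-operators} provides boundedness on $L^p(M)$, in particular for $\nabla e^{-t\L}$ via Proposition \ref{mapping-gradient-heat}. The boundary of $G$ can be ignored, since closed neighbourhoods suffice.
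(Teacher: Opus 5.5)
Your proposal is correct and follows the paper's own argument. The paper factors $t^{3/2}\nabla\L e^{-t\L}$ via the semigroup property into $\sqrt{t}\,\nabla e^{-t\L}$ and $t\L e^{-t\L}$, applies Corollary \ref{davies-operators} to each factor, and cites the composition property \cite[Proposition 3.1]{Au07}; the only difference is that you write out a proof of that composition lemma (splitting along the closed set $G=\{z:\dist(z,F)\le d/2\}$) instead of citing it, and that proof is correct.
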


\subsection{The ultracontractivity of the heat semigroup with large time} \hskip\parindent
We have the following ultracontractivity of the heat semigroup.
\begin{prop} \label{prop:ultcon}
Let $1 \leq p \leq q \leq \infty$. It holds that %for any $t \ge 1/2$,
  \begin{equation*}%\label{heat-operator-norm-1}
\|e^{-t \L}\|_{p \rightarrow q} \lesssim t^{-\delta(\frac{1}{p}-\frac{1}{q})}, \quad \forall t \geq \frac{1}{2}.
%\, 1 \leq p \leq q \leq \infty.
\end{equation*}
where %$2\delta:=\min\{2, \min_{1 \leq i \leq \ell} \delta_i\}.$
 $\delta:=\min_{1 \leq i \leq \ell}\{ \delta_i/2\}$
 % Ϊʲô��ҪС��1�� arxiv���Ⱥ��Ͻ���1/t��һ��. ��ʱû�У� ����С��1Ӧ������Ȼ��
 and $\delta_i$ is as in \eqref{rev-d}.
\end{prop}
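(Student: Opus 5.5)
The plan is to reduce everything to the $L^1\to L^\infty$ bound and then interpolate. Since $e^{-t\L}$ is a symmetric Markov semigroup, it is a contraction on every $L^p$, $1\le p\le\infty$. So it suffices to show
\[
\|e^{-t\L}\|_{1\to\infty}\lesssim t^{-\delta}, \qquad t\ge \tfrac12 .
\]
Riesz--Thorin interpolation between $L^1\to L^\infty$ and the contractivity $L^p\to L^p$ then gives $\|e^{-t\L}\|_{p\to q}\lesssim t^{-\delta(\frac1p-\frac1q)}$ for all $1\le p\le q\le\infty$.

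For the $L^1\to L^\infty$ bound, I would first use the semigroup property and Cauchy--Schwarz:
\[
\|e^{-t\L}\|_{1\to\infty}=\sup_{x,y}h_t(x,y)\le \sup_x h_t(x,x).
\]
Lemma \ref{lem:for-ultra} then gives
\[
h_t(x,x)\lesssim \frac{1}{V_0(\sqrt t)}+\frac{1}{V_i(x,\sqrt t)}\quad (x\in E_i),
\]
with the second term absent for $x\in E_0$. The task is therefore to bound $V_0(\sqrt t)$ and $V_i(x,\sqrt t)$ from below by $t^{\delta}$, uniformly in $x$, for $t\ge 1/2$.

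\textbf{Lower bound for $V_i(x,\sqrt t)$.} I would apply the reverse doubling \eqref{rev-d} with $r=1/\sqrt2$ and $R=\sqrt t\ge 1/\sqrt2$:
\[
V_i(x,\sqrt t)\gtrsim t^{\delta_i/2}\,V_i(x,1/\sqrt2).
\]
It then remains to know $\inf_{x}V_i(x,1/\sqrt2)>0$. This follows from the non-collapsed assumption $\inf_x V_i(x,1)>0$ combined with doubling on $M_i$. Since $t\ge1/2$, we get $t^{\delta_i/2}\ge c\,t^{\delta}$, because $\delta=\min_i\delta_i/2\le\delta_i/2$.

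\textbf{Lower bound for $V_0(\sqrt t)$.} Here $V_0(\sqrt t)=\min_i V_i(o_i,\sqrt t)$, and the same reverse doubling argument at the fixed points $o_i$ gives $V_i(o_i,\sqrt t)\gtrsim t^{\delta_i/2}V_i(o_i,1/\sqrt2)\gtrsim t^{\delta}$. The constants here depend only on the finitely many points $o_i$.

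I expect no real obstacle. The only points needing care are two. First, the uniformity in $x$ of the lower bound $V_i(x,1/\sqrt2)\gtrsim1$, which is exactly what non-collapsing plus local doubling provides. Second, the direction of the exponent comparison: one needs $t^{\delta_i/2}\ge c\,t^{\delta}$ for $t\ge1/2$, which holds since $\delta_i/2\ge\delta$ and $t$ is bounded below. Collecting the estimates gives $\sup_x h_t(x,x)\lesssim t^{-\delta}$, and interpolation finishes the proof.
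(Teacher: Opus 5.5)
Your proposal is correct and follows essentially the same route as the paper. You bound $h_t(x,x)$ by Lemma~\ref{lem:for-ultra}, pass to $\sup_{x,y}h_t(x,y)$ via the semigroup property, obtain $V_i(x,\sqrt t)\gtrsim t^{\delta}$ from reverse doubling \eqref{rev-d} plus non-collapsing, and finish with Riesz--Thorin against $L^p$-contractivity. The differences are cosmetic: you bound $V_0(\sqrt t)$ by reverse doubling at the points $o_i$ rather than by Lemma~\ref{lem:volume-growth}~(i), and your radius $1/\sqrt2$ covers $t\in[1/2,1)$ directly, a range the paper's use of $V_i(x,1)$ passes over.
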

\begin{proof}
%For any $t \ge 1/2$,
By the doubling property of $\mu_i$ and Lemma \ref{lem:volume-growth}, one has
%$$V_0(\sqrt t) %\gtrsim V_0(2 \sqrt t)
%= \min_{1\le i\le \ell} V_i(\sqrt t)
%\gtrsim \min_{1\le i\le \ell} V_i(2 \sqrt t)
%\gtrsim t^{\min\{n_i/2\}} \gtrsim t^{\min\{\delta_i/2\}} \gtrsim t^{\delta}. %, %\quad \forall t \ge \frac12.
%$$
%\[ V_0(\sqrt t) %\gtrsim V_0(2 \sqrt t)
%= \min_{1\le i\le \ell} V_i(\sqrt t)
%\gtrsim \min_{1\le i\le \ell} V_i(2 \sqrt t)
%    \gtrsim t^{\min\limits_{1\le i\le \ell}\{\frac{n_i}2\}} \gtrsim t^{\min\limits_{1\le i\le \ell}\{\frac{\delta_i}2\}} \gtrsim t^{\delta}.
%    %\gtrsim \sqrt{t}^{\min\limits_{1\le i\le \ell}\{n_i\}} \gtrsim \sqrt{t}^{\min\limits_{1\le i\le \ell}\{\delta_i\}} \gtrsim t^{\delta}.
%\]
%\[
%V_0(\sqrt t) %\gtrsim V_0(2 \sqrt t)
%= \min_{1\le i\le \ell} V_i(\sqrt t)
%\gtrsim \min_{1\le i\le \ell} V_i(2 \sqrt t)
%    \gtrsim t^{\min\limits_{1\le i\le \ell}\{n_i/2\}} \gtrsim t^{\min\limits_{1\le i\le \ell}\{\delta_i/2\}} \gtrsim t^{\delta}.
%\]
\[
V_0(\sqrt t) %\gtrsim V_0(2 \sqrt t)
= \min_{1\le i\le \ell} V_i(\sqrt t)
\gtrsim \min_{1\le i\le \ell} V_i(2 \sqrt t)
    \gtrsim {(\sqrt t)}^{\min\limits_{1\le i\le \ell}\{n_i\}}
    \gtrsim {(\sqrt t)}^{\min\limits_{1\le i\le \ell}\{\delta_i\}} \gtrsim t^{\delta}.
\]
%\[
%\frac{1}{V_0(\sqrt t)} %\gtrsim V_0(2 \sqrt t)
%= \frac{1}{\min_{1\le i\le \ell} V_i(\sqrt t)}
%\lesssim \frac{1}{\min_{1\le i\le \ell} V_i(2 \sqrt t)}
%\lesssim     \frac{1}{{(\sqrt t)}^{\min\limits_{1\le i\le \ell}\{n_i\}}}
%    \lesssim \frac{1}{{(\sqrt t)}^{\min\limits_{1\le i\le \ell}\{\delta_i\}}} \lesssim \frac1{t^{\delta}}.
%\]
For any $1 \leq i \leq \ell$, from \eqref{rev-d} and the non-collapsing condition, we deduce that
%\[
%    \frac{1}{V_i(x, \sqrt t)}
%    %\lesssim \frac{1}{V_i(x, 2 \sqrt t)}
%    \lesssim \frac{1}{t^{\delta_i/2}} \frac{1}{V_i(x,1)}
%    %\lesssim \max_{u \in M_i} \frac{1}{V(u,1)} \frac{1}{t^{\delta_i/2}}
%    \lesssim \frac{1}{t^{\delta_i/2}}
%    \lesssim \frac{1}{t^{\delta}}. %\quad
%    %\forall t \ge \frac12,
%\]
\[
    {V_i(x, \sqrt t)}
    %\lesssim \frac{1}{V_i(x, 2 \sqrt t)}
    \gtrsim {t^{\delta_i/2}} {V_i(x,1)}
    %\lesssim \max_{u \in M_i} \frac{1}{V(u,1)} \frac{1}{t^{\delta_i/2}}
    \gtrsim {t^{\delta_i/2}}
    \gtrsim {t^{\delta}},
    \quad \forall x \in E_i.
     %\quad
    %\forall t \ge \frac12,
\]
%\[
%    \frac{1}{V_i(x, \sqrt t)}
%    %\lesssim \frac{1}{V_i(x, 2 \sqrt t)}
%    \lesssim \frac{1}{V_i(x,1)} \frac{1}{t^{\frac{\delta_i}2}}
%    %\lesssim \max_{u \in M_i} \frac{1}{V(u,1)} \frac{1}{t^{\delta_i/2}}
%    \lesssim \frac{1}{t^{\frac{\delta_i}2}}
%    \lesssim \frac{1}{t^{\delta}}, %\quad
%    %\forall t \ge \frac12,
%\]
%where we used the fact that $V(x, 1)=V_i(x, 1)$ whenever $B_i(x, 1) \subset E_i$, otherwise $V(x, 1) \sim V_i(x, 1)$
%(see \cite[p. 1945]{gri-sal09} and Remark \ref{rem:thr} (ii)).
    Combining the above two estimates with Lemma \ref{lem:for-ultra}, it holds that
    %for any $x \in M$, %and $t \ge 1/2$,
    \[
        h_t(x,x)
        \lesssim {t^{-\delta}},
        \quad  \forall x\in M, %, \, t \geq \frac{1}{2},
    \]
    which together with the semigroup property (cf. \cite[(5.2)]{gri99-es}) gives that
    %for any $x, y \in M$, %and $t \ge 1/2$,
    \[
        h_t(x,y) \lesssim t^{-\delta},
        \quad \forall x,y \in M, %, \, t \ge\frac12,
    \]
   and hence
$$
\| e^{-t \L}\|_{1 \rightarrow \infty} \lesssim t^{-\delta}. %\quad \forall t \geq \frac{1}{2}.
$$
  From this, using the Riesz-Thorin interpolation theorem and the contraction property of the heat semigroup, we get the following ultracontractivity:
\begin{equation*}%\label{heat-operator-norm-1}
\|e^{-t \L}\|_{p \rightarrow q} \lesssim t^{-\delta(\frac{1}{p}-\frac{1}{q})},
%\quad \forall t \geq \frac{1}{2}, \, 1 \leq p \leq q \leq \infty,
\quad \forall t  \geq \frac{1}{2},
\end{equation*}
%for any $t \ge 1/2$,
which completes the proof.
%as desired.
%\\--------------------------------------------------------------------------\\
%The details is as follows:
%We have
%$$\|e^{-t\L}\|_{m \rightarrow m} \lesssim 1, $$ for any $1\leq m \leq \infty$;
%see Davies [Heat kernels and Spectral theorey, Theorem 1.3.3].
%Then
%\begin{equation*}
%\begin{dcases}
%    \frac{1-\theta}{m} + \frac{\theta}{1} = \frac{1}{p} \\
%    \frac{1-\theta}{m} + \frac{\theta}{\infty} =\frac{1}{q}.
%\end{dcases}
%\end{equation*}
%Therefore, $\theta = \frac{1}{p} - \frac{1}{q}$.
%\\----------------------------------------------------------
\end{proof}

\section{Mapping properties of the heat semigroup and its time derivative} \label{sec:map-proper} \hskip\parindent
In this section, we discuss the boundedness of $e^{-t\L}$ and $t\L e^{-t\L}$ from $L^p(F^{(R_i)}_i)$ to $L^2(E_j\setminus F^{(R_j)}_j)$,
%for any $1 \leq i, j\leq \ell$ and $t \ge R_j^2$,
which are obtained by %the heat kernel upper bound and the estimate of the heat kernel's time derivative, respectively.
the estimates of the heat kernel and its time derivative, respectively.
% which is straightforward to prove using \eqref{heat-mixed} in subsection \ref{sec:heat-ker-est}.
%and the boundedness of  from $L^p(F_i)$ to $L^2(E_j\setminus F_j)$, which follows mainly from the estimate of the inverse of the heat kernel time in subsection \ref{sec:time-der-heat-es}.
%{\color{blue}Besides, the $L^1$-estimate of the space derivative of the heat kernel is discussed.}
%%
%
%

\subsection{Mapping properties of the heat semigroup} \hskip\parindent
%{\color{blue}In this subsection, we discuss the boundedness of $e^{-t\L}$ and $t\L e^{-t\L}$ from $L^p(F_i)$ to $L^2(E_j\setminus F_j)$.
%They are deduced by the heat kernel upper bound \eqref{heat-mixed} in Subsection \ref{sec:heat-ker-est} and the estimate of  the heat kernel's time derivative in Subsection \ref{sec:time-der-heat-es}, respectively.}
% which is straightforward to prove using \eqref{heat-mixed} in subsection \ref{sec:heat-ker-est}.
%and the boundedness of  from $L^p(F_i)$ to $L^2(E_j\setminus F_j)$, which follows mainly from the estimate of the inverse of the heat kernel time in subsection \ref{sec:time-der-heat-es}.
%We first give an estimate that will be used in the proof of the Mapping properties.
%    The following proposition follows directly from
    %By the heat kernel upper bound \eqref{heat-mixed} in Subsection \ref{sec:heat-ker-est},
  The following mapping properties follow directly from Theorem \ref{thm:weighted-upper bound}.
\begin{prop}\label{map-heat-parabolic}
Let $1 \leq i, j \leq \ell$  and $ 1 \leq p < 2$. It holds that % and any $t \ge R_j^2$, it holds that:
 %and some given constants $c', C' >0$, there exists a constant $C>0$ such that for any $t>R_j^2$, it holds
$$\|e^{-t\L}\|_{L^p(F^{(R_i)}_i)\to L^2(E_j\setminus F^{(R_j)}_j)}
\lesssim \frac{R_j}{\sqrt t}\mu(F^{(R_i)}_i)^{\frac 12-\frac 1p},
\quad \forall t \ge R_j^2,$$
and
\[
    \|e^{-t\L}\|_{L^p(F^{(R_i)}_i)\to L^\infty(E_j\setminus F^{(R_j)}_j)}
   \lesssim   \frac{R_j^2}{t}  \mu(F^{(R_i)}_i)^{- \frac1p}, \quad \forall t \ge R_j^2.
\]
%and for $\widetilde F^{(R_j)}_j:=\{x\in E_j:\, c'R_j\le |x|\le C'R_j\}$ that
%$$\|e^{-t\L}\|_{L^p(F^{(R_i)}_i)\to L^2(\widetilde F^{(R_j)}_j)}\le
%\frac{CR_j^{2}}{t}\mu(F^{(R_i)}_i)^{\frac 12-\frac 1p},$$
\end{prop}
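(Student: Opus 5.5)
The plan is to derive both estimates from a single pointwise bound on the kernel and then pass to operator norms by Hölder and Minkowski. Throughout, $x\in E_j\setminus F^{(R_j)}_j$, $y\in F^{(R_i)}_i$ and $t\ge R_j^2$. Recall that $\mu(F^{(R_i)}_i)=\gamma\sim V_i(R_i)\sim V_j(R_j)$ by \eqref{eq:fifj-con}. The target bounds are
\[
h_t(x,y)\lesssim \frac{R_j^2}{t\,V_j(R_j)}
\qquad\text{and}\qquad
\sup_{y\in F^{(R_i)}_i}\Big(\int_{E_j\setminus F^{(R_j)}_j}h_t(x,y)^2\,d\mu(x)\Big)^{1/2}\lesssim \frac{R_j}{\sqrt t}\,\frac{1}{V_j(R_j)^{1/2}}.
\]
Granting these, the $L^\infty$ bound follows from $|e^{-t\L}f(x)|\le \sup_y h_t(x,y)\,\|f\|_{L^1(F_i)}$ and Hölder, $\|f\|_{L^1(F_i)}\le\mu(F_i)^{1-1/p}\|f\|_{L^p(F_i)}$. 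The $L^2$ bound follows from Minkowski's integral inequality, $\|e^{-t\L}f\|_{L^2(E_j\setminus F_j)}\le\int_{F_i}\|h_t(\cdot,y)\|_{L^2(E_j\setminus F_j)}|f(y)|\,d\mu(y)$, followed by the same Hölder step. Both produce exactly the powers $\mu(F_i)^{-1/p}$ and $\mu(F_i)^{1/2-1/p}$ in the statement.

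For the pointwise bound I will use Theorem \ref{thm:weighted-upper bound} with Doob's transform, $h_t(x,y)=h(x)h(y)\tilde h_t(x,y)$, together with $h(x)\sim\phi_j(|x|)$ and $h(y)\sim\phi_i(|y|)\lesssim\phi_i(R_i)$. Three facts make the Gaussian factors harmless:
\begin{itemize}
\item $d_+(x,y)\ge|x|+|y|-4$ (Remark \ref{rem:new-dist});
\item $|x|\ge R_j$;
\item $|y|\lesssim R_i$.
\end{itemize}
The term $\widetilde H(x,t)\widetilde H(y,t)/\widetilde V_0(\sqrt t)$ is handled as follows. Corollary \ref{cor:phi-H} gives $\phi_j(|x|)\widetilde H(x,t)\lesssim1$ and $\phi_i(|y|)\widetilde H(y,t)\lesssim1$. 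Lemma \ref{V0-general-equal} gives $1/\widetilde V_0(\sqrt t)\sim 1/(t\log^2(2+\sqrt t))$. Corollary \ref{lem:4-terms} lets me compare $\widetilde V_0(\sqrt t)$ with $\widetilde V_k(\sqrt t)$, and then $\widetilde V_k(\sqrt t)$ with $(t/R_j^2)\,\widetilde V_k(R_j)$ via Lemma \ref{lem:tilde-V}(ii). This yields the factor $R_j^2/t$ times a quantity bounded by $1/V_j(R_j)$, using Lemma \ref{lem:V-tildeV} to remove the tildes.

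The mixed terms $\widetilde H(y,t)/\widetilde V_{j}(\sqrt t)$ and $\widetilde H(x,t)/\widetilde V_{i}(\sqrt t)$ carry the weights $\phi_j(|x|)\phi_i(|y|)$. I will absorb $\phi_j(|x|)$ using $e^{-c|x|^2/t}$, as in Lemma \ref{lem:phi/v-e}, which gives $\phi_j(|x|)/\widetilde V_j(\sqrt t)\,e^{-c|x|^2/t}\lesssim \phi_j(\sqrt t)/\widetilde V_j(\sqrt t)$. Then I use $\widetilde V_j\sim\phi_j^2V_j$ and the growth bounds of Lemma \ref{lem:volume-growth}(i), splitting according to whether $n_j>2$, $n_j=2$ or $n_j<2$ (and likewise for $n_i$). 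In each case I aim to show the result is $\lesssim (R_j^2/t)/V_j(R_j)$.

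The $d_\emptyset$ term appears only when $i=j$, so $R_i=R_j$. Since $|y|\lesssim R_j\le\sqrt t$, I can use $\widetilde V_j(y,\sqrt t)\sim\widetilde V_j(\sqrt t)$, $\phi_j(|y|)\lesssim\phi_j(\sqrt t)$ and the doubling trick \eqref{eq:change-centre}. This bounds the term by $1/V_j(\sqrt t)\lesssim (R_j^2/t)/V_j(R_j)$ when $n_j\ge2$. When $n_j<2$, $V_j(\sqrt t)\sim V_j(R_j)(\sqrt t/R_j)^{n_j}$ is only good enough after using that $d(x,y)\gtrsim R_j$ is not guaranteed; this needs more care, described next.

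For the $L^2$ bound I will not simply square the sup bound. Instead I integrate the Gaussian factor $e^{-c|x|^2/t}$ (or $e^{-cd(x,y)^2/t}$) over $E_j$ using \eqref{eq:1/v-e}, which gives a factor $V_j(\sqrt t)$. The target then reduces to pointwise bounds of the form $h_t(x,y)\lesssim \frac{R_j}{\sqrt t}\,(V_j(R_j)V_j(\sqrt t))^{-1/2}e^{-c'|x|^2/t}$. In the parabolic case this loses less than $(R_j^2/t)/V_j(R_j)$, since $V_j(\sqrt t)/V_j(R_j)\sim(\sqrt t/R_j)^{n_j}$ with $n_j\le2$. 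The main obstacle is this bookkeeping: the mixed-end terms where one of $n_i,n_j$ equals $2$ (logarithmic factors), and the same-end $d_\emptyset$ term when $n_j<2$. For the latter I will first try to exploit that $x$ lies outside $F^{(R_j)}_j$, so $|x|\ge R_j$ forces either $d(x,y)\gtrsim|x|$ or $|x|\lesssim R_j$. I then trade a power of $|x|/\sqrt t$ or $R_j/\sqrt t$ against the Gaussian, and check each of the parameter cases against Lemma \ref{lem:phi/v-e}.
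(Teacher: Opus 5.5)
Your reduction to the pointwise bound $h_t(x,y)\lesssim R_j^2/(tV_j(R_j))$ via Theorem \ref{thm:weighted-upper bound}, Doob's transform and H\"older matches the paper. However, two of your term estimates fail as written.

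\textbf{The terms carrying $\widetilde H(x,t)$ when $n_j>2$.} You cannot discard $\widetilde H(x,t)$ through Corollary \ref{cor:phi-H}. What remains of the first term is $1/\widetilde V_0(\sqrt t)\sim 1/(t\log^2(2+\sqrt t))$. The inequality you then claim, $1/\widetilde V_0(\sqrt t)\lesssim R_j^2/(tV_j(R_j))$, is false. At $t=R_j^2$ it would force $V_j(R_j)\lesssim R_j^2\log^2(2+R_j)$, whereas $V_j(R_j)\gtrsim R_j^{n_j}$ with $n_j>2$ and $R_j$ arbitrarily large. So no volume comparison through Corollary \ref{lem:4-terms} or Lemma \ref{lem:tilde-V} can produce it. The same problem occurs in the term $\phi_j(|x|)\phi_i(|y|)\widetilde H(x,t)/\widetilde V_i(\sqrt t)$, where Lemma \ref{lem:phi/v-e} only gives $1/t$. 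The decay has to come from $x$ through $\widetilde H$. For $n_j>2$, Lemma \ref{H-equal-general}(i), $|x|\ge R_j$ and Lemma \ref{lem:volume-growth}(i) give $\phi_j(|x|)\sim 1$ and $\widetilde H(x,t)\lesssim |x|^2/V_j(|x|)\lesssim R_j^2/V_j(R_j)$. This factor must be kept in both terms.

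\textbf{The $d_\emptyset$ term ($i=j$) when $n_j<2$.} Replacing $\phi_j(|y|)$ by $\phi_j(\sqrt t)$ throws away exactly the gain you need, and your proposed repair via the position of $x$ cannot recover it. If $|x|\sim\sqrt t$, then $d(x,y)\lesssim\sqrt t$, so the Gaussian is $\sim 1$, and $\phi_j(|x|)\widetilde V_j(x,\sqrt t)^{-1/2}\sim V_j(\sqrt t)^{-1/2}$. You are left with $t^{-n_j/2}$, which exceeds the target $R_j^{2-n_j}/t$ by the unbounded factor $(\sqrt t/R_j)^{2-n_j}$. The gain comes from $y$ instead. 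Since $|y|\lesssim R_j\le\sqrt t$, Lemma \ref{lem:V-tildeV} gives $\widetilde V_j(y,\sqrt t)\gtrsim\phi_j^2(\sqrt t)V_j(y,\sqrt t)$. Hence $\phi_j(|y|)\widetilde V_j(y,\sqrt t)^{-1/2}\lesssim (R_j/\sqrt t)^{2-n_j}V_j(y,\sqrt t)^{-1/2}$. Combined with $\phi_j(|x|)\widetilde V_j(x,\sqrt t)^{-1/2}\lesssim V_j(x,\sqrt t)^{-1/2}$ and \eqref{eq:change-centre}, this yields $R_j^{2-n_j}/t\sim R_j^2/(tV_j(R_j))$.

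\textbf{The $L^2$ step.} Your detour is unnecessary, and for $n_j>2$ it is not justified. Your reduced target $\frac{R_j}{\sqrt t}(V_j(R_j)V_j(\sqrt t))^{-1/2}e^{-c'|x|^2/t}$ is stronger than the sup bound there. The first term of Theorem \ref{thm:weighted-upper bound} at $|x|\sim R_j$, $|y|\sim 1$ exceeds it by a factor $\gtrsim (\sqrt t/R_j)^{n_j/2-1}/\log^2(2+\sqrt t)$. Once the sup bound is in hand, $\int h_t(x,y)^2\,d\mu(x)\le\sup_x h_t(x,y)\int_M h_t(x,y)\,d\mu(x)\le\sup_x h_t(x,y)$ already gives the $L^2$ estimate. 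This is the kernel form of the paper's interpolation with $\|e^{-t\L}\|_{1\to 1}\le 1$.
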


\begin{proof}
%Recall that, by Corollary \ref{cor:phi-H}, for any $x \in M$ and $t>0$, it holds
%\[
%    \phi_{i_x}(|x|) \widetilde{H}(x,t) \lesssim 1.
%\]
%Sometimes we  use it directly, without specifying which lemma it come from.

By Theorem \ref{thm:weighted-upper bound} and Remark \ref{rem:new-dist}, we obtain that for any $x \in E_j\setminus F^{(R_j)}_j$, $y \in F^{(R_i)}_i$ and $t \ge R_j^2$,
\begin{eqnarray*}
        h_t(x,y)
        &&\lesssim \phi_{j}(|x|)\phi_{i}(|y|)\left(\frac{\widetilde{H}(x,t)\widetilde{H}(y,t)}{\widetilde{V}_0(\sqrt t)}+\frac{\widetilde{H}(y,t)}{\widetilde{V}_{j}(\sqrt t)}+\frac{\widetilde{H}(x,t)}{\widetilde{V}_{i}(\sqrt t)}\right)
        e^{-c\frac{|x|^2 + |y|^2}{t}} \nonumber\\
        && \ +\frac{\phi_{j}(|x|)\phi_{i}(|y|)}{\sqrt{\widetilde{V}_{j}(x,\sqrt t)\widetilde{V}_{i}(y,\sqrt t)}} e^{-c\frac{d_{\emptyset}(x,y)^2}{t}}
        = : \mathrm{I} + \mathrm{II}.
\end{eqnarray*}

For the term I, when $n_j >2$, it follows from
Lemma \ref{H-equal-general}, the fact $|x| \ge \dist(x, E_0)> R_j$ and Lemma \ref{lem:volume-growth} (i) that
\[
    \widetilde{H}(x,t)
   \lesssim \frac{|x|^2}{V_j(|x|)}
   \lesssim \frac{R_j^2}{V_j(R_j)},
\]
which, together with the fact $\phi_j(|x|) \sim 1$ and
$\phi_i(|y|) \widetilde{H}(y,t) \lesssim 1$ (cf. Lemma \ref{lem:phi-r-equal} and Corollary \ref{cor:phi-H}), gives that
\begin{eqnarray*}
  \rm{I}
  % && \sim \phi_{j}(|x|)\phi_{i}(|y|)\left(\frac{\widetilde{H}(x,t)\widetilde{H}(y,t)}{\widetilde{V}_0(\sqrt t)}+\frac{\widetilde{H}(y,t)}{\widetilde{V}_{j}(\sqrt t)}+\frac{\widetilde{H}(x,t)}{\widetilde{V}_{i}(\sqrt t)}\right)
%    e^{-c\frac{|x|^2 + |y|^2}{t}}\\
% && \sim \phi_{i}(|y|)\left(\frac{\widetilde{H}(x,t)\widetilde{H}(y,t)}{\widetilde{V}_0(\sqrt t)}+\frac{\widetilde{H}(y,t)}{\widetilde{V}_{j}(\sqrt t)}+\frac{\widetilde{H}(x,t)}{\widetilde{V}_{i}(\sqrt t)}\right)
%    e^{-c\frac{|x|^2 + |y|^2}{t}}\\
%  && \sim \phi_{i}(|y|) \frac{\widetilde{H}(y,t)}{\widetilde{V}_0(\sqrt t)} \widetilde{H}(x,t) +
%     \phi_{i}(|y|) \frac{\widetilde{H}(y,t)}{\widetilde{V}_{j}(\sqrt t)}
%   +\frac{\phi_{i}(|y|)}{\widetilde{V}_{i}(\sqrt t)} \widetilde{H}(x,t)
%    e^{-c\frac{|x|^2 + |y|^2}{t}}\\
  && \lesssim \frac{1}{\widetilde{V}_0(\sqrt t)} \frac{R_j^2}{V_j(R_j)}
     +\frac{1}{\widetilde{V}_{j}(\sqrt t)}
     + \frac{\phi_{i}(|y|)}{\widetilde{V}_{i}(\sqrt t)} \frac{R_j^2}{V_j(R_j)} e^{-c\frac{|y|^2}{t}}.
\end{eqnarray*}
From this, Lemma \ref{V0-general-equal}, the fact
$t/\widetilde{V}_{j}(\sqrt t) \sim t/V_j(\sqrt t) \lesssim R_j^2/V_j(R_j)$ (cf. Remark \ref{rem:thr} (i) and Lemma \ref{lem:volume-growth} (i))
and Lemma \ref{lem:phi/v-e}, one concludes that
\[
    {\rm{I}}
    %\lesssim \frac{1}{t} \frac{R_j^2}{V_j(R_j)}
%        +\frac{1}{\widetilde{V}_{j}(\sqrt t)}
    \lesssim \frac{1}{t} \frac{R_j^2}{V_j(R_j)} + \frac{1}{t} \frac{R_j^2}{V_j(R_j)} +\frac{1}{t} \frac{R_j^2}{V_j(R_j)}
%    \lesssim \frac{1}{t} \frac{R_j^2}{V_j(R_j)}
%        +\frac{1}{{V}_{j}(\sqrt t)}
    \lesssim \frac{1}{t} \frac{R_j^2}{V_j(R_j)}.
\]
%where we used Remark \ref{rem:thr} (i) in the second inequality and Lemma \ref{lem:volume-growth} (i) in the last inequality.
%{\color{blue}where the last inequality is due to Remark \ref{rem:thr} (i) and Lemma \ref{lem:volume-growth} (i). }
When $n_j \leq 2$, we deduce from
Corollary \ref{cor:phi-H} that
\begin{eqnarray*}
  \rm{I}
%  && \sim \phi_{j}(|x|)\phi_{i}(|y|)\left(\frac{\widetilde{H}(x,t)\widetilde{H}(y,t)}{\widetilde{V}_0(\sqrt t)}+\frac{\widetilde{H}(y,t)}{\widetilde{V}_{j}(\sqrt t)}+\frac{\widetilde{H}(x,t)}{\widetilde{V}_{i}(\sqrt t)}\right)
%     e^{-c\frac{|x|^2 + |y|^2}{t}} \\
   && \lesssim \frac{1}{\widetilde{V}_0(\sqrt t)} + \frac{\phi_{j}(|x|)}{\widetilde{V}_j(\sqrt t)} e^{-c\frac{|x|^2}{t}}
     + \frac{\phi_{i}(|y|)}{\widetilde{V}_i(\sqrt t)} e^{-c\frac{|y|^2}{t}},
%    %& \lesssim \frac{1}{\widetilde{V}_0(\sqrt t)} + \frac{1}{t} + \frac{1}{t}
%     \lesssim \frac{1}{t}
%     \lesssim \frac{1}{t} \frac{R_j^2}{R_j^{n_j}}
%     \sim \frac{1}{t} \frac{R_j^2}{V_j(R_j)},
\end{eqnarray*}
which, together with Lemma \ref{V0-general-equal}, Lemma \ref{lem:phi/v-e}, the fact $R_j \ge 1$ and Lemma \ref{lem:volume-growth} (i), implies that
\[
    {\rm{I}}
    \lesssim \frac{1}{t}
     \lesssim \frac{1}{t} \frac{R_j^2}{R_j^{n_j}}
     \sim \frac{1}{t} \frac{R_j^2}{V_j(R_j)}.
\]

For the term II, %by Remark \ref{rem:new-dist}, we have $d_{\emptyset}(x,y) = \infty$ when $i \neq j $
%it follows from Remark \ref{rem:new-dist} that $d_{\emptyset}(x,y) = \infty$ when $i \neq j $.
%note that
when $i \neq j $, Remark \ref{rem:new-dist} yields that $d_{\emptyset}(x,y) = \infty$.
%Therefore, we only need consider the case $i = j$.
Therefore, it suffices to consider the case $i = j$.
Notice that, for any %$1 \leq i =j \leq \ell$,
$y \in F^{(R_j)}_j$ and $t \ge R_j^2$, we have
$$|y| \le
\dist(y,E_0) +\diam(E_0) \lesssim
%R_i =
R_j \lesssim \sqrt t.$$
When $n_j \ge 2$, by Lemma \ref{lem:V-tildeV},
%it follows  from Lemma \ref{lem:V-tildeV} that
one obtains  that
\begin{eqnarray*}
    \mathrm{II}
    && \lesssim \frac{\phi_{j}(|x|)\phi_{j}(|y|)}{\sqrt{\widetilde{V}_{j}(x,\sqrt t)\widetilde{V}_{j}(y,\sqrt t)}} e^{-c\frac{d(x,y)^2}{t}}
    %& =\frac{\phi_{j}(|x|)\phi_{j}(|y|)}{ \sqrt{\widetilde{V}_j(x , \sqrt t)}  \sqrt{\widetilde{V}_j(y, \sqrt t)}}    \exp\left(-c\frac{d(x,y)^2}{t}\right) \\
     \lesssim \frac{1}{ \sqrt{ {V}_j(x , \sqrt t) {V}_j(y, \sqrt t)}}    e^{-c\frac{d(x,y)^2}{t}}, %\\
%    && \lesssim \frac{1}{ {V}_j(x , \sqrt t) }
%     \lesssim \frac{1}{ {V}_j(\sqrt t) }
%     \lesssim \frac{1}{t} \frac{R_j^2}{V_j(R_j)},
\end{eqnarray*}
%where in the third inequality, the fourth inequality, we used the standard track of the doubling property (cf. \eqref{eq:change-centre}),
%and in the last inequality, we used the fact that $V_j(\sqrt t) / V_j(R_j) \gtrsim (\sqrt t/R_j)^{n_j} \gtrsim (\sqrt t/R_j)^2$.
which, together with the standard trick of doubling property (cf. \eqref{eq:change-centre})
and Lemma \ref{lem:volume-growth} (i), %the fact that $V_j(\sqrt t) / V_j(R_j) \gtrsim (\sqrt t/R_j)^{n_j} \gtrsim (\sqrt t/R_j)^2$,
implies that
\[
    \mathrm{II} \lesssim \frac{1}{ {V}_j(y , \sqrt t) }
     \lesssim \frac{1}{ {V}_j(\sqrt t) }
     \lesssim \frac{1}{t} \frac{R_j^2}{V_j(R_j)}.
\]
%Similarly, if $n_j <2$, by Lemma \ref{lem:V-tildeV} and \eqref{eq:change-centre}, it holds
Similarly, when $n_j <2$, it follows from Lemma \ref{lem:V-tildeV} and \eqref{eq:change-centre} that
\begin{eqnarray*}
    \rm{II}
     && \lesssim
     \frac{\phi_{j}(|x|)\phi_{j}(|y|)}{\sqrt{ \widetilde{V}_{j}(x,\sqrt t) \widetilde{V}_{j}(y,\sqrt t)}}    e^{-c\frac{d(x,y)^2}{t}}
      \lesssim \frac{1}{\sqrt{V_j(x,\sqrt t)}} \frac{\phi_j(|y|)}{\sqrt{\widetilde{V}_j(y, \sqrt t)}} e^{-c\frac{d(x,y)^2}{t}}\\
     && \lesssim \frac{1}{\sqrt{{V}_j(y,\sqrt t)}} \frac{\phi_j(|y|)}{\sqrt{\widetilde{V}_j(y, \sqrt t)}}
       \lesssim \frac{1}{\sqrt{{V}_j(\sqrt t)}} \frac{\phi_j(|y|)}{\sqrt{\widetilde{V}_j(\sqrt t)}},
%     && \lesssim \frac{R_j^{2-n_i}}{t^{(4-n_j)/4} t^{n_j/4}}
%         \lesssim \frac{1}{t} \frac{R_j}{V_j(R_j)}.
\end{eqnarray*}
%
%\begin{eqnarray*}
%    \rm{II}
%     && \sim \frac{\phi_{j}(|x|)}{\widetilde{V}_{j}(x,\sqrt t)^{1/2}} \frac{\phi_{j}(|y|)}{\widetilde{V}_{j}(y,\sqrt t)^{1/2}} e^{-c\frac{d(x,y)^2}{t}}
%      \lesssim \frac{\phi_j(|x|)}{\widetilde{V}_j(x,\sqrt t)^{1/2}} \frac{1}{V_j(y, \sqrt t)^{1/2}} e^{-c\frac{d(x,y)^2}{t}}\\
%     && \lesssim \frac{\phi_j(|x|)}{\widetilde{V}_j(x,\sqrt t)^{1/2}} \frac{1}{V_j(x, \sqrt t)^{1/2}}
%       \lesssim \frac{\phi_j(|x|)}{\widetilde{V}_j(\sqrt t)^{1/2}} \frac{1}{V_j(\sqrt t)^{1/2}},
%%     && \lesssim \frac{R_j^{2-n_i}}{t^{(4-n_j)/4} t^{n_j/4}}
%%         \lesssim \frac{1}{t} \frac{R_j}{V_j(R_j)}.
%\end{eqnarray*}
which together with %the facts
%$\phi_j(|x|) \sim |x|^{2-n_j} \lesssim R_j^{2-n_j}$ and $\widetilde{V}_j(\sqrt t) \sim t^{(4-n_j)/2}$
%%(cf.
Lemmas \ref{lem:volume-growth}, \ref{lem:phi-r-equal}, \ref{lem:tilde-V}
and the fact $|y| \lesssim R_j$ %)
implies that
\[
    \mathrm{II}
    \lesssim \frac{1}{t^{n_j/4}} \frac{|y|^{2-n_j}}{t^{(4-n_j)/4} }
    %\lesssim \frac{R_j^{2-n_j}}{t^{(4-n_j)/4} } \frac{1}{t^{n_j/4}}
    \lesssim \frac{R_j^{2-n_j}}{t}
    \sim \frac{1}{t} \frac{R_j^2}{V_j(R_j)}.
\]

In conclusion, combining the estimates of $\mathrm{I}$ and $\mathrm{II}$, we have for any  $t \ge R_j^2$,
\begin{equation*}
\|e^{-t\L}\|_{L^1(F^{(R_i)}_i)\to L^\infty(E_j\setminus F^{(R_j)}_j)}\lesssim \frac{1}{t} \frac{R_j^2}{V_j(R_j)}.
\end{equation*}
This together with the fact $\|e^{-t\L}\|_{L^1 \rightarrow L^1} \lesssim 1$ and the Riesz-Thorin interpolation theorem implies that
\[
    \|e^{-t\L}\|_{L^1(F^{(R_i)}_i)\to L^2(E_j\setminus F^{(R_j)}_j)} \lesssim \frac{1}{\sqrt t} \frac{R_j}{\sqrt{V_j(R_j)}}.
\]
%Combining each of the two inequalities above with the H\"{o}lder inequality, we deduce from the fact
%$\mu(F^{(R_i)}_i) \sim V_j(R_j)$ (cf. \eqref{eq:fifj-con}) that
%Combining each of the  two inequalities above
From the two inequalities above, the H\"{o}lder inequality and the fact
$\mu(F^{(R_i)}_i) \sim V_j(R_j)$ (cf. \eqref{eq:fifj-con}), we deduce that
\[
    \|e^{-t\L}\|_{L^p(F^{(R_i)}_i)\to L^\infty(E_j\setminus F^{(R_j)}_j)}
    \lesssim \frac{1}{t} \frac{R_j^2}{V_j(R_j)} \mu(F^{(R_i)}_i)^{1- \frac1p}
    \sim  \frac{R_j^2}{t}  \mu(F^{(R_i)}_i)^{- \frac1p}
\]
and
\begin{equation*} %\label{eq:1inf-p2}
    \|e^{-t\L}\|_{L^p(F^{(R_i)}_i)\to L^2(E_j\setminus F^{(R_j)}_j)}
    \lesssim \frac{1}{\sqrt t} \frac{R_j}{\sqrt{V_j(R_j)}} \mu(F^{(R_i)}_i)^{1- \frac1p}
    \sim \frac{R_j}{\sqrt t} \mu(F^{(R_i)}_i)^{\frac{1}{2}-\frac{1}{p}},
\end{equation*}
which completes the proof.
\end{proof}

\begin{rem} \rm
  In fact, from the proof above, the doubling property implies that
  the subsets $F^{(R_i)}_i$ and $E_j \setminus F^{(R_j)}_j$
  can be relaxed to the subsets $F^{(c_1 R_i)}_i$ and $E_j \setminus F^{(c_2 R_j)}_j$, for some given constants $c_1, c_2>0$.
  Moreover,  the assumption $t \ge R_j^2$ can also be relaxed to
  $t \ge c_3 R_j^2$, for some given constants $c_3>0$.
  The conclusion of Proposition \ref{map-heat-parabolic} still holds.
  % $t \ge cR_j^2$ for some given positive constant $c$, the estimates still hold.
  The same is valid for Proposition \ref{mapping-time-heat-parabolic-1} below.
\end{rem}

 We will also need the following mapping properties which follow directly from Theorem \ref{thm:sim-heat-es}.
\begin{prop} \label{prop:map-anot}
  Let $1 \leq i \leq \ell$. It holds that
  \[
      \|e^{-t \L}\|_{L^2(M \setminus E_i) \rightarrow L^{\infty}(E_i \setminus F^{(R_i)}_i)}
        \lesssim \frac{1}{\sqrt{V_i(R_i)}},
        \quad \forall t >0.
  \]
\end{prop}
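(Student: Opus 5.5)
The plan is to prove a pointwise bound $h_t(x,y)\lesssim 1/\sqrt{V_i(R_i)\,\mu(M\setminus E_i)_{\rm loc}}$. More precisely, the aim is to bound, for $x\in E_i\setminus F^{(R_i)}_i$, the quantity
\[
\sup_{x}\Big(\int_{M\setminus E_i} h_t(x,y)^2\,d\mu(y)\Big)^{1/2}.
\]
By Cauchy--Schwarz, $|e^{-t\L}f(x)|\le \|h_t(x,\cdot)\|_{L^2(M\setminus E_i)}\|f\|_2$. By the semigroup property, $\int_M h_t(x,y)^2\,d\mu(y)=h_{2t}(x,x)$. So a first attempt would bound $h_{2t}(x,x)$. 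But Lemma \ref{lem:for-ultra} gives $1/V_0(\sqrt{2t})+1/V_i(x,\sqrt{2t})$, which is not uniform in small $t$ relative to $V_i(R_i)$. Hence the restriction $y\notin E_i$ must be used genuinely.

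For $y\in E_j$ with $j\neq i$, or $y\in E_0$, the relevant bound is Theorem \ref{thm:sim-heat-es} (iii) for $t\ge1$, transferred by Doob's transform. This gives
\[
h_t(x,y)\lesssim \phi_i(|x|)\phi_j(|y|)\Big(\frac{\widetilde H(x,t)\widetilde H(y,t)}{\widetilde V_0(\sqrt t)}+\frac{\widetilde H(y,t)}{\widetilde V_i(\sqrt t)}+\frac{\widetilde H(x,t)}{\widetilde V_j(\sqrt t)}\Big)e^{-c\frac{|x|^2+|y|^2}{t}}.
\]
Using Corollary \ref{cor:phi-H}, the factor $\phi_j(|y|)\widetilde H(y,t)\lesssim1$, and similarly for $x$. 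For the factors $\phi_j(|y|)/\widetilde V_j(\sqrt t)$ and $\phi_i(|x|)/\widetilde V_i(\sqrt t)$ against the Gaussian in $|y|$ or $|x|$, Lemma \ref{lem:phi/v-e} supplies the bound $1/t$. This yields $h_t(x,y)\lesssim t^{-1}e^{-c|x|^2/t}\cdot(\cdots)$ plus terms where the Gaussian $e^{-c|x|^2/t}$ survives. Since $|x|\ge 2R_i$, the decay in $|x|^2/t$ handles $t\lesssim R_i^2$. For $t\gtrsim R_i^2$ one uses the polynomial decay: the factor $\widetilde H(x,t)\lesssim |x|^2/\widetilde V_i(|x|)$ or $1/\log$ from Lemma \ref{H-equal-general}, together with $1/\widetilde V_0(\sqrt t)$.

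The key quantity is then $\int_{M\setminus E_i}h_t(x,y)^2\,d\mu(y)$. The factor $e^{-c|y|^2/t}$ integrated over $E_j$ gives $\lesssim V_j(\sqrt t)$, by the annular decomposition as in \eqref{eq:1/v-e} applied in $M_j$. One then checks that in all cases
\[
\int_{M\setminus E_i} h_t(x,y)^2\,d\mu(y)\lesssim \frac{1}{V_i(R_i)}.
\]
This is done by splitting into the regimes $t\le R_i^2$, where the exponential $e^{-cR_i^2/t}$ absorbs any polynomial factor of $t$, and $t\ge R_i^2$. For small times $t<1$, Theorem \ref{thm:sim-heat-es} (i) applies, since $d(x,y)\ge 2R_i\ge 100$, giving a superexponentially small contribution. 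The balancing of $V_j(\sqrt t)$ against $\widetilde V_0(\sqrt t)^2$ and $\widetilde V_j$ should use Corollary \ref{lem:4-terms}, Lemma \ref{V0-general-equal}, and \eqref{eq:fifj-con}, so that $V_j(R_j)\sim V_i(R_i)$.

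The main obstacle I expect is the large-time regime when $E_j$ is a parabolic end with $n_j<2$. There $\phi_j(|y|)\sim|y|^{2-n_j}$ grows, and $V_j(\sqrt t)\sim t^{n_j/2}$, so the terms must be tracked precisely. I would first try to bound each of the three terms pointwise by $\frac{1}{\sqrt{V_j(\sqrt t)}}\cdot\frac{1}{\sqrt{V_i(R_i)}}\,e^{-c|y|^2/t}$, using the estimate of Lemma \ref{lem:phi/v-e} in the sharper form $\lesssim V_j(\sqrt t)^{-1/2}\widetilde V_j(\sqrt t)^{-1/2}$. Squaring and integrating then gives the claim. If the term $\widetilde H(x,t)\phi_i(|x|)/\widetilde V_j$ is too large for $n_i=2$, the fallback is to use $\widetilde V_j(\sqrt t)\gtrsim t$ together with $V_i(R_i)\lesssim R_i^{2}\log$-type comparisons. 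If needed, I would also use interpolation with the $L^2\to L^2$ contraction.
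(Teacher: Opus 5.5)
Your reduction is exact: the operator norm in question equals $\sup_x\|h_t(x,\cdot)\|_{L^2(M\setminus E_i)}$. Your treatment of $t<1$ is fine, and so is your treatment of the third term $\phi_i(|x|)\widetilde H(x,t)\phi_j(|y|)/\widetilde V_j(\sqrt t)$ via Lemma \ref{lem:phi/v-e} and $\widetilde V_j(\sqrt t)\gtrsim t$.

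The gap is in the first two terms for large $t$. Once you replace $\phi_j(|y|)\widetilde H(y,t)$ by $1$, these terms are bounded only by $\widetilde V_0(\sqrt t)^{-1}e^{-c(|x|^2+|y|^2)/t}$ and $\phi_i(|x|)\widetilde V_i(\sqrt t)^{-1}e^{-c(|x|^2+|y|^2)/t}$. Squaring and integrating over a non-parabolic end $E_j$ then gives, for example when $n_i<2$ (where $\phi_i(|x|)\widetilde H(x,t)\sim1$), a bound of order $V_j(\sqrt t)/\widetilde V_0(\sqrt t)^2\sim V_j(\sqrt t)/(t^2\log^4(2+\sqrt t))$. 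The second term gives one of order $V_j(\sqrt t)/t^2$ when $|x|\sim\sqrt t$.

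Assumption (i) bounds $V_j$ only from below, so $V_j(r)$ may grow like $r^{N_j}$ with $N_j>4$ (think of $\mathbb{R}^5$). Then these bounds tend to infinity as $t\to\infty$. In particular, your pointwise target $\bigl(V_j(\sqrt t)V_i(R_i)\bigr)^{-1/2}e^{-c|y|^2/t}$ does not follow from the available estimates. Neither Corollary \ref{lem:4-terms} nor Lemma \ref{V0-general-equal} controls $V_j(\sqrt t)/\widetilde V_0(\sqrt t)^2$. So the real obstacle is a fast-growing non-parabolic end. It is not the parabolic case $n_j<2$ you singled out, where $V_j(\sqrt t)\sim t^{n_j/2}$ is harmless.

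The missing idea is never to integrate the upper bound in $y$. Since $\int_M h_t(x,y)\,d\mu(y)\le 1$, you have
\[
\int_{M\setminus E_i}h_t(x,y)^2\,d\mu(y)\le \sup_{y\in M\setminus E_i}h_t(x,y).
\]
This is exactly the paper's Riesz--Thorin interpolation between an $L^1(M\setminus E_i)\to L^\infty(E_i\setminus F^{(R_i)}_i)$ bound and the $L^\infty\to L^\infty$ contraction. Interpolating instead with the $L^2\to L^2$ contraction, as in your fallback, cannot produce an $L^\infty$ target.

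It then suffices to show $h_t(x,y)\lesssim V_i(R_i)^{-1}$ pointwise, and your term-by-term estimates essentially give this. For $n_i\le 2$, every term is $\lesssim t^{-1}e^{-cR_i^2/t}\lesssim R_i^{-2}\lesssim V_i(R_i)^{-1}$. For $n_i>2$, one uses $\widetilde H(x,t)\lesssim R_i^2/V_i(R_i)$, together with $\widetilde V_i(\sqrt t)^{-1}e^{-cR_i^2/t}\lesssim V_i(R_i)^{-1}$ by doubling.

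Alternatively, you could keep the decay $\phi_j(|y|)\widetilde H(y,t)\sim |y|^2/V_j(|y|)$ from Lemma \ref{H-equal-general} (i) when $n_j>2$, which would make your $y$-integrals converge. That, however, is a case analysis you have not carried out.
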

\begin{proof}
 % For the term $\cg_{22}$, we  first claim that
%    \begin{equation} \label{eq:cg12cl}
%        \|e^{-t\L}\|_{L^1(M \setminus E_i) \rightarrow L^{\infty}(B_{ik})}
%        \lesssim \frac{1}{V_i(R_i)}.
%    \end{equation}
    Let $x \in E_i \setminus F^{(R_i)}_i$  and $y \in E_j$ ($0 \leq j \neq i \leq \ell$). Consider first the case  $t <1$.
    From Theorem \ref{thm:sim-heat-es} (i), we can write
    \begin{eqnarray*}
        h_{t}(x,y)
        &&
        \lesssim \frac{1}{V(x,\sqrt{t})} e^{-c\frac{d(x,y)^2}{t}}
        \lesssim \frac{1}{V(x,\sqrt{t})} e^{-c'\frac{d(x,o_i)^2}{t}} e^{-c'\frac{R_i^2}{t}} \\
        && %\lesssim \frac{1}{V(o_i,\sqrt{t})} e^{-c'\frac{R_i^2}{t}}
        \lesssim \frac{1}{V_i(\sqrt{t})}  e^{-c'\frac{R_i^2}{t}}
          \lesssim \frac{1}{V_i(R_i)} \lf( \frac{R_i}{\sqrt{t}}\r)^{N_i} e^{-c'\frac{R_i^2}{t}}
         \lesssim \frac{1}{V_i(R_i)},
    \end{eqnarray*}
  %  ========THE DETAILS OF SECOND INEQUALITY=========\\
%       (i) $d(y,x) \ge d(x,o_i)$ : \mbox{ by the facts that}
%      $$d(x,y) \ge \dist(x, E_0) \ge R_i \gtrsim \diam(E_0) $$
%      and
%      $$d(x,o_i) \leq \dist(x, E_0) + \diam(E_0) \lesssim d(x,y) + d(x,y)$$
%      (ii) $ d(x,y) \gtrsim R_i$\\
%    ============END=======================\\
    where the third inequality follows from
 %   Lemma \ref{lem:volume-growth} (iii)
 %   the fact
%%    $V(y, \sqrt{t})=V_i(y, \sqrt{t})$ whenever $B_i(y, \sqrt{t}) \subset E_i$, otherwise $V(y, \sqrt{t}) \sim V_i(y, \sqrt{t})$
      $V(x, \sqrt{t}) \sim V_i(x, \sqrt{t})$
    and a standard trick of doubling property
    (cf. \eqref{eq:change-centre}). %,  and the last inequality follows from
%    the fact that $V_i(R_i)/V_i(\sqrt{t})$ has at most polynomial growth w.r.t $R_i/\sqrt{t}$ by the doubling property.

    For the opposite case $t \ge 1$, note first that when $y \in E_0$, it holds $|y| \leq 2$ and hence
    $$\phi_0(|y|) \lesssim 1, \quad \forall y \in E_0. $$
    Then, for each $y \in E_j$ ($0\leq j \neq i \leq \ell$), it follows from Theorem \ref{thm:sim-heat-es} (iii)  and Corollary \ref{cor:phi-H} that
   \begin{eqnarray*}
               h_{t}(x,y)
        && \lesssim \phi_i(|x|) \phi_j(|y|) \lf( \frac{\widetilde{H}(x,t) \widetilde{H}(y,t)}{\widetilde{V}_0(\sqrt{t})}
                         +\frac{\widetilde{H}(y,t)}{\widetilde{V}_i(\sqrt{t})} + \frac{\widetilde{H}(x,t)}{\widetilde{V}_j(\sqrt{t})} \r)
                                      e^{-c\frac{|x|^2+|y|^2}{t}} \\
       && \lesssim
          \lf( \frac{\phi_i(|x|) \widetilde{H}(x,t) }{\widetilde{V}_0(\sqrt{t})}
                         +\frac{\phi_i(|x|)}{\widetilde{V}_i(\sqrt{t})} + \frac{\phi_i(|x|)  \widetilde{H}(x,t)\phi_j(|y|)}{\widetilde{V}_j(\sqrt{t})} \r)
                                      e^{-c\frac{|x|^2+|y|^2}{t}},
    \end{eqnarray*}
    where we used the fact that $\phi_0(|y|) \lesssim 1$ and $\widetilde{H}(y,t) \leq 1$ (see the definition of $\widetilde{H}$), when $j=0$.
    Besides, it holds by Lemma \ref{V0-general-equal}, Lemma \ref{lem:phi/v-e} and the fact $\phi_0(|y|) \lesssim 1$ when $j=0$ that
%    \begin{equation}
%      \frac{1}{\widetilde{V}_0(\sqrt{t})} e^{-c'\frac{R_i^2}{t}}
%                   + \frac{\phi_i(|u|)}{\widetilde{V}_i(\sqrt{t})} e^{-c'\frac{|u|^2}{t}}
%                       + \frac{\phi_j(|v|)}{\widetilde{V}_j(\sqrt{t})} e^{-c'\frac{|v|^2}{t}}
%                       \lesssim \frac{1}{t}.
%    \end{equation}
%    \begin{equation} \label{eq:no-c}
%      \frac{1}{\widetilde{V}_0(\sqrt{t})} e^{-c'\frac{R_i^2}{t}}
%      \lesssim \frac{1}{t},
%    \qquad
%       \frac{\phi_i(|u|)}{\widetilde{ V}_i(\sqrt{t})} e^{-c'\frac{|u|^2}{t}}
%       \lesssim \frac{1}{t},
%    \qquad
%       \frac{\phi_j(|v|)}{\widetilde{V}_j(\sqrt{t})}
%       e^{-c'\frac{|v|^2}{t}}
%       \lesssim \frac{1}{t}.
%    \end{equation}
       \begin{equation} \label{eq:no-c}
      \frac{1}{\widetilde{V}_0(\sqrt{t})}
      +
       \frac{\phi_i(|x|)}{\widetilde{ V}_i(\sqrt{t})} e^{-c'\frac{|x|^2}{t}}
      +
       \frac{\phi_j(|y|)}{\widetilde{V}_j(\sqrt{t})}
       e^{-c'\frac{|y|^2}{t}}
       \lesssim \frac{1}{t}.
    \end{equation}
    When $n_i \le 2$, using Corollary \ref{cor:phi-H} again and the fact $|x| \gtrsim R_i$, one has
    \begin{eqnarray*}
       h_{t}(x,y)
       && \lesssim \frac{1}{\widetilde{V}_0(\sqrt{t})} e^{-c'\frac{R_i^2}{t}}
                   + \frac{\phi_i(|x|)}{\widetilde{V}_i(\sqrt{t})} e^{-c'\frac{|x|^2}{t}} e^{-c'\frac{R_i^2}{t}}
                       + \frac{\phi_j(|y|)}{\widetilde{V}_j(\sqrt{t})} e^{-c'\frac{|y|^2}{t}} e^{-c'\frac{R_i^2}{t}}  \\
       && \lesssim \frac{1}{t}e^{-c'\frac{R_i^2}{t}}
          \lesssim \frac{1}{R_i^2}
          \lesssim \frac{1}{V_i(R_i)},
    \end{eqnarray*}
    where the second inequality follows from \eqref{eq:no-c}.
    When $n_i >2$, by Lemmas \ref{lem:phi-r-equal}, \ref{H-equal-general} and \ref{lem:volume-growth},  it holds $\phi_i(|x|) \sim 1$ and
    $\widetilde{H}(x,t) \sim |x|^2/V_i(|x|) \lesssim R_i^2 / V_i(R_i)$, which gives
    \begin{eqnarray*}
       h_{t}(x,y)
       && \lesssim \frac{1}{\widetilde{V}_0(\sqrt{t})} \frac{R_i^2}{V_i(R_i)} e^{-c'\frac{R_i^2}{t}}
                   + \frac{1}{\widetilde{V}_i(\sqrt{t})}  e^{-c'\frac{R_i^2}{t}}
                   + \frac{\phi_j(|y|)}{\widetilde{V}_j(\sqrt{t})} \frac{R_i^2}{V_i(R_i)} e^{-c'\frac{|y|^2}{t}} e^{-c'\frac{R_i^2}{t}}  \\
       && \lesssim \frac{1}{t} \frac{R_i^2}{V_i(R_i)} e^{-c'\frac{R_i^2}{t}}
                    + \frac{1}{\widetilde{V}_i(\sqrt{t})} e^{-c'\frac{R_i^2}{t}}
                    + \frac{1}{t} \frac{R_i^2}{V_i(R_i)} e^{-c'\frac{R_i^2}{t}} \\
     %  && \lesssim \frac{1}{V_i(R_i)} + \frac{1}{\widetilde{V}_i(\sqrt{t})} e^{-c'\frac{R_i^2}{t}}, \\
        && \lesssim \frac{1}{V_i(R_i)} +
        \frac{1}{V_i(R_i)} \frac{V_i(R_i)}{V_i(\sqrt{t})} \frac{V_i(\sqrt{t})}{\widetilde{V}_i(\sqrt{t})}
        e^{-c'\frac{R_i^2}{t}}
        \lesssim \frac{1}{V_i(R_i)},
    \end{eqnarray*}
     where the second inequality follows from \eqref{eq:no-c},
     and the last inequality follows from
      Remark \ref{rem:thr} (i) together with
    the fact that $V_i(R_i)/V_i(\sqrt{t})$ has at most polynomial growth w.r.t $R_i/\sqrt{t}$ by the doubling property.

    In conclusion, we have
       \begin{equation*} %\label{eq:cg12cl}
        \|e^{-t\L}\|_{L^1(M \setminus E_i) \rightarrow L^{\infty}(E_i \setminus F^{(R_i)}_i)}
        \lesssim \frac{1}{V_i(R_i)}.
    \end{equation*}
    Combining this with $\|e^{-t\L}\|_{L^{\infty}(M \setminus E_i) \rightarrow L^{\infty}(E_i \setminus F^{(R_i)}_i)} \lesssim 1$ and
    the Riesz-Thorin interpolation theorem,
    one concludes that
    \[
        \|e^{-t\L}\|_{L^{2}(M \setminus E_i) \rightarrow L^{\infty}(E_i \setminus F^{(R_i)}_i)} \lesssim \frac{1}{\sqrt{V_i(R_i)}},
    \]
    which completes the proof.
\end{proof}

\begin{rem} \rm
  In fact, from the proof above, the doubling property implies that
  the subset $E_i \setminus F^{(R_i)}_i$
  can be relaxed to the subset $E_i \setminus F^{(c R_i)}_i$, for some given constants $c>0$.
\end{rem}

\subsection{Mapping properties of the time derivative of the heat semigroup} \label{sec:der-mapping-pr} \hskip\parindent
%
%\subsection{Mapping properties of the heat semigroup's time derivative} \label{sec:der-mapping-pr} \hskip\parindent
%
%In this subsetion we will prove the boundedness of  $t\L e^{-t\L}$ from $L^p(F_i)$ to $L^2(E_j\setminus F_j)$.
%with the help of Propositions \ref{time-heat-parabolic} and \ref{time-heat-parabolic-2} in Subsection \ref{sec:time-der-heat-es}. %Moreover, we will use the results of Lemma \ref{phi-equal}: for any $z \in M$,
%       \begin{equation*}
%        \phi(z) \sim
%        \begin{dcases}
%            1, & \, \mbox{if} \ n_{i_{z}} >2, \\
%           \log(2+|z|), & \, \mbox{if} \ n_{i_{z}} =2, \\
%            |z|^{2-n_{i_x}}, & \, \mbox{if} \ n_{i_{x}}< 2
%        \end{dcases}
%    \end{equation*}
%and the Doob's transform.
%
%{\color{blue}In this subsection, we will deduce the estimates of the time derivative of the heat kernel and then use them to obtain the mapping property.}
%
%
%\subsubsection{Mapping properties of $t \L e^{-t \L}$ with large time} \hskip\parindent
%\subsubsection{Proof of the mapping properties} \hskip\parindent
By the estimates on the time derivative of the heat kernel on weighted manifold $\widetilde{M}$ (Proposition %\ref{time-heat-parabolic} and
 \ref{time-heat-parabolic-2}), we shall use Doob's transform,
the estimates of $\phi_i$ and $\widetilde{V}_i$ (Lemmas \ref{lem:phi-r-equal} and \ref{lem:tilde-V}) to deduce
%the estimates of the time derivative of the heat kernel, and then
the mapping properties of the heat semigroup. %as follows.
In what follows,
we set
$p_1$ as
%\begin{equation} \label{eq:p1-def}
%p_0:=\min\left\{2,\,\frac{N_k}{N_k-n_k+2} : \, 1\leq k \leq \ell, \, n_k>2\right\}.
%\end{equation}
\begin{equation*} %\label{eq:p1-def}
    p_1:=\min\left\{2,\, \min_{1\leq k \leq \ell, \, n_k >2} \frac{N_k}{N_k-n_k+2} \r\}.
\end{equation*}
%\[
%    p_0:=\min\left\{2,\, \min_{1\leq k \leq \ell, \atop n_k >2} \frac{N_k}{N_k-n_k+2} \r\}
%\]
%\[
%    p_0:=\min\left\{2,\, \min_{\substack{1\leq k \leq \ell, \\ n_k >2}} \frac{N_k}{N_k-n_k+2} \r\}
%\]
%Apparently $1<p_0\le 2$.

\begin{prop}\label{mapping-time-heat-parabolic-1}
%We set $p_0$ as
%\begin{equation} \label{eq:p1-def}
%p_0:=\min\left\{2,\,\frac{N_i}{N_i-n_i+2}\bigg|\, n_i>2\right\}.
%\end{equation} Apparently $1<p_0\le 2$.
Let $1\leq i,j \leq \ell$. For each $1 < p < p_1$, it holds
$$\|t\L e^{-t\L}\|_{L^p(F^{(R_i)}_i)\to L^2(E_j\setminus F^{(R_j)}_j)} \lesssim \mu(F^{(R_i)}_i)^{\frac 12-\frac 1p},
\quad \forall t \ge R_j^2.$$
\end{prop}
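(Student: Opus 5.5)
The plan is to work directly with the kernel of $t\L e^{-t\L}$, which is $-t\partial_t h_t(x,y)$. By Doob's transform it equals $-h(x)h(y)\,t\partial_t\tilde h_t(x,y)$, and $h(x)h(y)\sim \phi_j(|x|)\phi_i(|y|)$ for $x\in E_j\setminus F^{(R_j)}_j$, $y\in F^{(R_i)}_i$. Writing $K_t(x,y)$ for this kernel, Minkowski's integral inequality followed by H\"older in $y$ gives
\[
\|t\L e^{-t\L}\|_{L^p(F^{(R_i)}_i)\to L^2(E_j\setminus F^{(R_j)}_j)}
\le \Big(\int_{F^{(R_i)}_i}\Big(\int_{E_j\setminus F^{(R_j)}_j}|K_t(x,y)|^2\,d\mu(x)\Big)^{p'/2}d\mu(y)\Big)^{1/p'} .
\]
Since $\mu(F^{(R_i)}_i)^{\frac12-\frac1p}=\mu(F^{(R_i)}_i)^{1/p'}\mu(F^{(R_i)}_i)^{-1/2}$ and $\mu(F^{(R_i)}_i)\sim V_i(R_i)\sim V_j(R_j)$ by \eqref{eq:fifj-con}, it suffices to show
\[
\frac{1}{\mu(F^{(R_i)}_i)}\int_{F^{(R_i)}_i}\Big(\int_{E_j\setminus F^{(R_j)}_j}|K_t(x,y)|^2\,d\mu(x)\Big)^{p'/2}d\mu(y)\lesssim V_j(R_j)^{-p'/2},\qquad t\ge R_j^2 .
\]

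\textbf{Step 1: the $x$-integral.} For each of the six cases of Proposition \ref{time-heat-parabolic-2}, multiply the bound by $\phi_j(|x|)\phi_i(|y|)$ and integrate in $x$.
\begin{itemize}
\item In the cases with factor $e^{-c(|x|^2+|y|^2)/t}$ (ii, iii, v, vi, and iv with $i\neq j$): use $\int_{E_j}\phi_j(|x|)^2e^{-c|x|^2/t}\,d\mu(x)\lesssim \phi_j(\sqrt t)^2V_j(\sqrt t)\sim\widetilde V_j(\sqrt t)$, which follows from Lemma \ref{lem:V-tildeV} and a decomposition into annuli as in \eqref{eq:1/v-e}.
\item In the cases with factor $e^{-c d(x,y)^2/t}$ (i, iv with $i=j$): use \eqref{eq:1/v-e} together with $|y|\lesssim R_j\lesssim\sqrt t$.
\end{itemize}
Next I insert $\widetilde V_0(\sqrt t)\sim t\log^2(2+\sqrt t)$ (Lemma \ref{V0-general-equal}) and the growth comparisons of Lemma \ref{lem:tilde-V} and Corollary \ref{lem:4-terms}. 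The aim is to show that every $t$-dependence yields a nonincreasing function of $t/R_j^2$. This reduces everything to the worst case $t=R_j^2$, where the bound should read $V_j(R_j)^{-1}$ multiplied by a $y$-dependent factor $G(y)$.

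\textbf{Step 2: the $y$-factor.} The factor $G(y)$ is built from $\phi_i(|y|)^2$ and powers of $|y|^2/\widetilde V_i(|y|)$ (or $1/\log(2+|y|)$ when $n_i=2$), with exponents $2(1-3\epsilon)$ or $3\epsilon$.
\begin{itemize}
\item When $n_i\le2$, Corollary \ref{cor:phi-H}-type bounds $\phi_i(|y|)\,|y|^2/\widetilde V_i(|y|)\lesssim1$ make $G$ bounded after comparing with $R_j$, so averaging is trivial.
\item When $n_i>2$, $\phi_i\sim1$ but $|y|^2/V_i(|y|)$ is large for small $|y|$. I will bound it via doubling, $\frac{|y|^2}{V_i(|y|)}\lesssim \big(\frac{R_i}{|y|}\big)^{N_i-2}\frac{R_i^2}{V_i(R_i)}$. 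I then average $G^{p'/2}$ over $F^{(R_i)}_i$ by splitting into dyadic shells $\{|y|\sim 2^{-k}R_i\}$ of measure $\lesssim V_i(2^{-k}R_i)\lesssim 2^{-kn_i}V_i(R_i)$.
\end{itemize}
The resulting geometric series converges exactly when $(N_i-2)p'(1-3\epsilon)<N_i$, up to the lower-exponent bookkeeping, once $\epsilon$ is chosen small. This is where $p<p_1=\min\{2,N_k/(N_k-n_k+2)\}$ is used, and I would fix $\epsilon=\epsilon(p)$ accordingly. The surviving constant, $(R_j^2/V_i(R_j))$ compared with $(R_i^2/V_i(R_i))$, is then absorbed using $R_i\sim R_j$ or the ordering forced by \eqref{eq:fifj-con}.

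\textbf{Main obstacle.} The hardest step is the bookkeeping in Case (ii) with $n_i>2$ (and Case (vi)). There one must check that the extra factor $(\widetilde V_j(R_j)/\widetilde V_0(R_j))^{3\epsilon/2}$ is compensated, for small $\epsilon$, by the decay $\big(\frac{R_j^2}{\widetilde V_i(R_j)}\big)^{1-3\epsilon}$ (or the logarithmic gain). One must also check that the small-$|y|$ singularity remains integrable precisely in the range $p<p_1$. I would first treat $n_j<2<n_i$ explicitly, since there $R_i$ and $R_j$ can be very different, and only then the remaining cases, which are similar or easier.
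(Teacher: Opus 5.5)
There is a genuine gap in Step 1. The bounds of Proposition \ref{time-heat-parabolic-2} are deliberately uniform in $x$. In their proof, $\widetilde H(x,s)$ is replaced by its supremum over $|x|\ge R_j$, and $1/\widetilde V_k(\sqrt t)$ by the larger quantity in \eqref{eq:1/w-V}. As a result they decay in $t$ only like $\widetilde V_0(\sqrt t)^{-1}\sim (t\log^2(2+\sqrt t))^{-1}$, and they carry no decay in $|x|$ for $|x|\lesssim\sqrt t$. Squaring such a bound and integrating $e^{-c|x|^2/t}$ over $E_j\setminus F^{(R_j)}_j$ produces a factor $\sim V_j(\sqrt t)$ when $t\gg R_j^2$. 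In case (ii) with $n_j>2$ (likewise case (v), and case (i) with $n_i=n_j>2$), your $x$-integral is therefore of size
\[
\frac{\widetilde V_0(R_j)^2\,V_j(\sqrt t)}{\widetilde V_0(\sqrt t)^2\,V_j(R_j)^2}\times\bigl(\text{a $t$-independent $y$-factor}\bigr).
\]
Take $M_j=\mathbb R^n$ with $n\ge5$, which is admissible under hypothesis (i). Then the ratio of this to the target $V_j(R_j)^{-1}$ is $\sim (t/R_j^2)^{(n-4)/2}\log^4(2+R_j)/\log^4(2+\sqrt t)$, which tends to $\infty$ as $t\to\infty$.

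So your claim that every $t$-dependence is nonincreasing in $t/R_j^2$ is false, and the bound you get is not uniform in $t\ge R_j^2$. Corollary \ref{lem:4-terms} does not rescue it. It bounds $1/\widetilde V_j(\sqrt t)$ from above by $\widetilde V_0(R_j)/(\widetilde V_0(\sqrt t)\widetilde V_j(R_j))$, which is the wrong direction for compensating the factor $V_j(\sqrt t)$. A direct $L^2_x$ computation would require re-deriving the Davies-type estimates with the $x$-profile kept, and that is a different proposition.

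The paper never integrates the kernel in $x$. It uses Proposition \ref{time-heat-parabolic-2} only through $\sup_x$, which gives $\|t\L e^{-t\L}\|_{L^p(F^{(R_i)}_i)\to L^\infty(E_j\setminus F^{(R_j)}_j)}\lesssim\mu(F^{(R_i)}_i)^{-1/p}$ (or the $L^1\to L^\infty$ version with $\mu(F^{(R_i)}_i)^{-1}$). It then applies Lemma \ref{lem:for-p2}, namely $\int_F|Hf|^2\le\|Hf\|^{2-p}_{L^\infty(F)}\|Hf\|^p_{L^p(F)}$, together with the $t$-uniform $L^p\to L^p$ bound of $t\L e^{-t\L}$ from analyticity of the semigroup. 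That global bound supplies the $x$-integrability at no cost.

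Your Step 2 is also misdirected. Since $|y|\ge1$ and $V_i(|y|)\gtrsim|y|^{n_i}$, one has $|y|^2/V_i(|y|)\lesssim|y|^{2-n_i}\lesssim1$, so there is no small-$|y|$ singularity. Comparing with $R_i$ via doubling is lossy: near $|y|\sim1$ it gives $R_i^{N_i}/V_i(R_i)$ instead of $O(1)$. The hypothesis $p<p_1$ enters through integrability at infinity. Indeed, $\int_{E_i}|y|^{-(n_i-2)p'(1-3\epsilon)}\,d\mu<\infty$ holds if and only if $(n_i-2)p'(1-3\epsilon)>N_i$, and for small $\epsilon$ this is exactly $p<N_i/(N_i-n_i+2)$. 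Your condition $(N_i-2)p'(1-3\epsilon)<N_i$ is unrelated to $p_1$, and since $p'>2$ it fails for small $\epsilon$ as soon as $N_i\ge5$.
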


%We need the following Lemma. /
To prove Proposition \ref{mapping-time-heat-parabolic-1}, we first establish the following lemma.

\begin{lem} \label{lem:for-p2}
  Let $H$ be an operator defined on a space of measurable functions on $(M, \mu)$ and taking values in the set of all
  almost everywhere measurable functions on $(M, \mu)$.
  Assume that $E, F \subset M$, $\mu(E) < \infty$,
  $1 < p <2$ and $1 \leq q \leq p$.
  Suppose that the operator $H$ satisfies
  \[
    \|H\|_{L^p(E) \rightarrow L^p(F)} \lesssim_p 1
    %\quad \forall  1 < p <2,
  \]
  and
  \begin{equation*}%\label{eq:qinf}
    \|H\|_{L^q(E) \rightarrow L^\infty(F)} \lesssim \mu(E)^{-\frac{1}{q}}.
    %\quad \forall 1 \leq q \leq p.
  \end{equation*}
  Then we have
  %\[
%    \|T\|_{L^p(E) \rightarrow L^2(F)} \lesssim \mu(E)^{\lf(1-\frac{1}{p}\r) \lf(-\frac1q + 1 - \frac{q}p\r)}.
%  \]
   \[
    \|H\|_{L^p(E) \rightarrow L^2(F)} \lesssim_p \mu(E)^{\frac12- \frac{1}p}.
  \]
\end{lem}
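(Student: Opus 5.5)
The natural route is interpolation between the two hypotheses via the elementary inequality $\|g\|_{L^2(F)}\le \|g\|_{L^p(F)}^{p/2}\|g\|_{L^\infty(F)}^{1-p/2}$, valid since $p<2$. Fix $f\in L^p(E)$ and set $g=Hf$ restricted to $F$. The first hypothesis gives $\|g\|_{L^p(F)}\lesssim_p\|f\|_{L^p(E)}$.

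For the $L^\infty$ bound, we combine the second hypothesis with Hölder's inequality on $E$. Since $q\le p$ and $\mu(E)<\infty$, we have $\|f\|_{L^q(E)}\le \mu(E)^{\frac1q-\frac1p}\|f\|_{L^p(E)}$. Hence
\[
\|g\|_{L^\infty(F)}\lesssim \mu(E)^{-\frac1q}\mu(E)^{\frac1q-\frac1p}\|f\|_{L^p(E)}=\mu(E)^{-\frac1p}\|f\|_{L^p(E)} .
\]

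Inserting both bounds into the interpolation inequality yields
\[
\|g\|_{L^2(F)}\lesssim_p \|f\|_{L^p(E)}\,\mu(E)^{-\frac1p\left(1-\frac p2\right)}=\mu(E)^{\frac12-\frac1p}\|f\|_{L^p(E)},
\]
which is the claim.

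The interpolation inequality itself follows from writing $\int_F|g|^2=\int_F|g|^p|g|^{2-p}\le \|g\|_\infty^{2-p}\|g\|_p^p$. There is no genuine obstacle here. The only points needing care are that the Hölder step requires $q\le p$ and finiteness of $\mu(E)$, both of which are assumed, and that the exponents combine exactly to $\frac12-\frac1p$; the computation above checks the latter. No linearity of $H$ is needed, since the argument is applied pointwise to a single $f$.
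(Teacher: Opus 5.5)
Your proposal is correct and matches the paper's proof: both bound $\int_F|Hf|^2\le \|Hf\|_{L^\infty(F)}^{2-p}\|Hf\|_{L^p(F)}^p$, apply the two hypotheses, and use Hölder on $E$ to pass from $\|f\|_{L^q(E)}$ to $\mu(E)^{\frac1q-\frac1p}\|f\|_{L^p(E)}$. The exponents then combine to $\mu(E)^{\frac12-\frac1p}$ exactly as you computed.
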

\begin{proof}
  By the H\"{o}lder inequality, it holds that for any $f \in L^p(E)$,
%\[
%     \int_{F} |Tf|^2 \,d\mu
%      \lesssim \int_{F} |Tf|^{2-p} |Tf|^p  \, d\mu
%      \lesssim
%        \mu(E)^{-\frac{1}{q}(2-p)}
%         \|f\|^{2-p}_{L^q(E)}
%        \|f\|^{p}_{L^p(E)}
%     %&& \lesssim
%%     \|f\|^{p}_{L^p(E)}
%%     \mu(E)^{-\frac{1}{q}(2-p)}
%%     \|f\|^{2-p}_{L^p(E)} \mu(E)^{(2-p)(1-\frac q p)}    \\
%      \lesssim
%     \mu(E)^{(2-p) (-\frac1q + 1 - \frac{q}p)}
%     \|f\|^{2}_{L^p(E)}
%\]
  \begin{eqnarray*}
     \int_{F} |Hf|^2 \,d\mu
     && \lesssim \int_{F} |Hf|^{2-p} |Hf|^p  \, d\mu \\
    % && \lesssim \|Tf\|^{2-p}_{L^\infty(F)} \|Tf\|^p_{L^p(F)}
%         \lesssim_p  \|Tf\|^{2-p}_{L^q(E) \rightarrow L^\infty(F)} \|f\|^{2-p}_{L^q(E)}  \|f\|^{p}_{L^p(E)} \\
     && \lesssim_p
        \mu(E)^{-\frac{1}{q}(2-p)}
         \|f\|^{2-p}_{L^q(E)}
        \|f\|^{p}_{L^p(E)}
           \\
     %&& \lesssim
%     \|f\|^{p}_{L^p(E)}
%     \mu(E)^{-\frac{1}{q}(2-p)}
%     \|f\|^{2-p}_{L^p(E)} \mu(E)^{(2-p)(1-\frac q p)}    \\
     && \lesssim_p
     \mu(E)^{-\frac{1}{q}(2-p)}
     \mu(E)^{(\frac1q - \frac{1}p) (2-p) }
     \|f\|^{2-p}_{L^{p}(E)}  \|f\|^{p}_{L^p(E)} \\
     && \lesssim_p
     \mu(E)^{1- \frac{2}p}
     \|f\|^{2}_{L^p(E)},
  \end{eqnarray*}
  which finishes the proof.
%  \[
%    \|f\|_{L^q(E)}
%    = \lf(\int_E |f|^q d\mu \r)^{\frac1q}
%    \leq \lf(\int_E |f|^p d\mu \r)^{\frac1p} \lf(\int_E  d\mu \r)^{\frac1q - \frac{1}p}
%    \leq \|f\|_{L^p(E)} \mu(E)^{\frac1q-\frac{1}p}.
%  \]
  \end{proof}

\begin{proof}[Proof of Proposition \ref{mapping-time-heat-parabolic-1}]
%Recall that
%\[
%    V_i(R_i) \sim \mu(F^{(R_i)}_i) = \mu(F^{(R_j)}_j) \sim V_j(R_j).
%\]

%We will complete the proof in nine cases:

%(i) $n_i, n_j >2$;
%(ii) $n_i, n_j <2$;
%(iii) $n_i >2$, $n_j<2$;
%(iv) $n_i<2$, $n_j >2$;
%(v) $n_i=2$, $n_j>2$;
%(vi) $n_i=2$, $n_j <2$;
%(vii) $n_i=2=n_j$;
%(viii) $n_j=2$, $n_i >2$;
%(ix) $n_j=2$, $n_i <2$.
%The first four cases are proved with the help of Propositions \ref{time-heat-parabolic},
%and the last five cases are proved with the help of Propositions \ref{time-heat-parabolic-2}.

Let $1 \leq i, j \leq \ell$ and $t \ge R_j^2$.
Note that the classical Littlewood-Paley-Stein theory says that for any $1 < p <2$,
$$
\|t \L e^{-t \L}\|_{L^p(F^{(R_i)}_i) \rightarrow L^p(E_j \setminus F^{(R_j)}_j)}  \lesssim_p 1. %\quad \forall 1<p<2.
$$
By Lemma \ref{lem:for-p2}, it remains to show that the operator $t \L e^{-t \L}$ satisfies
$$
\|t \L e^{-t \L}\|_{L^1(F^{(R_i)}_i) \rightarrow L^\infty(E_j \setminus F^{(R_j)}_j)} \lesssim \mu(F^{(R_i)}_i)^{-1}
$$
or
$$
\|t \L e^{-t \L}\|_{L^p(F^{(R_i)}_i) \rightarrow L^\infty(E_j \setminus F^{(R_j)}_j)} \lesssim \mu(F^{(R_i)}_i)^{-\frac{1}{p}}.
$$

 Let $x \in E_j \setminus F^{(R_j)}_j$ and $y \in F^{(R_i)}_i$.
%We denote by $p'$ the H\"older conjugate index of $p$.
%Based on the ranges of $n_i$ and $n_j$,
We shall discuss the mapping properties of $t \L e^{-t \L}$ in the following nine cases, whose order is consistent with that of Proposition \ref{time-heat-parabolic-2}.

\textbf{Case 1: \boldmath $n_i, n_j >2$.}
If $i=j$, then for each $1 < p <2$, let us choose an $0 < \epsilon < 1/{8}$ small enough such that
$3 \epsilon /2 < 1/p'.$
%$
%    \frac32 \epsilon < \frac1{p'},
%$
Here and in what follows,
$p': = p/(p-1)$
%$p' = \frac{p}{p-1}$
is the H\"older conjugate of $p$.
It follows from Proposition \ref{time-heat-parabolic-2} (i), Lemma \ref{lem:phi-r-equal}, Lemma \ref{V0-general-equal} and Remark \ref{rem:thr} (i) that
%for any $x\in F^{(R_i)}_i$, $y\in E_i\setminus F^{(R_i)}_i$ and $t \ge R_i^2$,
\begin{eqnarray*}
|t\partial_t h_t(x,y)|\sim \phi_{i}(|x|)\phi_{i}(|y|) |t\partial_t\tilde h_t(x,y)|
%&&\lesssim \frac{1}{t} \frac{R_i^2}{\widetilde{V}_i(R_i)}\left(1+ \left(\frac{\widetilde{V}_i(R_i)}{R_i^2}\right)^{\frac{3}{2}\epsilon}
%\left(\frac{|x|^2}{\widetilde{V}_i(|x|)}\right)^{3\epsilon/2}\right)
%e^{-c\frac{d(x,y)^2}{ct}}\\
&&\lesssim  \frac{1}{{V}_i(R_i)}\left[1+ \left(\frac{{V}_i(R_i)}{R_i^2}\right)^{\frac{3}{2}\epsilon}
\left(\frac{|y|^2}{{V}_i(|y|)}\right)^{\frac{3}{2}\epsilon}\right].
%e^{-c\frac{d(x,y)^2}{t}}.
\end{eqnarray*}
%by Lemma \ref{lem:phi-r-equal} and Remark \ref{rem:thr} (i).
%Using the H\"older inequality,
Therefore, one has %for any $1< p <2$,
\begin{eqnarray*}
&& \|t\L e^{-t\L}(f\chi_{F^{(R_i)}_i})\|_{L^\infty(E_i\setminus F^{(R_i)}_i)} \\
%&&\lesssim \lf\|\int_{F^{(R_i)}_i} t\partial_t h_t(x, \cdot) f(x) \, d\mu(x) \r\|_{L^{\infty}(E_i \setminus F^{(R_i)}_i)} \\
&& \ \lesssim \frac{1}{V_i(R_i)}
\left[\|f\chi_{F^{(R_i)}_i}\|_1
         +\left(\frac{V_i(R_i)}{R_i^2}\right)^{\frac{3}{2}\epsilon} \int_{F^{(R_i)}_i}|f(y)|\left(\frac{|y|^2}{V_i(|y|)}\right)^{\frac{3}{2}\epsilon}\,d\mu(y)
\right]\\
%&& \ \lesssim  \frac{\|f\|_{L^p(F^{(R_i)}_i)}}{ V_i(R_i)}
%    \left[\mu(F^{(R_i)}_i)^{\frac1{p'}}
%        +\left(\frac{V_i(R_i)}{R_i^2}\right)^{\frac{3}{2}\epsilon}\left(\int_{F^{(R_i)}_i}\frac{|x|^{3\epsilon p'}}{V_i(|x|)^{3\epsilon p'/2}}\,d\mu(x)\right)^{\frac1{p'}}
%    \right],
&& \ \lesssim  \frac{\|f\|_{L^p(F^{(R_i)}_i)}}{ V_i(R_i)}
    \left\{\mu(F^{(R_i)}_i)^{\frac1{p'}}
        +\left(\frac{V_i(R_i)}{R_i^2}\right)^{\frac{3}{2}\epsilon}
        \left[\int_{F^{(R_i)}_i}\lf(\frac{|y|^2}{V_i(|y|)}\r)^{\frac32 \epsilon p'}\,d\mu(y)\right]^{\frac1{p'}}
    \right\},
%&&\lesssim  \frac{\|f\chi_{F^{(R_i)}_i}\|_p}{\mu(F^{(R_i)}_i)^{1/p}} .
\end{eqnarray*}
by the H\"older inequality.
Next, we split the domain of integration into %(not necessarily disjoint) sets%$F^{(R_i)}_i$ into
%$$
%F^{(R_i)}_i
%= \lf\{x \in E_i: \dist(x, E_0) \leq 2\r\}
%\bigcup \lf( \bigcup^{[\log_2R_i]}_{k=0} \{x\in E_i:2^{-k}R_i\le \dist(x,E_0)\le 2^{-k+1}R_i\} \r)
%$$
%$$
%F^{(R_i)}_i
%= \lf\{x \in E_i: \dist(x, E_0) \leq 2\r\}
%\bigcup \bigcup^{[\log_2R_i]}_{k=0} \{x\in E_i:2^{-k}R_i\le \dist(x,E_0)\le 2^{-k+1}R_i\}
%$$
$$
F^{(R_i)}_i
= \{y \in E_i: \dist(y, E_0) \leq 2\}
\cup \left( \mathop{\cup}\limits^{[\log_2R_i]}_{k=0} \{y \in E_i:2^{-k}R_i\le \dist(y,E_0)\le 2^{-k+1}R_i\} \right),
$$
%$$
%F^{(R_i)}_i
%= \{x \in E_i: \dist(x, E_0) \leq 2\}
%\cup \left( \cup^{[\log_2R_i]}_{k=0} \{x\in E_i:2^{-k}R_i\le \dist(x,E_0)\le 2^{-k+1}R_i\} \right)
%$$
%
%\[
%\{x \in E_i: \dist(x, E_0) \leq 2\}
%\]
%and
%\[
%\{x\in E_i:2^{-k}R_i\le \dist(x,E_0)\le 2^{-k+1}R_i\},
%\]
where %$0 \leq k \leq[\log_2R_i]$ and
$[\log_2R_i]$ denotes the integer part of $\log_2R_i$.
%and deduce that
%\begin{eqnarray*}
%\left(\int_{F^{(R_i)}_i}\frac{|x|^{3\epsilon p'}}{V_i(|x|)^{3\epsilon p'/2}}\,d\mu\right)^{1/p'}
% \lesssim  V_i(1)^{1/p'}
%   +   \sum_{k=0}^{[\log_2R_i]} \frac{(2^{-k}R_i)^{3\epsilon }}{V_i(2^{-k}R_i)^{3\epsilon /2}}  V_i(R_i)^{1/p'}
%%&& \lesssim
%%         1
%%       +
%%       \sum_{k=0}^{[\log_2R_i]} (2^{-k})^{3 \epsilon} R_i^{3 \epsilon} V_i(R_i)^{1/p' - 3 \epsilon/2}
%%          \\
% \lesssim    R_i^{3\epsilon}V_i(R_i)^{\frac{1}{p'}-3\epsilon/2}.
%\end{eqnarray*}
From Lemma \ref{lem:volume-growth} and the fact $3\epsilon /2 < 1/p'$,
%$\frac32  < \frac1{p'}$,
it holds that
\begin{eqnarray*} %\label{eq:decom}
%\left(\int_{F^{(R_i)}_i}\frac{|x|^{3\epsilon p'}}{V_i(|x|)^{3\epsilon p'/2}}\,d\mu\right)^{\frac1{p'}}
\left[\int_{F^{(R_i)}_i}\lf(\frac{|y|^2}{V_i(|y|)}\r)^{\frac32 \epsilon p'}\,d\mu(y)\right]^{\frac1{p'}}
%&& \ \le  \left(\int_{\{x\in E_i:\,\dist(x,E_0)\le 2\}} \,d\mu\right)^{\frac1{p'}}
%       +   \sum_{k=0}^{[\log_2R_i]}
%\left(\int_{\{x\in E_i:\,2^{-k}R_i\le \dist(x,E_0)\le 2^{-k+1}R_i\}} \frac{(2^{-k}R_i)^{3\epsilon p'}}{V_i(2^{-k}R_i)^{3\epsilon p'/2}} \,d\mu\right)^{\frac1{p'}}\\
&&  \lesssim
         V_i(1)^{\frac1{p'}-\frac{3}{2}\epsilon}
       +
       %\sum_{k=0}^{[\log_2R_i]} \frac{(2^{-k}R_i)^{3\epsilon }}{V_i(2^{-k}R_i)^{3\epsilon /2}}  V_i(2^{-k}R_i)^{\frac1{p'}}  \\
        \sum_{k=0}^{[\log_2R_i]} \lf(\frac{2^{-2k}R_i^2}{V_i(2^{-k}R_i)}\r)^{\frac32\epsilon}  V_i(2^{-k}R_i)^{\frac1{p'}}  \nonumber \\
&&  \lesssim
         V_i(1)^{\frac1{p'}-\frac{3}{2}\epsilon}
       +
       \sum_{k=0}^{[\log_2R_i]} 2^{-3\epsilon k} R_i^{3\epsilon} V_i(2^{-k}R_i)^{\frac1{p'}-\frac32 \epsilon}  \nonumber \\
&&  \lesssim    R_i^{3\epsilon}V_i(R_i)^{\frac1{p'}-\frac32 \epsilon}
 \lesssim  \left( \frac{R_i^2}{V_i(R_i)} \right)^{\frac32 \epsilon}  V_i(R_i)^{\frac1{p'}}.
\end{eqnarray*}
%Combining the above two estimates,
Therefore,  by the fact $V_i(R_i) \sim \mu(F^{(R_i)}_i)$ (cf. Lemma \ref{lem:volume-growth} (ii)),
we conclude that %for any $1 < p<2$,
%\begin{eqnarray*}%\label{case-1}
%\|t\L e^{-t\L}(f\chi_{F^{(R_i)}_i})\|_{L^\infty(E_i\setminus F^{(R_i)}_i)}
%\lesssim \frac{\|f\chi_{F^{(R_i)}_i}\|_p}{ V_i(R_i)}
%    \left(\mu(F^{(R_i)}_i)^{1/p'}
%        +\left(\frac{V_i(R_i)}{R_i^2}\right)^{\frac{3}{2} \epsilon} R_i^{3\epsilon}V_i(R_i)^{\frac{1}{p'}-3\epsilon/2} \r)
%\lesssim
% \frac{\|f\chi_{F^{(R_i)}_i}\|_p}{\mu(F^{(R_i)}_i)^{1/p}}.
%\end{eqnarray*}
\begin{eqnarray*} %\label{case-1}
\|t \L e^{-t \L}\|_{L^p(F^{(R_i)}_i) \rightarrow L^\infty(E_j \setminus F^{(R_j)}_j)}
&& \lesssim \frac{1}{ V_i(R_i)}
    \left[\mu(F^{(R_i)}_i)^{\frac1{p'}}
        + \left(\frac{V_i(R_i)}{R_i^2}\right)^{\frac{3}{2}\epsilon} \left( \frac{R_i^2}{V_i(R_i)} \right)^{\frac32 \epsilon}  V_i(R_i)^{\frac1{p'}} \r] \\
&& \lesssim \mu(F^{(R_i)}_i)^{-\frac1p},
 %\frac{1}{\mu(F^{(R_i)}_i)^{1/p}}.
\end{eqnarray*}
  which implies that
  for all $1\leq i=j \leq \ell$, $n_i, n_j >2$ and  each $1 < p<2$, %it holds
  \begin{equation*} %\label{eq:pinf-p2}
\|t \L e^{-t \L}\|_{L^p(F^{(R_i)}_i) \rightarrow L^2(E_j \setminus F^{(R_j)}_j)} \lesssim_p \mu(F^{(R_i)}_i)^{\frac{1}{2}-\frac{1}{p}},
\quad \forall  t \ge R_j^2.
  \end{equation*}
Let us consider another subcase $i \neq j$.
For each $1 < p < p_1$,  we can choose an $0<\epsilon< 1/8$ small enough
such that
\begin{equation}\label{epsilon-general}
p'(1-3\epsilon)(n_i-2)>N_i
\end{equation}
and
\begin{equation}\label{range-general-1}
\frac32\epsilon(N_\infty-2)<\frac {n_{0}-2}{p'},
\end{equation}
where %$N_\infty$ is defined in \eqref{N-infty} and
$n_0:=\min\{n_k:1\leq k \leq \ell, \, n_k>2\}>2.$
%\\-------------The reason why $\epsilon_0<\min\{\frac 18,\frac{1}{3p'}\}$-------------------\\
%
%    $\epsilon_0 < \frac18$: Therefore, we can use the previous Proposition. Notice that the estimate of the time derivative of heat kernel is due to [Davies] which request $0<\epsilon < \frac18$
%
%    $\epsilon_0<\frac{1}{3p'}$: This naturally holds, since $3 \epsilon p' < \frac{n_0-2}{N_{\infty}-2} \leq 1$. Sometimes, we use $3 \epsilon p'\leq 1$ directly in the following proofs.
%\\----------------End-----------------------------------------\\
%Above, we used the fact that
%\[\frac{1}{p_1} \geq \frac{N_i-n_i+2}{N_i}\]
%\[\frac{1}{p'_1}=1 - \frac{1}{p_1} < 1- (1 - \frac{n_i-2}{N_i}) = \frac{n_i-2}{N_i}\]
%which means that $p'>p'_1>\frac{N_i}{n_i-2}$. Therefore,  we can pick $\epsilon$ small enough to let
%$p'(1-3\epsilon)(n_i-2)>N_i$ holds for each $n_i >2$.
%
%On the other hand, if we pick $\epsilon=\epsilon(p, n_0, N_\infty)$ small enough, we have
%$3\epsilon(N_\infty-2)<\frac {n_{0}-2}{p'}$ holds.
%\\===========================\\
It follows from Proposition \ref{time-heat-parabolic-2} (ii),  Lemma \ref{lem:phi-r-equal}, Lemma \ref{V0-general-equal} and Remark \ref{rem:thr} (i) that
%for any $x\in F^{(R_i)}_i$, $y\in E_j\setminus F^{(R_j)}_j$,$t\ge R_j^2$,
\[
    |t\partial_t h_t(x,y)|
    \sim \phi_{j}(|x|)\phi_{i}(|y|) |t\partial_t\tilde h_t(x,y)|
\lesssim
\frac{1}{V_j(R_j)}
\left(\frac{{V}_j(R_j)}{R_j^2}\right)^{\frac32\epsilon}
    \lf(\frac{|y|^2}{{V}_i(|y|)} + \frac{R_j^2}{{V}_i(R_j)} \r)^{1-3\epsilon}.
\]
%{\color{gray}\begin{eqnarray*}
%|t\partial_t\tilde h_t(x,y)|
%\lesssim
%\frac{1}{V_j(R_j)}
%\left(\frac{{V}_j(R_j)}{R_j^2}\right)^{3\epsilon/2}
%\times
%\begin{cases}
%    \Big(\frac{|x|^2}{{V}_i(|x|)} \Big)^{1-3\epsilon}, & \mbox{if}\, |x| \le R_j, \vspace{0.5em} \\
%    \Big(\frac{R_j^2}{{V}_i(R_j)}\Big)^{1-3\epsilon}, & \mbox{if}\, |x| > R_j.
%\end{cases}
%\end{eqnarray*}}
%
%\begin{eqnarray*}
%|t\partial_t\tilde h_t(x,y)|
%\lesssim
%\begin{cases}
% \frac{1}{V_j(R_j)}
%\left(\frac{{V}_j(R_j)}{R_j^2}\right)^{3\epsilon/2}
%\left(\frac{|x|^2}{{V}_i(|x|)} \right)^{1-3\epsilon}
%e^{-c\frac{|x|^2+|y|^2}{t}}, & \mbox{if} |x| \le R_j, \\
% \frac{1}{V_j(R_j)}
%\left(\frac{{V}_j(R_j)}{R_j^2}\right)^{3\epsilon/2}
%\left(\frac{R_j^2}{{V}_i(R_j)}\right)^{1-3\epsilon}
%e^{-c\frac{|x|^2+|y|^2}{t}}, & \mbox{if} |x| > R_j.
%\end{cases}
%\end{eqnarray*}
%by Lemma \ref{lem:phi-r-equal} and Remark \ref{rem:thr} (i).
Using the H\"older inequality, one has %for any $1 < p < p_1$,
%\begin{eqnarray*}
%&&\left\|t\L e^{-t\L}(f\chi_{F^{(R_i)}_i})\right\|_{L^\infty(E_j\setminus F^{(R_j)}_j)}\\
%&&\lesssim  \frac{1}{V_j(R_j)} \left(\frac{V_j(R_j)}{R_j^{2}}\right)^{3\epsilon/2}\left[\left(\frac{R_j^2}{V_i(R_j)}\right)^{1-3\epsilon} \int_{\{x\in F^{(R_i)}_i: |x|>R_j\}} |f(x)|\,d\mu(x) \right.\\
%&& \quad \left. +\int_{\{x\in F^{(R_i)}_i: |x|\le R_j\}} \left(\frac{|x|^2}{V_i(|x|)}\right)^{1-3\epsilon} |f(x)| \,d\mu(x)\right]\\
%&&\lesssim  \frac{\|f\|_{L^p(F^{(R_i)}_i)}}{V_j(R_j)} \left(\frac{V_j(R_j)}{R_j^{2}}\right)^{3\epsilon/2}
%\left[\left(\frac{R_j^2}{V_i(R_j)}\right)^{1-3\epsilon}\mu(F^{(R_i)}_i)^{1/p'}+
%\left(\int_{F^{(R_i)}_i}\left(\frac{|x|^{2}}{V_i(|x|)}\right)^{p'(1-3\epsilon)}\,d\mu(x)\right)^{1/p'}\right].
%\end{eqnarray*}
\begin{eqnarray*}
&& \|t\L e^{-t\L}(f\chi_{F^{(R_i)}_i})\|_{L^\infty(E_j\setminus F^{(R_j)}_j)} \\
%&&\lesssim  \frac{1}{V_j(R_j)} \left(\frac{V_j(R_j)}{R_j^{2}}\right)^{3\epsilon/2} \left(\frac{R_j^2}{V_i(R_j)}\right)^{1-3\epsilon} \int_{\{x\in F^{(R_i)}_i: |x|>R_j\}} |f(x)|\,d\mu(x) \\
%&& \quad  + \frac{1}{V_j(R_j)}  \left(\frac{V_j(R_j)}{R_j^{2}}\right)^{3\epsilon/2}
%\int_{\{x\in F^{(R_i)}_i: |x|\le R_j\}} \left(\frac{|x|^2}{V_i(|x|)}\right)^{1-3\epsilon} |f(x)| \,d\mu(x) \\
&&\ \lesssim  \frac{1}{V_j(R_j)} \left(\frac{V_j(R_j)}{R_j^{2}}\right)^{\frac32\epsilon}
\lf[
\int_{F^{(R_i)}_i} \left(\frac{|y|^2}{V_i(|y|)}\right)^{1-3\epsilon} |f(y)| \,d\mu(y)
+ \left(\frac{R_j^2}{V_i(R_j)}\right)^{1-3\epsilon}
\int_{F^{(R_i)}_i}  |f(y)| \,d\mu(y)  \r] \\
&& \ \lesssim  \frac{\|f\|_{L^p(F^{(R_i)}_i)}}{V_j(R_j)} \left(\frac{V_j(R_j)}{R_j^{2}}\right)^{\frac32\epsilon}
    \left\{ \left[\int_{F^{(R_i)}_i} \left(\frac{|y|^{2}}{V_i(|y|)}\right)^{p'(1-3\epsilon)}\,d\mu(y)\right]^{\frac1{p'}}
           + \left(\frac{R_j^2}{V_i(R_j)}\right)^{1-3\epsilon} \mu(F^{(R_i)}_i)^{\frac1{p'}}   \right\}.
%&&\ \lesssim  \frac{\|f\|_{L^p(F^{(R_i)}_i)}}{V_j(R_j)} \mu(F^{(R_i)}_i)^{1/p'}
% \lesssim  \frac{\|f\|_{L^p(F^{(R_i)}_i)}}{V_j(R_j)} \mu(F^{(R_i)}_i)^{1/p}.
\end{eqnarray*}
From a classical annulus argument %(cf. \eqref{eq:decom})
, Lemma \ref{lem:volume-growth} and \eqref{epsilon-general}, we have%,i.e., $p'(1-3\epsilon)(n_i-2)>N_i$ that
%\[
%    \int_{F^{(R_i)}_i}\left(\frac{|x|^{2}}{V_i(|x|)}\right)^{p'(1-3\epsilon)}\,d\mu(x)
%    \lesssim \int_{F^{(R_i)}_i}\frac{1}{|x|^{p'(1-3\epsilon)(n_i-2)}}\,d\mu(x)
%    \lesssim 1.
%\]
\begin{eqnarray*}
 \int_{F^{(R_i)}_i}\left(\frac{|y|^{2}}{V_i(|y|)}\right)^{p'(1-3\epsilon)}\,d\mu(y)
%&& \lesssim \int_{F^{(R_i)}_i}\frac{1}{|y|^{p'(1-3\epsilon)(n_i-2)}}\,d\mu(y)\\
&&  \lesssim \int_{E_i}|y|^{-p'(1-3\epsilon)(n_i-2)}  \,d\mu(y)\\
&&  \lesssim \int_{E_i \cap B_i(o_i, 1)} |y|^{-p'(1-3\epsilon)(n_i-2)} \, d\mu(y) \\
&&  \  + \sum_{i=1}^{\infty} \int_{E_i \cap (B_i(o_i, 2^{i+1}) \setminus B_i(o_i, 2^{i}))} |y|^{-p'(1-3\epsilon)(n_i-2)} \, d\mu(y) \\
&& %\lesssim  1+ \sum_{i=1}^{\infty} \frac{1}{2^{ip'(1-3\epsilon)(n_i-2)}} \mu(E_i \cap B_i(o_i, 2^{i+1}))
   \lesssim  V_i(1)+ \sum_{i=1}^{\infty} \frac{1}{2^{ip'(1-3\epsilon)(n_i-2)}} V_i( 2^{i+1}) \\
&&  \lesssim  1+ \sum_{i=1}^{\infty} 2^{-i[p'(1-3\epsilon)(n_i-2)-N_i]}
    \lesssim 1.
\end{eqnarray*}
Besides, it follows from \eqref{range-general-1} and \eqref{eq:fifj-con} that%, i.e., $\frac{3}{2}\epsilon(N_\infty-2) <(n_{0}-2)/p'$  that
%\[
%   \left(\frac{V_j(R_j)}{R_j^{2}}\right)^{\frac32\epsilon}
%   %\lesssim  \left(\frac{R_j^{N_j}}{R_j^{2}}\right)^{3\epsilon/2}
%   \lesssim R_j^{\frac 32 \epsilon(N_j-2)}
%   \lesssim  R_j^{(n_0-2)/p'}
%   \lesssim  R_j^{n_{j}/p'}
%   \lesssim V_j(R_j)^{1/p'} \sim \mu(F^{(R_i)}_i)^{1/p'}.
%\]
\[
   \left(\frac{V_j(R_j)}{R_j^{2}}\right)^{\frac32\epsilon}
   %\lesssim  \left(\frac{R_j^{N_j}}{R_j^{2}}\right)^{3\epsilon/2}
   \lesssim R_j^{\frac 32 \epsilon(N_j-2)}
   \lesssim  R_j^{\frac{n_0-2}{p'}}
   \lesssim  R_j^{\frac{n_{j}}{p'}}
   \lesssim V_j(R_j)^{\frac1{p'}} \sim \mu(F^{(R_i)}_i)^{\frac1{p'}}.
\]
%{\color{gray}and $
%    \frac{3}{2} \epsilon (N_j-2) + (2-n_i)(1-3\epsilon)
%    < (n_{0}-2)/p' + (2-n_0)(1-3\epsilon)
%    %\leq (n_{0}-2) (\frac{1}{p'} + 1 - 3\epsilon)
%    < 0
%$
%which implies
%\[
%\left(\frac{V_j(R_j)}{R_j^{2}}\right)^{3\epsilon/2} \left(\frac{R_j^2}{V_i(R_j)}\right)^{1-3\epsilon}
%\lesssim R_j^{\frac 32 \epsilon(N_j-2)+(2-n_i)(1-3\epsilon)}
%\lesssim R_j^{(n_{0}-2)/p' + (2-n_0)(1-3\epsilon)}
%\lesssim 1.
%\]}
%The convention \eqref{range-general-1} also implies that
%The fact $p'>2$, {\color{blue}combined with \eqref{range-general-1}}, tells us that
By \eqref{range-general-1} again and the fact $p'>2$, it holds
$$
    %\frac{3}{2} \epsilon (N_j-2) + (2-n_i)(1-3\epsilon)
  \frac{3}2 \epsilon (N_j-2) + (2-n_i)(1-3\epsilon)
  %  < \frac{n_{0}-2}{p'} + (2-n_0)(1-3\epsilon)
    \le (n_{0}-2) \lf(\frac1{p'} - 1 +3\epsilon\r)<0.
$$
%by the fact $p' \ge 2$.
This estimate together with the fact $R_j \geq 1$ gives that
\[
\left(\frac{V_j(R_j)}{R_j^{2}}\right)^{\frac32\epsilon} \left(\frac{R_j^2}{V_i(R_j)}\right)^{1-3\epsilon}
\lesssim R_j^{\frac 32 \epsilon(N_j-2)+(2-n_i)(1-3\epsilon)}
%\lesssim R_j^{(n_{0}-2)/p' + (2-n_0)(1-3\epsilon)}
\lesssim 1.
\]
%Therefore, combining the above four estimates,
Based on the above arguments, we deduce from \eqref{eq:fifj-con} that %if $1\leq i\neq j \leq \ell$, $n_i, n_j >2$,
%for any $1 < p<p_0$,
%\[
%\left\|t\L e^{-t\L}(f\chi_{F^{(R_i)}_i})\right\|_{L^\infty(E_j\setminus F^{(R_j)}_j)}
%\lesssim  \frac{\|f\|_{L^p(F^{(R_i)}_i)}}{V_j(R_j)} \mu(F^{(R_i)}_i)^{1/p'}
%\lesssim  \frac{1}{\mu(F^{(R_i)}_i)^{1/p}} \|f\chi_{F^{(R_i)}_i}\|_{p}.
%\]
\[
\left\|t\L e^{-t\L}\right\|_{L^p(F^{(R_i)}_i) \rightarrow L^\infty(E_j \setminus F^{(R_j)}_j)}
\lesssim  \frac{1}{V_j(R_j)} \mu(F^{(R_i)}_i)^{\frac1{p'}}
%\lesssim  \frac{1}{\mu(F^{(R_i)}_i)^{1/p}},
\lesssim  \mu(F^{(R_i)}_i)^{-\frac1p},
\]
%
%Therefore,  for any $1<p<p_0$, by suitably choosing $\epsilon$, we deduce that
%\begin{eqnarray*}
%&&\left\|t\L e^{-t\L}(f\chi_{F^{(R_i)}_i})\right\|_{L^\infty(E_j\setminus F^{(R_j)}_j)}
%\le  \frac{C\widetilde{V}_0(R_j)}{\widetilde{V}_0(\sqrt t) \mu(F^{(R_i)}_i)^{1/p}} \|f\chi_{F^{(R_i)}_i}\|_{p}.
%\end{eqnarray*}
%Combining this with \eqref{case-1}, we obtain for any $1<p<p_0$,
%\begin{eqnarray*}
%&&\left\|t\L e^{-t\L}(f\chi_{F^{(R_i)}_i})\right\|_{L^\infty(E_j\setminus F^{(R_j)}_j)}
%\lesssim  \frac{1}{\mu(F^{(R_i)}_i)^{1/p}} \|f\chi_{F^{(R_i)}_i}\|_{p}.
%\end{eqnarray*}
%which together with the fact
%$
%\|t\L e^{-t\L}(f\chi_{F^{(R_i)}_i})\|_{L^p(E_i\setminus F^{(R_i)}_i)}
%\lesssim \|f\chi_{F^{(R_i)}_i}\|_{p}
%$
%and the Riesz-Thorin theorem,
%which further
%The same argument as from \eqref{case-1} to \eqref{eq:pinf-p2}
which further
yields that for all $1\leq i\neq j \leq \ell$, $n_i, n_j >2$
and each $1 < p<p_1$,
\[
\|t\L e^{-t\L}\|_{L^p(F^{(R_i)}_i)\to L^2(E_j\setminus F^{(R_j)}_j)}
%\lesssim  \mu(F^{(R_i)}_i)^{-\frac{1}{p}(1-\frac{p}{2})}
%\le \left(\frac{CR_j^2}{t}\right)^{1-\frac{p}{2}} \mu(F^{(R_i)}_i)^{\frac 12 - \frac 1p}
\lesssim_p \mu(F^{(R_i)}_i)^{\frac 12-\frac 1p},
\quad \forall t \ge R_j^2.
\]

 \textbf{Case 2: \boldmath $n_i, n_j<2$.} %$i=j$, by Lemma \ref{lem:phi-r-equal} and Lemma \ref{lem:tilde-V}, it holds $\phi_{i}(|x|) \sim |x|^{2-n_i}$, $\phi_{j}(|y|) \sim |y|^{2-n_i}$ and $\widetilde{V}_i(R_i) \sim R_i^{4-n_i}$. Then
%By Lemmas \ref{lem:phi-r-equal} and  \ref{lem:tilde-V}, we have
%\[
%    \phi_{i}(|x|) \sim |x|^{2-n_i}
%    , \quad
%    \phi_{j}(|y|) \sim |y|^{2-n_j}
%    , \quad
%    \widetilde{V}_j(R_j) \sim R_j^{4-n_j}
%    , \quad
%    \widetilde{V}_i(|x|) \sim |x|^{4-n_i}
%    \quad \mbox{and} \quad
%    \widetilde{V}_i(R_j) \sim R_j^{4-n_i}
%\]
If $i=j$, then using the estimates of $\phi_i$, $\widetilde{V}_i$ and $\widetilde{V}_0$ (cf.  Lemmas \ref{lem:phi-r-equal}, \ref{lem:tilde-V}  and \ref{V0-general-equal}), one deduces from
Proposition \ref{time-heat-parabolic-2} (i) that
%for any $x\in F^{(R_i)}_i$, $y\in E_i\setminus F^{(R_i)}_i$ and $t \ge R_i^2$,
\begin{eqnarray*}
|t\partial_t h_t(x,y)|
%&& \sim \phi_{i}(|x|)\phi_{j}(|y|)|t\partial_t\tilde h_t(x,y)| \\
&&\lesssim_\epsilon\frac{|x|^{2-n_i} |y|^{2-n_i}}{t} \frac{R_i^2}
{R_i^{4-n_i}}\left[1+ \left(\frac{R_i^{4-n_i}}{R_i^2}\right)^{\frac{3}{2}\epsilon}
\left(\frac{|y|^2}{|y|^{4-n_i}}\right)^{\frac{3}{2}\epsilon}\right]
e^{-c\frac{d(x,y)^2}{t}}\\
&&\sim_\epsilon \frac{|x|^{2-n_i}}{t} \frac{|y|^{2-n_i}}{R_i^{2-n_i}} \left[ 1 + \lf(\frac{R_i^{2-n_i}}{|y|^{2-n_i}}\r)^{\frac{3}{2} \epsilon} \right] e^{-c\frac{d(x,y)^2}{t}} \\
&& \sim_\epsilon \frac{|x|^{2-n_i}}{t}  \left[ \frac{|y|^{2-n_i}}{R_i^{2-n_i}}  +\left( \frac{|y|^{2-n_i}}{R_i^{2-n_i}}\right)^{1-\frac{3}{2}\epsilon} \right]e^{-c\frac{d(x,y)^2}{t}} \\
&& \lesssim_\epsilon\frac{|x|^{2-n_i}}{t}
e^{-c\frac{d(x,y)^2}{t}}
\lesssim_\epsilon\frac{1}{t^{n_i/2}}
 \lesssim_\epsilon\frac{1}{R_i^{n_i}},
\end{eqnarray*}
%where the penultimate inequality follows from the fact that
where we used the fact that %that $|x| \lesssim R_i$ and
\begin{equation} \label{eq:y-es}
    |x| \leq |y| + d(x,y)
    \lesssim \sqrt t  + d(x,y).
   % \lesssim
%    \begin{cases}
%       \sqrt t, & \,\mbox{if} \, d(x,y) \leq \sqrt t, \\
%       d(x,y), & \,\mbox{if} \, d(x,y) \ge \sqrt t.
%    \end{cases}
\end{equation}
%-----------------------\\
%Above, in the last inequality, we used the fact that if $d(x,y) \leq \sqrt t$, then $|y| \leq |x| + d(x,y) \leq C \sqrt t$, and hence
%\[
%    \frac{|y|^{2-n_i}}{t} \lesssim \frac{(\sqrt t)^{2-n_i}}{t} \lesssim \frac{1}{t^{n_i/2}};
%\]
%if $d(x,y) \geq \sqrt t$, then $|y| \leq |x| + d(x,y) \leq C d(x,y)$, and hence
%\[
%     \frac{|y|^{2-n_i}}{t} e^{-\frac{d(x,y)^2}{ct}}
%     \lesssim \frac{|y|^{2-n_i}}{t} \frac{(\sqrt t)^{2-n_i}}{|y|^{2-n_i}}
%     \lesssim \frac{1}{t^{n_i/2}}.
%\]
Therefore, by suitably choosing $\epsilon$, one has
\begin{equation*} %\label{eq:case2}
\|t\L e^{-t\L}\|_{L^1(F^{(R_i)}_i)\to L^\infty(E_j\setminus F^{(R_j)}_j)} \lesssim   V_i(R_i)^{-1} \sim \mu(F^{(R_i)}_i)^{-1},
\end{equation*}
     which implies that %we obtain that
     for all $1\leq i=j\leq \ell$, $n_i, n_j <2$ and each $1 < p < 2$,
\begin{equation*}%\label{eq:1inf-p2}
\|t\L e^{-t\L}\|_{L^p(F^{(R_i)}_i)\to L^2(E_j\setminus F^{(R_j)}_j)} \lesssim_p \mu(F^{(R_i)}_i)^{\frac 12-\frac 1p},
\quad \forall t \ge R_j^2.\end{equation*}
%{\color{blue}The same trick of deducing from \eqref{eq:case2} to \eqref{eq:1inf-p2} will be used repeatedly.}
%============DETAILS OF ABOVE==============\\
%  For any $f \in L^p$, it holds
%  \begin{eqnarray*}
%     \int_{E_j \setminus F^{(R_j)}_j} |t \L e^{-t\L} f |^2 d\mu
%     && \lesssim \int_{E_j \setminus F^{(R_j)}_j} |t \L e^{-t\L} f |^p |t \L e^{-t\L} f |^{2-p}  d\mu
%     \lesssim \|t \L e^{-t\L} f \|^{2-p}_{L^\infty(E_j \setminus F^{(R_j)}_j)} \|t \L e^{-t\L} f \|^p_{L^p(E_j \setminus F^{(R_j)}_j)}\\
%     && \lesssim \|t \L e^{-t \L}\|^{2-p}_{L^1(F^{(R_i)}_i) \rightarrow L^\infty(E_j \setminus F^{(R_j)}_j)}  \|f\|^{2-p}_{L^1(F^{(R_i)}_i)}
%              \|t \L e^{-t \L}\|^p_{L^p(F^{(R_i)}_i) \rightarrow L^p(E_j \setminus F^{(R_j)}_j)} \|f\|^{p}_{L^p(F^{(R_i)}_i)}   \\
%     && \lesssim \mu(F^{(R_i)}_i)^{p-2}  \|f\|^{2-p}_{L^p(F^{(R_i)}_i)} \mu(F^{(R_i)}_i)^{\frac{2-p}{p'}} \|f\|^{p}_{L^p(F^{(R_i)}_i)}   \\
%     && \lesssim \mu(F^{(R_i)}_i)^{1-\frac{2}{p}} \|f\|^{2}_{L^p(F^{(R_i)}_i)},
%  \end{eqnarray*}
%  where we used the fact that
%  \[
%    p-2 + \frac{2-p}{p'} = p-2 + 2(1- \frac1p) - p(1-\frac1p) = 1- \frac{2}{p},
%  \]
%  And hence
%  $$
%        \|t \L e^{-t \L}\|_{L^p(F^{(R_i)}_i) \rightarrow L^2(E_j \setminus F^{(R_j)}_j)} \lesssim \mu(F^{(R_i)}_i)^{\frac{1}{2}-\frac{1}{p}}.
%  $$
%\\============END OF DETAILS================\\

%When $i\neq j$, let us first fix an $0< \epsilon <1/8$ small enough such that
Let us consider another subcase $i\neq j$,  and fix an $0< \epsilon <1/8$ small enough such that

%We choose $\epsilon>0$
%%which depends only on $n_i,n_j$
%small enough such that
\begin{equation} \label{eq:epi-reg-<2}
3\epsilon(2-n_i) + \frac{3}{2}\epsilon(2-n_j)  \le 2-n_j.
\end{equation}
%by Lemmas \ref{lem:phi-r-equal} and  \ref{lem:tilde-V}, we have
%\[
%    \phi_{i}(|x|) \sim |x|^{2-n_i}
%    , \quad
%    \phi_{j}(|y|) \sim |y|^{2-n_j}
%    , \quad
%    \widetilde{V}_j(R_j) \sim R_j^{4-n_j}
%    , \quad
%    \widetilde{V}_i(|x|) \sim |x|^{4-n_i}
%    \quad \mbox{and} \quad
%    \widetilde{V}_i(R_j) \sim R_j^{4-n_i}
%\]
Proposition \ref{time-heat-parabolic-2} (ii) together with Lemmas \ref{lem:phi-r-equal}, \ref{lem:tilde-V} and \ref{V0-general-equal} yields  that
%for any $x\in F^{(R_i)}_i$, $y\in E_j\setminus F^{(R_j)}_j$ and $t \ge R_j^2$,
\begin{eqnarray*}
|t\partial_t h_t(x,y)|
%&&\sim \phi_{i}(|x|)\phi_{j}(|y|)|t\partial_t\tilde h_t(x,y)|\\
&&  \lesssim {\frac{|x|^{2-n_i} |y|^{2-n_j} R_j^2}{tR_j^{4-n_j}}}\left(\frac{R_j^{4-n_j}}{R_j^2}\right)^{\frac 32\epsilon}\left(\frac{|y|^2}{|y|^{4-n_i}}+\frac{R_j^2}{R_j^{4-n_i}}\right)^{1-3\epsilon}
    e^{-c\frac{|x|^2+|y|^2}{t}} \\
&&  \sim {\frac{|x|^{2-n_i} |y|^{2-n_j}}{t}} R_j^{n_j-2} R_j^{\frac32 \epsilon(2-n_j)} \left(|y|^{n_i-2} + R_j^{n_i-2}\right)^{1-3\epsilon}
    e^{-c\frac{|x|^2+|y|^2}{t}}.
\end{eqnarray*}
If $|y|\le R_j$, via \eqref{eq:epi-reg-<2}, one has
\begin{eqnarray*}
     |t\partial_t h_t(x,y)|
    &&  \lesssim {\frac{|x|^{2-n_i} |y|^{2-n_j}}{t}} R_j^{n_j-2} R_j^{\frac32 \epsilon(2-n_j)} |y|^{(3\epsilon-1)(2-n_i)}  e^{-c\frac{|x|^2}{t}} \\
    &&  \lesssim {\frac{|x|^{2-n_j}}{t}}
     |y|^{3\epsilon(2-n_i)}
     R_j^{n_j-2} R_j^{\frac32 \epsilon(2-n_j)}   e^{-c\frac{|x|^2}{t}} \\
    && \lesssim \frac{1}{t^{n_j/2}} R_j^{3\epsilon(2-n_i) + \frac32\epsilon(2-n_j) - (2-n_j)}
     \lesssim \frac{1}{t^{n_j/2}}
      \lesssim \frac{1}{R_j^{n_j}} \sim V_j(R_j)^{-1}.
\end{eqnarray*}
%where we used the trivial inequality $e^{-c\frac{|y|^2}{t}} \lesssim \frac{t^{1-n_j/2}}{|y|^{2-n_j}}$.
 Otherwise $|y| > R_j$, using \eqref{eq:epi-reg-<2} again, one obtains %and $\frac{3}{2}\epsilon(2-n_j)-3\epsilon(2-n_i)\le 2-n_j,$
\begin{eqnarray*}
     |t\partial_t h_t(x,y)|
    && \lesssim {\frac{|x|^{2-n_i} |y|^{2-n_j}}{t}}  R_j^{n_j-2} R_j^{\frac32 \epsilon(2-n_j)} R_j^{(3\epsilon-1)(2-n_i)}  e^{-c\frac{|x|^2+|y|^2}{t}}  \\
%    &  \lesssim \frac{1|x|^{2-n_i}|y|^{2-n_j}}{t}\frac{R_j^2}{R_j^{4-n_j}}
%    \left(\frac{R_j^{4-n_j}}{R_j^2}\right)^{\frac 32\epsilon}\left(\frac{R_j^2}{R_j^{4-n_i}}\right)^{1-3\epsilon}
%    e^{-\frac{|x|^2+|y|^2}{ct}} \\
%    &  \lesssim \frac{1|x|^{2-n_i}|y|^{2-n_j}}{t}   e^{-\frac{|x|^2+|y|^2}{ct}} \frac{R_j^2}{R_j^{4-n_j}}
%    \left(R_j^{2-n_j}\right)^{\frac 32\epsilon} \left(R_j^{n_i-2}\right)^{1-3\epsilon} \\
 %   &  \lesssim \frac{1}{t^{\frac{n_i}{2}+\frac{n_j}{2}-1}} R_j^{3\epsilon(2-n_i) + \frac{3}{2}\epsilon(2-n_j) - (2-n_j) + n_i-2} \\
     &&  \lesssim \frac{1}{t^{n_i/2 + n_j/2-1}} R_j^{3\epsilon(2-n_i) + \frac{3}{2}\epsilon(2-n_j) - (2-n_j)} R_j^{n_i-2} \\
    &&  \lesssim \frac{1}{R_j^{n_i+ n_j - 2}}  R_j^{n_i-2}
     \lesssim  \frac{1}{R_j^{n_j}} \sim V_j(R_j)^{-1}.
\end{eqnarray*}
%where we used the trivial inequality $e^{-c\frac{|x|^2+|y|^2}{t}} \lesssim \frac{t^{2-n_i/2-n_j/2}}{|x|^{2-n_i} |y|^{2-n_j}}$.
Hence, it holds
$$\|t\L e^{-t\L}\|_{L^1(F^{(R_i)}_i)\to L^\infty(E_j\setminus F^{(R_j)}_j)} \lesssim  V_j(R_j)^{-1} \sim \mu(F^{(R_i)}_i)^{-1},$$
which further implies that for all $1\leq i \neq j\leq \ell$, $n_i, n_j <2$ and each $1 < p < 2$,
$$\|t\L e^{-t\L}\|_{L^p(F^{(R_i)}_i)\to L^2(E_j\setminus F^{(R_j)}_j)}%\le \sqrt{\frac{CR_j^{n_i+n_j-2}}{t^{\frac{n_i}{2}+\frac{n_j}{2}-1}}+\frac{CR_j^{n_j}}{t^{n_j/2}}}\mu(F^{(R_i)}_i)^{\frac 12-\frac 1p}
\lesssim_p \mu(F^{(R_i)}_i)^{\frac 12-\frac 1p},
\quad \forall t \ge R_j^2. $$

\textbf{Case 3: \boldmath $n_i > 2$, $n_j<2$.} %then $R_i\le CR_j$ and  $|x| \lesssim R_i \lesssim R_j \lesssim \sqrt t$. By Lemmas  \ref{lem:phi-r-equal}  and \ref{lem:tilde-V}, we have
%$$\phi_{i}(|x|) \sim 1, \quad \phi_{j}(|y|) \sim |y|^{2-n_j} ,\quad \widetilde{V}_j(R_j) \sim R_j^{4-n_j} , \quad
%|x|^2 / \widetilde{V}_i(|x|) \lesssim 1 \quad \mbox{and} \quad
%R_j^2 / \widetilde{V}_i(R_j) \lesssim 1
%.$$
%We choose $\epsilon >0$ small enough such that $\frac{3}{2} \epsilon (2-n_j) < 2-n_j$.
%-------------\\
%Let us first fix an $0 < \epsilon < 1/8$ small enough such that
%$$\frac{3}{2} \epsilon (2-n_j) < 2-n_j.$$
Proposition \ref{time-heat-parabolic-2} (ii) together with Lemmas \ref{lem:phi-r-equal}, \ref{lem:tilde-V} and \ref{V0-general-equal} yields  that
%for any $x\in F^{(R_i)}_i$, $y\in E_j\setminus F^{(R_j)}_j$ and $t \ge R_j^2$,
\begin{eqnarray*}
|t\partial_th_t(x,y)|
%&&\sim \phi_{i}(|x|) \phi_{j}(|y|)|t\partial_t\tilde h_t(x,y)|
%&&\lesssim \phi_{i}(|x|) \phi_{j}(|y|) {\frac{R_j^2}{t\widetilde{V}_j(R_j)}} \left(\frac{\widetilde{V}_j(R_j)}{R_j^2}\right)^{\frac 32\epsilon}\left(\frac{|x|^2}{\widetilde{V}_i(|x|)}+\frac{R_j^2}{\widetilde{V}_i(R_j)}\right)^{1-3\epsilon}
%        e^{-\frac{|x|^2+|y|^2}{ct}} \\
 && \lesssim_\epsilon \frac{|x|^{2-n_j}}{t}  \frac{R_j^2 }{R_j^{4-n_j}}
        \left(\frac{R_j^{4-n_j}}{R_j^2}\right)^{\frac32 \epsilon}
       % \left(\frac{|y|^2}{\widetilde{V}_i(|y|)}+\frac{R_j^2}{\widetilde{V}_i(R_j)}\right)^{1-3\epsilon}
        e^{-c\frac{|x|^2}{t}}
%  && \lesssim \frac{|y|^{2-n_j}}{t}  \frac{R_j^2 }{R_j^{4-n_j}}
%        \left(\frac{R_j^{4-n_j}}{R_j^2}\right)^{\frac32 \epsilon}
%        e^{-c\frac{|y|^2}{t}} \\
   \lesssim_\epsilon \frac{|x|^{2-n_j}}{t}  \frac{1}{R_j^{2-n_j}}
        R_j^{\frac32 \epsilon(2-n_j)} e^{-c\frac{|x|^2}{t}}\\
  &&   \lesssim_\epsilon \frac{1}{t^{n_j/2}} R_j^{\frac 32\epsilon(2-n_j) - (2-n_j)}
      \lesssim_\epsilon \frac{1}{t^{n_j/2}} \lesssim_\epsilon \frac{1}{R_j^{n_j}} \sim_\epsilon V_j(R_j)^{-1}.
%&& \le\frac{C}{t^{n_j/2}}\frac{R_j^2}{R_j^{4-n_j}} \left(R_j^{2-n_j}\right)^{3\epsilon/2}
%    \le\frac{CR_j^{n_j}}{t^{n_j/2}}R_j^{-2+\frac 32\epsilon(2-n_j)} \\
%&&       \le \frac{CR_j^{n_j}}{t^{n_j/2}}R_j^{-n_j}.
\end{eqnarray*}
%where we used the trivial inequality
%%$e^{-c\frac{|y|^2}{t}} \lesssim \frac{t^{1-n_j/2}}{|y|^{2-n_j}}$
%$e^{-c\frac{|y|^2}{t}} \lesssim \frac{t^{1-n_j/2}}{|y|^{2-n_j}}$.
By suitably choosing $\epsilon$, one concludes
\[
    \|t\L e^{-t\L}\|_{L^1(F^{(R_i)}_i)\to L^\infty(E_j\setminus F^{(R_j)}_j)}
  %  \lesssim V_j(R_j)^{-1},
    \lesssim V_j(R_j)^{-1}
    \sim \mu(F^{(R_i)}_i)^{-1},
\]
which further implies that for all $1 \leq i, j \leq \ell$, $n_i >2$, $n_j<2$ and each $1<p<2$,
\[
     \|t\L e^{-t\L}\|_{L^2(F^{(R_i)}_i)\to L^p(E_j\setminus F^{(R_j)}_j)}
    \lesssim_p \mu(F^{(R_i)}_i)^{\frac{1}{2} - \frac1p},
\quad \forall t \ge R_j^2.
\]

\textbf{Case 4: \boldmath $n_i<2$, $n_j>2$.} %it holds $\phi_{i}(|x|) \sim |x|^{2-n_i}$ and $\phi_{j}(|y|) \sim 1$.
%by Lemmas \ref{lem:phi-r-equal} and  \ref{lem:tilde-V}, we have
%\[
%    \phi_{i}(|x|) \sim |x|^{2-n_i}
%    , \quad
%    \phi_{j}(|y|) \sim 1
%    , \quad
%    \widetilde{V}_j(R_j) \sim V_j(R_j)
%    , \quad
%    \widetilde{V}_i(|x|) \sim |x|^{4-n_i}
%    \quad \mbox{and} \quad
%    \widetilde{V}_i(R_j) \sim R_j^{4-n_i}
%\]
For each $1 < p<2$, let us choose an $0< \epsilon <1/8$ small enough such that
\begin{equation*}%\label{eq:epi-reg-<2>2}
3\epsilon(2-n_i) + \frac{3}{2}\epsilon (N_j - 2) \le (n_j-n_i)\frac{1}{p'},
\end{equation*}
and hence it holds
\begin{equation} \label{eq:rjrj}
  R_j^{3\epsilon(2-n_i)}R_j^{\frac{3}{2}\epsilon (N_j - 2)}
  \lesssim V_j(R_j)^{\frac1{p'}} V_i(R_j)^{-\frac1{p'}}.
\end{equation}
Using Lemma \ref{lem:phi-r-equal}, Remark \ref{rem:thr} (i), Lemmas \ref{lem:tilde-V} and \ref{V0-general-equal}, we deduce from
Proposition \ref{time-heat-parabolic-2} (ii) that %for any $x \in F^{(R_i)}_i$, $y \in E_j \setminus F^{(R_j)}_j$ and $t \ge R_j^2$, it holds
\begin{eqnarray*}
|t\partial_t h_t(x,y)|
%&&\sim \phi_{i}(|x|) \phi_{j}(|y|) |t\partial_t\tilde h_t(x,y)| \\
&&  \lesssim {\frac{|y|^{2-n_i}R_j^2 }{t}}  \frac{1}{V_j(R_j)} \left(\frac{{V}_j(R_j)}{R_j^2}\right)^{\frac 32\epsilon} \left(\frac{|y|^2}{|y|^{4-n_i}}+ \frac{R_j^2}{R_j^{4-n_i}}\right)^{1-3\epsilon}    e^{-c\frac{|x|^2+|y|^2}{t}} \\
&& \lesssim \frac{|y|^{2-n_i} R_j^2}{t}  R_j^{ \frac{3}{2}\epsilon(N_j-2)} V_j(R_j)^{-1}
\left(|y|^{n_i-2}+R_j^{n_i-2}\right)^{1-3\epsilon}    e^{-c\frac{|x|^2+|y|^2}{t}}.
\end{eqnarray*}
When $|y| \leq R_j$, %one has via \eqref{eq:rjrj} and the fact $t \ge R_j^2$ that
via \eqref{eq:rjrj} and the fact $t \ge R_j^2$, one has
\begin{eqnarray*}
  |t\partial_t h_t(x,y)|
&& \lesssim   |y|^{2-n_i} R_j^{ \frac{3}{2}\epsilon(N_j-2)} V_j(R_j)^{-1} |y|^{(3\epsilon-1)(2-n_i)}
  \lesssim   |y|^{3\epsilon(2-n_i)}  R_j^{ \frac{3}{2}\epsilon(N_j-2)} V_j(R_j)^{-1} \\
&& \lesssim R_j^{3\epsilon(2-n_i)}  R_j^{ \frac{3}{2}\epsilon(N_j-2)} V_j(R_j)^{-1}
  \lesssim  V_j(R_j)^{-\frac1{p}}  V_i(R_j)^{-\frac1{p'}}.
\end{eqnarray*}
When $ |y| > R_j$, using \eqref{eq:rjrj} again, we deduce that
\begin{eqnarray*}
 |t\partial_t h_t(x,y)|
&& \lesssim  {\frac{|y|^{2-n_i}R_j^2 }{t}}   R_j^{ \frac{3}{2}\epsilon(N_j-2)} V_j(R_j)^{-1} R_j^{(3\epsilon-1)(2-n_i)} e^{-c\frac{|y|^2}{t}}  \\
%& \lesssim \frac{1}{t^{n_i/2}}  R_j^{2 + 3\epsilon(2-n_i) - (2-n_i)} R_j^{-n_j+\frac{3}{2}\epsilon(N_j-2)} e^{-c\frac{|x|^2+|y|^2}{t}} \\
&&    \lesssim  |y|^{-n_i} R_j^{n_i} R_j^{3\epsilon(2-n_i)}  R_j^{ \frac{3}{2}\epsilon(N_j-2)} V_j(R_j)^{-1}
  \lesssim |y|^{-n_i} R_j^{n_i}   V_j(R_j)^{-\frac1{p}}  V_i(R_j)^{-\frac1{p'}}.
\end{eqnarray*}
%where we used the trivial inequality
%$e^{-c\frac{|x|^2}{t}} \lesssim \frac{t}{|x|^2}$.
%where $\delta$ satisfies $0 < \delta \leq n_i/2$ and $\frac{2\delta p}{p-1}>n_i$. For instance we can take $\delta=\frac{n_i}{2}$.
% The first condition makes $ \left(\frac{R_j^2}{t}\right)^{\frac{n_i}{2}-\delta} \lesssim 1$.
%  The Second condition makes the following series   $\sum_{k=0}^{\infty} (2^k)^{-2\delta} (2^k)^{n_i(1-\frac{1}{p})}$ converge.
%  Since $\frac{p}{p-1} > 2$, if $\delta > \frac{n_i}{4}$, then $\frac{2\delta p}{p-1}>n_i$.
%Since $n_i/2 - \delta <1$, we have $\frac{R_j^2}{t} \lesssim \left(\frac{R_j^2}{t}\right)^{n_i/2 -\delta}$.
Hence, it follows from the H\"{o}lder inequality that
\begin{eqnarray} \label{eq:s2b2}
\|t\L e^{-t\L}\|_{L^p(F^{(R_i)}_i)\to L^\infty(E_j\setminus F^{(R_j)}_j)}
&&   \lesssim
         V_j(R_j)^{-\frac1{p}}  V_i(R_j)^{-\frac1{p'}}  %R_j^{3\epsilon(2-n_i)-n_j+\frac32 \epsilon(N_j-2)}
      \left(\int_{F^{(R_i)}_i \cap \{y \in E_i: \, |y| \leq R_j \}}  \,  d\mu(y)\right)^{\frac1{p'}}  \notag \\
&&  \   + R_j^{n_i}  V_j(R_j)^{-\frac1{p}}  V_i(R_j)^{-\frac1{p'}}%\frac{R_j^{- n_i/p'}}{V_j(R_j)^{1/p}} %R_j^{- n_j/p - n_i/p'}
       \left(\int_{F^{(R_i)}_i \cap  \{y \in E_i: \, |y| > R_j\}} |y|^{-n_i p'} \,d\mu(y)\right)^{\frac1{p'}} \notag \\
&&     \lesssim  {V}_j(R_j)^{-\frac 1p}
        \sim \mu(F^{(R_i)}_i)^{-\frac 1p},
\end{eqnarray}
%---------------------------
%\begin{eqnarray*}
%\|t\L e^{-t\L}\|_{L^p(F^{(R_i)}_i)\to L^\infty(E_j\setminus F^{(R_j)}_j)}
%&&\lesssim
%         R_j^{3\epsilon(2-n_i)-n_j+\frac32 \epsilon(N_j-2)}
%      \left(\int_{F^{(R_i)}_i \cap \{x \in E_i; \, |x| \leq R_j \}} d\mu(x)\right)^{1 - \frac1 p}\\ %R_j^{n_i(1-\frac 1p)}\\
%&& \quad +\frac{R_j^{n_i}}{t^{n_i/2}} R_j^{3\epsilon(2-n_i)} R_j^{-n_j}    R_j^{\frac32 \epsilon(N_j-2)}
%       \left(\int_{F^{(R_i)}_i \cap  \{x\in E_i: R_j<|x|\}}\left(\frac{t}{|x|^2+|y|^2}\right)^{\frac{\delta p}{p-1}}\,d\mu\right)^{1-\frac 1p}\\
%&&\lesssim  R_j^{3\epsilon(2-n_i)} R_j^{-n_j}    R_j^{\frac32 \epsilon(N_j-2)} R_j^{n_i(1-\frac 1p)}\\
%&& \quad + \left(\frac{R_j^2}{t}\right)^{\frac{n_i}{2}-\delta}  R_j^{3\epsilon(2-n_i)} R_j^{-n_j}    R_j^{\frac32 \epsilon(N_j-2)} R_j^{n_i(1-\frac 1p)} \\
%&& \lesssim  R_j^{3\epsilon(2-n_i) + \frac32 \epsilon(N_j-2) -n_j + n_i(1-\frac 1p)}    \\
%&& \lesssim  R_j^{n_j (-\frac 1p)} \le C {V}_j(R_j)^{-\frac 1p},
%\end{eqnarray*}
%as soon as %$0<\delta\le \frac{n_i}{2}$,
%%$\frac{\delta p}{p-1}>n_i$
%$\epsilon$ small enough such that $3\epsilon(2-n_i) + \frac{3}{2}\epsilon (N_j - 2) \le (n_j-n_i)(1-\frac 1p)$.
%For instance we can take $\delta=\frac{n_i}{2}$.
%where in the last inequality we used
where the second inequality is due to%we used the fact that
\begin{equation*}
    \left(\int_{F^{(R_i)}_i \cap \{y \in E_i: \, |y| \leq R_j \}} \, d\mu(y)\right)^{\frac1{p'}}
    % \leq \left(\int_{\{x \in E_i: \, |x| \leq R_j \}} d\mu(x)\right)^{\frac1{p'}}
     \leq \left(\int_{\{y \in E_i: \dist(y, E_0) \leq 2R_j \}} \, d\mu(y)\right)^{\frac1{p'}}
     = \mu(F^{(R_j)}_i)^{\frac1{p'}}
     \sim V_i(R_j)^{\frac1{p'}}
%     \sim R_j^{n_i(\frac1{p'})}
\end{equation*}
and
\begin{eqnarray*}
      \left(\int_{F^{(R_i)}_i \cap \{y\in E_i: \, |y| > R_j\}} |y|^{-n_ip'} \,d\mu(y)\right)^{\frac1{p'}}
    && \leq \sum_{k=0}^{\infty} \left(\int_{\{y\in E_i: \, 2^kR_j<|y|\le 2^{k+1}R_j\}} |y|^{-n_ip'} \,d\mu(y)\right)^{\frac1{p'}} \\
%    & \lesssim \sum_{k=0}^{\infty} \left[\frac{t}{(2^kR_j)^2}\right]^{n_i/2} \left(\int_{\{x\in E_i: \dist(x,E_0) \le 2^{k+1}R_j\}} \,d\mu(x)\right)^{\frac1{p'}} \\
    && \lesssim \sum_{k=0}^{\infty} (2^kR_j)^{-n_i} V_i(2^k R_j)^{\frac1{p'}}
     %\lesssim \sum_{k=0}^{\infty} (2^k)^{-n_i} (2^k)^{n_i/p'} R_j^{n_i} V_i(R_j)^{\frac1{p'}} \\
      \lesssim R_j^{-n_i} V_i(R_j)^{\frac1{p'}}.
\end{eqnarray*}
By \eqref{eq:s2b2}, we conclude that for all $1\leq i, j\leq j$, $n_i <2$, $n_j>2$ and each $1 < p <2$,
%Hence, we have
%\[
%\|t\L e^{-t\L}\|_{L^p(F^{(R_i)}_i)\to L^\infty(E_j\setminus F^{(R_j)}_j)}
%\lesssim %\left(\frac{R_j^2}{t}\right)^{\frac{n_i}{2}-\delta}
%{V}_j(R_j)^{-\frac 1p},
%\]
%Combining this with $\|t\L e^{-t\L}\|_{L^p(F^{(R_i)}_i)\to L^p(E_j\setminus F^{(R_j)}_j)} \lesssim 1$ and the Riesz-Thorin interpolation theorem, where $\frac{2/p}{p} + \frac{1-2/p}{\infty} = \frac{1}{2}$, yields that
\begin{eqnarray*}
\|t\L e^{-t\L}\|_{L^p(F^{(R_i)}_i)\to L^2(E_j\setminus F^{(R_j)}_j)}
%&& \lesssim \|t\L e^{-t\L}\|^{1 - \frac2 p}_{L^p(F^{(R_i)}_i)\to L^\infty(E_j\setminus F^{(R_j)}_j)} \\
%&&\lesssim % \left(\frac{R_j^2}{t}\right)^{(\frac{n_i}{2}-\delta)(1-\frac p2)}
%{V}_j(R_j)^{\frac 12-\frac 1p}
\lesssim_p \mu(F^{(R_i)}_i)^{\frac 12-\frac 1p},
\quad \forall t \ge R_j^2.
\end{eqnarray*}

\textbf{Case 5: \boldmath $n_i=2$, $n_j<2$.} %It holds $R_i\le CR_j$ and  $|x| \lesssim R_i \lesssim R_j \lesssim \sqrt t$.
%By Lemmas  \ref{lem:phi-r-equal}  and \ref{lem:tilde-V}, we have
%$$\phi_{i}(|x|) \sim \log(2+|x|), \quad \phi_{j}(|y|) \sim |y|^{2-n_j} \quad \mbox{and} \quad \widetilde{V}_j(R_j) \sim R_j^{4-n_j}.$$
Note that in this case $R_i \lesssim R_j$ and hence $|y| \lesssim \sqrt t$.
%From this, using Lemmas \ref{lem:phi-r-equal} and \ref{V0-general-equal}, we conclude from Proposition \ref{time-heat-parabolic-2} (ii) that
From this, together with Proposition \ref{time-heat-parabolic-2} (iii), Lemma \ref{lem:phi-r-equal} and Lemma \ref{V0-general-equal}, we conclude that
%for any  $x \in F^{(R_i)}_i$, $y \in E_j \setminus F^{(R_j)}_j$ and $t \ge R_j^2$,
\begin{eqnarray*}
|t\partial_t  h_t(x,y)|
 %\sim \phi_{i}(|x|)\phi_{j}(|y|)|t\partial_t\tilde h_t(x,y)|
  \lesssim_\epsilon\frac{|x|^{2-n_j} \log(2+|y|)}{t \log^2(2+\sqrt t)} e^{-c\frac{|x|^2}{t}}
   \lesssim_\epsilon \frac{|x|^{2-n_j}}{t} e^{-c\frac{|x|^2}{t}}
  %\lesssim_\epsilon\frac{\log(2+|x|)|y|^{2-n_j}}{t \log^2(2+\sqrt t)} e^{-\frac{|y|^2}{ct}}
  %\lesssim_\epsilon\frac{1}{t^{n_j/2}\log(2+\sqrt t)}
  \lesssim_\epsilon\frac{1}{t^{n_j/2}} \lesssim_\epsilon\frac{1}{R_j^{n_j}}
  \sim_\epsilon V_j(R_j)^{-1}.
\end{eqnarray*}
%where we used the trivial inequality $e^{-c\frac{|y|^2}{t}} \lesssim \frac{t^{1-n_j/2}}{|y|^{2-n_j}}$.
Choosing $\epsilon$ suitably, it follows that
\[
    \|t\L e^{-t\L}\|_{L^1(F^{(R_i)}_i)\to L^\infty(E_j\setminus F^{(R_j)}_j)}
    \lesssim V_j(R_j)^{-1}
    \sim \mu(F^{(R_i)}_i)^{-1} ,
\]
which further implies that for all $1 \leq i, j \leq \ell$, $n_i =2$, $n_j <2$ and each $1 < p <2$,
\[
     \|t\L e^{-t\L}\|_{L^2(F^{(R_i)}_i)\to L^p(E_j\setminus F^{(R_j)}_j)}
    \lesssim_p \mu(F^{(R_i)}_i)^{\frac{1}{2} - \frac1p},
\quad \forall t \ge R_j^2.
\]
%\begin{align*}
%\|t\L e^{-t\L}\|_{L^p(F^{(R_i)}_i)\to L^\infty(E_j\setminus F^{(R_j)}_j)}
%&\le \frac{CR_j^{n_j}}{t^{n_j/2}} R_j^{-n_j} \left(\int_{F^{(R_i)}_i} d\mu\right)^{1 - \frac1 p} \\
%&\le \frac{CR_j^{n_j}}{t^{n_j/2}} \frac{1}{V_j(R_j)} \mu_i(F^{(R_i)}_i)^{1 - \frac1 p} \\
%&\le \frac{CR_j^{n_j}}{t^{n_j/2}} V_j(R_j)^{- \frac1 p}.
%\end{align*}
%This together with $$\|t\L e^{-t\L}\|_{L^p(F^{(R_i)}_i)\to L^p(E_j\setminus F^{(R_j)}_j)}\le C,$$
%and the Riesz-Thorin interpolation Theorem, where $\frac{2/p}{p} + \frac{1-2/p}{\infty} = \frac{1}{2}$,  gives that
%\begin{eqnarray*}
%\|t\L e^{-t\L}\|_{L^p(F^{(R_i)}_i)\to L^2(E_j\setminus F^{(R_j)}_j)}
%%\le \| t\L e^{-t\L} \|^{\frac{2-p}{2}}_{L^p(F^{(R_i)}_i)\to L^\infty(E_j\setminus F^{(R_j)}_j)}
%\le C \left(\frac{R_j^{n_j}}{t^{n_j/2}}\right)^{1-\frac p2}
%{V}_j(R_j)^{\frac 12-\frac 1p}\le C\mu(F^{(R_i)}_i)^{\frac 12-\frac 1p}.
%\end{eqnarray*}

\textbf{Case 6: \boldmath  $n_i=n_j=2$.}  %For any $x\in F^{(R_i)}_i$ and $y\in E_j\setminus F^{(R_j)}_j$,
%by Lemma \ref{lem:phi-r-equal}, we have
%\[
%    \phi_{i}(|x|) \sim \log(2+|x|) \quad \mbox{and} \quad
%    \phi_{j}(|y|) \sim \log(2+|y|).
%\]
Note that in this case $R_i \sim R_j$ and hence $|y| \lesssim \sqrt t $.
%Using Lemmas \ref{lem:phi-r-equal} and \ref{V0-general-equal}, it follows from Proposition \ref{time-heat-parabolic-2} (ii) that
It follows from Proposition \ref{time-heat-parabolic-2} (iv), Lemma \ref{lem:phi-r-equal} and Lemma \ref{V0-general-equal} that
%for any  $x \in F^{(R_i)}_i$, $y \in E_j \setminus F^{(R_j)}_j$ and $t \ge R_j^2$,
\begin{eqnarray*}
|t\partial_t h_t(x,y)|
 %\sim \phi_{i}(|x|)\phi_{j}(|y|)|t\partial_t\tilde h_t(x,y)|
 \lesssim_\epsilon \frac{\log(2+|x|)\log(2+|y|)}{t\log^2(2+\sqrt t)} e^{-c\frac{d(x,y)^2}{t}}
  \lesssim_\epsilon \frac{1}{t}
   \lesssim_\epsilon \frac{1}{R_j^2} \sim_\epsilon V_j(R_j)^{-1},
\end{eqnarray*}
where the second inequality is due to the fact $|y| \lesssim \sqrt t$ and \eqref{eq:y-es}.
%\[
%    |y| \leq |x| + d(x,y)
%    \lesssim
%    \begin{cases}
%       \sqrt t, & \mbox{if} \, d(x,y) \leq \sqrt t, \\
%       d(x,y), & \mbox{if} \, d(x,y) \ge \sqrt t.
%    \end{cases}
%\]
%Above, in the last inequality, we used the fact that if $d(x,y) \leq \sqrt t$, then $|y| \leq |x| + d(x,y) \lesssim \sqrt t$, and hence
%\[
%    \frac{\log(2+|y|)}{\log(2+ \sqrt t)} \lesssim 1;
%\]
%if $d(x,y) \geq \sqrt t$, then $|y| \leq |x| + d(x,y) \lesssim d(x,y)$, and hence
%\[
%     \frac{\log(2+|y|)}{\log(2+ \sqrt t)} e^{-\frac{d(x,y)^2}{ct}}
%     \lesssim 1.
%\]
Hence, by suitably choosing $\epsilon$, one has
$$\|t\L e^{-t\L}\|_{L^1(F^{(R_i)}_i)\to L^\infty(E_j\setminus F^{(R_j)}_j)} \lesssim  V_j(R_j)^{-1} \sim \mu(F^{(R_i)}_i)^{-1},$$
which further implies that for all $1 \leq i, j \leq \ell$, $n_i =n_j=2$ and each $1 < p <2$,
$$\|t\L e^{-t\L}\|_{L^p(F^{(R_i)}_i)\to L^2(E_j\setminus F^{(R_j)}_j)}
%\le \sqrt{\frac{CR_j^{2}}{t}}\mu(F^{(R_i)}_i)^{\frac 12-\frac 1p}
\lesssim_p \mu(F^{(R_i)}_i)^{\frac 12-\frac 1p},
\quad \forall t \ge R_j^2.$$

\textbf{Case 7: \boldmath $n_i=2$, $n_j>2$.}
%$$\phi_{i}(|x|) \sim \log(2+|x|), \quad \phi_{j}(|y|) \sim 1 \quad \mbox{and} \quad \widetilde{V}_j(R_j) \sim V_j(R_j).$$
For each $1 < p <2$, let us  choose an $0 < \epsilon < 1/8$ small enough such that $$\frac{3}{2}\epsilon (N_j-2) \le \frac{1}{p'}(n_j-2),$$
and hence
\begin{equation*}% \label{eq:cf}
  \frac{1}{V_j(R_j)} \left(\frac{{V}_j(R_j)}{R_j^2}\right)^{\frac{3}{2}\epsilon}
  \lesssim \frac{1}{V_j(R_j)} R_j^{ \frac{3}{2}\epsilon(N_j-2)}
  \lesssim \frac{1}{V_j(R_j)} V_j(R_j)^{\frac1{p'}} V_i(R_j)^{-\frac1{p'}}
  \lesssim V_j(R_j)^{-\frac1p} V_i(R_j)^{-\frac1{p'}}.
\end{equation*}
Using this %and the estimates of $\phi_i$,
%Lemma \ref{lem:phi-r-equal},
 we deduce from Proposition \ref{time-heat-parabolic-2} (v),
 Remark \ref{rem:thr} (i), Lemma \ref{lem:phi-r-equal} and Lemma \ref{V0-general-equal} that %for any  $x \in F^{(R_i)}_i$, $y \in E_j \setminus F^{(R_j)}_j$ and $t \ge R_j^2$,
 \begin{eqnarray*}
 |t\partial_t h_t(x,y)|
% \sim \phi_{i}(|x|) \phi_{j}(|y|) |t\partial_t\tilde h_t(x,y)| \\
 %&& \lesssim \frac{R_j^2}{t} \frac{\log(2+|y|) \log(2+R_j)}{\log(2+\sqrt t) \log^{3\epsilon}(2+R_j)} \frac{1}{{V}_j(R_j)}
%    \left(\frac{{V}_j(R_j)}{R_j^2}\right)^{\frac32\epsilon} \\
% && \ \times  \left(\frac{1}{\log(2+R_j)} + \frac{1}{\log(2+|y|)}\right)^{1-3\epsilon}
% && \lesssim \frac{R_j^2}{t} \frac{\log(2+R_j)}{\log(2+\sqrt t)}
%\frac{\log^{3\epsilon-1}(2+R_j) + \log^{3\epsilon-1}(2+|y|)}{\log^{3\epsilon}(2+R_j)} \\
%&& \ \times
%\frac{1}{ \widetilde V_j(R_j)} \left(\frac{\widetilde V_j(R_j)}{R_j^2}\right)^{\frac 32\epsilon}
%e^{-c\frac{|x|^2+|y|^2}{t}} \\
%&& \lesssim \frac{R_j^2}{t} \frac{\log(2+|x|) \log(2+R_j)}{\log(2+\sqrt t)}
%\frac{\log^{3\epsilon -1}(2+|x|) + \log^{3\epsilon -1}(2+R_j)}{\log^{3\epsilon}(2+R_j)} R_j^{ \frac{3}{2}\epsilon(N_j-2)} V_j(R_j)^{-1} e^{-c\frac{|x|^2+|y|^2}{t}} \\
&&  \lesssim \frac{R_j^2}{t} \frac{\log(2+|y|) \log(2+R_j)}{\log(2+\sqrt t)}
\frac{\log^{3\epsilon -1}(2+R_j) + \log^{3\epsilon -1}(2+|y|)}{\log^{3\epsilon}(2+R_j)}    \\
&& \ \times \frac{1}{V_j(R_j)} \left(\frac{{V}_j(R_j)}{R_j^2}\right)^{\frac{3}{2}\epsilon} e^{-c\frac{|x|^2+|y|^2}{t}}  \\
&&  \lesssim \frac{R_j^2}{t} \frac{\log(2+|y|) \log(2+R_j)}{\log(2+\sqrt t)}
\frac{\log^{3\epsilon -1}(2+R_j) + \log^{3\epsilon -1}(2+|y|)}{\log^{3\epsilon}(2+R_j)} \\
&& \ \times
V_j(R_j)^{-\frac1p} V_i(R_j)^{-\frac1{p'}} e^{-c\frac{|x|^2+|y|^2}{t}}.
\end{eqnarray*}
%\begin{eqnarray*}
%&& |t\partial_t h_t(x,y)| \\
%% \sim \phi_{i}(|x|) \phi_{j}(|y|) |t\partial_t\tilde h_t(x,y)| \\
%&& \ \lesssim \frac{R_j^2}{t} \frac{\log(2+|x|) \log(2+R_j)}{\log(2+\sqrt t) \log^{3\epsilon}(2+R_j)} \frac{1}{{V}_j(R_j)}
%    \left(\frac{{V}_j(R_j)}{R_j^2}\right)^{\frac32\epsilon}
%   \left(\frac{1}{\log(2+|x|)} + \frac{1}{\log(2+R_j)}\right)^{1-3\epsilon}
%e^{-c\frac{|x|^2+|y|^2}{t}} \\
%%&& \lesssim \frac{R_j^2}{t} \frac{\log(2+|x|) \log(2+R_j)}{\log(2+\sqrt t)}
%%\frac{\log^{3\epsilon -1}(2+|x|) + \log^{3\epsilon -1}(2+R_j)}{\log^{3\epsilon}(2+R_j)} R_j^{ \frac{3}{2}\epsilon(N_j-2)} V_j(R_j)^{-1} e^{-c\frac{|x|^2+|y|^2}{t}} \\
%&& \ \lesssim \frac{R_j^2}{t} \frac{\log(2+|x|) \log(2+R_j)}{\log(2+\sqrt t)}
%\frac{\log^{3\epsilon -1}(2+|x|) + \log^{3\epsilon -1}(2+R_j)}{\log^{3\epsilon}(2+R_j)} V_j(R_j)^{-\frac1p} V_i(R_j)^{-\frac1{p'}} e^{-c\frac{|x|^2+|y|^2}{t}}.
%\end{eqnarray*}
When $|y| \leq R_j$, by the fact $t \ge R_j^2$, one has
\begin{eqnarray*}
|t\partial_t h_t(x,y)|
 \lesssim \log(2+|y|) \frac{\log^{3\epsilon -1}(2+|y|)}{\log^{3\epsilon}(2+R_j)}  V_j(R_j)^{-\frac1p} V_i(R_j)^{-\frac1{p'}}
%&&\le\frac{C\log^{3\epsilon}(2+|x|)}{\widetilde{V}_0(\sqrt t)}\frac{\widetilde{V}_0(R_j)}{\widetilde{V}_j(R_j)}
%\left(\frac{\widetilde{V}_j(R_j)}{\widetilde{V}_0(R_j)}\right)^{3\epsilon/2}
%e^{-\frac{
%|x|^2+|y|^2}{ct}}\\
%&&\le\frac{C\log^{3\epsilon}(2+|x|)}{t \log^2(2 +\sqrt t)}\frac{R_j^2 \log^2(2+R_j)}{\widetilde{V}_j(R_j)}
%\left(\frac{\widetilde{V}_j(R_j)}{R_j^2 \log^2(2+R_j)}\right)^{3\epsilon/2}
%e^{-\frac{|x|^2+|y|^2}{ct}}\\
%&& \lesssim  R_j^{ \frac{3}{2}\epsilon(N_j-2)} V_j(R_j)^{-1}
\lesssim V_j(R_j)^{-\frac1p} V_i(R_j)^{-\frac1{p'}}.
\end{eqnarray*}
When $|y| > R_j$, one finds
\begin{eqnarray*}
|t\partial_t  h_t(x,y)|
&&\lesssim  \frac{R_j^2}{t} \frac{\log(2+|y|) \log(2+R_j) }{ \log(2+\sqrt t) }
   \frac{\log^{3\epsilon -1}(2+R_j)}{\log^{3\epsilon}(2 + R_j)}
   V_j(R_j)^{-\frac1p} V_i(R_j)^{-\frac1{p'}}  e^{-c \frac{|y|^2}{t}}\\
%&&\le\frac{C}{t} \frac{R_j^2}{{V}_j(R_j)} \frac{\log^2(2+R_j)}{\log^2(2 + \sqrt t)} \frac{\log(2+|x|)}{\log(2+R_j)}
%\left(\frac{{V}_j(R_j)}{R_j^2}\right)^{3\epsilon/2}
%e^{-\frac{(1-3\epsilon)
%|x|^2+|y|^2}{ct}}\\
%&&\lesssim \frac{R_j^2}{t} \frac{\log(2+|x|)}{\log(2+\sqrt t)} V_j(R_j)^{-1/p} V_i(R_j)^{-1/p'} e^{-\frac{|x|^2+|y|^2}{ct}} \\
&&\lesssim |y|^{-2} R_j^2 V_j(R_j)^{-\frac1p} V_i(R_j)^{-\frac1{p'}},
\end{eqnarray*}
where we used %the
%trivial inequality
%$e^{-c\frac{|x|^2}{t}} \lesssim \frac{t}{|x|^2}$ and
the fact that
%$\frac{\log(2+|x|)}{\log(2+\sqrt t)} e^{-c\frac{|x|^2}{t}} \lesssim 1$.
$ \log(2+|y|)  e^{-c\frac{|y|^2}{t}} \lesssim \log(2+\sqrt t) $ .
Combining the above two estimates and applying the approach similar to \eqref{eq:s2b2}, one concludes that
\begin{eqnarray*}%\label{eq:tlt6}
\|t\L e^{-t\L}\|_{L^p(F^{(R_i)}_i)\to L^\infty(E_j\setminus F^{(R_j)}_j)}
&&\lesssim V_j(R_j)^{-\frac1p} V_i(R_j)^{-\frac1{p'}}
         \left(\int_{F^{(R_i)}_i \cap \{y \in E_i: \, |y| \leq R_j \}} \, d\mu(y)\right)^{\frac1{p'}} \nonumber\\
&& \ + R_j^2 V_j(R_j)^{-\frac1p} V_i(R_j)^{-\frac1{p'}}
\left(\int_{F^{(R_i)}_i \cap \{y \in E_i: \, |y|>R_j\}} |y|^{-2p'} \,d\mu(y)\right)^{\frac1{p'}} \nonumber\\
&& %\lesssim V_j(R_j)^{-\frac1p} %V_i(R_j)^{-\frac1{p'}} V_i(R_j)^{\frac1{p'}}
%  + R_j^2 R_j^{-2}  V_j(R_j)^{-\frac1p} %V_i(R_j)^{-\frac1{p'}} V_i(R_j)^{\frac1{p'}} \nonumber\\
 \lesssim V_j(R_j)^{-\frac1p}
 \sim \mu(F^{(R_i)}_i)^{-\frac1p},
\end{eqnarray*}
which further implies that for all $1 \leq i, j \leq \ell$, $n_i =2$, $n_j >2$ and each $1 < p <2$,
\begin{eqnarray*}
\|t\L e^{-t\L}\|_{L^p(F^{(R_i)}_i)\to L^2(E_j\setminus F^{(R_j)}_j)}
\lesssim_p \mu(F^{(R_i)}_i)^{\frac 12-\frac 1p},
\quad \forall t \ge R_j^2.
\end{eqnarray*}

{\bf Case 8: \boldmath $n_i > 2$, $ n_j=2$.} %It holds $R_i\le cR_j$ and $|x| \lesssim R_i \lesssim R_j$.
%It follows from Lemma \ref{lem:tilde-V} (ii) that
%\[
%    |x|^2 / \widetilde{V}_i(|x|) \lesssim 1.
%\]
We deduce from Proposition \ref{time-heat-parabolic-2} (vi) and Lemma \ref{V0-general-equal} that
\begin{eqnarray*}
 |t\partial_t\tilde h_t(x,y)|
&&  \lesssim_\epsilon
 \frac{\log^{3\epsilon}(2+R_j) \log^{1-3\epsilon}(2+R_j)}{t\log^2(2+\sqrt t)}
 \left(\frac{|y|^2}{\widetilde V_i(|y|)} + \frac{R_j^2 }{\widetilde V_i(R_j)}\right)^{\frac 32 \epsilon} \\
&& \ \times \left( \frac{|y|^2}{\widetilde V_i(|y|)} + \frac{R_j^2 }{\widetilde{V}_i(R_j)} \right)^{1-3\epsilon}e^{-c\frac{|x|^2+|y|^2}{t}}  \\
&&  \lesssim_\epsilon
 \frac{1}{t\log(2+\sqrt t)}
 \left(\frac{|y|^2}{\widetilde V_i(|y|)} + \frac{R_j^2 }{\widetilde V_i(R_j)}\right)^{1-\frac 32 \epsilon} e^{-c\frac{|x|^2+|y|^2}{t}}.
\end{eqnarray*}
Combining this with
Lemma \ref{lem:phi-r-equal}, we have
%for any  $x \in F^{(R_i)}_i$, $y \in E_j \setminus F^{(R_j)}_j$ and $t \ge R_j^2$,
\begin{eqnarray*}
|t\partial_th_t(x,y)|
%\sim \log(2+|y|)|t\partial_t\tilde h_t(x,y)|
%&&\le\frac{C \log(2+|y|)}{t\log^2(2+\sqrt t)}\left(\frac{\widetilde{V}_0(|x|)}{{V}_i(|x|)}\right)^{\frac 32 \epsilon}
%\left(\frac{\log(2+R_j)|x|^2}{ V_i(|x|)}\right)^{1-3\epsilon}
%e^{-\frac{(1-3\epsilon)|x|^2+|y|^2}{ct}}\\
%&&\le\frac{C \log(2+|y|)}{t\log^2(2+\sqrt t)}\left(\frac{|x|^2 \log^2(2+|x|)}{{V}_i(|x|)}\right)^{\frac 32 \epsilon}
%\left(\frac{\log(2+R_j)|x|^2}{ V_i(|x|)}\right)^{1-3\epsilon} \\
%%e^{-\frac{|x|^2+|y|^2}{ct}}\\
\lesssim_\epsilon\frac{1}{t} \frac{\log(2+|x|)}{\log(2+ \sqrt t)}
\left(\frac{|y|^2}{\widetilde V_i(|y|)} + \frac{R_j^2 }{\widetilde V_i(R_j)}\right)^{1- \frac 32 \epsilon} e^{-c\frac{|x|^2}{t}}
\lesssim_\epsilon \frac{1}{t}
%\left(\frac{|x|^2}{ V_i(|x|)}\right)^{1-\frac32\epsilon}
  \lesssim_\epsilon\frac{1}{R_j^2}
  \sim_\epsilon V_j(R_j)^{-1},
\end{eqnarray*}
where in the second inequality we used the fact
$\log(2+|x|) e^{-c\frac{|x|^2}{t}} \lesssim \log(2+\sqrt t)$ and Lemma \ref{lem:tilde-V} (ii).
Hence, by suitably choosing $\epsilon$, one has
\[
    \|t\L e^{-t\L}\|_{L^1(F^{(R_i)}_i)\to L^\infty(E_j\setminus F^{(R_j)}_j)}
    \lesssim V_j(R_j)^{-1}
    \sim \mu(F^{(R_i)}_i)^{-1},
\]
which further implies that for all $1 \leq i, j \leq \ell$, $n_i >2$, $n_j=2$ and each $1 < p <2$,
\[
      \|t\L e^{-t\L}\|_{L^2(F^{(R_i)}_i)\to L^p(E_j\setminus F^{(R_j)}_j)}
      \lesssim_p \mu(F^{(R_i)}_i)^{\frac12 - \frac1p},
\quad \forall t \ge R_j^2.
\]
%\begin{align*}
%\|t\L e^{-t\L}\|_{L^p(F^{(R_i)}_i)\to L^\infty(E_j\setminus F^{(R_j)}_j)}
%&\le \frac{C R_j^2}{t} \frac{1}{R_j^2} \left(\int_{F^{(R_i)}_i} d\mu\right)^{1 - \frac1 p} \\
%&\le \frac{C R_j^2}{t} \frac{1}{V_j(R_j)} \mu_i(F^{(R_i)}_i)^{1 - \frac1 p} \\
%&\le \frac{C R_j^2}{t} V_j(R_j)^{- \frac1 p},
%\end{align*}
%which together with $$\|t\L e^{-t\L}\|_{L^p(F^{(R_i)}_i)\to L^p(E_j\setminus F^{(R_j)}_j)}\le C,$$
%and the Riesz-Thorin interpolation Theorem, where $\frac{2/p}{p} + \frac{1-2/p}{\infty} = \frac{1}{2}$  gives that
%\begin{eqnarray*}
%\|t\L e^{-t\L}\|_{L^p(F^{(R_i)}_i)\to L^2(E_j\setminus F^{(R_j)}_j)}
%%\le \| t\L e^{-t\L} \|^{\frac{2-p}{2}}_{L^p(F^{(R_i)}_i)\to L^\infty(E_j\setminus F^{(R_j)}_j)}
%\le C\left(\frac{R_j^{2}}{t}\right)^{1-\frac p2}
%{V}_j(R_j)^{\frac 12-\frac 1p}\le C\mu(F^{(R_i)}_i)^{\frac 12-\frac 1p}.
%\end{eqnarray*}
%\begin{eqnarray*}
%\|t\L e^{-t\L}\|_{L^p(F^{(R_i)}_i)\to L^2(E_j\setminus F^{(R_j)}_j)}
%%\le C \| t\L e^{-t\L} \|^{\frac{2-p}{2}}_{L^p(F^{(R_i)}_i)\to L^\infty(E_j\setminus F^{(R_j)}_j)}
%&&\le C\left(\frac{R_j^{2}}{t}\right)^{1-\frac p2}
%{V}_j(R_j)^{\frac 12-\frac 1p}\le C\mu(F^{(R_i)}_i)^{\frac 12-\frac 1p}.
%\end{eqnarray*}

\textbf{Case 9: \boldmath $n_i<2$, $n_j=2$.} % It holds $R_i\ge cR_j$.
%By Lemmas \ref{lem:phi-r-equal} and \ref{lem:tilde-V}, it holds
% $$\phi_{i}(|x|) \sim |x|^{2-n_i}, \quad \phi_{j}(|y|) \sim \log(2+|y|), \quad  \widetilde{V}_i(x) \sim |x|^{4-n_i} \quad \mbox{and} \quad \widetilde{V}_i(R_j) \sim R_j^{4-n_i}.$$
For each $1 < p <2$, let us fix an $0 < \epsilon < 1/8$ small enough such that
$$\frac{3}{2} \epsilon (2-n_i) < \frac{1}{p'}  (2-n_i).$$
Hence, it holds
\begin{equation} \label{ci1}
    R_j^{\frac32 \epsilon(2-n_i)}
    \leq R_j^{\frac{1}{p'}(2-n_i)}
    = R_j^{2- \frac2p} R_j^{-\frac{n_i}{p'}}
    \sim R_j^2 V_j(R_j)^{-\frac1p} V_i(R_j)^{-\frac1{p'}}.
\end{equation}
This further implies that
\begin{equation} \label{ci2}
    R_j^{(1-\frac 32\epsilon)(n_i-2)}
    \leq R_j^{n_i-2} R_j^{\frac 32\epsilon(2-n_i)}
    \lesssim R_j^{n_i} V_j(R_j)^{-\frac1p} V_i(R_j)^{-\frac1{p'}}.
\end{equation}
%Using Lemmas \ref{lem:phi-r-equal} and \ref{lem:tilde-V}, it follows from Proposition \ref{time-heat-parabolic-2} (ii) that
Using Proposition \ref{time-heat-parabolic-2} (vi) and Lemma \ref{V0-general-equal}, we get that
%for any  $x \in F^{(R_i)}_i$, $y \in E_j \setminus F^{(R_j)}_j$ and $t \ge R_j^2$,
\begin{eqnarray*}
 |t\partial_t\tilde h_t(x,y)|
  \lesssim_\epsilon
 \frac{1}{t\log(2+\sqrt t)}
 \left(\frac{|y|^2}{\widetilde V_i(|y|)} + \frac{R_j^2 }{\widetilde V_i(R_j)}\right)^{1-\frac 32 \epsilon} e^{-c\frac{|x|^2+|y|^2}{t}},
\end{eqnarray*}
which together with  Lemma \ref{lem:phi-r-equal} and Lemma \ref{lem:tilde-V} yields that
\begin{eqnarray*}
|t\partial_th_t(x,y)|
%&&\sim \phi_{i}(|x|) \phi_{j}(|y|) |t\partial_t\tilde h_t(x,y)|\nonumber\\
%&&\lesssim \frac{\phi_{i}(|x|)\phi_{j}(|y|)}{t\log(2+\sqrt t)} \left(\frac{|x|^2}{\widetilde{V}_i(|x|)} + \frac{R_j^2}{\widetilde{V}_i(R_j)}\right)^{1-\frac32 \epsilon} e^{-\frac{|x|^2+|y|^2}{ct}}\nonumber \\
&&\lesssim \frac{\log(2+|x|) |y|^{2-n_i}}{t\log(2+\sqrt t)} \left(\frac{|y|^2}{|y|^{4-n_i}} + \frac{R_j^2}{R_j^{4-n_i}}\right)^{1-\frac32 \epsilon}
e^{-c\frac{|x|^2+|y|^2}{t}}\nonumber\\
&&\lesssim \frac{|y|^{2-n_i}}{t} \left(|y|^{n_i-2} +R_j^{n_i-2}\right)^{1-\frac32 \epsilon}
e^{-c\frac{|x|^2+|y|^2}{t}},
\end{eqnarray*}
where %in the last inequality
we used the fact that %when $|y| \leq \sqrt t$, we have $\frac{\log(2+|y|)}{ \log(2+\sqrt t)} \leq 1$;
%hen $|y| \geq \sqrt t$, for any $m >0$, we have
$\log(2+|x|) e^{-c\frac{|x|^2}{t}} \lesssim  \log(2+\sqrt t).$
When $|y|\leq R_j$, via \eqref{ci1} and the fact $t \ge R_j^2$, one has
\[%\label{eq:tL-1}
|t\partial_th_t(x,y)|
 \lesssim \frac{1}{t} |y|^{2-n_i}  |y|^{(\frac32 \epsilon-1 )(2-n_i)}%|y|^{(1-\frac32 \epsilon )(n_i-2)}
\lesssim \frac{1}{t}  |y|^{\frac 32 \epsilon(2-n_i)}
\lesssim \frac{1}{t} R_j^{\frac 32 \epsilon(2-n_i)}
\lesssim V_j(R_j)^{-\frac1p} V_i(R_j)^{-\frac1{p'}}.
\]
%by the fact $t \ge R_j^2$.
When $|y| >R_j$, via \eqref{ci2}, it holds
\[
|t\partial_th_t(x,y)|
 \lesssim \frac{|y|^{2-n_i}}{t}  R_j^{(1-\frac 32\epsilon)(n_i-2)} e^{-c\frac{|y|^2}{t}}
   \lesssim  |y|^{-n_i} R_j^{n_i} V_j(R_j)^{-\frac1p} V_i(R_j)^{-\frac1{p'}}. %e^{-\frac{|x|^2+|y|^2}{ct}}.
\]
Combining the above two estimates and applying the approach similar to \eqref{eq:s2b2}, it holds
\begin{eqnarray*} %\label{eq:ix-1}
 \|t\L e^{-t\L}\|_{L^p(F^{(R_i)}_i)\to L^\infty(E_j\setminus F^{(R_j)}_j)}
&&  \lesssim V_j(R_j)^{-\frac1p} V_i(R_j)^{-\frac1{p'}}  \lf(\int_{F^{(R_i)}_i \cap \{y\in E_i: \, |y| \le R_j\}} \, d\mu(y)\r)^{\frac1{p'}}  \notag \\
&&  \  + R_j^{n_i} V_j(R_j)^{-\frac1p} V_i(R_j)^{-\frac1{p'}} \lf(\int_{F^{(R_i)}_i \cap \{y\in E_i: \, |y|> R_j\}} |y|^{-n_ip'} \, d\mu(y)\r)^{\frac1{p'}}\nonumber\\
&&  \lesssim  V_j(R_j)^{-\frac1p}
    \sim \mu(F^{(R_i)}_i)^{-\frac1p},
%&&\lesssim \|f\|_{L^p(F^{(R_i)}_i)} V_j(R_j)^{-1/p}
%+ \|f\|_{L^p(F^{(R_i)}_i)} R_j^{n_i/2} V_j(R_j)^{-1/p} V_i(R_i)^{-1/p'} R_j^{-\frac{n_i}2} V_i(R_i)^{-1/p'} \nonumber\\
%&&\lesssim \frac{1}{V_j(R_j)^{1/p}} \|f\|_{L^p(F^{(R_i)}_i)}.
\end{eqnarray*}
%where in the second inequality we used the fact that %$p'/2 >1$,
%%$\frac12 - \frac{1}{p'} >0$
%\[
%     \lf(\int_{\{x\in F^{(R_i)}_i:\,|x|\le R_j\}} \,d\mu(x)\r)^{1/p'}  \lesssim V_i(R_i)^{1/p'}
%\]
%and
%\begin{align*}
%    \left(\int_{\{x\in F^{(R_i)}_i:\,|x|\ge R_j\}} |x|^{- n_ip'/2}\,d\mu(x)\right)^{1/p'}
%    & \lesssim \sum_{k=0}^{\infty}\left(  \int_{\{x\in E_i:\,2^kR_j \leq |x| \leq 2^{k+1} R_j\}} |x|^{-n_ip'/2}\,d\mu(x)\right)^{1/p'} \\
%    & \lesssim \sum_{k=0}^{\infty} 2^{-kn_i/2} R_j^{-n_i/2} V_i(2^{k}R_j)^{1/p'}
%    %& \lesssim \sum_{k=0}^{\infty} 2^{k(-\frac{n_i}{2})} 2^{k \frac{n_i}{p'}} R_j^{-\frac{n_i}{2}} R_j^{\frac{n_i}{p'}} \\
%      \lesssim  R_j^{-n_i/2} V_i(R_j)^{1/p'}.
%\end{align*}
%Combing \eqref{eq:ix-1}
% with $\|t\L e^{-t\L}(f\chi_{F^{(R_i)}_i})\|_{L^p(E_j\setminus F^{(R_j)}_j)}\lesssim \|f\|_{L^p(F^{(R_i)}_i)}$, we can deduce that
%\begin{eqnarray*}
%\|t\L e^{-t\L}\|_{L^p(F^{(R_i)}_i)\to L^2(E_j\setminus F^{(R_j)}_j)}
%\lesssim \| t\L e^{-t\L} \|^{\frac{2-p}{2}}_{L^p(F^{(R_i)}_i)\to L^\infty(E_j\setminus F^{(R_j)}_j)}
%&&\lesssim %\left(\frac{R_j^{\frac{n_i}{2}}}{t^{n_i/4}}\right)^{1-\frac p2}
%{V}_j(R_j)^{\frac 12-\frac 1p} \lesssim \mu(F^{(R_i)}_i)^{\frac 12-\frac 1p}.
%\end{eqnarray*}
which further implies that for all $1 \leq i, j \leq \ell$, $n_i <2$, $n_j=2$ and each $1 < p <2$,
\[
      \|t\L e^{-t\L}\|_{L^2(F^{(R_i)}_i)\to L^p(E_j\setminus F^{(R_j)}_j)}
      \lesssim_p \mu(F^{(R_i)}_i)^{\frac12 - \frac1p},
\quad \forall t \ge R_j^2.
\]

 Collecting the estimates above, we finish the proof.
\end{proof}

\section{Proof of the main result} \label{sec:main-proof} \hskip\parindent
    In this section, we will show our main result, i.e.,
the Riesz transform $\nabla \L^{-1/2}$ is bounded on $L^q(M)$ for each
$1 < q<2$,
%Theorem \ref{main-result-parabolic}.
    % It is worth to note that
%     the $L^p$-Davies-Gaffney estimate,
%    the mapping properties of the heat semigroup together with its time derivative,
%    and the $L^1$ estimate of the space derivative of the heat kernel
%    have been established.
%     %in Subsection \ref{sec:DG-es} and Section \ref{sec:map-proper},
%    and hence
    %The argument in this section is similar to that in \cite[Section 4]{jiang-li-lin-2022}.
%    We include it for the sake of  clarity. %/completeness.
%whose proof is inspired by \cite{cd99,jiang-li-lin-2022}.
The methods of \cite{cd99,jiang-li-lin-2022} play important roles in our proof.
Let us outline the main approach for the convenience of the reader.

%    ��һ�� ֱ��˵(step 1)���������ݣ�
%    �����ڵ�һ�����棬 ����Щ���зֽ⡣
%    Ȼ����Subsection a, b, c���ֱ������ǽ��й��ơ�
%    ������ in subsection d�� ���ǽ�����֮ǰ���õĽ����� �õ���T����L^p �н��ԡ�
%
%    3.4�й����Ⱥ˵Ŀռ䵼���Ĺ��ƣ� ���ǽ�����part away from the center.
%    3.3��mapping property, etL, tLetL���н��ԣ����ǽ�������part around the center

    \textbf{Step 1: Reduction.}

    In Subsection \ref{sec:main-proof-red},
 %   by Lemma \ref{local-part} and
%    the Marcinkiewicz interpolation theorem, we convert the proof of the main result,  into the  proof of the weakly $L^p$-boundedness of $T$ for  $p$ close to $1$,
 %     where
 %   $$T:=\frac{1}{\sqrt \pi} \int_1^\infty\nabla e^{-t\L}\frac{\,dt}{\sqrt t}.$$
    we will show that the small time part of the Riesz transform
    \[
   S:= \frac{1}{\sqrt \pi} \int_0^1 \nabla e^{-s\L}\frac{\,ds}{\sqrt s},
    \]
is $L^p$-bounded for $1 <p <2$ and
%    we point out that
    hence our main task is reduced to show that the large time part of the Riesz transform
        $$T:=\frac{1}{\sqrt \pi} \int_1^\infty\nabla e^{-t\L}\frac{\,dt}{\sqrt t}$$
    is weakly $(p,p)$ bounded for some $1< p <2$ but close to $1$, by the natural $L^2$-boundedness of the Riesz transform and the the Marcinkiewicz interpolation theorem.
    %
  %
%
%    we first transform the $L^p$-boundedness of the Riesz transform into the weak $L^p$-boundedness of $T$ by using Lemma \ref{local-part} and
%    the Marcinkiewicz interpolation theorem, where
%    $$T:=\int_1^\infty\nabla e^{-t\L}\frac{\,dt}{\sqrt t}.$$
   %

    % By a simple calculation, we obtain that for any $\lambda > \|f\|_{p}$, it holds
    %A simple calculation leads to that for any $\lambda > \|f\|_{p}$, it holds
    It is simple to obtain that  for any $\lambda > \|f\|_{p}$,
    \[
        \mu(\{x \in M: |Tf| > \lambda\}) \lesssim \frac{\|f\|_p^p}{\lambda^p}.
    \]
    For the case $0 < \lambda \leq \|f\|_p$, we decompose $M$ into three parts as
 %   \[
%    M=E_0 \cup (\cup_{i=1}^\ell F^{(R_i)}_i) \cup \lf[\cup_{j=1}^\ell (E_j \setminus F^{(R_j)}_j)\r].
%    \]
    \[
    M=E_0 \cup (\mathop{\cup}\limits_{i=1}^\ell F^{(R_i)}_i) \cup \lf[\mathop{\cup}\limits_{j=1}^\ell (E_j \setminus F^{(R_j)}_j)\r].
    \]
%    \[
%    M=E_0 \cup (\mathop{\cup}\limits_{i=1}^\ell F^{(R_i)}_i) \cup \Big[\mathop{\cup}\limits_{j=1}^\ell (E_j \setminus F^{(R_j)}_j)\Big].
%    \]
   It is obvious that the first two parts satisfy
    \[
        \mu\lf(E_0\cup(\cup_{i=1}^\ell F^{(R_i)}_i)\r) \lesssim \frac{\|f\|_p^p}{\lambda^p}.
    \]
%    For the third part, we further decompose it into the center part, the part away from the center, the part around the center as
%%    \begin{eqnarray*}
%%    &&\mu\left(\{x\in \cup_{j=1}^\ell E_j\setminus F_j:|Tf|>(2\ell+1)\lambda\}\right)\\
%%    &&\le \mu\left(\{x\in \cup_{j=1}^\ell E_j\setminus F_j:|T(f\chi_{E_0})|>\lambda\}\right)
%%        +\sum_{i=1}^\ell \mu\left(\{x\in \cup_{j=1}^\ell E_j\setminus F_j:|T(f\chi_{E_i\setminus F_i})|>\lambda\}\right)\\
%%    &&\quad +\sum_{i=1}^\ell \mu\left(\{x\in \cup_{j=1}^\ell E_j\setminus F_j:|T(f\chi_{F_i})|>\lambda\}\right).
%%    \end{eqnarray*}
%    \begin{eqnarray*}
%&& \mu\left(\lf\{x\in \bigcup_{j=1}^\ell \, (E_j\setminus F^{(R_j)}_j):|Tf(x)|>(2\ell+1)\lambda\r\}\right) \\
%&& \ \le \mu\left(\lf\{x\in \bigcup_{j=1}^\ell \, (E_j\setminus F^{(R_j)}_j):|T(f\chi_{E_0})(x)|>\lambda\r\}\right) \\
%&& \ \ +\sum_{i=1}^\ell \mu\left(\lf\{x\in \bigcup_{j=1}^\ell \, (E_j\setminus F^{(R_j)}_j):|T(f\chi_{F^{(R_i)}_i})(x)|>\lambda\r\}\right)     \\
%&& \ \ +\sum_{i=1}^\ell \mu\left(\lf\{x\in \bigcup_{j=1}^\ell \, (E_j\setminus F^{(R_j)}_j):|T(f\chi_{E_i\setminus F^{(R_i)}_i})(x)|>\lambda\r\}\right).
%\end{eqnarray*}
For the third part $\cup_{j=1}^\ell (E_j \setminus F^{(R_j)}_j)$, we shall further decompose $f$ into %three parts as
\[
    f = f\chi_{E_0} +\sum_{i=1}^\ell f\chi_{F^{(R_i)}_i}+ \sum_{i=1}^\ell f\chi_{E_i\setminus F^{(R_i)}_i},
\]
and then treat them in the following steps.
 %   We shall estimate them in Subsections \ref{sec:es-on-center}, \ref{sec:es-awar-center} and \ref{sec:es-ard-center}, respectively.
%    Finally, in Subsection \ref{sec:com-prof}, combining the previous results, we obtain the $L^p$ boundedness of the Riesz transform for $1< p <2$.

    \textbf{Step 2: Estimate on the center.}

    In Subsection \ref{sec:es-on-center}, we can easily obtain that the center part  satisfies
    \[
        \mu\left(\lf\{x\in \bigcup_{j=1}^\ell \, (E_j\setminus F^{(R_j)}_j):|T(f\chi_{E_0})(x)|>\lambda\r\}\right) \lesssim \frac{\|f\|_p^p}{\lambda^p}.
    \]%which is obvious.

    \textbf{Step 3: Estimate on the part around the center.}

    In Subsection \ref{sec:es-ard-center},
     in order to deal with the part
    \[
        \mu\left(\lf\{x\in \bigcup_{j=1}^\ell \, (E_j\setminus F^{(R_j)}_j):|T(f\chi_{F^{(R_i)}_i})(x)|>\lambda\r\}\right),
    \]
     we shall use the boundedness of the operators $e^{-t \L}$ and $t \L e^{-t \L}$ from $L^p(F_i^{(R_i)})$ to $L^2(E_j \backslash F_j^{(R_j)})$ established in Section \ref{sec:map-proper}, and the $L^p$-Davies-Gaffney estimates from Subsection \ref{sec:DG-es}.

    \textbf{Step 4: Estimate on the part away from the center.}

    In Subsection \ref{sec:es-awar-center}, we shall handle the part
    \[
        \mu\left(\lf\{x\in \bigcup_{j=1}^\ell \, (E_j\setminus F^{(R_j)}_j):|T(f\chi_{E_i\setminus F^{(R_i)}_i})(x)|>\lambda\r\}\right),
    \]
    by incorporating some ideas from \cite{cd99} and using the Calder\'{o}n-Zygmund decomposition to decompose $f \chi_{E_i \backslash F_i^{(R_i)}}$ as $g_i+\sum_k b_{i k}$. %, where $g_i$ and $b_{i k}$ are supported on $E_i$.
    %We can easily estimate the part $g_i$.
  %  The estimates of the good part $g_i$ is obvious.
   The estimates of the good part
    \[ \mu\lf(\lf\{x\in \bigcup_{j=1}^\ell \, (E_j\setminus F^{(R_j)}_j):|Tg_i(x)|>\lambda\r\}\r)
        \lesssim \frac{\|f\|_p^p}{\lambda^p}\]
    is obvious.
    However, %for the part $\sum_{k}b_{ik}$, we need to further divide it into the on-diagonal part
    the bad part needs to be further divided into the on-diagonal part
    \[
        \mu\lf(\lf\{x\in E_i \setminus F^{(R_i)}_i: \lf|T\lf(\sum_k b_{ik}\r)(x)\r|>\lambda\r\}\r)
    \]
    and the off-diagonal part
    \[
        \mu\lf(\lf\{x\in \bigcup_{1\leq j\neq i \leq \ell} \, (E_j\setminus F^{(R_j)}_j):\lf|T\lf(\sum_k b_{ik}\r)(x)\r|>\lambda\r\}\r).
    \]
  %  \begin{align*}%\label{CZ-decompo-2}
%        & \sum_{i=1}^\ell \mu\lf(\lf\{x\in \bigcup_{j=1}^\ell \, (E_j\setminus F_j):\lf|T\lf(\sum_k b_{ik}\r)(x)\r| >2 \lambda\r\}\r)  \\
%        &  \ \leq  \sum_{i=1}^\ell \mu\lf(\lf\{x\in E_i \setminus F^{(R_i)}_i: \lf|T\lf(\sum_k b_{ik}\r)(x)\r|>\lambda\r\}\r) \\
%        & \ \ +\sum_{i=1}^\ell \mu\lf(\lf\{x\in \bigcup_{1\leq j\neq i \leq \ell} \, (E_j\setminus F^{(R_j)}_j):\lf|T\lf(\sum_k b_{ik}\r)(x)\r|>\lambda\r\}\r).
%    \end{align*}
%         \begin{eqnarray*}%\label{CZ-decompo-1}
%        && \sum_{i=1}^\ell \mu\left(\lf\{x\in \bigcup_{j=1}^\ell \, (E_j\setminus F^{(R_j)}_j):|T(f\chi_{E_i\setminus F^{(R_i)}_i})(x)|>3\lambda\r\}\right) \nonumber \\
%        && \ \leq \sum_{i=1}^\ell \mu\lf(\lf\{x\in \bigcup_{j=1}^\ell \, (E_j\setminus F^{(R_j)}_j):|Tg_i(x)|>\lambda\r\}\r) \\
%        && \ \
%            + \sum_{i=1}^\ell \mu\lf(\lf\{x\in E_i \setminus F^{(R_i)}_i:\lf|T\lf(\sum_k b_{ik}\r)(x)\r|>\lambda\r\}\r) \notag \\
%        && \ \ + \sum_{i=1}^\ell \mu\lf(\lf\{x\in \bigcup_{1\leq j\neq i \leq \ell} \, (E_j\setminus F^{(R_j)}_j):\lf|T\lf(\sum_k b_{ik}\r)(x)\r|>\lambda\r\}\r).
%    \end{eqnarray*}
    %{\color{blue}and} %̫��and �ɸ�Ϊthenʲô�ġ�
%    Then
%    the required
%    estimates will be deduced
    Then, we shall deduce the required  estimates
    by using the $L^p$-Davies-Gaffney estimates of the operators $\nabla e^{-t \L}$ and $\L e^{-t \L}$ established in Subsection \ref{sec:DG-es},
    %the heat kernel estimate in Section \ref{sec:map-proper}
    and the heat kernel estimate in Subsection \ref{sec:heat-ker-away-center-est}.
   %  For the diagonal part, we shall deduce the required estimate by using the $L^p$-boundedness (Davies-Gaffney estimate) of $\nabla e^{-t \L}$ and $\L e^{-t \L}$, and heat kernel estimate in Subsection XXX.
%     The $L^p$-Davies-Gaffney estimate of the operators $\nabla e^{-t \L}$ and $\L e^{-t \L}$ established in Section XXX is sufficient to yield the estimate on the off-diagonal part.

    \textbf{Step 5: Completion of the proof.}

    In Subsection \ref{sec:com-prof}, by combining previously obtained results, we show that $\nabla \L^{-1/2}$ is weakly $(p,p)$ bounded %/$(p, p)$ bounded
    for each $1<p<p_0$.
    An application of the Marcinkiewicz interpolation theorem gives the desired result.
 %
   %{\color{gray} We would like to mention that the $L^p$-boundedness of $\nabla e^{-t\L}$ (Proposition \ref{mapping-gradient-heat}) and the $L^p$-Davies-Gaffney estimate (Corollary \ref{davies-operators})
%    will be used in Subsections \ref{sec:es-awar-center} and \ref{sec:es-ard-center};
%    The estimate of the heat kernel's spatial gradient away from the center (Proposition \ref{est-integral-diagonal-general}) will be used in Subsection \ref{sec:es-awar-center};
%    The mapping property of $e^{-t\L}$ and $t\L e^{-t\L}$ (Propositions \ref{map-heat-parabolic} and \ref{mapping-time-heat-parabolic-1}) will be used in Subsection \ref{sec:es-ard-center}.}
%
   % \textbf{Step 2: center} ��subsection 2 �У� ���ǱȽ����׵صõ������Ĳ��ֵع���
%
%    \textbf{Step 3: away center} ��subsection 3�У� �������Ƚ���Calderon-Zygmund�ֽ⽫$fxx$�ֽ�Ϊgood part $g$��bad par $\sum b$. g�Ĺ����ǱȽ����׵õ��ġ� b�Ĺ�����Ҫ������ʽ�ٴη�Ϊ
%    off the diagonal part��on the diagonal part.
%    off the diagonal part: �Ⱥ��Ͻ�(ME), prop 2.1 (\nabla e-tL��Lp�н���)�� D-G estimate
%    on the diagonal: ����off�Ľ����� ʵ���Ͼ���(ME); proposition 3.10(�Ⱥ˿ռ䵼��Զ�����ĵĹ���)�� prop 2.1 (\nabla e-tL��Lp�н���)��D-G estimate
%
%    \textbf{Step 3: around center} Proposition 3.8, 3.9 (Mapping property of e-tL, tLe-tL), prop 2.1 (\nabla e-tL��Lp�н���)��D-G estimate
%
\subsection{Reduction} \label{sec:main-proof-red} \hskip\parindent
Recall that the Riesz transform
$$
\nabla \L^{-1 / 2}=\frac{1}{\sqrt{\pi}} \int_{0}^{\infty} \nabla e^{-s \L} \frac{\, ds}{\sqrt{s}}
$$
is naturally bounded on $L^2(M)$.
Hence for each $1 < q <2$, by the Marcinkiewicz interpolation theorem,
if we show that there exists some $1 < p < q$ such that $\nabla \L^{-1 / 2}$ is
weakly $(p,p)$ bounded, then the Riesz transform $\nabla \L^{-1 / 2}$
is bounded on $L^q(M)$.
%
%
%
%xxxxxx
To this end, we shall decompose the Riesz transform into the the small time part and the large time part.
%We will prove that the small time part is $L^p$-bounded blow.
The $L^p$-boundedness of the small time part can be obtained with the help of \cite[Theorem 1.2]{cd99}; see Proposition \ref{local-part} below.
Then the main difficulty is to %show that  the big time part is weakly $(p,p)$ bounded,
consider the large time part,
which costs us a huge effort in the rest of Section \ref{sec:main-proof}.
%
%
%The Marcinkiewicz interpolation theorem implies that the $L^p$-boundedness of $\nabla \L^{-1 / 2}$ for any $1 <p <2$
%%the main result
%is obtained by
%the weakly $L^{q}$-boundedness of $\nabla \L^{-1 / 2}$
%%for some $0 < q < p$.
%for $0 < q < p$.

%\subsubsection{Small time part of the Riesz transform} \hskip\parindent
%In this subsection, we shall prove that
%the small time part of the Riesz transform, i.e.,
%\[
%    \frac{1}{\sqrt \pi} \int_0^1 \nabla e^{-s\L}\frac{\,ds}{\sqrt s},
%\]
%is bounded on $L^p(M)$ for $1<p<2$.
%This is ensured by the fact that
% the local Riesz transform
%$$\nabla(1+\L)^{-1/2}=\frac{1}{\sqrt{\pi}} \int_0^\infty\nabla e^{-s-s\L}\frac{\,ds}{\sqrt s}$$
%is $L^p$-bounded under the assumptions of Theorem \ref{main-result-parabolic}.
%
%
%In this subsection,
\subsubsection{Local estimates} \hskip\parindent
To begin with,
we shall prove that
the small time part of the Riesz transform, i.e.,
\[
   S= \frac{1}{\sqrt \pi} \int_0^1 \nabla e^{-s\L}\frac{\,ds}{\sqrt s},
\]
is bounded on $L^p(M)$ for each $1<p<2$.
This result is actually deduced from %/ensured by
the fact that the local Riesz transform
$$\nabla(1+\L)^{-1/2}=\frac{1}{\sqrt{\pi}} \int_0^\infty\nabla e^{-s-s\L}\frac{\,ds}{\sqrt s}$$
is $L^p$-bounded under the assumptions of Theorem \ref{main-result-parabolic}.
%In fact,
%%suppose that each $M_i$ satisfies $(D)$ and {\color{blue}$(UE)$},
%%\[
%%xxxxxxxxxxxxxxx
%%\]
%from Lemma \ref{lem:volume-growth} (iii)
%and the small time heat kernel Gaussian upper bound (cf. \cite[Corollary 4.7]{gri-sal09})),
%we can use \cite[Theorem 1.2]{cd99} to deduce that the local Riesz transform $\nabla(1+\L)^{-1/2}$ is bounded on $L^p(M)$ for $1<p\le 2$.
More precisely, we have
\begin{prop}\label{local-part}
%Assume that $M=M_1\#\cdots \# M_\ell$, and each $M_i$ is a doubling manifold satisfying {\color{blue}the upper Gaussian bound}.
Under the assumptions of Theorem \ref{main-result-parabolic}, the small time part of the Riesz transform, i.e.,
the operator
$$S=\frac{1}{\sqrt \pi} \int_0^1 \nabla e^{-s\L}\frac{\,ds}{\sqrt s},$$
is bounded on $L^p(M)$ for each $1 < p \leq 2$.
\end{prop}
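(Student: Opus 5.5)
The plan is to deduce the boundedness of $S$ from that of the local Riesz transform $\nabla(1+\L)^{-1/2}$, which in turn comes from \cite[Theorem 1.2]{cd99}. That result applies to a manifold with local doubling and a small time Gaussian upper bound. Both hypotheses are available here. Lemma \ref{lem:volume-growth} (iii) gives local doubling $V(x,2r)\lesssim V(x,r)$ for $0<r\le 1$, together with at most polynomial growth of $V(x,r)/V(x,1)$. Theorem \ref{thm:sim-heat-es} (i) (see also Remark \ref{rem:sma-ue}) gives $h_t(x,y)\lesssim V(x,\sqrt t)^{-1}e^{-c d(x,y)^2/t}$ for $0<t\le 1$. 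By \cite[Theorem 1.2]{cd99}, $\nabla(1+\L)^{-1/2}$ is then of weak type $(1,1)$ and bounded on $L^p(M)$ for $1<p\le 2$.

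Next I would compare $S$ with the local Riesz transform. Write
\[
\nabla(1+\L)^{-1/2}=\frac{1}{\sqrt\pi}\int_0^1 e^{-s}\nabla e^{-s\L}\frac{ds}{\sqrt s}+\frac{1}{\sqrt\pi}\int_1^\infty e^{-s}\nabla e^{-s\L}\frac{ds}{\sqrt s}.
\]
For the tail, Proposition \ref{mapping-gradient-heat} gives $\|\nabla e^{-s\L}\|_{p\to p}\lesssim s^{-1/2}$, so its $L^p$ norm is at most $\int_1^\infty e^{-s}s^{-1}\,ds<\infty$. For the difference between $S$ and the first integral, note that $1-e^{-s}\le s$ on $(0,1)$. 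Hence
\[
\Big\|\frac{1}{\sqrt\pi}\int_0^1(1-e^{-s})\nabla e^{-s\L}\frac{ds}{\sqrt s}\Big\|_{p\to p}\lesssim\int_0^1 s\cdot s^{-1/2}\cdot s^{-1/2}\,ds\lesssim 1,
\]
again by Proposition \ref{mapping-gradient-heat}. Combining the three pieces, $S$ is bounded on $L^p(M)$ for every $1<p\le 2$. The case $p=2$ also follows directly from the $L^2$ bound in Proposition \ref{mapping-gradient-heat}.

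The main obstacle is checking that the hypotheses of \cite[Theorem 1.2]{cd99} are satisfied in exactly the form stated there: local doubling at scales $r\le 1$ and the Gaussian upper bound for $t\le 1$, on a manifold that is only locally doubling. If that theorem requires a slightly different formulation, such as duplication for $r\le r_0$ or an exponential growth bound, I would use the polynomial growth estimate in Lemma \ref{lem:volume-growth} (iii). Since polynomial growth is stronger than the exponential growth usually allowed, this should cover the requirement.
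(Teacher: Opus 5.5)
Your proposal is correct and follows the paper's proof essentially step for step: the local Riesz transform $\nabla(1+\L)^{-1/2}$ is handled by \cite[Theorem 1.2]{cd99} using Lemma \ref{lem:volume-growth} (iii) and Theorem \ref{thm:sim-heat-es} (i), and Proposition \ref{mapping-gradient-heat} controls both the tail over $(1,\infty)$ and the difference term carrying the weight $1-e^{-s}\le s$. One small correction: your aside that $p=2$ follows ``directly'' from Proposition \ref{mapping-gradient-heat} is not right, since integrating $\|\nabla e^{-s\L}\|_{2\to 2}\lesssim s^{-1/2}$ against $ds/\sqrt{s}$ over $(0,1)$ diverges logarithmically; this is harmless, because your main argument already covers $p=2$.
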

\begin{proof}
The proof is adapted from \cite[Lemma 2.9]{jiang-li-lin-2022}. We give a brief argument of this proof for completeness and the convenience of the
reader.

First, we obtain from Proposition \ref{mapping-gradient-heat} and the semigroup property that for any $1<p\le 2$,
\begin{eqnarray*}
\left\|\int_1^\infty \nabla e^{-s-s\L}\frac{\,ds}{\sqrt s}\right\|_{p\to p}&&\le
\int_1^\infty \left\|\nabla e^{-s-s\L}\right\|_{p\to p}\frac{\,ds}{\sqrt s}\\
&&\le \int_1^\infty e^{-s}\left\|\nabla e^{-\L}\right\|_{p\to p}\left\|e^{-(s-1)\L}\right\|_{p\to p}\frac{\,ds}{\sqrt s}\\
&&\lesssim \int_1^\infty e^{-s}\frac{\,ds}{\sqrt s}
\lesssim 1.
\end{eqnarray*}
%When $p=2$, the local Riesz transform $\nabla(1+\L)^{-1/2}$ is naturally bounded on $L^2(M)$. Moreover, for $1<p<2$, by assumption
%that $\nabla(1+\L)^{-1/2}$ is bounded on $L^p(M)$.
%This together with the fact that the local Riesz transform $\nabla(1+\L)^{-1/2}$ is bounded on $L^p(M)$ for $1 < p \leq 2$, implies that

Next, from Lemma \ref{lem:volume-growth} (iii)
and {the small time heat kernel Gaussian upper bound} (cf. Theorem \ref{thm:sim-heat-es} (i)),
we can use \cite[Theorem 1.2]{cd99} to deduce that the local Riesz transform $\nabla(1+\L)^{-1/2}$ is bounded on $L^p(M)$ for $1<p\le 2$,
which further implies that
\begin{eqnarray*}
\frac{1}{\sqrt \pi}\int_0^1 \nabla e^{-s-s\L}\frac{\,ds}{\sqrt s} = \nabla(1+\L)^{-1/2}-\frac{1}{\sqrt \pi}\int_1^\infty \nabla e^{-s-s\L}\frac{\,ds}{\sqrt s}
\end{eqnarray*}
is bounded on $L^p(M)$ for $1 < p  \leq 2$.

On the other hand, by Proposition \ref{mapping-gradient-heat}, it holds
\begin{eqnarray*}
\left\|\int_0^1 \nabla e^{-s-s\L}\frac{\,ds}{\sqrt s}-\int_0^1 \nabla e^{-s\L}\frac{\,ds}{\sqrt s}\right\|_{p\to p}
&&=\left\|\int_0^1(1-e^{-s}) \nabla e^{-s\L}\frac{\,ds}{\sqrt s}\right\|_{p\to p}\\
&&\lesssim \int_0^1(1-e^{-s}) \frac{1}{\sqrt s}\frac{\,ds}{\sqrt s}
\lesssim 1.
\end{eqnarray*}
%Notice that $1-e^{-s} \lesssim s$, for $s \in (0, 1]$.
We conclude therefore that the small part of the Riesz transform %$\int_0^1 \nabla e^{-s\L}\frac{\,ds}{\sqrt s}$
 is bounded on $L^p(M)$ for $1 < p \leq 2$,
%Moreover, since $\nabla\L^{-1/2}$ is bounded on $L^2(M)$, we also have $\int_1^\infty \nabla e^{-s\L}\frac{\,ds}{\sqrt s}$ is bounded on $L^2(M)$.
which completes the proof.
\end{proof}

\begin{rem} \rm
  Referring to the proof of Lemma \ref{lem:volume-growth} (iii)
  and Remark \ref{rem:sma-ue}, %the setting of \cite[Corollary 4.7]{gri-sal09}),
  the assumptions of Proposition \ref{local-part}
  can be relaxed to that
   each $M_i$ satisfies $(D)$ and $(UE)$.
    %the heat kernel satisfies the Gaussian upper bound
%\begin{equation} \tag{$UE$}
%h_{i, t}(x, y) \lesssim \frac{1}{V_i(x, \sqrt{t})} e^{-c\frac{d^2(x, y)}{t}}, \quad \forall t>0, \, x, y \in M_i.
%\end{equation}
%Indeed, if each $M_i$ satisfies $(D)$ and $(UE)$,
%%then Lemma \ref{lem:volume-growth} (iii)
%%and the small time heat kernel Gaussian upper bound hold
%then the local Riesz transform $\nabla(1+\L)^{-1/2}$ is $L^p$-bounded,
%which further deduces that the operator $\int_0^1 \nabla e^{-s\L}\frac{\,ds}{\sqrt s}$ is $L^p$-bounded.
\end{rem}

%Since $\nabla\L^{-1/2}$ is naturally bounded on $L^2(M)$,
%by Lemma \ref{local-part}, we have
By Proposition \ref{local-part} and the fact that $\nabla\L^{-1/2}$ is naturally bounded on $L^2(M)$, we have
%we also have $\int_1^\infty \nabla e^{-s\L}\frac{\,ds}{\sqrt s}$ is bounded on $L^2(M)$.
%This completes the proof.
\begin{cor}\label{cor:L2-T}
   The large time part of the Riesz transform %$\int_1^\infty \nabla e^{-s\L}\frac{\,ds}{\sqrt s}$
   is bounded on $L^2(M)$.
\end{cor}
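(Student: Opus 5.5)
The corollary follows by writing the full Riesz transform as the sum of its small and large time parts. From the integral representation of $\nabla \L^{-1/2}$ recalled in Subsection \ref{sec:main-proof-red}, we split the $ds$-integral at $s=1$:
\[
\frac{1}{\sqrt\pi}\int_1^\infty \nabla e^{-s\L}\frac{ds}{\sqrt s}=\nabla\L^{-1/2}-S .
\]
This is an identity of operators, valid on a dense class such as $C_0^\infty(M)$ or the range of $\L^{1/2}$, where both integrals converge. We then bound each term on the right in $L^2(M)$ separately.

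For the first term, the $L^2$-boundedness of $\nabla\L^{-1/2}$ is the integration-by-parts identity $\||\nabla f|\|_2^2=\langle \L f,f\rangle=\|\L^{1/2}f\|_2^2$. Applying it to $f=\L^{-1/2}g$ gives $\|\nabla\L^{-1/2}g\|_2\le\|g\|_2$. For the second term, Proposition \ref{local-part} with $p=2$ gives $\|S\|_{2\to2}\lesssim1$. Alternatively, we can argue directly: Proposition \ref{mapping-gradient-heat} gives $\|\nabla e^{-s\L}\|_{2\to2}\lesssim s^{-1/2}$, but then $\int_0^1 s^{-1/2}\,s^{-1/2}\,ds$ diverges logarithmically. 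So the crude direct estimate fails, and we rely on Proposition \ref{local-part} instead.

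Subtracting the two bounded operators and extending by density gives $\|T\|_{2\to2}\le \|\nabla\L^{-1/2}\|_{2\to2}+\|S\|_{2\to2}\lesssim1$.

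The only delicate point is the convergence and justification of the splitting on a dense class. At $s\to\infty$, the spectral theorem gives $\|\nabla e^{-s\L}g\|_2=\|\L^{1/2}e^{-s\L}g\|_2$, so for $g$ in the range of $\L^{1/2}$, say $g=\L^{1/2}h$, the tail integral converges absolutely. Similarly, the full integral equals $\nabla\L^{-1/2}g$ by the functional calculus identity $\frac1{\sqrt\pi}\int_0^\infty \lambda^{1/2}e^{-s\lambda}\frac{ds}{\sqrt s}=1$. The range of $\L^{1/2}$ is dense in $L^2(M)$ because $\L$ has trivial kernel in $L^2$, which holds since $\mu(M)=\infty$ by polynomial volume growth. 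This completes the plan.
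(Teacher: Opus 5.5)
Your proposal is correct and follows the paper's proof exactly. You write $T=\nabla\L^{-1/2}-S$, bound $\nabla\L^{-1/2}$ on $L^2(M)$ by integration by parts, and bound $S$ on $L^2(M)$ by Proposition \ref{local-part} at $p=2$; your density discussion supplies a justification the paper leaves implicit. As an aside, at $p=2$ you could bypass Proposition \ref{local-part} altogether: from $\|\nabla u\|_2=\|\L^{1/2}u\|_2$ and the spectral theorem, $\|Tg\|_2=\|m(\L)g\|_2\le\|g\|_2$, where $m(\lambda)=\frac{1}{\sqrt\pi}\int_1^\infty \lambda^{1/2}e^{-s\lambda}\frac{ds}{\sqrt s}$ takes values in $[0,1]$.
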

\subsubsection{Further reduction} \hskip\parindent
Since we showed in Proposition  \ref{local-part} that the operator $S$ %$
%  \int_0^1 \nabla e^{-t \L} \frac{\,dt}{\sqrt{t}}
%$
is weakly $(p,p)$ bounded for any $1 < p <2$,
it reduces to show that there exists $1 < p <q$ such that
$$T=\frac{1}{\sqrt \pi} \int_1^\infty\nabla e^{-t\L}\frac{\,dt}{\sqrt t}$$
is weakly $(p,p)$ bounded, i.e., for each $f \in L^p(M)$, it holds
\begin{equation} \label{weak-T}
     \mu\left(\{x \in M: |Tf(x)| > \lambda\}\right) \lesssim \frac{\|f\|_p^p}{ \lambda^p},
    \quad \forall  \lambda > 0.
\end{equation}
In fact, we shall show  that \eqref{weak-T} holds %/%is valid
for each $1< p <p_0$, where $p_0$ is defined as
\begin{equation}\label{eq:p0-def}
    p_0:=\min\left\{p_1, \, \frac{N_\infty}{N_\infty-2}\right\} = \min\left\{2,\,  \min_{1\leq k \leq \ell, \, n_k >2} \frac{N_k}{N_k-n_k+2}, \, \frac{N_\infty}{N_\infty-2} \r\}.
\end{equation}

Let us begin with the following simple observation.
\begin{lem} \label{lem:big-lam}
      Let $1 < p <2$. For each $f \in L^p(M)$, it holds that
  \begin{equation*}%\label{eq:weak-T}
    \mu\left(\{x \in M: |Tf(x)| > \lambda\}\right) \lesssim \frac{\|f\|_p^p}{ \lambda^p},
    \quad \forall  \lambda > \|f\|_p.
\end{equation*}
\end{lem}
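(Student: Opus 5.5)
The idea is to show that $T$ maps $L^p(M)$ boundedly into $L^\infty(M)$ with a uniform constant. Once this is known, the lemma follows by a Chebyshev-type argument in which large $\lambda$ makes the level set trivial or small. Concretely, I would estimate $\|Tf\|_\infty$ by splitting $T$ at time one through the semigroup property:
\[
Tf=\frac{1}{\sqrt\pi}\int_1^\infty \nabla e^{-\frac12\L}\, e^{-(t-\frac12)\L}f\,\frac{dt}{\sqrt t}.
\]
This is awkward, since we have no $L^\infty$ bounds for $\nabla e^{-s\L}$. A cleaner route is to avoid $L^\infty$ altogether and aim for the weaker estimate $\|Tf\|_r\lesssim \|f\|_p$ for some exponent $r>p$. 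Then
\[
\mu(\{|Tf|>\lambda\})\le \lambda^{-r}\|Tf\|_r^r\lesssim \lambda^{-r}\|f\|_p^r\le \lambda^{-p}\|f\|_p^p,
\]
where the last inequality uses $\lambda>\|f\|_p$ and $r>p$. This reduction is the heart of the lemma: the restriction $\lambda>\|f\|_p$ is exactly what turns an $L^p\to L^r$ bound with $r>p$ into the weak $(p,p)$ inequality.

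Take $r=2$, so we need $\|T\|_{p\to 2}<\infty$. Write $Tf=T\bigl(e^{-\frac12\L}f\bigr)+(T-Te^{-\frac12\L})f$. For the first term, combine Corollary \ref{cor:L2-T} ($T$ is bounded on $L^2$) with the ultracontractivity of Proposition \ref{prop:ultcon} at $t=\frac12$, which gives $\|e^{-\frac12\L}\|_{p\to2}\lesssim1$. Hence $\|Te^{-\frac12\L}f\|_2\lesssim\|f\|_p$.

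For the second term, compute directly:
\[
T-Te^{-\frac12\L}=\frac{1}{\sqrt\pi}\int_1^\infty\bigl(\nabla e^{-t\L}-\nabla e^{-(t+\frac12)\L}\bigr)\frac{dt}{\sqrt t}.
\]
This can be handled in either of two ways.

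\emph{Change of variables.} Substituting in the second integral rewrites the difference as
\[
\frac{1}{\sqrt\pi}\int_1^{3/2}\nabla e^{-t\L}\frac{dt}{\sqrt t}+\frac{1}{\sqrt\pi}\int_{3/2}^\infty\nabla e^{-t\L}\Bigl(\frac1{\sqrt t}-\frac1{\sqrt{t-\frac12}}\Bigr)dt.
\]
In each integrand, factor $\nabla e^{-t\L}=\nabla e^{-\frac t2\L}\,e^{-\frac t2\L}$. Proposition \ref{mapping-gradient-heat} at $p=2$ gives $\|\nabla e^{-\frac t2\L}\|_{2\to2}\lesssim t^{-1/2}$, and Proposition \ref{prop:ultcon} gives $\|e^{-\frac t2\L}\|_{p\to2}\lesssim t^{-\delta(\frac1p-\frac12)}$ for $t\ge1$. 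The first integral is then clearly bounded. For the second, the weight satisfies $\bigl|t^{-1/2}-(t-\frac12)^{-1/2}\bigr|\lesssim t^{-3/2}$, so the integrand is $O(t^{-2})$ in norm and the integral converges, giving $\|(T-Te^{-\frac12\L})f\|_2\lesssim\|f\|_p$.

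\emph{Factoring the semigroup.} Alternatively, write $T=\frac{1}{\sqrt\pi}\int_1^\infty\nabla e^{-\frac t2\L}e^{-\frac t2\L}\frac{dt}{\sqrt t}$ directly. This gives $\|Tf\|_2\lesssim\int_1^\infty t^{-1-\delta(\frac1p-\frac12)}dt\,\|f\|_p$, which is finite since $\delta>0$. This is even simpler and avoids the splitting entirely, so I would use it.

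The main point to check is that the interchange of the $t$-integral with the norm is legitimate. For $f$ in a dense class this follows from Minkowski's inequality, and the general case follows by density. There is no real obstacle; the only care needed is the $\lambda>\|f\|_p$ trick in the Chebyshev reduction.
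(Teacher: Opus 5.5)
Your final route is correct and is exactly the paper's proof: Chebyshev at the $L^2$ level, then Minkowski, then the factorization $\nabla e^{-t\L}=\nabla e^{-\frac t2\L}e^{-\frac t2\L}$ with Proposition \ref{mapping-gradient-heat} and Proposition \ref{prop:ultcon} to get $\|Tf\|_2\lesssim\|f\|_p$, and finally $\lambda>\|f\|_p$ with $p<2$ to pass from $\|f\|_p^2/\lambda^2$ to $\|f\|_p^p/\lambda^p$. The preliminary $L^\infty$ idea and the splitting through $Te^{-\frac12\L}$ are unnecessary detours but do no harm.
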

\begin{proof}
  By the Chebyshev inequality and the Minkowski inequality, we have
\[
    \mu\left(\left\{x \in M:\, |Tf(x)|>\lambda\right\}\right)
\le \frac{1}{\pi \lambda^2 }\left\|\int_1^\infty\nabla e^{-t\L}f\frac{\,dt}{\sqrt t}\right\|^2_2
\le  \frac{1}{\pi \lambda^2 }\left( \int_1^\infty \|\nabla e^{-t\L}f\|_2 \frac{\,dt}{\sqrt t}\right)^2.
\]
It follows from the semigroup property, Proposition \ref {mapping-gradient-heat} and the ultracontractivity of the heat semigroup (cf. Proposition \ref{prop:ultcon}) that for any $t \ge 1$,
\[
    \|\nabla e^{-t\L}f\|_2
    = \|\nabla e^{-\frac{1}{2}t\L} \circ e^{-\frac{1}{2}t\L} f \|_2
   \lesssim   \frac{1}{\sqrt t} \|e^{-\frac{1}{2}t\L} f \|_2
%    \lesssim   \frac{1}{\sqrt t} \frac{1}{t^{\delta(\frac 1p - \frac 12)}} \|f\|_p.
     \lesssim   \frac{1}{\sqrt t} {t^{-\delta(\frac 1p - \frac 12)}} \|f\|_p.
\]
Therefore, it holds
\[
    \mu\left(\left\{x \in M:\, |Tf(x)|>\lambda\right\}\right)
    \lesssim \frac{\|f\|_p^2}{\lambda^2}
%    \lesssim \frac{\|f\|_p^p}{\lambda^p} \left( \frac{\|f\|_p}{\lambda} \right)^{2-p}
     \lesssim \frac{\|f\|_p^p}{\lambda^p},
\]
which completes the proof.
\end{proof}
%Ϊ��֤��xxx�����н��ԣ� ����ֻ��֤��xxx����xxxstill holds. In such case, xxxxxxx.
%��M�ֽ����������ϡ�
%By xxx, it holods
%$$ǰ�������ϵĹ���xxxx.$$
%For the term xxxx, ���ǰ�����Ϊ������������.
%���ǽ��ѵ�һ������XXX�� �ڶ�������xxx,  ����������xxx, ����������С���������ǽ��й��ơ�
%{\color{blue}Now, it suffices to show that for any $1 < p <p_0$, $f \in L^p$ and $0<\lambda \leq \|f\|_p$, it still holds
%$\mu\left(\{x: |Tf| > \lambda\}\right) \lesssim \|f\|_p^p/ \lambda^p$.}
 Now, it is then enough to prove that \eqref{weak-T} holds for the case
$0<\lambda \leq \|f\|_p$ with $1 < p <p_0$.
In such case, we decompose $M$ as
%\[
%    M=E_0 \bigcup \lf(\bigcup_{i=1}^\ell F^{(R_i)}_i\r) \bigcup \lf[\bigcup_{j=1}^\ell (E_j \setminus F^{(R_j)}_j)\r],
%\]
%\[
%    M=E_0 \cup (\cup_{i=1}^\ell F^{(R_i)}_i) \cup \lf[\cup_{j=1}^\ell (E_j \setminus F^{(R_j)}_j)\r],
%\]
 \[
    M=E_0 \cup (\mathop{\cup}\limits_{i=1}^\ell F^{(R_i)}_i) \cup \lf[\mathop{\cup}\limits_{j=1}^\ell (E_j \setminus F^{(R_j)}_j)\r],
    \]
%\[
%    M=E_0 \cup \lf(\mathop{\cup}\limits_{i=1}^\ell F^{(R_i)}_i\r)
%    \cup \lf[\mathop{\cup}\limits_{j=1}^\ell (E_j \setminus F^{(R_j)}_j)\r],
%\]
%\[
%    M=E_0 \cup  (\mathop{\cup}\limits_{i=1}^\ell F^{(R_i)}_i)
%    \cup [\mathop{\cup}\limits_{j=1}^\ell (E_j \setminus F^{(R_j)}_j)],
%\]
%\[
%    M=E_0 \bigcup \bigg(\bigcup_{i=1}^\ell F^{(R_i)} \bigg) \bigcup \bigg[\bigcup_{j=1}^\ell (E_j \setminus F^{(R_j)}_j)\bigg],
%\]
where
%\begin{equation}\label{eq:F_i-set}
%F_i^{(R_i)}:=\{x \in E_i: \dist(x, E_0) \leq 2 R_i\} \text{ with } R_i \ge 1 \text { such that }
%      \mu(F_i^{(R_i)})
%      = 100 \ell \frac{\|f\|_p^p}{ \lambda^p}.
% %     = 100 \ell \|f\|_p^p / \lambda^p.
%\end{equation}
$
F_i^{(R_i)} = \{x \in E_i: \dist(x, E_0) \leq 2 R_i\}
$
with $R_i \ge 1$ such that
      \begin{equation} \label{eq:F_i-set}
      \mu(F_i^{(R_i)})
%      = 100 \ell \frac{\|f\|_p^p}{ \lambda^p}
      = 100 \ell \frac{\|f\|_p^p}{ \lambda^p}.
      \end{equation}
By the convention $\mu(E_0) =1$ (cf. Subsection \ref{sec:not-basic}), it holds
%assumed in Subsection \ref{sec:not-basic}, it holds
\begin{equation} \label{est-small-lambda-3}
\mu\lf(E_0\cup(\cup_{i=1}^\ell F^{(R_i)}_i)\r) = 1 + \sum_{i=1}^\ell \mu(F^{(R_i)}_i)
\lesssim \frac{\|f\|_p^p}{\lambda^p}.
%\lesssim {\color{blue}\frac{\|f\|_p^p}{\lambda^p}}.
\end{equation}
%\begin{eqnarray}\label{est-small-lambda-3}
%\mu\lf(E_0\cup\lf(\mathop{\cup}\limits_{i=1}^\ell F^{(R_i)}_i\r)\r) = 1 + \sum_{i=1}^\ell \mu(F^{(R_i)}_i)
%\lesssim \frac{\|f\|_p^p}{\lambda^p}.
%%\lesssim {\color{blue}\frac{\|f\|_p^p}{\lambda^p}}.
%\end{eqnarray}
    Therefore, it suffices to prove that for such $\lambda$, %each  $1 < p <p_0$,
    \begin{equation*}
      \mu\left(\lf\{x\in \bigcup_{j=1}^\ell \, (E_j\setminus F^{(R_j)}_j):|Tf(x)|> \lambda\r\}\right)
      \lesssim \frac{\|f\|_p^p}{\lambda^p}.
 %     \quad \forall 0<\lambda < \|f\|_p, \, f \in L^p.
    \end{equation*}
 %
%       \begin{equation*}
%      \mu\left(\lf\{x\in \bigcup_{1\leq j \leq \ell} \lf(E_j\setminus F^{(R_j)}_j\r):|Tf(x)|> \lambda\r\}\right)
%      \lesssim \frac{\|f\|_p^p}{\lambda^p}.
% %     \quad \forall 0<\lambda < \|f\|_p, \, f \in L^p.
%    \end{equation*}
%
%
%    \begin{equation*}
%      \mu\left(\lf\{x\in {\cup}_{j=1}^\ell (E_j\setminus F^{(R_j)}_j):|Tf(x)|> \lambda\r\}\right)
%      \lesssim \frac{\|f\|_p^p}{\lambda^p}.
% %     \quad \forall 0<\lambda < \|f\|_p, \, f \in L^p.
%    \end{equation*}
%
%       \begin{equation*}
%      \mu\left(\lf\{x\in \mathop{\cup}\limits_{1 \leq j\leq \ell} (E_j\setminus F^{(R_j)}_j):|Tf(x)|> \lambda\r\}\right)
%      \lesssim \frac{\|f\|_p^p}{\lambda^p}.
% %     \quad \forall 0<\lambda < \|f\|_p, \, f \in L^p.
%    \end{equation*}
%
For the simplicity of notation, we replace $\lambda$ by $(2\ell +1) \lambda$ (the same trick will be used repeatedly) and write
\begin{eqnarray*}
&& \mu\left(\lf\{x\in \bigcup_{j=1}^\ell \, (E_j\setminus F^{(R_j)}_j):|Tf(x)|>(2\ell+1)\lambda\r\}\right) \\
&& \ \le \mu\left(\lf\{x\in \bigcup_{j=1}^\ell \, (E_j\setminus F^{(R_j)}_j):|T(f\chi_{E_0})(x)|>\lambda\r\}\right) \\
&& \ \ +\sum_{i=1}^\ell \mu\left(\lf\{x\in \bigcup_{j=1}^\ell \, (E_j\setminus F^{(R_j)}_j):|T(f\chi_{F^{(R_i)}_i})(x)|>\lambda\r\}\right)     \\
&& \ \ +\sum_{i=1}^\ell \mu\left(\lf\{x\in \bigcup_{j=1}^\ell \, (E_j\setminus F^{(R_j)}_j):|T(f\chi_{E_i\setminus F^{(R_i)}_i})(x)|>\lambda\r\}\right)
   = : \mathcal{C}+\mathcal{A}+\mathcal{W}.
\end{eqnarray*}
%\begin{eqnarray*}
%&&\mu\left(\lf\{x\in \bigcup_{j=1}^\ell \lf(E_j\setminus F^{(R_j)}_j\r):|Tf(x)|>(2\ell+1)\lambda\r\}\right)\\
%&& \ \le \mu\left(\lf\{x\in \bigcup_{j=1}^\ell \lf(E_j\setminus F^{(R_j)}_j\r):|T(f\chi_{E_0})(x)|>\lambda\r\}\right)
%        +\sum_{i=1}^\ell \mu\left(\lf\{x\in \bigcup_{j=1}^\ell \lf(E_j\setminus F^{(R_j)}_j\r):|T(f\chi_{F^{(R_i)}_i})(x)|>\lambda\r\}\right)     \\
%&& \ \  +\sum_{i=1}^\ell \mu\left(\lf\{x\in \bigcup_{j=1}^\ell \lf(E_j\setminus F^{(R_j)}_j\r):|T(f\chi_{E_i\setminus F^{(R_i)}_i})(x)|>\lambda\r\}\right)
%    = : \mathcal{C}+\mathcal{A}+\mathcal{W}.
%\end{eqnarray*}
We shall call the first term as the center part,
the second term as the part around the center, and the last term as the part away from the center, respectively,
and treat them in the following subsections.

\subsection{Estimate on the center}\label{sec:es-on-center} \hskip\parindent
 First we establish the following lemma.
 \begin{prop} \label{es:center-part}
   Let $1 < p <2$. For each $f \in L^p(M)$, it holds that
   \[
     \mathcal{C}
     = \mu\left(\lf\{x\in \bigcup_{j=1}^\ell \, (E_j\setminus F^{(R_j)}_j):|T(f\chi_{E_0})(x)|>\lambda\r\}\right)
     \lesssim \frac{\|f\|_p^p}{\lambda^p},
     \quad \forall 0<\lambda \leq \|f\|_p.
   \]
 \end{prop}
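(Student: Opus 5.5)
The plan is to use Chebyshev's inequality at the $L^2$ level and the smallness of $\mu(E_0)=1$. Set $g=f\chi_{E_0}$. Since $T$ is bounded on $L^2(M)$ (Corollary \ref{cor:L2-T}), I would write
\[
\mathcal{C}\le \mu\left(\{x\in M: |Tg(x)|>\lambda\}\right)\le \frac{\|Tg\|_2^2}{\lambda^2}\lesssim \frac{\|g\|_2^2}{\lambda^2}.
\]
This is not enough on its own, because $\|g\|_2$ is not controlled by $\|f\|_p$ when $p<2$. So the $L^2$ bound should be applied only to the large-time integral, after first passing through the heat semigroup to regularize $g$.

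To do this, imitate the proof of Lemma \ref{lem:big-lam}. By Minkowski's inequality, the semigroup property, Proposition \ref{mapping-gradient-heat} with $p=2$, and the ultracontractivity in Proposition \ref{prop:ultcon},
\[
\|Tg\|_2\lesssim \int_1^\infty \|\nabla e^{-\frac t2\L}\|_{2\to2}\,\|e^{-\frac t2\L}g\|_2\frac{dt}{\sqrt t}\lesssim \int_1^\infty t^{-1-\delta(\frac1p-\frac12)}\,dt\,\|g\|_p\lesssim \|f\|_p .
\]
The integral converges because $\delta>0$ and $p<2$. Hence $\mathcal{C}\lesssim \|f\|_p^2/\lambda^2$.

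The last step uses the range of $\lambda$. Since $0<\lambda\le\|f\|_p$, the bound $\|f\|_p^2/\lambda^2$ is \emph{larger} than $\|f\|_p^p/\lambda^p$, so the exponent has to be converted the other way. The fix is to use the finite measure of $E_0$: by Hölder's inequality, $\|g\|_p\le \mu(E_0)^{\frac1p-\frac12}\|g\|_2$, and in the reverse direction $\|g\|_1\le \|g\|_p\,\mu(E_0)^{1-\frac1p}=\|g\|_p$. So I would use ultracontractivity from $L^1$ to $L^2$ instead, obtaining $\|Tg\|_2\lesssim \|g\|_1\le\|g\|_p$. This still only gives $\|f\chi_{E_0}\|_p^2/\lambda^2$.

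The main obstacle is therefore the regime $\lambda\ll\|f\|_p$. There I would combine two facts. First, trivially $\mathcal{C}\le \sum_j\mu(E_j\setminus F_j^{(R_j)})$, but these sets are unbounded, so this alone fails. Second, and this is what I would actually rely on, the level set is only taken over $\bigcup_j(E_j\setminus F^{(R_j)}_j)$, which lies at distance at least $R_j$ from $E_0$. On that set I would use the pointwise kernel bound
\[
\int_1^\infty \sup_{y\in E_0}|\nabla_x h_t(x,y)|\frac{dt}{\sqrt t},
\]
obtained via Caccioppoli as in Proposition \ref{est-integral-diagonal-general}, or more simply the $L^p$-Davies–Gaffney estimate of Corollary \ref{davies-operators} applied to $\sqrt t\,\nabla e^{-t\L}$ with $E=E_0$, $F=E_j\setminus F^{(R_j)}_j$ and $\dist(E,F)\ge R_j$. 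This gives
\[
\|Tg\|_{L^p(F)}\lesssim \int_1^\infty e^{-cR_j^2/t}\frac{dt}{t}\,\|g\|_p .
\]
That integral diverges logarithmically, so I would split it at $t=R_j^2$. For $t\le R_j^2$ the Davies–Gaffney decay gives an $O(1)$ contribution. For $t\ge R_j^2$ I would use the $L^2$ ultracontractive bound, which gives $\|\cdot\|_2\lesssim R_j^{-2\delta(\frac1p-\frac12)}\|f\|_p$. Chebyshev's inequality in $L^p$ and $L^2$ respectively, together with $\mu(F_j^{(R_j)})\sim V_j(R_j)\sim\|f\|_p^p/\lambda^p$ (using the polynomial lower growth of $V_j$), should then close the estimate. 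I expect this matching of powers to be the delicate point.
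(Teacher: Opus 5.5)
There is a genuine gap: none of your routes actually closes. The $L^2$-Chebyshev bound $\mathcal{C}\lesssim\|f\|_p^2/\lambda^2$ is too weak when $\lambda\le\|f\|_p$, as you note yourself. Your $L^p$ Davies–Gaffney route only uses $L^p\to L^p$ bounds, which is why you get the divergent $\int_1^\infty dt/t$.

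The final split at $t=R_j^2$ fails as well. The piece $t\ge R_j^2$ contributes $\lambda^{-2}R_j^{-4\delta(\frac1p-\frac12)}\|f\|_p^2$. Comparing this with $\|f\|_p^p/\lambda^p$ via $V_j(R_j)\sim\|f\|_p^p/\lambda^p$ forces
\[
V_j(R_j)^{\frac{2-p}{p}}\lesssim R_j^{2\delta\frac{2-p}{p}},\qquad\text{i.e.}\qquad V_j(R_j)\lesssim R_j^{2\delta}.
\]
Here $2\delta=\min_i\delta_i\le\min_i n_i<2<\max_i n_i$, while $V_j(R_j)\gtrsim R_j^{n_j}$. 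So the requirement is false, as $\lambda\to0$ (so $R_j\to\infty$), on every end with $n_j>2\delta$, in particular the non-parabolic ones. The polynomial lower growth of $V_j$ points in exactly the wrong direction for this matching of powers.

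The missing idea is to apply Chebyshev in $L^p$ and to use ultracontractivity from $L^1$ to $L^p$, not to $L^2$. For $t\ge1$, Proposition \ref{mapping-gradient-heat} and Proposition \ref{prop:ultcon} (with exponents $1\le p$) give
\[
\|\nabla e^{-t\L}(f\chi_{E_0})\|_p\lesssim t^{-\frac12}\|e^{-\frac t2\L}(f\chi_{E_0})\|_p\lesssim t^{-\frac12-\delta(1-\frac1p)}\|f\chi_{E_0}\|_1 .
\]
Since $p>1$, the factor $t^{-1-\delta(1-\frac1p)}$ is integrable on $[1,\infty)$. It is precisely the extra decay your $L^p\to L^p$ bound lacked. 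By Minkowski and H\"older with $\mu(E_0)=1$,
\[
\|T(f\chi_{E_0})\|_p\lesssim\|f\chi_{E_0}\|_1\le\|f\|_p,
\]
and hence $\mathcal{C}\le\lambda^{-p}\|T(f\chi_{E_0})\|_p^p\lesssim\|f\|_p^p/\lambda^p$. This is the paper's argument. It needs neither the restriction $\lambda\le\|f\|_p$ nor any separation between $E_0$ and $E_j\setminus F^{(R_j)}_j$.
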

\begin{proof}
  %since $\mu(E_0) = 1$,  we conclude via the Minkowski inequality, Proposition \ref {mapping-gradient-heat} and \eqref{heat-operator-norm-1} that
  %Applying the approach
  Similar to the proof of Lemma \ref{lem:big-lam}, one has
\begin{eqnarray*}
\mathcal{C}
&&\leq \frac{1}{\lambda^p} \|T(f\chi_{E_0}) \|_p^p
 \lesssim \frac{1}{\lambda^p}\left[\int_1^\infty \|\nabla e^{-t\L}(f\chi_{E_0})\|_p\frac{\,dt}{\sqrt t}\right]^p\\
&&\lesssim \frac{1}{\lambda^p}\left[\int_1^\infty  \| e^{-\frac{t}{2}  \L}(f\chi_{E_0})\|_p\frac{\,dt}{t}\right]^p
% \lesssim \frac{1}{\lambda^p}\left[\int_1^\infty \frac{1}{t^{\delta(1- \frac 1p)}} \|f\chi_{E_0}\|_1\frac{\,dt}{t}\right]^p \\
  \lesssim \frac{1}{\lambda^p}\left[\int_1^\infty {t^{-\delta(1- \frac 1p)}} \|f\chi_{E_0}\|_1\frac{\,dt}{t}\right]^p \\
&&\lesssim \frac{\|f\chi_{E_0}\|_1^p}{\lambda^p} \lesssim \frac{\|f\|_p^p}{\lambda^p},
\end{eqnarray*}
%where in the last inequality we used the  convention $\mu(E_0) =1$.
where the last inequality is due to the H\"older inequality and the convention $\mu(E_0) =1$.
This completes the proof.
\end{proof}
%
%
%==============\\
%The part on $E_0$ is easier, since $\mu(E_0) = 1$,  we conclude via the Minkowski inequality, Proposition \ref {mapping-gradient-heat} and \eqref{heat-operator-norm-1} that
%\begin{eqnarray*}
%\mu\left(\{x\in \cup_{j=1}^\ell E_j\setminus F_j:|T(f\chi_{E_0})|>\lambda\}\right)
%&&\le \frac{1}{\lambda^p}\left\|T(f\chi_{E_0})\right\|_p^p\\
%&&\le \frac{1}{\lambda^p}\left[\int_1^\infty \|\nabla e^{-t\L}(f\chi_{E_0})\|_p\frac{\,dt}{\sqrt t}\right]^p\\
%&&\lesssim \frac{1}{\lambda^p}\left[\int_1^\infty  \| e^{-\frac{t}{2}  \L}(f\chi_{E_0})\|_p\frac{\,dt}{t}\right]^p\\
%&&\lesssim \frac{1}{\lambda^p}\left[\int_1^\infty \frac{1}{t^{\delta(1- \frac 1p)}} \|f\chi_{E_0}\|_1\frac{\,dt}{t}\right]^p\\
%&&\lesssim \frac{\|f\chi_{E_0}\|_1^p}{\lambda^p} \lesssim \frac{\|f\|_p^p}{\lambda^p}.
%\end{eqnarray*}

\subsection{Estimate on the part around the center}\label{sec:es-ard-center}\hskip\parindent
For the part around the center, we have
\begin{prop}\label{es:around-center}
    Let $1 <p <2$. For each $f \in L^p(M)$, it holds that
  \[
  \ca=
  \sum_{i=1}^{\ell} \mu\left(\left\{x \in \bigcup_{j=1}^{\ell}\, (E_j\setminus F^{(R_j)}_j):
  |T(f \chi_{F_i^{(R_i)}})(x)|> \lambda\right\}\right)
  \lesssim \frac{\|f\|_p^p}{\lambda^p}, \quad \forall 0< \lambda \leq \|f\|_p.
  \]
\end{prop}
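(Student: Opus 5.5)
We fix $i,j$ and estimate each set $\{x\in E_j\setminus F^{(R_j)}_j: |T(f\chi_{F_i^{(R_i)}})(x)|>\lambda\}$ separately; summing over $j$ only costs a factor $\ell$. The idea follows \cite{cd99,jiang-li-lin-2022}: split $T$ at the scale $R_j^2$,
\[
T(f\chi_{F^{(R_i)}_i})=\frac{1}{\sqrt\pi}\int_1^{R_j^2}\nabla e^{-t\L}(f\chi_{F^{(R_i)}_i})\frac{dt}{\sqrt t}+\frac{1}{\sqrt\pi}\int_{R_j^2}^\infty\nabla e^{-t\L}(f\chi_{F^{(R_i)}_i})\frac{dt}{\sqrt t}=:T_1+T_2 .
\]

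\textbf{The short-time piece $T_1$.} Here we use that $\dist(F^{(R_i)}_i,E_j\setminus F^{(R_j)}_j)\gtrsim R_j$. This holds for $i\neq j$, since any path between the two sets passes through $E_0$. For $i=j$ it holds after the harmless relaxation $F^{(cR_i)}_i$ allowed by the remarks following Propositions \ref{map-heat-parabolic} and \ref{prop:map-anot}; if needed we apply the argument with $F^{(R_i/2)}$ versus $E\setminus F^{(R_i)}$. The $L^p$-Davies--Gaffney estimate for $\sqrt t\nabla e^{-t\L}$ (Corollary \ref{davies-operators}) then gives
\[
\|T_1\|_{L^p(E_j\setminus F^{(R_j)}_j)}\lesssim \int_1^{R_j^2}e^{-cR_j^2/t}\frac{dt}{t}\,\|f\|_p\lesssim\|f\|_p .
\]
Chebyshev's inequality gives the bound $\|f\|_p^p/\lambda^p$.

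\textbf{The long-time piece $T_2$.} We write $\nabla e^{-t\L}=\nabla e^{-\frac t2\L}e^{-\frac t2\L}$ and localize the inner function $g_t:=e^{-\frac t2\L}(f\chi_{F^{(R_i)}_i})$ into three regions: $E_j\setminus F^{(R_j/2)}_j$, the region $F^{(R_j/2)}_j$, and $M\setminus E_j$.

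\emph{Far region.} On $E_j\setminus F^{(R_j/2)}_j$, Proposition \ref{map-heat-parabolic} (with its relaxation remark) gives $\|g_t\|_{L^2}\lesssim (R_j/\sqrt t)\,\mu(F^{(R_i)}_i)^{1/2-1/p}\|f\|_p$. Combined with the $L^2$ bound $\|\nabla e^{-\frac t2\L}\|_{2\to2}\lesssim t^{-1/2}$, this yields
\[
\Big\|\int_{R_j^2}^\infty\cdots\Big\|_2\lesssim\int_{R_j^2}^\infty \frac{R_j}{t^{3/2}}dt\,\mu(F^{(R_i)}_i)^{\frac12-\frac1p}\|f\|_p\sim \mu(F^{(R_i)}_i)^{\frac12-\frac1p}\|f\|_p .
\]
Chebyshev in $L^2$, together with $\mu(F^{(R_i)}_i)=100\ell\|f\|_p^p/\lambda^p$ from \eqref{eq:F_i-set}, gives $\lambda^{-2}\mu(F)^{1-2/p}\|f\|_p^2\sim\|f\|_p^p/\lambda^p$.

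\emph{Near region.} The part of $g_t$ supported in $F^{(R_j/2)}_j$ is separated from the target set $E_j\setminus F^{(R_j)}_j$ by distance $\gtrsim R_j$. However, Davies--Gaffney only supplies the factor $e^{-cR_j^2/t}$, which does not decay for $t\ge R_j^2$, so it must be combined with the decay of $g_t$ itself.

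\emph{Other ends.} The part of $g_t$ supported in $M\setminus E_j$ should be handled through Proposition \ref{prop:map-anot}.

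\textbf{Main obstacle.} Because of the difficulties in the near and other-end regions, the cleaner route is to avoid decomposing $g_t$ and instead integrate by parts in $t$. Write
\[
\nabla e^{-t\L}=-\partial_t\bigl(t\,\nabla e^{-t\L}\bigr)-t\nabla\L e^{-t\L},
\]
so that the integral over $[R_j^2,\infty)$ becomes a boundary term at $t=R_j^2$ plus $\int_{R_j^2}^\infty \nabla e^{-\frac t2\L}\bigl(\tfrac t2\L e^{-\frac t2\L}\bigr)(f\chi_{F^{(R_i)}_i})\,dt/\sqrt t$. Each piece is then estimated in $L^2(E_j\setminus F^{(R_j)}_j)$ as follows.
\begin{itemize}
\item The boundary term is bounded using Proposition \ref{map-heat-parabolic} at $t\sim R_j^2$.
\item In the integral term, $t\L e^{-t\L}$ is mapped from $L^p(F^{(R_i)}_i)$ to $L^2(E_j\setminus F^{(R_j)}_j)$ by Proposition \ref{mapping-time-heat-parabolic-1}, which gives the uniform bound $\mu(F)^{1/2-1/p}$ (this is where $p<p_1$ enters). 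The remaining decay $t^{-1/2}$ from $\nabla e^{-\frac t2\L}$ still has to be made integrable against $dt/\sqrt t$.
\item To gain the missing factor, use the composed $L^p$-Davies--Gaffney bound for $t^{3/2}\nabla\L e^{-t\L}$ (Corollary \ref{cor:com-DG}) on the part of the inner function lying outside $E_j\setminus F^{(R_j/2)}_j$, and the mapping bounds on the part inside it.
\end{itemize}

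The crux is securing integrability in $t$ over $[R_j^2,\infty)$ while keeping the factor $\mu(F^{(R_i)}_i)^{1/2-1/p}$. I expect that the mismatch between the $L^p$ mapping bounds and the $L^2$ gradient bounds, together with the regions near and away from the center, will require applying Lemma \ref{lem:for-p2}-type interpolation once more.
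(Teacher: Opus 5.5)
There is a genuine gap: you never actually estimate the long-time piece $T_2$.

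\textbf{Why your routes for $T_2$ fail.} Your decomposition of $g_t$ stalls on the near region and on the other ends. As you note yourself, Davies--Gaffney only gives $e^{-cR_j^2/t}$, which is useless for $t\ge R_j^2$. The integration-by-parts fallback does not repair this, for three reasons. (There is also a sign slip: the identity is $\nabla e^{-t\L}=\partial_t(t\nabla e^{-t\L})+t\nabla\L e^{-t\L}$.)
\begin{enumerate}
\item Pairing $\|\nabla e^{-\frac t2\L}\|_{2\to2}\lesssim t^{-1/2}$ with the uniform bound of Proposition \ref{mapping-time-heat-parabolic-1} gives $\int_{R_j^2}^\infty dt/t=\infty$.
\item An $L^2\to L^2$ gradient bound cannot be fed an input whose $L^2$ norm is known only on $E_j\setminus F^{(R_j)}_j$. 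This is the same localization problem again.
\item The boundary term $R_j\nabla e^{-R_j^2\L}(f\chi_{F^{(R_i)}_i})$ needs gradient control, which Proposition \ref{map-heat-parabolic} does not provide.
\end{enumerate}

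\textbf{The missing idea: the Caccioppoli inequality.} Take a cutoff $\psi_j\equiv1$ on $E_j\setminus F^{(R_j)}_j$ with $\supp\psi_j\subset E_j\setminus F^{(R_j/2)}_j$ and $|\nabla\psi_j|\lesssim R_j^{-1}$. For $u=e^{-s\L}(f\chi_{F^{(R_i)}_i})$ it gives
\[
\|\nabla u\|^2_{L^2(E_j\setminus F^{(R_j)}_j)}\lesssim \|\L u\|_{L^2(E_j\setminus F^{(R_j/2)}_j)}\,\|u\|_{L^2(E_j\setminus F^{(R_j/2)}_j)}+R_j^{-2}\|u\|^2_{L^2(F^{(R_j)}_j\setminus F^{(R_j/2)}_j)}.
\]
This never applies $\nabla$ to anything outside the target region, so no near/far/other-end decomposition is needed. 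Every term on the right is controlled by Proposition \ref{map-heat-parabolic} (its $L^2$ and $L^\infty$ forms, relaxed) and Proposition \ref{mapping-time-heat-parabolic-1}.

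For $s\ge R_j^2$ this yields
\[
\|\nabla u\|_{L^2(E_j\setminus F^{(R_j)}_j)}\lesssim \bigl(R_j^{1/2}s^{-3/4}+R_js^{-1}\bigr)\mu(F^{(R_i)}_i)^{\frac12-\frac1p}\|f\|_p .
\]
The geometric mean of the decays $s^{-1}$ and $R_js^{-1/2}$ is what beats $s^{-1/2}$. This bound is integrable against $dt/\sqrt t$ on $[R_j^2,\infty)$, and $L^2$-Chebyshev together with \eqref{eq:F_i-set} finishes.

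\textbf{Comparison with the paper.} The paper runs this Caccioppoli argument with $s=t+R_j^2$, $t\ge1$. It first splits the function as $\bigl(f\chi_{F^{(R_i)}_i}-e^{-R_j^2\L}(f\chi_{F^{(R_i)}_i})\bigr)+e^{-R_j^2\L}(f\chi_{F^{(R_i)}_i})$, and treats the first piece with the Davies--Gaffney bound for $t^{3/2}\nabla\L e^{-t\L}$. Once the Caccioppoli step is inserted, your splitting of $T$ in time at $R_j^2$ works equally well.

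\textbf{A separate issue in $T_1$ when $i=j$.} The sets $F^{(R_i)}_i$ and $E_i\setminus F^{(R_i)}_i$ are at distance zero. You may not shrink the source to $F^{(R_i/2)}_i$, because $f\chi_{F^{(R_i)}_i}$ is supported on all of $F^{(R_i)}_i$; the relaxation remarks concern operator norms, not the support of $f$. Instead, remove $F^{(2R_j)}_j$ from the target set, as the paper does. This costs $\mu(F^{(2R_j)}_j)\sim V_j(R_j)\sim\|f\|_p^p/\lambda^p$. The separation is then at least $2R_j$ for all $i,j$, and your Davies--Gaffney computation for $T_1$ goes through.
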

\begin{proof}
   %\textbf{(a) Decomposition:}
   For each $1\leq i,j \leq \ell$, we write
\begin{eqnarray*}%\label{key-est-around-neck}
&&\mu\left(\left\{x\in  E_j\setminus F^{(R_j)}_j:\, |T(f\chi_{F^{(R_i)}_i})(x)|> \lambda\right\}\right) \notag \\
&& \ \le \mu\left(\left\{x\in  E_j\setminus F^{(R_j)}_j:\, \lf|T\lf(f\chi_{F^{(R_i)}_i}-e^{-R_j^2\L}(f\chi_{F^{(R_i)}_i})\r)(x)\r|> \frac{\lambda}2\right\}\right)\nonumber\\
&& \ \ +\mu\left(\left\{x\in  E_j\setminus F^{(R_j)}_j:\, \lf|T\lf((f\chi_{F^{(R_i)}_i})\r)(x)\r|> \frac{\lambda}2\right\}\right)
=: \ca^{ij}_{1} + \ca^{ij}_2.
% =: {\color{blue} K_{j,i,1} + K_{j,i,2} + K^{ji}_{1} + K^1_{ji} + K^{(2)}_{ji} + K_{ji1}}.
\end{eqnarray*}

   \textbf{Estimation of \boldmath $\ca^{ij}_1$:}  %it holds
%\[
%        \mu(F^{(2R_j)}_j)  \sim \mu(F^{(R_j)}_j) \sim \frac{\|f\|_{p}^p}{\lambda^{p}},  \quad
%        \dist(E_j\setminus F^{(2R_j)}_j, F^{(R_i)}_i) \ge 2R_j.
%\]
%For any $1 \leq i, j \leq \ell$,
One can write
\[
 \ca^{ij}_1
 \le \mu(F^{(2R_j)}_j)
 + \mu\left(\left\{x \in  E_j \setminus F^{(2R_j)}_j:\, \lf|T\lf(f\chi_{F^{(R_i)}_i}-e^{-R_j^2\L}(f\chi_{F^{(R_i)}_i})\r)(x)\r|> \frac{\lambda}2 \right\}\right).
% & \lesssim \|f\|_p^p/ \lambda^p
% + \mu\left(\left\{x \in  E_j \setminus F^{(2R_j)}_j:\, |T(f\chi_{F_i}-e^{-R_j^2\L}f\chi_{F_i})(x)|>\frac12\lambda\right\}\right)
\]
For the first term, note that
$$\mu(F^{(2R_j)}_j)  \sim V_j(2R_j) \sim \mu(F^{(R_j)}_j) \sim \frac{\|f\|_{p}^p}{\lambda^{p}},$$
and hence it remains to estimate the second term.
%Using the Chebyshev inequality, the Minkowski inequality and the fact
%\[
%    \dist(E_j\setminus F^{(2R_j)}_j, F^{(R_i)}_i) \ge 2R_j,
%\]
%we deduce from Corollary \ref{cor:com-DG} that
By the the Chebyshev inequality, the Minkowski inequality, Corollary \ref{cor:com-DG} and the fact
\[
    \dist(E_j\setminus F^{(2R_j)}_j, F^{(R_i)}_i) \ge 2R_j,
\]
we obtain that
\begin{eqnarray*}%\label{key-est-around-neck-3}
&&\mu\left(\left\{x\in  E_j\setminus F^{(2R_j)}_j:\, \lf|T\lf(f\chi_{F^{(R_i)}_i}-e^{-R_j^2\L}(f\chi_{F^{(R_i)}_i})\r)(x)\r|> \frac{\lambda}2\right\}\right)\nonumber\\
&&\ \lesssim \frac{1}{\lambda^p}\int_{E_j\setminus F^{(2R_j)}_j}
                 \lf|T\lf(f\chi_{F^{(R_i)}_i}-e^{-R_j^2\L}(f\chi_{F^{(R_i)}_i})\r)\r|^p\,d\mu\nonumber\\
&&\ \lesssim \frac{1}{\lambda^p}\left[\int_1^\infty\int_0^{R_j^2}
         \|\nabla \L e^{-(s+t)\L}(f\chi_{F^{(R_i)}_i})\|_{L^p(E_j\setminus F^{(2R_j)}_j)}\frac{\,ds\,dt}{\sqrt t}\right]^p\nonumber\\
%&&\ \lesssim \frac{1}{\lambda^p}\left[\int_1^{\infty}\int_0^{R_j^2} \frac{\|f\chi_{F^{(R_i)}_i}\|_p}{(s+t)^{3/2}} e^{-\frac{\dist(E_j \setminus F^{(2R_j)}_j , F^{(R_i)}_i)^2}{s+t}} \frac{\,ds\,dt}{\sqrt t}\right]^p\nonumber\\
&&\ \lesssim \frac{\|f\chi_{F^{(R_i)}_i}\|^p_p}{\lambda^p}
     \left[\int_{1}^\infty\int_0^{R_j^2} \frac{1}{(s+t)^{3/2}} e^{-c\frac{R_j^2}{s+t}} \frac{\,ds\,dt}{\sqrt t}\right]^p
%&&\ \lesssim \frac{1}{\lambda^p}\left[\int_{R_j^2}^\infty\int_0^{R_j^2} \frac{\|f\chi_{F^{(R_i)}_i}\|_p}{(s+t)^{3/2}} \frac{\,ds\,dt}{\sqrt t}+\int_1^{R_j^2}\int_0^{R_j^2} \frac{\|f\chi_{F^{(R_i)}_i}\|_p}{(s+t)^{3/2}} e^{-c\frac{R_j^2}{s+t}} \frac{\,ds\,dt}{\sqrt t}\right]^p
  \lesssim \frac{\|f\|_p^p}{\lambda^p},
\end{eqnarray*}
since the integral here can be controlled by
\[
    \frac{1}{R_j^3} \int_0^{R_j^2}\int_0^{R_j^2} \frac{\,ds\,dt}{\sqrt t}
    +
    \int_{R_j^2}^\infty\int_0^{R_j^2} \frac{1}{t^{3/2}}  \frac{\,ds\,dt}{\sqrt t}
    \lesssim 1.
\]

    \textbf{Estimation of \boldmath $\ca^{ij}_2$:}
    Using the Chebyshev inequality and the Minkowski inequality, one has
\begin{eqnarray}\label{first-part-ard}
  \ca^{ij}_2
  && \lesssim  \frac{1}{\lambda^2}\int_{E_j\setminus F^{(R_j)}_j}\lf|T\lf(e^{-R_j^2\L}(f\chi_{F^{(R_i)}_i})\r)\r|^2\,d\mu \notag \\
  %&& \lesssim  \frac{1}{\lambda^2}\int_{E_j\setminus F^{(R_j)}_j} \left|\int_{1}^{\infty} \nabla e^{-t\L} \lf(e^{-R_j^2\L}(f\chi_{F^{(R_i)}_i})\r) \frac{\,dt}{\sqrt t} \right|^2\,d\mu \nonumber \\
  &&\lesssim \frac{1}{\lambda^2}\left(\int_1^\infty \|\nabla e^{-(t+R_j^2)\L}(f\chi_{F^{(R_i)}_i}) \|_{L^2(E_j\setminus F^{(R_j)}_j)}\frac{\,dt}{\sqrt t}\right)^2.
\end{eqnarray}
%\begin{equation}
%  K_{j,i,2}
%   \lesssim  \frac{1}{\lambda^2}\int_{E_j\setminus F^{(R_j)}_j}\lf|T\lf(e^{-R_j^2\L}(f\chi_{F^{(R_i)}_i})\r)\r|^2\,d\mu
%%  && \lesssim  \frac{1}{\lambda^2}\int_{E_j\setminus F^{(R_j)}_j} \left|\int_{1}^{\infty} \nabla e^{-t\L} \lf(e^{-R_j^2\L}(f\chi_{F^{(R_i)}_i})\r) \frac{\,dt}{\sqrt t} \right|^2\,d\mu \nonumber \\
% \lesssim \frac{1}{\lambda^2}\left[\int_1^\infty \left\|\nabla e^{-(t+R_j^2)\L}(f\chi_{F^{(R_i)}_i})\right\|_{L^2(E_j\setminus F^{(R_j)}_j)}\frac{\,dt}{\sqrt t}\right]^2.
%\end{equation}
    %Next, we shall estimate $\|\nabla e^{-(t+R_j^2)\L}(f\chi_{F_i^{(R_i)}})\|_{L^2(E_j\setminus F^{(R_j)}_j)}$ as follows.

Let $\psi_j$ be a Lipschitz function on $M$ with $\supp \psi_j\subset \{x \in E_j:\,\dist(x,E_j\setminus F^{(R_j)}_j)<R_j\}$ such that $\psi_j\equiv 1$ on $E_j\setminus F^{(R_j)}_j$  and $|\nabla \psi_j|\le C/R_j$.
%Note that for any $x \in \supp \psi_j$, it holds
%$
%    \dist(x, E_0) \geq \dist(E_0, E_j \setminus F^{(R_j)}_j) - d(x, E_j \setminus F^{(R_j)}_j)  > R_j,
%$
%which means that
Note that it holds
$\dist(\supp \psi_j,\,E_0)>R_j,$
and hence
$$ \supp \psi_j \subset E_j \setminus F^{(R_j/2)}_j.$$
% ��ʦ�� �������ǲ�ȷ���Ƿ������ڶ���psi��ʱ��ֱ��˵֧���������������档 ������ǰ������4.5�б�����ԭ����һ���ġ�
For the simplicity of notation, denote $f\chi_{F_j^{(R_j)}}$ and $t + R_j^2$ by $g$ and $s$.
We use the Caccioppoli
argument as follows %{\color{blue}for $g = f\chi_{F_j^{(R_j)}}$ and $s=t + R_j^2$,}
\begin{eqnarray*}
\int_{M}|\nabla e^{-s\L}g|^2\psi_j^2\,d\mu
&& =\int_M \nabla e^{-s\L}g \cdot \nabla  (\psi_j^2e^{-s\L}g)\,d\mu
   -2\int_M (\nabla e^{-s\L}g \cdot \nabla  \psi_j) \psi_j e^{-s\L}g\,d\mu\\
&& \le \int_M (\L e^{-s\L}g) (\psi_j^2e^{-s\L}g)\,d\mu
    +\frac{1}{2}\int_{M}|\nabla e^{-s\L}g|^2\psi_j^2\,d\mu \\
&& \     +2\int_{M} |\nabla \psi_j(x)|^2 |e^{-s\L}g(x)|^2\,d\mu(x).
\end{eqnarray*}
This fact, together with H\"older's inequality %, the fact $|\psi_j| \lesssim 1$ and $|\nabla \psi_j| \lesssim 1 /R_j$  implies
and the properties of $\psi_j$, implies that
\begin{eqnarray*}
&&\int_{E_j\setminus F^{(R_j)}_j}|\nabla e^{-s\L}g|^2\,d\mu\le \int_{M}|\nabla e^{-s\L}g|^2\psi_j^2\,d\mu\\
&& \ \le 2\left(\int_M |\L e^{-s\L}g|^2\psi_j^2\,d\mu\right)^{\frac12} \left(\int_M|\psi_j e^{-s\L}g|^2\,d\mu\right)^{\frac12}
       + 4 \int_{M} |\nabla \psi_j|^2 |e^{-s\L}g|^2\,d\mu \\
&& \  \lesssim \left(\int_{E_j \setminus F^{(R_j/2)}_j} |\L e^{-s\L}g|^2 \,d\mu\right)^{\frac12}
                \left(\int_{E_j \setminus F^{(R_j/2)}_j} |e^{-s\L}g|^2\,d\mu\right)^{\frac12}
       + \frac{1}{R_j^2} \int_{F^{(R_j)}_j \setminus F^{(R_j/2)}_j}  |e^{-s\L}g|^2\,d\mu.
\end{eqnarray*}
 Combining this with Propositions \ref{map-heat-parabolic} and \ref{mapping-time-heat-parabolic-1},
 we deduce that % for $1<p<2$,
\begin{eqnarray*}%\label{L2-nabla-f}
&& \|\nabla e^{-(t+R_j^2)\L}(f\chi_{F^{(R_i)}_i}) \|^2_{L^2(E_j\setminus F^{(R_j)}_j)}\nonumber \\
%&& \ \lesssim \frac{1}{t+R_j^2}\left(\int_{E_j\setminus F^{(R_j/2)}_j} |(t+R_j^2)\L e^{-(t+R_j^2)\L}f\chi_{F^{(R_i)}_i}|^2 \,d\mu\right)^{1/2} \left(\int_{E_j\setminus F^{R_j/2}_j} |e^{-(t+R_j^2)\L}f\chi_{F^{(R_i)}_i}|^2\,d\mu\right)^{1/2}\nonumber \\
%&& \ \quad  +\frac{1}{R_j^2}\int_{F^{(R_j)}_j \setminus F^{(R_j/2)}_j} (e^{-(t+R_j^2)\L}f\chi_{F^{(R_i)}_i})^2\,d\mu \nonumber \\
&& \ \lesssim \frac{\|(f\chi_{F^{(R_i)}_i})\|^2_{p}}{t+R_j^2} \frac{R_j}{\sqrt t + R_j} \mu(F^{(R_i)}_i)^{1 - \frac 2p}
    + \frac{1}{R_j^2} \|e^{-(t+R_j^2) \L} (f \chi_{F_i^{(R_i)}}) \|_{L^{\infty} (E_j \setminus F^{(R_j/2)}_j)}^2 \mu(F^{(R_j)}_j) \nonumber \\
&& \
\lesssim \frac{\|(f\chi_{F^{(R_i)}_i})\|_{p}^2}{t+R_j^2} \frac{R_j}{\sqrt t+R_j} \mu(F^{(R_i)}_i)^{1-\frac 2p}+\frac{\|(f\chi_{F^{(R_i)}_i})\|_{p}^2}{R_j^2}\left(\frac{R_j^{2}}{t +R_j^2}\right)^2 \mu(F^{(R_i)}_i)^{1-\frac 2p} \nonumber \\
%&& \
%\lesssim \frac{\|(f\chi_{F^{(R_i)}_i})\|_{p}^2}{t+R_j^2} \frac{R_j}{\sqrt t+R_j} \mu(F^{(R_i)}_i)^{1-\frac 2p}+\frac{\|(f\chi_{F^{(R_i)}_i})\|_{p}^2}{t+R_j^2} \frac{R_j^{2}}{t +R_j^2} \mu(F^{(R_i)}_i)^{1-\frac 2p} \nonumber\\
&& \ \lesssim \frac{\|(f\chi_{F^{(R_i)}_i})\|_{p}^2}{t+R_j^2}\frac{R_j}{\sqrt t+R_j} \mu(F^{(R_i)}_i)^{1-\frac 2p}.
\end{eqnarray*}

Inserting this estimate into \eqref{first-part-ard} and using \eqref{eq:F_i-set}, we conclude that
\begin{eqnarray*} %\label{key-est-around-neck-2}
  \ca^{ij}_2
  && \lesssim \frac{\|(f\chi_{F^{(R_i)}_i})\|_{p}^2}{\lambda^2 } \mu(F^{(R_i)}_i)^{1-\frac 2p}\left[\int_1^\infty \left(\frac{R_j}{\sqrt t+R_j}\right)^{\frac12}\frac{\,dt}{\sqrt{t+R_j^2}\sqrt t}\right]^2\nonumber\\
 && \lesssim \frac{\|f\|_{p}^2}{\lambda^2 } \mu(F^{(R_i)}_i)^{1-\frac 2p}
   % \lesssim \frac{\|f\|_{p}^2}{\lambda^2 } \left( \frac{\|f\|^p_p}{\lambda^p} \right)^{1-\frac 2p}
   \lesssim \frac{\|f\|_{p}^p}{\lambda^p },
\end{eqnarray*}
since the integral here can be controlled by
$$
    %\int_1^\infty \left(\frac{R_j}{\sqrt t+R_j}\right)^{1/2}\frac{\,dt}{\sqrt{t+R_j^2}\sqrt t}
%    \leq
    \int_0^{R_j^2} \frac{1}{R_j} \frac{\,dt}{\sqrt t}
    + \int_{R_j^2}^{\infty} \lf( \frac{R_j}{\sqrt t} \r)^{\frac12} \frac{\, dt}{t}
         %+ \int_{R_j^2}^{\infty} \frac{R_j^{1/2}}{t^{1/4}} \frac{\, dt}{t}
    \lesssim 1.
$$

Combing the estimates of $\ca^{ij}_1$ and $\ca^{ij}_2$, and summing over $1\leq i \leq \ell$ and $1 \leq j \leq \ell$, we have
\[
    \ca
    \leq \sum_{i=1}^{\ell} \sum_{j=1}^{\ell} (\ca^{ij}_1+ \ca^{ij}_2)
    \lesssim \frac{\|f\|_{p}^p}{\lambda^p }.
\]
This completes the proof.
\end{proof}

\subsection{Estimate on the part away from the center}\label{sec:es-awar-center} \hskip\parindent
  The main aim of this subsection is to prove the following proposition.
\begin{prop} \label{es:away-center}
 Let $1<p<p_0$, where $p_0$ is as in \eqref{eq:p0-def}. For each $f \in L^p(M)$, it holds that
\[
    \mathcal{W}=
    \sum_{i=1}^{\ell}
       \mu\lf(\lf\{
         x \in \bigcup_{j=1}^{\ell} \, (E_j\setminus F^{(R_j)}_j):
         |T(f \chi_{E_i \setminus F_i^{(R_i)}})(x)| > \lambda\r\}\r) \lesssim \frac{\|f\|_p^p}{\lambda^p},
         \quad \forall 0<\lambda \leq \|f\|_p.
\]
\end{prop}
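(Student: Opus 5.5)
Following the outline in Step 4, I would fix $1\le i\le \ell$ and perform a Calder\'on--Zygmund decomposition of $f\chi_{E_i\setminus F^{(R_i)}_i}$ on the end $M_i$, which is doubling, at height $\lambda$. This gives $f\chi_{E_i\setminus F^{(R_i)}_i}=g_i+\sum_k b_{ik}$ with $|g_i|\lesssim\lambda$ and $\|g_i\|_p\lesssim\|f\|_p$. Each $b_{ik}$ is supported in a ball $B_{ik}=B_i(x_{ik},r_{ik})$, with $\|b_{ik}\|_p^p\lesssim\lambda^p\mu(B_{ik})$ and $\sum_k\mu(B_{ik})\lesssim\|f\|_p^p/\lambda^p$. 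Since $\sum_k\mu(B_{ik})\lesssim\mu(F^{(R_i)}_i)\sim V_i(R_i)$, doubling forces $r_{ik}\lesssim R_i$. After shrinking the level set slightly and using the Remark after Proposition \ref{map-heat-parabolic} that $F^{(R_i)}_i$ may be replaced by $F^{(cR_i)}_i$, I may assume the balls lie in $E_i\setminus F^{(R_i/2)}_i$ with $r_{ik}\le R_i/8$.

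The good part is easy. By Corollary \ref{cor:L2-T} and Chebyshev,
\[
\mu(\{|Tg_i|>\lambda\})\lesssim\lambda^{-2}\|g_i\|_2^2\lesssim\lambda^{-p}\|g_i\|_p^p .
\]
For the bad part I remove $\bigcup_k 4B_{ik}$, whose measure is $\lesssim\|f\|_p^p/\lambda^p$. I then split $T b_{ik}=T e^{-r_{ik}^2\L}b_{ik}+T(I-e^{-r_{ik}^2\L})b_{ik}$, in the style of \cite{cd99}.

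For the first piece, the function $\sum_k e^{-r_{ik}^2\L}b_{ik}$ should have $L^2$ norm $\lesssim\lambda^2\sum\mu(B_{ik})$. I would get this by duality against $\sum_k\chi_{2^mB_{ik}}$, using the kernel bounds of Lemma \ref{lem:es-away} on $E_i\setminus F^{(R_i/2)}_i$ for times $r_{ik}^2\lesssim R_i^2$. Since $b_{ik}$ is only in $L^p$, the estimate $\|e^{-r^2\L}b\|_\infty\lesssim V_i(x_{ik},r)^{-1/p}\|b\|_p\lesssim\lambda$ is what replaces the usual $L^1$ argument. The contribution of the mass that leaks off $E_i$ is controlled by Theorem \ref{thm:sim-heat-es}(iii) and the Gaussian factor $e^{-cR_i^2/r^2}$. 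The $L^2$ bound for $T$ then gives a contribution $\lesssim\|f\|_p^p/\lambda^p$.

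For the second piece, write
\[
T(I-e^{-r^2\L})b=\int_1^\infty\int_0^{r^2}\nabla\L e^{-(s+t)\L}b\,ds\,\frac{dt}{\sqrt t}.
\]
The estimate splits by where the output is measured.

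(i) On $E_i\setminus(F^{(R_i)}_i\cup 4B_{ik})$ with $t\lesssim R_i^2$, I would use Proposition \ref{est-integral-diagonal-general}, extended to $\nabla\L e^{-u\L}$ via the time-derivative bound in Lemma \ref{lem:es-away-time-der} and the semigroup property. Integrating $|b_{ik}|$ against the kernel bound $u^{-3/2}e^{-cr_{ik}^2/u}$ then yields an $L^1$ bound $\lesssim\|b_{ik}\|_1\lesssim\lambda\mu(B_{ik})$, and Chebyshev in $L^1$ finishes this part.

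(ii) For $t\gtrsim R_i^2$, and for the off-diagonal ends $E_j$, $j\ne i$, where the distance to $B_{ik}$ is at least $R_j+R_i$, I would use the $L^p$-Davies--Gaffney estimate for $t^{3/2}\nabla\L e^{-t\L}$ (Corollary \ref{cor:com-DG}). Summing over dyadic annuli around $B_{ik}$ gives $\|\cdot\|_p\lesssim (r_{ik}^2/t)\,t^{-1/2}e^{-c\,\mathrm{dist}^2/t}\|b_{ik}\|_p$. The $t$-integral converges, and summing over $k$ in $\ell^p$ gives $\lesssim\|f\|_p^p/\lambda^p$.

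The main obstacle is (ii) together with the leaking mass in the first piece. Here $b_{ik}$ is not in $L^1$ with good constants relative to $V_i$, and the ends have different volume growth, so the $L^p\to L^p$ off-diagonal bound must absorb both the sum over $k$ and the time integral. I expect this is where the restriction $p<p_0\le N_\infty/(N_\infty-2)$ enters. Summing $V_i(x,2^mr)^{1-p/2}$-type factors against $e^{-c4^m}$ only works for such $p$, and I would verify that exponent count first.
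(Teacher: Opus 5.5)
There is a genuine gap in how you sum over $k$ in the bad part. First, doubling does not force $r_{ik}\lesssim R_i$. It only gives $V_i(x_{ik},r)\gtrsim \bigl(r/(r+d(x_{ik},o_i))\bigr)^{N_i}V_i(r)$, which degenerates for balls far from $E_0$. So Calder\'on--Zygmund balls of radius $\gg R_i$ can occur far out, and your reduction to $r_{ik}\le R_i/8$ is unjustified.

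More seriously, look at the coefficient $c_{ik}$ your step (ii) puts in front of $\|b_{ik}\|_p$. On the diagonal it is of order $\min\{1,r_{ik}^2/R_i^2\}$: the distance from $B_{ik}$ to $E_i\setminus 4B_{ik}$ is only $\sim r_{ik}$, while the time integral starts at $R_i^2$. Off the diagonal you only use $\dist\ge R_i+R_j$, so it is of order $r_{ik}^2/(R_i+R_j)^2$. After Minkowski you are left with $\lambda^{-p}\bigl(\sum_k c_{ik}\|b_{ik}\|_p\bigr)^p$. Closing this by H\"older with \eqref{CZ-1}--\eqref{CZ-2} requires $\sum_k c_{ik}^{p'}\lesssim 1$. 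At the scale $R_i$ the hypotheses do not give this. Indeed $(r_{ik}/R_i)^{N_i}\lesssim \mu(B_{ik})/V_i(x_{ik},R_i)$, and nothing bounds $V_i(x_{ik},R_i)$ below by $V_i(R_i)$ uniformly when $d(x_{ik},o_i)\gg R_i$.

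Your diagnosis of the obstacle is also off on two counts. First, $b_{ik}$ is perfectly good in $L^1$, since $\|b_{ik}\|_1\le \mu(B_{ik})^{1/p'}\|b_{ik}\|_p\lesssim\lambda\mu(B_{ik})$. Second, the restriction $p<N_\infty/(N_\infty-2)$ does not come from sums of $V_i(x,2^mr)^{1-p/2}$ against Gaussians; it enters exactly through this $\ell^{p'}$ summation.

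The missing idea is to work at the scale $d(x_{ik},o_i)$ rather than $R_i$.
\begin{itemize}
\item Run the decomposition at height $\delta\lambda$ with $\delta$ large. This gives \eqref{isolate-balls}: large balls may occur, but only far out. Hence $d(x_{ik},o_i)\sim\dist(B_{ik},E_0)$ by \eqref{isolate-balls-1}.
\item On the diagonal, the paper replaces $T$ by $\nabla\L^{-1/2}$; the difference is the local part, handled by Proposition \ref{local-part}. It then writes $\sqrt{\pi}\,\nabla\L^{-1/2}(1-e^{-r_{ik}^2\L})=\int_0^\infty\bigl[t^{-1/2}-\chi_{(r_{ik}^2,\infty)}(t)(t-r_{ik}^2)^{-1/2}\bigr]\nabla e^{-t\L}\,dt$. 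Only $\nabla e^{-t\L}$ appears, so no extension of Proposition \ref{est-integral-diagonal-general} to $\nabla\L e^{-t\L}$ is needed.
\item Time is then split at $R_i^2$ and $4d(x_{ik},o_i)^2$. For $t\le R_i^2$, and for $R_i^2\le t\le 4d(x_{ik},o_i)^2$ on $G_{2k}=\{\dist(x,E_0)>\frac12\dist(B_{ik},E_0)\}$, use the $L^1$ bound of Proposition \ref{est-integral-diagonal-general}, with $\kappa\sim d(x_{ik},o_i)$ in the second range. On $G_{1k}$, which lies at distance $\gtrsim d(x_{ik},o_i)$ from $B_{ik}$, use Davies--Gaffney. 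For $t\ge 4d(x_{ik},o_i)^2$, use Proposition \ref{mapping-gradient-heat} together with $|t^{-1/2}-(t-r_{ik}^2)^{-1/2}|\lesssim r_{ik}^2t^{-3/2}$.
\item Off the diagonal, apply Davies--Gaffney with distance $\dist(B_{ik},E_0)\sim d(x_{ik},o_i)$ instead of $R_i+R_j$.
\end{itemize}
Every term controlled only through $L^p$ bounds then carries the factor $r_{ik}^2/d(x_{ik},o_i)^2$. Since $2p'>N_\infty$ and $r_{ik}\le d(x_{ik},o_i)$,
\[
\Bigl(\frac{r_{ik}^2}{d(x_{ik},o_i)^2}\Bigr)^{p'}\le\Bigl(\frac{r_{ik}}{d(x_{ik},o_i)}\Bigr)^{N_i}\lesssim\frac{\mu(B_{ik})}{V_i(o_i,d(x_{ik},o_i))}\lesssim\frac{\mu(B_{ik})}{V_i(R_i)},
\]
which sums to $\lesssim 1$ by \eqref{CZ-2}.

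There is a related problem in your first piece. For the part of $\sum_k e^{-r_{ik}^2\L}b_{ik}$ that leaks off $E_i$, a factor $e^{-cR_i^2/r_{ik}^2}$ is useless once $r_{ik}\gtrsim R_i$. The paper instead uses the uniform bound $\|e^{-t\L}\|_{L^2(M\setminus E_i)\to L^\infty(B_{ik})}\lesssim V_i(R_i)^{-1/2}$ from Proposition \ref{prop:map-anot}, together with $\sum_k\mu(B_{ik})\lesssim V_i(R_i)$.
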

 To this end, we shall decompose $\mathcal{W}$ into three parts with the help of the Calder\'on-Zygmund decomposition in Subsection \ref{sec:cz-decom}, and then estimate them in Subsections \ref{sec:es-of-g}-\ref{sec:off-diag}.

\subsubsection{Calder\'on-Zygmund decomposition } \label{sec:cz-decom} \hskip\parindent
For each $1 \leq i \leq \ell$, since $M_i$ is a doubling manifold,
we may run the Calder\'on-Zygmund decomposition (cf. \cite{cw77, cd99}) of $f\chi_{E_i\setminus F^{(R_i)}_i}$ on $M_i$ at the height $\delta \lambda$,
where $\delta >1$ is large enough to be fixed later,
and then obtain that
$$f\chi_{E_i\setminus F^{(R_i)}_i}=g_i + \sum_{k}b_{ik}.$$
Here $\supp b_{ik} \subset B_{ik} = B(x_{ik}, r_{ik})$, $\{B_{ik}\}$ has the bounded overlap property (i.e. $\sum_k \chi_{B_{ik}} \lesssim 1$), $|g_i|\le \delta \lambda$,
%where the supports of $\{b_{ik}\}$ is bounded overlap, $|g_i|\le C_{CZ}\lambda$,
\begin{equation} \label{g-es}
  \int_{M_i} |g_i|^p d\mu_i
  \lesssim \int_{M_i} |f \chi_{E_i \setminus F^{(R_i)}_i}|^p d\mu_i
  = \int_M |f \chi_{E_i \setminus F^{(R_i)}_i}|^p d \mu,
\end{equation}
\begin{equation} \label{CZ-1} %\tag{\textsl{CZ-1}}
\fint_{B_{ik}}|b_{ik}|^p\,d\mu_i \lesssim (\delta\lambda)^p,
\end{equation}
and
\begin{equation} \label{CZ-2} %\tag{\textsl{CZ-2}}
\sum_{k}\mu_i(B_{ik})
\lesssim \frac{1}{(\delta\lambda)^p} \int_{M_i}|f\chi_{E_i\setminus F^{(R_i)}_i}|^p\,d\mu_i
\lesssim \frac{\|f\|_p^p}{(\delta\lambda)^p}.
\end{equation}

 \textbf{Claim.}
%Fixed a large enough $\delta$, %depending only on the manifold itself,
For a fixed $\delta >1$ large enough,
it holds that
\begin{equation}\label{isolate-balls}
\dist(B_{ik},E_0)\ge \frac{1}{2}(R_i+r_{ik}).
\end{equation}
\textit{Proof of the Claim.}
%Let us prove the claim.
When $r_{ik}<R_i/2$,
%since $B_{ik}\cap (E_i\setminus F_i) \neq\emptyset$,
%we can pick a point $u \in B_{ik}\cap (E_i\setminus F^{(R_i)}_i)$,
for each $z \in B_{ik}\cap (E_i\setminus F^{(R_i)}_i)$,
it holds
$$\dist(B_{ik}, E_0) \geq \dist(z, E_0) - \diam(B_{ik}) \ge 2R_i - 2r_{ik} %> 2R_i - R_i = R_i
%\geq (R_i + r_{ik})/2,
\geq \frac12(R_i + r_{ik}),
$$
which means that \eqref{isolate-balls} holds.
%\\----------------------------------------------------\\
%The details of $\dist(B_{ik}, E_0) \geq \dist(y, E_0) - \diam(B_{ik})$ is as follows:
%For any $y \in B_{ik}\cap E_i\setminus F_i$, $z \in E_0$ and $x \in B_{ik}$, we have
%$$ d(y,z) \leq d(y,x) + d(x,z) \leq \diam(B_{ik}) + d(x,z),$$
%and hence
%$$d(y,E_0) =  \inf\limits_{z \in E_0} d(y,z) \leq  \diam(B_{ik})+ \inf\limits_{z \in E_0} d(x,z) =  \diam(B_{ik})+ d(x,E_0),$$ for any $x \in B_{ik}$.
%Moreover,
%$$ d(y, E_0)  \leq \diam(B_{ik}) + \inf\limits_{x \in B_{ik}} d(x,E_0) \leq \diam(B_{ik}) + \dist(B_{ik},E_0).$$
%-------------------------------------------------------------\\

When $r_{ik}\ge R_i/2$,  %we argue via the proof of contradiction. %we conclude the claim via arguing by contradiction.
%
%
%Otherwise $r_{ik}\ge R_i/2$, the proof is by contradiction.
%If the claim fails,
% prove it by negation
if  the claim is not true,
then
$$
\dist(B_{ik},E_0)<\frac12(R_i+r_{ik})
<\frac32r_{ik},
%< \frac{3}{2}r_{ik},
$$
%$$
%\dist(B_{ik},E_0)<\frac12(R_i+r_{ik})
%<\frac32r_{ik},
%%< \frac{3}{2}r_{ik},
%$$
which together with the convention $\diam(E_0) =2$ implies that
$$
    d(x_{ik}, o_i) \leq r_{ik} + \dist(B_{ik},E_0)+\diam(E_0) \lesssim r_{ik}.
$$
%\\=======THE REASON FOR THE FIRST INEQUALITY=========\\
%    For any $z \in B_{ik}$, $y \in E_0$, it holds
%    \[
%        d(x_{ik}, o_i)
%        \leq d(x_{ik},z) + d(z,y) + d(o_i,y)
%    \]
%    Taking the infimum of $z \in B_{ik}$, $y \in E_0$, we have
%    \[
%        d(x_{ik}, o_i)  \leq r_{ik} + \dist (B_{ik} , E_0) + \diam(E_0).
%    \]
%=========END=======================================\\
Combining this with the doubling condition, one concludes
\begin{equation*} %\label{eq:claim-con}
   \mu(F^{(R_i)}_i) \sim V_i(R_i) \lesssim V_i(o_i,2r_{ik}) \lesssim V_i(x_{ik},r_{ik}) \lesssim \mu_i(B_{ik}).
\end{equation*}
%\begin{equation*}
%\mu(B_{ik})\ge CV_i(x_{ik},3r_{ik})\ge CV_i(o_i,3r_{ik})\ge CV_i(R_i),
%\end{equation*}
Besides, it follows from \eqref{CZ-2} and the convention $\mu(F^{(R_i)}_i) = 100\ell \|f\|_p^p / \lambda^p$ that
$$
\mu_i(B_{ik}) \leq
\sum_{k}\mu_i(B_{ik})
\lesssim  \frac{\mu(F^{(R_i)}_i)}{\delta^p}.
%\lesssim \frac{\mu(F^{(R_i)}_i)}{\delta^p},
$$
%which contradicts with \eqref{eq:claim-con} as soon as we choose $\delta$ large enough.
%which contradicts with \eqref{cz-measure} as soon as we choose $\delta$ large enough.
%Therefore \eqref{isolate-balls} holds.                                           $\hfill \square$
The two inequalities above lead to a contradiction as soon as we choose $\delta$ large enough.
Hence the claim is proved.  $\hfill \square$ %/ we conclude the claim.

\begin{rem}
\rm
  By the triangle inequality, \eqref{isolate-balls} implies that  %(see \cite{jiang-li-lin-2022}) %(see \cite[(4.11) and (4.12)]{jiang-li-lin-2022})
   \begin{equation}\label{isolate-balls-1}
%\dist(B_{ik},E_0) \le
%(R_i+r_{ik})/2
\frac12 (R_i+r_{ik}) + r_{ik}
\le d(x_{ik},o_i)
  \le 3\dist(B_{ik},E_0);
\end{equation}
see \cite[(4.11) and (4.12)]{jiang-li-lin-2022}.
%see \cite{jiang-li-lin-2022}.
Besides, \eqref{isolate-balls} also tells us that
 $\supp b_{i k} \subset E_i$ (hence $\mu(B_{ik}) = \mu_i(B_{ik})$), which together with the identity $f \chi_{E_i \setminus F_i^{(R_i)}} = g_i+ \sum_k b_{i k}$ implies that
 $\supp g_i \subset E_i$. Since $\mu=\mu_i$ on $E_i$, the integrals of $g_i$ and $b_{ik}$ on $M_i$ are the same as on $M$, i.e.,
$$
\int_{M_i}|g_i|^p \, d\mu_i=\int_M|g_i|^p \, d\mu, \quad \int_{B_{ik}}|b_{i k}|^p \, d\mu_i = \int_{B_{ik}}|b_{ik}|^p \, d\mu.
$$
%and
%\begin{equation}\label{isolate-balls-2}
%  (R_i+r_{ik})/2 \le  d(x_{ik},o_i).
%\end{equation}
 %Notice that \eqref{isolate-balls} implies that $\dist(B_{ik},E_0)\le d(x_{ik},o_i) \le r_{ik} + \dist(B_{ik},E_0)+\diam(E_0)\le 3\dist(B_{ik},E_0)$
% and  $\frac{1}{2}(R_i+r_{ik})+r_{ik}\le \dist(B_{ik},E_0)+r_{ik}\le d(x_{ik},o_i)$ hold and then \eqref{isolate-balls-1} \eqref{isolate-balls-2}
\end{rem}
%
%This claim further implies that
%\begin{equation}\label{isolate-balls-1}
%\dist(B_{ik},E_0)\le d(x_{ik},o_i) \le  \dist(B_{ik},E_0)+\diam(E_0)+r_{ik}\le 3\dist(B_{ik},E_0),
%\end{equation}
%and
%\begin{equation}\label{isolate-balls-2}
%\frac{1}{2}(R_i+r_{ik})+r_{ik}\le \dist(B_{ik},E_0)+r_{ik}\le d(x_{ik},o_i).
%\end{equation}
%Note that \eqref{isolate-balls} also implies that
%\[
%    \supp b_{ik}, \, \supp g_i \subset E_i.
%\]
%%\\---------------------------------------------------\\
%%The details of the last inequality:
%%
%%Assume that $\ell_{x_{ik}, o_i} \cap \partial B_{ik} =c$, $\dist(B_{ik} , E_0) = d(a,b)$.
%%Then
%%$$ d(a,b) \leq d(c, o_i) \leq d(x_{ik} , o_i) - r_{ik}.$$
%%
%%We used this inequality \eqref{isolate-balls-2} below, but it seems that the inequality
%%$$ \frac{1}{2}(R_i+r_{ik}) \le \dist(B_{ik},E_0) \le d(x_{ik},o_i)$$ is enough.
%%\\-------------------------------------------------\\

    Using the Calder\'on-Zygmund decomposition, one writes
     \begin{eqnarray*}%\label{CZ-decompo-1}
        && \sum_{i=1}^\ell \mu\left(\lf\{x\in \bigcup_{j=1}^\ell \, (E_j\setminus F^{(R_j)}_j):|T(f\chi_{E_i\setminus F^{(R_i)}_i})(x)|>3\lambda\r\}\right) \nonumber \\
        && \ \leq \sum_{i=1}^\ell \mu\lf(\lf\{x\in \bigcup_{j=1}^\ell \, (E_j\setminus F^{(R_j)}_j):|Tg_i(x)|>\lambda\r\}\r) \\
        && \ \
            + \sum_{i=1}^\ell \mu\lf(\lf\{x\in E_i \setminus F^{(R_i)}_i:\lf|T\lf(\sum_k b_{ik}\r)(x)\r|>\lambda\r\}\r) \notag \\
        && \ \ + \sum_{i=1}^\ell \mu\lf(\lf\{x\in \bigcup_{1\leq j\neq i \leq \ell} \, (E_j\setminus F^{(R_j)}_j):\lf|T\lf(\sum_k b_{ik}\r)(x)\r|>\lambda\r\}\r).
    \end{eqnarray*}
   %====================
%    \[
%    \mu\lf(\lf\{x\in \bigcup_{\substack{1 \leq j \leq \ell \\ j \neq i}} \lf(E_j\setminus F^{(R_j)}_j\r):\lf|T\lf(\sum_k b_{ik}\r)\r|>\lambda\r\}\r)
%    \]
%     \[
%    \mu\lf(\lf\{x\in \bigcup_{1 \leq j \leq \ell,\, j \neq i} \lf(E_j\setminus F^{(R_j)}_j\r):\lf|T\lf(\sum_k b_{ik}\r)\r|>\lambda\r\}\r)
%    \]
%    \begin{align*}%\label{CZ-decompo-1}
%        & \sum_{i=1}^\ell \mu\left(\lf\{x\in \bigcup_{j=1}^\ell (E_j\setminus F^{(R_j)}_j):|T(f\chi_{E_i\setminus F^{(R_i)}_i})|>3\lambda\r\}\right) \nonumber \\
%        & \ \leq \sum_{i=1}^\ell \mu\lf(\lf\{x\in \bigcup_{j=1}^\ell (E_j\setminus F^{(R_j)}_j):|T(g_i)|>\lambda\r\}\r)
%        + \sum_{i=1}^\ell \mu\lf(\lf\{x\in \bigcup_{1\leq j\neq i \leq \ell} (E_j\setminus F^{(R_j)}_j):\lf|T\lf(\sum_k b_{ik}\r)\r|>\lambda\r\}\r) \notag\\
%        & \ \
%            + \sum_{i=1}^\ell \mu\lf(\lf\{x\in E_i \setminus F^{(R_i)}_i:\lf|T\lf(\sum_k b_{ik}\r)\r|>\lambda\r\}\r).
%    \end{align*}
%    ====================
    We shall call the first term as the good part, the second term as the bad parts on the diagonal, and the
    last term as the bad parts off the diagonal,  respectively, and then handle them in the following subsections.

%    ====================\\
%    Using the Calder\'on-Zygmund decomposition, we split the part away from the center $f\chi_{E_i\setminus F_i}$ into its good part $g$ and its bad part $\sum_{k}b_{ik}$ as follows
%    \begin{align}\label{CZ-decompo-1}
%        & \sum_{i=1}^\ell \mu\left(\{x\in \cup_{j=1}^\ell E_j\setminus F_j:|T(f\chi_{E_i\setminus F_i})|>\lambda\}\right) \nonumber \\
%        & \leq \sum_{i=1}^\ell \mu(\{x\in \cup_{j=1}^\ell E_j\setminus F_j:|T(g_i)|>\lambda/2\})
%         + \sum_{i=1}^\ell \mu(\{x\in \cup_{j=1}^\ell E_j\setminus F_j:|T(\sum_{k}b_{ik})|>\lambda/2\})
%    \end{align}
%    We can easily obtain the estimate of the term involving $g$; see \S\ref{sec:es-of-g} for details.
%    However, with respect to the estimate of the term involving $\sum_{k}b_{ik}$, we need to further divide it into an off-diagonal part and an on-diagonal part as follows
%    \begin{align}\label{CZ-decompo-2}
%        & \sum_{i=1}^\ell \mu(\{x\in \cup_{j=1}^\ell E_j\setminus F_j:|T(\sum_{k}b_{ik})|>\lambda\}) \nonumber \\
%        & \leq \sum_{i=1}^\ell \mu(\{x\in \cup_{j\neq i} E_j\setminus F_j:|T(\sum_k b_{ik})|>\lambda\})
%           + \sum_{i=1}^\ell \mu(\{x\in E_i \setminus F_i:|T(\sum_k b_{ik})|>\lambda\}),
%    \end{align}
%    and deduce the required
%    estimate by using the $L^p$-boundedness of $\nabla e^{-t\L}$, the $L^p$-Davies-Gaffney estimate of $\L e^{-t\L}$, the heat kernel estimate in Section \ref{sec:map-proper} and Subsection \ref{sec:heat-ker-away-center-est}.
%    Below, we will estimate them separately.

\subsubsection{Estimate of the good part} \label{sec:es-of-g} \hskip\parindent
    We can easily obtain the following lemma.
\begin{lem}
  Let $1 < p <2$. For each $f \in L^p(M)$, it holds that
  \[
  \sum_{i=1}^\ell \mu\lf(\lf\{x\in \bigcup_{j=1}^\ell \, (E_j\setminus F^{(R_j)}_j):|Tg_i(x)|>\lambda\r\}\r)
  \lesssim \frac{\|f\|_p^p}{\lambda^p}, \quad
  \forall 0<\lambda \leq \|f\|_p.
  \]
\end{lem}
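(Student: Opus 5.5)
The plan is to use the $L^2$-boundedness of the large time part $T$ (Corollary \ref{cor:L2-T}) together with the pointwise bound $|g_i|\le \delta\lambda$. For each fixed $i$, Chebyshev's inequality at the $L^2$ level gives
\[
\mu\lf(\lf\{x\in \bigcup_{j=1}^\ell \, (E_j\setminus F^{(R_j)}_j):|Tg_i(x)|>\lambda\r\}\r)
\le \mu\left(\{x\in M: |Tg_i(x)|>\lambda\}\right)
\le \frac{1}{\lambda^2}\|Tg_i\|_2^2 \lesssim \frac{1}{\lambda^2}\|g_i\|_2^2 .
\]
The cost of passing from the union over $j$ to all of $M$ is harmless, since we only need an upper bound.

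Next I would convert the $L^2$ norm of $g_i$ into an $L^p$ norm using the height of the Calder\'on--Zygmund decomposition. Since $1<p<2$ and $|g_i|\le\delta\lambda$, we have $|g_i|^2\le (\delta\lambda)^{2-p}|g_i|^p$. Since $\supp g_i\subset E_i$ and $\mu=\mu_i$ there (see the remark after \eqref{isolate-balls}), estimate \eqref{g-es} then yields
\[
\|g_i\|_2^2 \le (\delta\lambda)^{2-p}\int_M |g_i|^p\,d\mu \lesssim (\delta\lambda)^{2-p}\int_M |f\chi_{E_i\setminus F^{(R_i)}_i}|^p\,d\mu \le (\delta\lambda)^{2-p}\|f\|_p^p .
\]
Combining the two displays gives a bound of $\delta^{2-p}\|f\|_p^p/\lambda^p$ for each $i$. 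Here $\delta$ is a fixed constant chosen in the Claim, so it is absorbed into the implicit constant. Summing over the finitely many $i=1,\dots,\ell$ finishes the proof.

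There is essentially no obstacle. The only point to be careful about is that the constant in \eqref{g-es} and the choice of $\delta$ do not depend on $\lambda$ or $f$. This holds because $\delta$ was fixed once and for all in the Claim, using only the doubling constants. The restriction $\lambda\le\|f\|_p$ is not even needed for this step.
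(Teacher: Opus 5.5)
Your proposal is correct and follows the paper's own argument: Chebyshev at the $L^2$ level, the $L^2$-boundedness of $T$ (Corollary \ref{cor:L2-T}), the bound $|g_i|^2\le(\delta\lambda)^{2-p}|g_i|^p$, then \eqref{g-es}, summed over the finitely many $i$. Your remarks that $\supp g_i\subset E_i$ and that $\delta$ is fixed independently of $f$ and $\lambda$ make explicit what the paper leaves implicit; the hypothesis $\lambda\le\|f\|_p$ is used only upstream, to make $R_i\ge1$ in \eqref{eq:F_i-set} and hence the decomposition well defined.
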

\begin{proof}
The $L^2$-boundedness of $T$ (cf. Corollary \ref{cor:L2-T}) together with the fact $|g_i| \lesssim \lambda$  implies that
\begin{eqnarray*}%\label{es-g}
\sum_{i=1}^\ell \mu\lf(\lf\{x\in \bigcup_{j=1}^\ell \, (E_j\setminus F^{(R_j)}_j):|Tg_i(x)|>\lambda\r\}\r)
&&\lesssim \sum_{i=1}^\ell\frac{1}{\lambda^2}\int_{M}|g_i|^2\,d\mu \\
% \lesssim \sum_{i=1}^\ell \frac{1}{\lambda^p}\int_{E_i} \left(\frac{|g_i|}{\lambda}\right)^{2-p} |g_i|^p\,d\mu \nonumber\\
&& \lesssim \sum_{i=1}^\ell\frac{1}{\lambda^p}\int_{E_i}|g_i|^p\,d\mu
  \lesssim \frac{\|f\|_p^p}{\lambda^p},
\end{eqnarray*}
because of \eqref{g-es}. This completes the proof.
\end{proof}

\subsubsection{Estimate of the bad parts on the diagonal}\label{sec:on-diag}\hskip\parindent
Recall that
$$
    T = \frac{1}{\sqrt{\pi}} \int_1^{\infty} \nabla e^{-t \L}  \frac{\, dt}{\sqrt{t}}
   % = \frac{1}{\sqrt{\pi}} \int_0^{\infty} \nabla e^{-t \L}  \frac{\, dt}{\sqrt{t}}
%       - \frac{1}{\sqrt{\pi}} \int_1^{\infty} \nabla e^{-t \L}  \frac{\, dt}{\sqrt{t}}
    = \nabla \mathcal{L}^{-1/2} - \frac{1}{\sqrt{\pi}} \int_0^{1} \nabla e^{-t \L}  \frac{\, dt}{\sqrt{t}}.
$$
%
%$$
%T=\int_1^{\infty} \nabla e^{-t \L}  \frac{\, dt}{\sqrt{t}},
%  \qquad \nabla \mathcal{L}^{-1/2} = \frac{1}{\sqrt{\pi}} \int_0^{\infty} \nabla e^{-t \L}  \frac{\, dt}{\sqrt{t}} .
%$$
For each $1\leq i\leq \ell$, we write %split the bad part on the diagonal into two parts
\begin{eqnarray*}
&& \mu\left(\left\{x\in  E_i\setminus F^{(R_i)}_i:\left|T\left(\sum_k  b_{ik}\right)(x)\right|>3\lambda\right\}\right) \\
&&\ \le \mu\left(\left\{x\in E_i\setminus F^{(R_i)}_i:\left|\nabla \L^{-1/2}\left(\sum_k b_{ik}\right)(x)\right|> 2\lambda\right\}\right) \\
&& \  \ + \mu\left(\left\{x\in E_i\setminus F^{(R_i)}_i:\left|\int_0^1\nabla e^{-t\L}\left(\sum_k b_{ik}\right)(x)\frac{\,dt}{\sqrt t}\right|>  \sqrt\pi \lambda \right\}\right)
   =: \mathcal{S}_1 + \cs_2.
\end{eqnarray*}
%\begin{eqnarray*}
%&&  \mu\left(\left\{x\in  E_i\setminus F^{(R_i)}_i:\left|T\left(\sum_k  b_{ik}\right)(x)\right|>\lambda\right\}\right) \\
%&&\ \le \mu\left(\left\{x\in E_i\setminus F^{(R_i)}_i:\left|\nabla \L^{-1/2}\left(\sum_k b_{ik}\right)(x)\right|> \frac{\lambda}{2}\right\}\right) \\
%&& \  \ + \mu\left(\left\{x\in E_i\setminus F^{(R_i)}_i:\left|\int_0^1\nabla e^{-t\L}\left(\sum_k b_{ik}\right)(x)\frac{\,dt}{\sqrt t}\right|>\frac{\lambda}{2}\right\}\right) \\
%&& \ \le \mu\left(\left\{x\in E_i\setminus F^{(R_i)}_i:\left|\nabla \L^{-1/2}\left(\sum_k e^{-r_{ik}^2\L} b_{ik}\right)(x)\right|> \frac{\lambda}{2}\right\}\right) \\
%&&\ +\mu\left(\left\{x\in E_i\setminus F^{(R_i)}_i:\left|\nabla \L^{-1/2}\left(\sum_k (1-e^{-r_{ik}^2\L}) b_{ik}\right)(x)\right|>\frac{\lambda}{2}\right\}\right) \\
%&& \  \ + \mu\left(\left\{x\in E_i\setminus F^{(R_i)}_i:\left|\int_0^1\nabla e^{-t\L}\left(\sum_k b_{ik}\right)(x)\frac{\,dt}{\sqrt t}\right|>\frac{\lambda}{2}\right\}\right)
% =: \mathcal{S}_1 + \cs_2 + \cs_3 \\
%\end{eqnarray*}

The estimation of the term $\cs_1$ is somewhat complicated. We shall decompose it into two parts as follows and then estimate them respectively.
\begin{eqnarray*}
 \cs_1
&&=\mu\left(\left\{x\in E_i\setminus F^{(R_i)}_i:\left|\nabla \L^{-1/2}\left(\sum_k b_{ik}\right)(x)\right|>  2\lambda\right\}\right)\nonumber\\
&&\le \mu\left(\left\{x\in E_i\setminus F^{(R_i)}_i:\left|\nabla \L^{-1/2}\left(\sum_k (1-e^{-r_{ik}^2\L}) b_{ik}\right)(x)\right|> {\lambda}\right\}\right) \\
&&\ +
\mu\left(\left\{x\in E_i\setminus F^{(R_i)}_i:\left|\nabla \L^{-1/2}\left(\sum_k e^{-r_{ik}^2\L} b_{ik}\right)(x)\right|> {\lambda}\right\}\right)
    = : \cs_{11} + \cs_{12}.
\end{eqnarray*}

We begin with the estimates of  the term $\cs_{11}$.

\begin{lem}
Let $1<p<p_0$, where $p_0$ is as in \eqref{eq:p0-def}.
%We have that for all $f \in L^p(M)$ and all $0<\lambda \leq \|f\|_p$,
For each $f \in L^p(M)$, it holds that
$$
\cs_{11}
=\mu\left(\left\{x \in E_i \setminus F_i^{(R_i)}
   :\left|\nabla \L^{-1/2}\left(\sum_k(1-e^{-r_{ik}^2 \L}) b_{ik}\right)(x)\right|>{\lambda}\right\}\right)
 \lesssim \frac{\|f\|_p^p}{\lambda^p},
 \quad \forall 0<\lambda \leq \|f\|_p.
$$
\end{lem}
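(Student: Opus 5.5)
We follow the Coulhon--Duong strategy \cite{cd99}. First enlarge the exceptional set by discarding $\bigcup_k 4B_{ik}$. By doubling on $M_i$ and \eqref{CZ-2}, its measure is $\lesssim \|f\|_p^p/\lambda^p$. It then remains to bound
\[
\mu\Bigl(\Bigl\{x\in (E_i\setminus F^{(R_i)}_i)\setminus \textstyle\bigcup_k 4B_{ik}: \bigl|\nabla\L^{-1/2}\sum_k(1-e^{-r_{ik}^2\L})b_{ik}(x)\bigr|>\lambda\Bigr\}\Bigr)
\le \frac1\lambda\sum_k \int_{(E_i\setminus F^{(R_i)}_i)\setminus 4B_{ik}}\bigl|\nabla\L^{-1/2}(1-e^{-r_{ik}^2\L})b_{ik}\bigr|\,d\mu .
\]
We write the operator through its kernel:
\[
\nabla\L^{-1/2}(1-e^{-r^2\L})=\frac1{\sqrt\pi}\int_0^\infty \nabla\bigl(e^{-t\L}-e^{-(t+r^2)\L}\bigr)\frac{dt}{\sqrt t}
=\frac1{\sqrt\pi}\int_0^\infty \nabla e^{-t\L}\Bigl(\frac{\chi_{t>0}}{\sqrt t}-\frac{\chi_{t>r^2}}{\sqrt{t-r^2}}\Bigr)dt .
\]
It then suffices to show that for $y\in B_{ik}$,
\[
\int_{(E_i\setminus F^{(R_i)}_i)\setminus 4B_{ik}} |K_{r_{ik}}(x,y)|\,d\mu(x)\lesssim 1 ,
\]
where $K_r$ is the kernel above. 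Once this holds, summing $\|b_{ik}\|_1\lesssim \mu(B_{ik})\delta\lambda$ (by \eqref{CZ-1} and H\"older) together with \eqref{CZ-2} gives $\lesssim \|f\|_p^p/\lambda^p$.

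\textbf{Key step: the kernel bound.} We split the $t$-integral at a threshold tied to the geometry of $B_{ik}$. By \eqref{isolate-balls} and \eqref{isolate-balls-1}, every $y\in B_{ik}$ has $\dist(y,E_0)\gtrsim R_i+r_{ik}$. We therefore take $\kz\sim \dist(B_{ik},E_0)\ge r_{ik}$, chosen so that $B_{ik}\subset E_i\setminus F^{(\kz)}_i$ and $(E_i\setminus F^{(R_i)}_i)\subset E_i\setminus F^{(\kz')}_i$ with $\kz'$ comparable.

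\emph{Small times, $t\le \beta\kz^2$.} Proposition \ref{est-integral-diagonal-general} (with $r=3r_{ik}$) gives the bound $\int|\nabla_x h_t(x,y)|\,d\mu(x)\lesssim t^{-1/2}e^{-cr_{ik}^2/t}$ over the region away from the ball. Near $E_0$, i.e.\ on $F^{(\kz)}_i\setminus F^{(R_i)}_i$ when $\kz>R_i$, a separate estimate is needed. There we use Lemma \ref{lem:es-away} with the Caccioppoli trick, or the $L^p$-Davies--Gaffney estimate of Corollary \ref{davies-operators} applied with $E=B_{ik}$ and $F$ a far annulus. The reason this works is that $\dist(B_{ik},F^{(\kz/2)}_i)\gtrsim\kz$, which produces the factor $e^{-c\kz^2/t}$. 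The kernel difference then integrates as in \cite[Lemma 2.4]{cd99}:
\[
\int_0^\infty t^{-1/2}e^{-cr^2/t}\Bigl|\frac{\chi_{t>0}}{\sqrt t}-\frac{\chi_{t>r^2}}{\sqrt{t-r^2}}\Bigr|dt\lesssim 1 ,
\]
using $|t^{-1/2}-(t-r^2)^{-1/2}|\lesssim r^2t^{-3/2}$ for $t\ge 2r^2$.

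\emph{Large times, $t>\beta\kz^2$.} Here $\kz$ can be large, so we avoid kernel bounds and work at the operator level. By Proposition \ref{mapping-gradient-heat} and H\"older,
\[
\|\nabla e^{-t\L}b_{ik}\|_{1}\le \mu(\text{supp})^{1/p'}\ldots
\]
is not directly useful. Instead we use the $L^p$ norm with the factor $r^2t^{-3/2}$ coming from the kernel difference:
\[
\Bigl\|\int_{\beta\kz^2}^\infty\nabla e^{-t\L}b\,(\cdots)dt\Bigr\|_p\lesssim \int_{\beta\kz^2}^\infty t^{-1/2}\frac{r_{ik}^2}{t^{3/2}}\,dt\,\|b\|_p\lesssim \frac{r_{ik}^2}{\kz^2}\|b\|_p\lesssim \|b_{ik}\|_p .
\]
We then estimate the corresponding level set by Chebyshev in $L^p$. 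Summing, via \eqref{CZ-1} and \eqref{CZ-2}, gives $\lambda^{-p}\sum_k\|b_{ik}\|_p^p\lesssim\|f\|_p^p/\lambda^p$. To sum over $k$ we use the bounded overlap: the crude triangle inequality in $L^p$ loses, so instead we use Chebyshev on each piece with the weight $(r_{ik}/\kz_{ik})^{2}$, or, if needed, interpolate the $L^2$ bound with ultracontractivity (Proposition \ref{prop:ultcon}). This is where the restriction $p<p_0$, in particular $p<N_\infty/(N_\infty-2)$, should enter.

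\textbf{Main obstacle.} The main obstacle is the intermediate regime: the region of $x$ between $F^{(R_i)}_i$ and $F^{(\kz)}_i$, and times $t\sim\kz^2$, where neither the Gaussian bound of Lemma \ref{lem:es-away} nor decay in $t$ is available. The plan is to cover it with the Davies--Gaffney factor $e^{-c\kz^2/t}$ combined with the volume ratio $V_i(\kz)/V_i(r_{ik})\lesssim(\kz/r_{ik})^{N_i}$. The resulting sum over $k$ should converge exactly when $p$ is below the threshold $N_\infty/(N_\infty-2)$ encoded in $p_0$.
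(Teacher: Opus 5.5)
There is a genuine gap. It lies in the large-time part $t>\beta\kz_{ik}^2$ (the paper's $T_{ik3}$), and more generally in every piece you can only control in $L^p$.

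After the per-ball bound $\bigl\|\int_{\beta\kz_{ik}^2}^\infty\cdots\,dt\bigr\|_p\lesssim (r_{ik}^2/\kz_{ik}^2)\|b_{ik}\|_p$, you discard the factor $r_{ik}^2/\kz_{ik}^2$ and assert that Chebyshev gives $\lambda^{-p}\sum_k\|b_{ik}\|_p^p$. This is not justified. The functions $\nabla e^{-t\L}b_{ik}$ spread over all of $E_i$, so bounded overlap of the $B_{ik}$ is irrelevant, and $\|\sum_k F_k\|_p^p$ is not controlled by $\sum_k\|F_k\|_p^p$. The substitutes you list (weighted Chebyshev with weights $(r_{ik}/\kz_{ik})^2$, interpolation with ultracontractivity) are not carried out and do not close by themselves.

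The missing step is exactly where $p<p_0$ and the choice of $R_i$ in \eqref{eq:F_i-set} enter. Keep the factor (recall $\kz_{ik}\sim d(x_{ik},o_i)$ by \eqref{isolate-balls-1}) and use Minkowski and H\"older in $k$:
\[
\frac{1}{\lambda^p}\Bigl(\sum_k \frac{r_{ik}^2}{d(x_{ik},o_i)^2}\,\|b_{ik}\|_p\Bigr)^p
\le \frac{1}{\lambda^p}\Bigl[\sum_k\Bigl(\frac{r_{ik}}{d(x_{ik},o_i)}\Bigr)^{2p'}\Bigr]^{p/p'}\sum_k\|b_{ik}\|_p^p .
\]
Then prove $\sum_k (r_{ik}/d(x_{ik},o_i))^{2p'}\lesssim 1$ as follows.
\begin{itemize}
\item The condition $p<N_\infty/(N_\infty-2)$ means $2p'>N_i$.
\item Since $r_{ik}\le d(x_{ik},o_i)$ and $d(x_{ik},o_i)\ge R_i/2$ by \eqref{isolate-balls-1}, doubling gives $(r_{ik}/d(x_{ik},o_i))^{2p'}\le (r_{ik}/d(x_{ik},o_i))^{N_i}\lesssim \mu(B_{ik})/V_i(x_{ik},d(x_{ik},o_i))\lesssim \mu(B_{ik})/V_i(R_i)$.
\item By \eqref{CZ-2} and \eqref{eq:F_i-set}, $\sum_k\mu(B_{ik})\lesssim \|f\|_p^p/\lambda^p\sim\mu(F_i^{(R_i)})\sim V_i(R_i)$.
\end{itemize}
Combined with $\sum_k\|b_{ik}\|_p^p\lesssim\lambda^p\sum_k\mu(B_{ik})$ from \eqref{CZ-1}, this closes the estimate. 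Without it, your argument does not bound the large-time part.

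Two smaller points.
\begin{itemize}
\item Your opening reduction to a uniform bound $\int|K_{r_{ik}}(x,y)|\,d\mu(x)\lesssim 1$ is not obtainable from the available estimates. No $L^1$ gradient bound is available for $t\gtrsim \dist(y,E_0)^2$ or for $x$ near $E_0$, and you in fact abandon this reduction. The proof must combine $L^1$ Chebyshev where Proposition \ref{est-integral-diagonal-general} applies with $L^p$ Chebyshev elsewhere. The paper does this via the time splits at $R_i^2$ and $4d(x_{ik},o_i)^2$ and the spatial split $G_{1k}$, $G_{2k}$.
\item In the near-center region the Caccioppoli option fails, because Lemma \ref{lem:es-away-time-der} needs both points outside $F_i^{(\kz)}$. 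The $L^p$-Davies--Gaffney route (Corollary \ref{davies-operators}) works and again yields the factor $r_{ik}^2/d(x_{ik},o_i)^2$. You can then either reuse the H\"older summation above, or pass to $L^1$ by H\"older on that region. Its measure is $\lesssim V_i(\kz_{ik})\lesssim (d(x_{ik},o_i)/r_{ik})^{N_i}\mu(B_{ik})$, so this costs $(d(x_{ik},o_i)/r_{ik})^{N_i/p'}$, which is affordable exactly when $2p'\ge N_i$. This is presumably what your volume-ratio remark intends. The shortcut is unavailable for the large-time part, whose $x$-region has infinite measure, so the H\"older-in-$k$ summation is indispensable there.
\end{itemize}
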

\begin{proof}
   % \textbf{Step 1: Decomposition.}
   \textbf{Step 1.} In this step, we decompose the estimation of $\cs_{11}$ into four parts and handle the first part.

    As in \cite[pp. 1160-1161]{cd99}, using \eqref{isolate-balls}, we write
     % The role of \eqref{isolate-balls}: The reason for choosing the piecewise points of integration
\begin{eqnarray*}
 \sqrt{\pi} \nabla \L^{-1/2}(1-e^{-r_{ik}^2\L})
&&=\int_0^{\infty}\nabla e^{-t\L}\frac{\,dt}{\sqrt t}-\int_0^{\infty}\nabla e^{-(t+r_{ik}^2)\L}\frac{\,dt}{\sqrt t} \\
%=\int_0^{\infty}\nabla e^{-t\L}\frac{\,dt}{\sqrt t}-\int_{r^2_{ik}}^{\infty}\nabla e^{-t\L}\frac{\,dt}{\sqrt {t-r_{ik}^2}} \\
&& =\int_{0}^\infty\left[\frac{1}{\sqrt t}- \frac{\chi_{(r_{ik}^2,\infty)}(t) }{\sqrt{t-r_{ik}^2}}\right]\nabla e^{-t\L}\,dt\\
&&=\lf\{\int_{0}^{R_i^2} + \int_{R_i^2}^{4d(x_{ik},o_i)^2} +\int_{4d(x_{ik},o_i)^2}^\infty\r\} \cdots \,dt\\
&& =:T_{ik1}+T_{ik2}+T_{ik3}.
\end{eqnarray*}

It holds then
\[
    \cs_{11}
    \leq \mu(\cup_m 2B_{im}) + \sum_{j=1}^3 \cs_{11j},
\]
where
\[
    \cs_{11j}
    : = \mu\left(\left\{x\in E_i\setminus (F^{(R_i)}_i\cup (\cup_m 2B_{im}) ):\left|\sum_k T_{ikj}b_{ik}(x)\right|>\frac{\sqrt \pi}3\lambda\right\}\right),
    \quad 1 \leq j \leq 3.
\]

    The estimation of $\mu(\cup_m 2B_{im})$ is standard.
    Indeed, via the doubling property and \eqref{CZ-2}, one has
    $$\mu(\cup_m 2B_{im}) \lesssim \frac{\|f\|_p^p}{\lambda^p}.$$
    %$$\mu(\cup_m 2B_{im}) \lesssim \frac{\|f\|_p^p}{\lambda^p}.$$

    %\textbf{(b) Next, we estimate \boldmath $\cs_{111}$:}

    \textbf{Step 2.} In this step, we estimate the term $\cs_{111}$.

     Recall that \eqref{isolate-balls} says
     $$B_{ik} \subset E_i \setminus \{z \in E_i: \dist(z, E_0) < R_i/2 \} = E_i \setminus F^{(R_i/4)}_i.$$
     Then for any $0 < t \leq R_i^2$ and $y \in B_{ik}$, by Proposition \ref{est-integral-diagonal-general}, it holds
$$\int_{E_i \setminus (F^{(R_i)}_i \cup 2B_{ik})}|\nabla h_t(x,y)|\,d\mu(x)
\le \int_{E_i\setminus (F^{(R_i/4)}_i \cup B(y,r_{ik}) )} |\nabla h_t(x,y)|\,d\mu(x)
\lesssim \frac{1}{\sqrt t}e^{-c\frac{r_{ik}^2}{t}}.$$
Hence, it follows from Fubini's theorem that
\begin{eqnarray*}
    \int_{E_i \setminus (F^{(R_i)}_i \cup 2B_{ik})} |\nabla e^{-t\L}b_{ik}(x)| \, d\mu(x)
    &&= \int_{E_i \setminus (F^{(R_i)}_i \cup 2B_{ik})} \left|\int_{B_{ik}} \nabla h_t(x,y) b_{ik}(y) \, d\mu(y)\right| \, d\mu(x)\\
    &&\leq \int_{B_{ik}} |b_{ik}(y)| \int_{E_i \setminus (F^{(R_i)}_i \cup 2B_{ik})} | \nabla h_t(x,y) | \, d\mu(x) \, d\mu(y)\\
    && \lesssim \frac{1}{\sqrt t}e^{-c\frac{r_{ik}^2}{t}} \|b_{ik}\|_{1}.
\end{eqnarray*}
Therefore, following the argument in \cite[p. 1161]{cd99}, we deduce that
\begin{eqnarray*}
 \cs_{111}
&& \lesssim \frac{1}{\lambda} \int_{E_i\setminus (F^{(R_i)}_i\cup \lf(\cup_m 2B_{im}\r))} \left|\sum_k T_{ik1}b_{ik}\right| \,d\mu \\
%&&\lesssim \frac{1}{\lambda} \int_{E_i\setminus (F^{(R_i)}_i\cup 2B_{ik})} \sum_k \int_{0}^{R_i} \left|\frac{1}{\sqrt t}-\frac{\chi_{\{t>r_{ik}^2\}}}{\sqrt{t-r_{ik}^2}}\right| |\nabla e^{-t\L}b_{ik}| \,dt \,d\mu \\
&& \lesssim \frac{1}{\lambda}\sum_k\int_{0}^{R_i^2}\int_{E_i\setminus (F^{(R_i)}_i \cup 2B_{ik})}
\left|\frac{1}{\sqrt t}-\frac{\chi_{(r_{ik}^2,\infty)}(t)}{\sqrt{t-r_{ik}^2}}\right| |\nabla e^{-t\L}b_{ik}|\,d\mu\,dt\\
&&\lesssim \frac{1}{\lambda}\sum_k \|b_{ik}\|_1 \int_{0}^{R_i^2} \frac{1}{\sqrt t}e^{-c\frac{r_{ik}^2}{t}}\left|\frac{1}{\sqrt t}-\frac{\chi_{(r_{ik}^2,\infty)}(t)}{\sqrt{t-r_{ik}^2}}\right|\,dt\\
%&&\lesssim \frac{1}{\lambda}\sum_k\|b_{ik}\|_1 \left[\int_{0}^{r_{ik}^2} \frac{1}{\sqrt t}e^{-c\frac{r_{ik}^2}{t}}\frac{1}{\sqrt t}\,dt+\int_{r_{ik}^2}^\infty \frac{1}{\sqrt t} \lf(\frac{1}{\sqrt{t-r_{ik}^2}} - \frac{1}{\sqrt t} \r)\,dt\right]\\
&&\lesssim \frac{1}{\lambda}\sum_k\|b_{ik}\|_1
  % \lesssim \sum_k\mu(B_{ik})\\
 \lesssim \frac{\|f\|_p^p}{\lambda^p},
\end{eqnarray*}
where we have used  the H\"older inequality, \eqref{CZ-1} and \eqref{CZ-2} in the last inequality.
% The details of the integration is bounded can be seen in \cite[p. 1161]{cd99}. The details is as follows.
%\\======Details of the integration==============\\
%\begin{eqnarray*}
%  && \int_{0}^{R_i^2} \frac{1}{\sqrt t}e^{-c\frac{r_{ik}^2}{t}}\left|\frac{1}{\sqrt t}-\frac{\chi_{\{t>r_{ik}^2\}}}{\sqrt{t-r_{ik}^2}}\right|\|b_{ik}\|_1\,dt \\
%  && \ \lesssim
%  \int_{0}^{r_{ik}^2} \frac{1}{\sqrt t}e^{-c\frac{r_{ik}^2}{t}}\frac{1}{\sqrt t}\,dt
%  +
%  \int_{r_{ik}^2}^\infty \frac{1}{\sqrt t} \lf(\frac{1}{\sqrt{t-r_{ik}^2}} - \frac{1}{\sqrt t} \r)\,dt
%\end{eqnarray*}
%It holds
%\[
%    \int_{0}^{r_{ik}^2} \frac{1}{\sqrt t}e^{-c\frac{r_{ik}^2}{t}}\frac{1}{\sqrt t}\,dt \lesssim 1
%\]
%and
%\begin{eqnarray*}
%  I:= \int_{r_{ik}^2}^\infty \frac{1}{\sqrt t} \lf(\frac{1}{\sqrt{t-r_{ik}^2}} - \frac{1}{\sqrt t} \r)\,dt
%  \lesssim
%  \int_{0}^\infty \frac{1}{\sqrt{t+r_{ik}^2}} \lf(\frac{1}{\sqrt{t}} - \frac{1}{\sqrt{t+r_{ik}^2}} \r)\,dt
%\end{eqnarray*}
%Setting $t = v r_{ik}^2$
%\begin{eqnarray*}
%    I
%    && \lesssim
%    \int_{0}^\infty \lf(\frac{1}{r_{ik}^2\sqrt{v(v+1)}} - \frac{1}{r_{ik}^2( v +1)}\r)  r_{ik}^2 \, dv
%    \lesssim
%    \int_{0}^\infty \lf(\frac{1}{\sqrt{v(v+1)}} - \frac{1}{v +1}\r)  \, dv \\
%    && \lesssim
%    \int_{0}^\infty \frac{1}{\sqrt v (v+1)} \frac{1}{\sqrt{v+1} + \sqrt v}  \, dv \\
%    && \lesssim
%    \int_{0}^1 \frac{1}{\sqrt v} \, dv
%     + \int_1^{\infty} \frac{1}{\sqrt v v} \frac{1}{\sqrt v} \, dv
%    \lesssim 1.
%\end{eqnarray*}
%
%===========End of details================

 %\textbf{(c) Aim now at \boldmath $\cs_{113}$:}
 \textbf{Step 3.} In this step, we estimate the term $\cs_{113}$.

From the Chebyshev inequality and the Minkowski inequality,
% the Beppo Levi convergence theorem and the simple fact that $(\sum_j a_j)^\theta \leq \sum_j a_j^\theta$, $0 \leq \theta <1$, for any sequence of positive reals $\{a_j\}_j$,
we get %/can conclude that
\begin{eqnarray*}
\cs_{113}
&&\le  \frac{1}{\lambda^p}\int_{E_i\setminus F^{(R_i)}_i} \left|\sum_k T_{ik3}b_{ik}\right|^p\,d\mu\\
&&\lesssim  \frac{1}{\lambda^p}\int_{E_i\setminus F^{(R_i)}_i} \left|\int_0^\infty\sum_k  \chi_{(4d(x_{ik},o_i)^2,\infty)}(t) \left|\frac{1}{\sqrt t}-\frac{\chi_{(r_{ik}^2,\infty)}(t)}{\sqrt{t-r_{ik}^2}}\right| |\nabla  e^{-t\L}b_{ik}|\,dt \right|^p\,d\mu\\
&&\lesssim  \frac{1}{\lambda^p}\left[\sum_k\int_{4d(x_{ik},o_i)^2}^\infty \left|\frac{1}{\sqrt t}-\frac{\chi_{(r_{ik}^2,\infty)}(t)}{\sqrt{t-r_{ik}^2}}\right| \left(\int_{E_i\setminus F^{(R_i)}_i}|\nabla  e^{-t\L}b_{ik}|^p\,d\mu\right)^{1/p}\,dt\right]^p.
\end{eqnarray*}
%================DETAILS=====================
%\begin{eqnarray*}
%    && \int_{E_i\setminus F_i} \left|\int_0^\infty\sum_k  \chi_{(4d(x_{ik},o_i)^2,\infty)}(t) \left|\frac{1}{\sqrt t}-\frac{\chi_{\{t>r_{ik}^2\}}}{\sqrt{t-r_{ik}^2}}\right| |\nabla  e^{-t\L}b_{ik}|\,dt \right|^p\,d\mu \\
%    && \quad \leq \left[ \int_0^\infty\sum_k \left(\int_{E_i\setminus F_i} \chi_{(4d(x_{ik},o_i)^2,\infty)}(t) \left|   \frac{1}{\sqrt t}-\frac{\chi_{\{t>r_{ik}^2\}}}{\sqrt{t-r_{ik}^2}}\right|^p \left|\nabla  e^{-t\L}b_{ik} \right|^p \,d\mu \right)^{1/p} \,dt \right]^p \\
%    && \quad \leq \left[ \int_0^\infty\sum_k  \chi_{(4d(x_{ik},o_i)^2,\infty)}(t)  \left| \frac{1}{\sqrt t}-\frac{\chi_{\{t>r_{ik}^2\}}}{\sqrt{t-r_{ik}^2}}\right| \left(\int_{E_i\setminus F_i}  \left|\nabla  e^{-t\L}b_{ik} \right|^p \,d\mu \right)^{1/p} \,dt \right]^p \\
%    && \quad \leq \left[ \sum_k \int_0^\infty   \chi_{(4d(x_{ik},o_i)^2,\infty)}(t) \left|\frac{1}{\sqrt t}-\frac{\chi_{\{t>r_{ik}^2\}}}{\sqrt{t-r_{ik}^2}}\right| \left(\int_{E_i\setminus F_i}  \left|\nabla  e^{-t\L}b_{ik} \right|^p \,d\mu \right)^{1/p} \,dt \right]^p \\
%    &&\quad \leq \left[ \sum_k \int_{4d(x_{ik},o_i)^2}^\infty   \left|\frac{1}{\sqrt t}-\frac{\chi_{\{t>r_{ik}^2\}}}{\sqrt{t-r_{ik}^2}}\right| \left(\int_{E_i\setminus F_i}  \left|\nabla  e^{-t\L}b_{ik} \right|^p \,d\mu \right)^{1/p} \,dt \right]^p
%\end{eqnarray*}
%======================END======================\\
This together with the mapping property of $\nabla e^{-t\L}$ (cf. Proposition \ref{mapping-gradient-heat}) gives that
\[
    \cs_{113}
    \lesssim
    \frac{1}{\lambda^p}\left[\sum_k\|b_{ik}\|_p \int_{4d(x_{ik},o_i)^2}^\infty\left|\frac{1}{\sqrt t}-\frac{\chi_{(r_{ik}^2,\infty)}(t)}{\sqrt{t-r_{ik}^2}}\right|  \frac{\,dt}{\sqrt t}\right]^p,
\]
and in view of \eqref{isolate-balls-1}, the last integral here equals
\[
  \int_{4d(x_{ik}, o_i)^2}^{\infty}
 \frac{r_{ik}^2}{\sqrt{t}+\sqrt{t-r_{i k}^2}} \frac{1}{\sqrt{t-r_{i k}^2} \sqrt{t}} \frac{\,dt}{\sqrt{t}}
 \lesssim \int_{4d(x_{ik}, o_i)^2}^{\infty} \frac{r_{ik}^2}{\sqrt{t}} \frac{1}{d(x_{ik}, o_i)} \frac{\,dt}{t}
 %\lesssim r_{ik}^2 \int_{4d(x_{ik}, o_i)^2}^{\infty} \frac{1}{t^2} \,dt
 \lesssim \frac{r_{i k}^2}{d(x_{i k}, o_i)^2},
\]
%%In conclusion, noticing that $1 <p <p_0$,
%Consequently, by repeating the argument in \eqref{eq:sum-r-d}, we obtain that
%\[
%    \cs_{113}
%    \lesssim
%    \frac{1}{\lambda^p}\left(\sum_k \frac{r_{ik}^2}{d(x_{i k}, o_i)^2}\|b_{ik}\|_p\right)^p
%    \lesssim \frac{\|f\|_p^p}{\lambda^p}.
%\]
%
%=====================\\
%Consequently,  we obtain that
which yields that
\[
    \cs_{113}
    \lesssim
    \frac{1}{\lambda^p}\left(\sum_k \frac{r_{ik}^2}{d(x_{i k}, o_i)^2}\|b_{ik}\|_p\right)^p.
\]
Next, notice that $p' > N_\infty/2$ provided $1 < p <p_0$. From the doubling property and \eqref{isolate-balls-1}, one has
$$\left(\frac{r_{ik}^2}{d(x_{ik},o_i)^2}\right)^{p'}
\le \left(\frac{r_{ik}}{d(x_{ik},o_i)}\right)^{N_i}\
\lesssim \frac{V_i(x_{ik},r_{ik})}{V_{i}(x_{ik},d(x_{ik},o_i))}
\lesssim \frac{\mu(B_{ik})}{V_{i}(o_i,d(x_{ik},o_i))}
\lesssim \frac{\mu(B_{ik})}{V_i(R_i)},$$
which together with \eqref{CZ-2}, \eqref{eq:F_i-set}
and Lemma \ref{lem:volume-growth} (ii) implies that
\[
\sum_k \left(\frac{r_{ik}^2}{d(x_{ik},o_i)^2}\right)^{p'}
\lesssim \frac{\sum_k\mu(B_{ik})}{V_i(R_i)}
\lesssim 1.
\]
By the H\"older inequality, one concludes
%\begin{eqnarray}    \label{eq:sum-r-d}
%\cg_1
%&& \lesssim \frac{1}{\lambda^p}\left(\sum_k \frac{r_{ik}^2}{d(x_{ik},o_i)^2}\|b_{ik}\|_p \right)^p
%\lesssim \frac{1}{\lambda^p} \left[\sum_k \left(\frac{r_{ik}^2}{d(x_{ik},o_i)^2}\right)^{p'}\right]^{\frac{p}{p'}}\left(\sum_{k}\|b_{ik}\|_p^p\right) \notag \\
%&& \lesssim \frac{1}{\lambda^p} \lf( \lambda^p \sum_k \mu(B_{ik}) \r)
% \lesssim \frac{\|f\|_p^p}{\lambda^p},
%\end{eqnarray}
\begin{eqnarray} \label{eq:sum-r-d}
\cs_{113}
&& \lesssim \frac{1}{\lambda^p}\left(\sum_k \frac{r_{ik}^2}{d(x_{ik},o_i)^2}\|b_{ik}\|_p \right)^p \nonumber \\
&& \lesssim \frac{1}{\lambda^p} \left[\sum_k \left(\frac{r_{ik}^2}{d(x_{ik},o_i)^2}\right)^{p'}\right]^{\frac{p}{p'}}\left(\sum_{k}\|b_{ik}\|_p^p\right) \nonumber \\
&& \lesssim \frac{1}{\lambda^p} \lf( \lambda^p \sum_k \mu(B_{ik}) \r)
 \lesssim \frac{\|f\|_p^p}{\lambda^p},
\end{eqnarray}
where we have used %the properties of Calder\'on-Zygmund decomposition \eqref{CZ-1} and \eqref{CZ-2}.
\eqref{CZ-1} in the penultimate inequality and \eqref{CZ-2} in the last inequality.

%\textbf{(d) Finally, we turn to \boldmath $\cs_{112}$:}
\textbf{Step 4.} In this step, we estimate the term $\cs_{112}$.

We split the set $E_i\setminus (F^{(R_i)}_i\cup (\cup_m 2B_{im}))$ into
$$G_{1k}:=\left\{x\in E_i\setminus (F^{(R_i)}_i\cup (\cup_m 2B_{im})): \dist(x,E_0)\le \frac12\dist(B_{ik},E_0)\right\}$$
and
$$G_{2k}:=\left\{x\in E_i\setminus (F^{(R_i)}_i \cup (\cup_m 2B_{im})): \dist(x,E_0)> \frac12\dist(B_{ik},E_0)\right\}.$$
From the triangle inequality and \eqref{isolate-balls-1}, it holds
\begin{equation} \label{eq:g1k}
    \dist(G_{1k},B_{ik}) \ge \dist(B_{ik}, E_0) - \dist(G_{1k}, E_0) \ge \frac 12\dist(B_{ik},E_0)\ge \frac 16d(x_{ik},o_i)
\end{equation}
and
\begin{equation} \label{eq:g2k}
\dist(G_{2k},E_0)\ge \frac 12\dist(B_{ik},E_0)\ge \frac 16d(x_{ik},o_i).
\end{equation}

%Notice that
Using the Chebyshev inequality, one writes
\begin{eqnarray*}
\cs_{112}
&&  = \mu\left(\left\{x\in E_i \setminus (F^{(R_i)}_i\cup (\cup_m 2B_{im})) :\left|\sum_k (\chi_{G_{1k}}(x) + \chi_{G_{2k}}(x)) T_{ik2}b_{ik}(x)\right|>\frac{\sqrt\pi}3\lambda\right\}\right) \\
&& \lesssim  \frac{1}{\lambda^p}\int_{E_i\setminus F^{(R_i)}_i} \left|\sum_k \chi_{G_{1k}}(x)T_{ik2}b_{ik}(x)\right|^p\,d\mu
       + \frac{1}{\lambda}\int_{E_i\setminus F^{(R_i)}_i}  \left|\sum_k \chi_{G_{2k}}(x)T_{ik2}b_{ik}(x)\right|\,d\mu \\
&&   = : \cs_{1121} + \cs_{1122}.
\end{eqnarray*}

    For the term $\cs_{1121}$, we argue as in the estimation of $\cs_{113}$,
     by using the Davies-Gaffney estimate
(cf. Corollary \ref{davies-operators}) instead of Proposition \ref{mapping-gradient-heat}, and conclude that
\begin{eqnarray*}
\cs_{1121}
&&  \lesssim \frac{1}{\lambda^p}\int_{E_i\setminus F^{(R_i)}_i} \left|\int_0^\infty \sum_k \chi_{(R_i^2,4d(x_{ik},o_i)^2)}(t)\left|\frac{1}{\sqrt t}-\frac{\chi_{(r_{ik}^2,\infty)}(t)}{\sqrt{t-r_{ik}^2}}\right| |\chi_{G_{1k}}\nabla  e^{-t\L}b_{ik}| \,dt\right|^p\,d\mu\\
&&\lesssim \frac{1}{\lambda^p}\left[\sum_k\int_{R_i^2}^{4d(x_{ik},o_i)^2}\left|\frac{1}{\sqrt t}-\frac{\chi_{(r_{ik}^2,\infty)}(t)}{\sqrt{t-r_{ik}^2}}\right| \left(\int_{G_{1k}}|\nabla  e^{-t\L}b_{ik}|^p\,d\mu\right)^{1/p}\,dt\right]^p\\
&&\lesssim  \frac{1}{\lambda^p} \left[\sum_k \|b_{ik}\|_p  \int_{R_i^2}^{4d(x_{ik},o_i)^2}  \left|\frac{1}{\sqrt t}-\frac{\chi_{(r_{ik}^2,\infty)}(t)}{\sqrt{t-r_{ik}^2}}\right| e^{-c\frac{\dist(G_{1k},B_{ik})^2}{t}} \frac{\,dt}{\sqrt t}\right]^p.
\end{eqnarray*}
In view of \eqref{eq:g1k}, the last integral can be controlled by
\begin{eqnarray*}
  && \int_{R_i^2}^{4d(x_{ik}, o_i)^2}
    \left|\frac{1}{\sqrt{t}} - \frac{\chi_{(r_{ik}^2,\infty)}(t)}{\sqrt{t-r_{ik}^2}}\right| e^{-c\frac{d(x_{ik}, o_i)^2}{t}} \frac{\,dt}{\sqrt t} \\
 % && \lesssim \int_0^{r_{ik}^2} e^{-c\frac{d(x_{ik}, o_i)^2}{t}} \frac{\,dt}{t}
%        + \int_{r_{ik}^2}^{4d(x_{ik}, o_i)^2} e^{-c\frac{d(x_{ik}, o_i)^2}{t}}
%            \frac{r_{ik}^2}{\sqrt{t}+\sqrt{t-r_{i k}^2}} \frac{1}{\sqrt{t-r_{i k}^2} \sqrt{t}} \frac{\, dt}{\sqrt{t}} \\
%  && \lesssim \int_0^{r_{ik}^2} e^{-c\frac{d(x_{ik}, o_i)^2}{t}} \frac{\,dt}{t}
%        + \int_{r_{ik}^2}^{4d(x_{ik}, o_i)^2} e^{-c\frac{d(x_{ik}, o_i)^2}{t}}
%            \frac{r_{ik}^2}{t^{3/2}}  \frac{\, dt}{\sqrt{t-r_{i k}^2}} \\
  && \ \lesssim \int_0^{r_{ik}^2} \frac{\,dt}{d(x_{ik}, o_i)^2}
     +\int_{r_{ik}^2}^{4d(x_{ik}, o_i)^2} \frac{r_{ik}^2}{d(x_{ik}, o_i)^3} \frac{\, dt}{\sqrt{t-r_{ik}^2}}
%  && \lesssim  \frac{r_{i k}^2}{d(x_{i k}, o_i)^2} + \frac{r_{ik}^2}{d(x_{ik}, o_i)^3} \lf. \sqrt{t - r_{i k}^2}\r|^{4d(x_{ik}, o_i)^2}_{r_{ik}^2}
    \lesssim \frac{r_{i k}^2}{d(x_{i k}, o_i)^2}.
\end{eqnarray*}
Consequently, using \eqref{eq:sum-r-d}, we obtain that
\[
    \cs_{1121}
    \lesssim
    \frac{1}{\lambda^p}\left(\sum_k \frac{r_{ik}^2}{d(x_{i k}, o_i)^2}\|b_{ik}\|_p\right)^p
    \lesssim \frac{\|f\|_p^p}{\lambda^p}.
\]

    For the term $\cs_{1122}$, we shall use the same argument as in the estimation of $\cs_{111}$.
    It follows from  \eqref{isolate-balls-1} that
$$
    B_{ik} \subset
    \left\{z \in E_i:  \dist(z, E_0) \ge d(x_{ik}, o_i)/3 \right\} = E_i \setminus F_i^{(d(x_{ik},o_i)/6)}.
$$
Then for any $0 < t \leq 4d(x_{ik},o_i)^2$ and $y \in B_{ik}$, by \eqref{eq:g2k} and Proposition \ref{est-integral-diagonal-general}, it holds
\begin{equation*}
\int_{G_{2k}} |\nabla h_t(x,y)|\,d\mu(x)
 \le \int_{ E_i \setminus (F_i^{(d(x_{ik},o_i)/12)} \cup B(y, r_{ik}))} |\nabla h_t(x,y)|\,d\mu(x)
 \lesssim \frac{1}{\sqrt t}e^{-c\frac{r_{ik}^2}{t}},
\end{equation*}
which implies that
\begin{eqnarray*}
\cs_{1122}
&& \lesssim \frac{1}{\lambda}\sum_k\int_{R_i^2}^{4d(x_{ik},o_i)^2}\int_{G_{2k}}
\left|\frac{1}{\sqrt t}-\frac{\chi_{(r_{ik}^2,\infty)}(t)}{\sqrt{t-r_{ik}^2}}\right| |\nabla e^{-t\L}b_{ik}|\,d\mu\,dt\\
&& \lesssim \frac{1}{\lambda}\sum_k \|b_{ik}\|_1 \int_{R_i^2}^{4d(x_{ik},o_i)^2} \frac{1}{\sqrt t}e^{-c\frac{r_{ik}^2}{t}}\left|\frac{1}{\sqrt t}-\frac{\chi_{(r_{ik}^2,\infty)}(t)}{\sqrt{t-r_{ik}^2}}\right|\,dt
  \lesssim \frac{\|f\|_p^p}{\lambda^p}.
\end{eqnarray*}
%{\color{blue}which completes the proof. }
%Based on the above arguments, we obtain the desired result.
The above four steps give the desired estimates.
\end{proof}

We now turn to the term $\cs_{12}$.

\begin{lem}  \label{lem:s11}
Let $1<p<2$.
%It holds that for all $f \in L^p(M)$ and all $0<\lambda \leq \|f\|_p$,
For each $f \in L^p(M)$, it holds that
$$
\cs_{12}
=\mu\left(\left\{x \in E_i \setminus F_i^{(R_i)}:
     \left|\nabla \L^{-1/2}\left(\sum_k e^{-r_{ik}^2 \L} b_{ik}\right)(x)\right|>{\lambda}\right\}\right) \lesssim \frac{\|f\|_p^p}{\lambda^p},
     \quad \forall 0<\lambda \leq \|f\|_p.
$$
\end{lem}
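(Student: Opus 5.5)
Following the standard argument of \cite[p. 1160]{cd99}, I would use the $L^2$-boundedness of $\nabla \L^{-1/2}$ together with an $L^2$ bound on $\sum_k e^{-r_{ik}^2\L} b_{ik}$. By Chebyshev,
\[
\cs_{12}\le \frac{1}{\lambda^2}\Big\|\nabla\L^{-1/2}\Big(\sum_k e^{-r_{ik}^2\L}b_{ik}\Big)\Big\|_2^2\lesssim \frac{1}{\lambda^2}\Big\|\sum_k e^{-r_{ik}^2\L}|b_{ik}|\Big\|_2^2,
\]
so it suffices to prove $\|\sum_k e^{-r_{ik}^2\L}|b_{ik}|\|_2^2\lesssim \lambda^{2}\sum_k\mu(B_{ik})$. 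Combined with \eqref{CZ-2} this gives $\cs_{12}\lesssim \sum_k\mu(B_{ik})\lesssim \|f\|_p^p/\lambda^p$. The positivity of the heat semigroup justifies replacing $b_{ik}$ by $|b_{ik}|$.

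For the $L^2$ bound I would argue by duality. Take $u\in L^2(M)$ with $\|u\|_2=1$ and write
\[
\Big\langle \sum_k e^{-r_{ik}^2\L}|b_{ik}|, |u|\Big\rangle=\sum_k\int_{B_{ik}}|b_{ik}|\, e^{-r_{ik}^2\L}|u|\,d\mu .
\]
The key pointwise estimate I need is
\[
\sup_{y\in B_{ik}} e^{-r_{ik}^2\L}|u|(y)\lesssim \inf_{z\in B_{ik}}\mathcal M|u|(z),
\]
where $\mathcal M$ is the (uncentred) Hardy--Littlewood maximal function on $M_i$. Granting this, \eqref{CZ-1} and H\"older give $\int_{B_{ik}}|b_{ik}|\lesssim \delta\lambda\,\mu(B_{ik})$. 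Bounded overlap of the $B_{ik}$ then yields
\[
\sum_k \delta\lambda\int_{B_{ik}}\mathcal M|u|\,d\mu\lesssim \lambda\,\mu(\cup_k B_{ik})^{1/2}\|\mathcal M|u|\|_2\lesssim \lambda\Big(\sum_k\mu(B_{ik})\Big)^{1/2},
\]
using the $L^2$-boundedness of $\mathcal M$ on the doubling space $M_i$. Note that both $\mathcal M$ and the ball structure live on $M_i$, and $\mu=\mu_i$ on $E_i$.

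The main obstacle is the pointwise heat-kernel domination. The kernel of $e^{-r_{ik}^2\L}$ lives on $M$, not on $M_i$, and Gaussian bounds on $M$ fail globally. Here \eqref{isolate-balls} helps, since it gives $\dist(B_{ik},E_0)\ge (R_i+r_{ik})/2$. I would split $u$ into its part on $E_i$ and its part on $M\setminus E_i$.

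For the $E_i$ part with $y\in B_{ik}$ and $t=r_{ik}^2$, Lemma \ref{lem:es-away} applies with $\kappa\sim R_i+r_{ik}$ and $t\le\beta\kappa^2$. It gives $h_t(x,y)\lesssim V_i(y,r_{ik})^{-1}e^{-cd(x,y)^2/r_{ik}^2}$ for $x\in E_i$ far from the centre. The remaining $x\in E_i$ near the centre satisfy $d(x,y)\gtrsim R_i+r_{ik}$ and can be handled as in Proposition \ref{prop:map-anot}. Decomposing into annuli around $B_{ik}$ and using doubling on $M_i$ then produces $\mathcal M|u|$.

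For the part of $u$ on $M\setminus E_i$, I would reuse the proof of Proposition \ref{prop:map-anot}. It gives $\|e^{-t\L}\|_{L^2(M\setminus E_i)\to L^\infty(E_i\setminus F_i^{(cR_i)})}\lesssim V_i(R_i)^{-1/2}$. The resulting contribution is bounded by $\lambda\sum_k\mu(B_{ik})V_i(R_i)^{-1/2}\lesssim\lambda(\sum_k\mu(B_{ik}))^{1/2}$, because $\sum_k\mu(B_{ik})\lesssim V_i(R_i)$ by \eqref{CZ-2} and \eqref{eq:F_i-set}. This second bound is where I expect the most care to be needed.
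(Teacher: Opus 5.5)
Your proposal is correct and follows essentially the same route as the paper. You use Chebyshev with the $L^2$-boundedness of $\nabla\L^{-1/2}$, then an $L^2$ bound on $\sum_k e^{-r_{ik}^2\L}b_{ik}$ split into an $E_i$ part (Lemma \ref{lem:es-away} via \eqref{isolate-balls} plus the Coulhon--Duong maximal-function duality) and an $M\setminus E_i$ part (Proposition \ref{prop:map-anot} with $\sum_k\mu(B_{ik})\lesssim V_i(R_i)$); splitting the test function $u$ is just the dual form of the paper's splitting of the integration domain, and your separate treatment of $x\in E_i$ near the centre is unnecessary, since the sum condition $\dist(x,E_0)+\dist(y,E_0)\ge 2\kappa$ in Lemma \ref{lem:es-away} already holds for every $x\in E_i$ once $y\in B_{ik}$ and $\kappa=(R_i+r_{ik})/4$.
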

\begin{proof}
 By the Chebyshev inequality and the natural $L^2$-boundedness of $\nabla \L^{-1/2}$, we obtain that
\begin{eqnarray*}
    \cs_{12}% \lesssim \frac{1}{\lambda^2} \big\| \sum_k e^{-r_{ik}^2\L} b_{ik}\big\|_2^2
      \lesssim \frac{1}{\lambda^2}\int_M \left|\sum_k e^{-r_{ik}^2\L}b_{ik}\right|^2\,d\mu
      \sim \frac{1}{\lambda^2} \lf\{\int_{E_i}+ \int_{M \setminus E_i} \r\}\cdots \,d\mu
       =: \cs_{121}+ \cs_{122}.
\end{eqnarray*}
%where
%\[
% \cs_{121}: = \frac{1}{\lambda^2} \int_{E_i}\left|\sum_k e^{-r_{ik}^2\L}b_{ik}\right|^2\,d\mu
%\]
%and
%\[
%\cs_{122}: =      \frac{1}{\lambda^2}  \sum_{0 \leq j\neq i \leq \ell} \int_{E_j}\left|\sum_k e^{-r_{ik}^2\L}b_{ik}\right|^2\,d\mu.
%\]
%One writes
%\begin{align*}
%\frac{1}{\lambda^2}\int_M \left|\sum_k e^{-r_{ik}^2\L}b_{ik}\right|^2\,d\mu
%& = \frac{1}{\lambda^2} \lf\{\int_{E_i}+ \int_{M \setminus E_i} \r\} \cdots \,d\mu \\
%&    =: \cs_{121}+ \cs_{122}.

    Let us estimate the term $\cs_{121}$ first. By \eqref{isolate-balls}, one deduces from Lemma \ref{lem:es-away} that
    \[
        h_{r_{ik}^2}(x,y)
        \lesssim  \frac{1}{{V}_{i}(x,r_{ik})}
                   e^{-c\frac{d(x,y)^2}{r_{ik}^2}}, \quad
        \forall x \in E_i, \, y \in B_{ik} \subset E_i.
    \]
    Let $\mathcal{M}_i$ denotes the centered Hardy-Maximal function on $M_i$.
    Then the same argument as Coulhon-Duong \cite[pp. 1158-1160]{cd99} yields  that
       \begin{eqnarray*}
        \lf(\int_{E_i} \left|\sum_k e^{-r_{ik}^2\L}b_{ik}\right|^2\,d\mu\r)^{1/2}
       &&  = \sup_{\|\psi\|_{L^2(M_i)}\le 1} \int_{M_i} \lf(\sum_k e^{-r_{ik}^2\L}b_{ik}\r) \psi \,d\mu \\
       &&  \lesssim \lambda \sup_{\|\psi\|_{L^2(M_i)} \le 1}\int_{M_i}\mathcal{M}_i\psi \lf(\sum_{k}\chi_{B_{ik}}\r) \,d\mu \\
       &&  \lesssim \lambda  \sup_{\|\psi\|_{L^2(M_i)} \le 1} \|\mathcal{M}_i\psi\|_{L^2(M_i)} \big\|\sum_{k}\chi_{B_{ik}}\big\|_{L^2(M_i)} \\
       && \lesssim \lambda\left(\sum_{k}\mu(B_{ik})\right)^{1/2}.
    \end{eqnarray*}
%=================DETAILS OF THE LAST INEQUALTIY================\\
%    Since $\{B_{ik}\}$ has bounded overlap property, set
%    \[
%        \sum_{k} \chi_{B_{ik}} \leq N,
%    \]
%    for some $N >0$.
%    Then, it holds
%    \[
%        \big\|\sum_{k}\chi_{B_{ik}}\big\|^2_{L^2(M_i)}
%        = \int_{M_i} (\sum_{k} \chi_{B_{ik}})^2 d\mu_i
%        = \int_{\cup_k B_{ik}} (\sum_{k} \chi_{B_{ik}})^2 d\mu_i
%        \leq N^2 \mu(\cup_k B_{ik})
%        \leq N^2 \sum_k \mu(B_{ik}).
%    \]
%    Hence, it follow that
%    \[
%        \|\sum_{k}\chi_{B_{ik}}\big\|_{L^2(M_i)}
%         \leq N \left(\sum_{k}\mu(B_{ik})\right)^{1/2}.
%    \]
%================END OF DETAILS===============================\\
    Combining this with \eqref{CZ-2}, one has
    \[
        \cs_{121} \lesssim \frac{\lambda^2}{\lambda^2} \sum_{k}\mu(B_{ik})
        \lesssim \frac{\|f\|_p^p}{\lambda^p}.
    \]

%    We now turn to $\cg_{12}$. We claim that
  For the term $\cs_{122}$, by the fact $B_{ik} \subset E_i \setminus F^{(R_i/4)}_i$ (see \eqref{isolate-balls}) and Proposition \ref{prop:map-anot}, we have
    \[
        \|e^{-r^2_{ik}\L}\|_{L^{2}(M \setminus E_i) \rightarrow L^{\infty}(B_{ik})} \lesssim \frac{1}{\sqrt{V_i(R_i)}}.
    \]
    Hence, for any $1 \leq i \leq \ell$ and $\psi \in L^2(M)$ with $\|\psi\|_2 \le 1$, it holds
    \begin{eqnarray*}
     \lf|\sum_{0 \leq j\neq i \leq \ell} \int_{E_j} \lf( \sum_k e^{-r_{ik}^2\L}b_{ik}\r) \psi \,d\mu\r|
    && = \left|\int_{M \setminus E_i} \lf(\sum_k e^{-r_{ik}^2\L}b_{ik}\r)\psi\,d\mu\right|\\
    &&\lesssim \sum_k \int_{M}  |b_{ik}| \, e^{-r_{ik}^2\L}(|\psi\chi_{M\setminus E_i}|) \,d\mu\\
    && \lesssim    \sum_{k} \int_{B_{ik}} \frac{|b_{ik}|}{\sqrt{V_i(R_i)}}\,d\mu
     \lesssim \lambda \frac{ \sum_k\mu(B_{ik})}{\sqrt{V_i(R_i)}},
    %\lesssim \lambda \sqrt{V_i(R_i)}.
\end{eqnarray*}
    where in the last inequality we used the H\"older inequality and \eqref{CZ-1}.
   % This together with \eqref{CZ-2} and $V_i(R_i) \sim \mu(F^{(R_i)}_i) \sim \|f\|_p^p / \lambda^p$ yields that
   % This, together with the fact (cf. \eqref{CZ-2}, \eqref{eq:F_i-set} and Lemma \ref{lem:volume-growth} (ii))
%    $$\sum_k\mu(B_{ik}) \lesssim \frac{\|f\|_p^p}{ \lambda^p} \sim \mu(F^{(R_i)}_i) \sim V_i(R_i),$$  yields that
%
    This, together with the fact
    $\sum_k\mu(B_{ik}) \lesssim \|f\|_p^p/\lambda^p \sim \mu(F^{(R_i)}_i) \sim V_i(R_i)$ (see \eqref{CZ-2}, \eqref{eq:F_i-set} and Lemma \ref{lem:volume-growth} (ii)), yields that
    \[
        \cs_{122} \lesssim \frac{1}{\lambda^2} \lf(\lambda \frac{ \sum_k\mu(B_{ik})}{\sqrt{V_i(R_i)}}\r)^2
        \lesssim \frac{\|f\|^p_p}{\lambda^p},
    \]
    which finishes the proof.
\end{proof}

%Let us begin with the following lemma.
%The estimation of $\cs_2$ is relatively simple and we can easily obtain the following result.
%Let us begin with the estimation of $\cs_1$.
%Let us estimate $\cs_1$ first.
%For the term $\cs_2$, we can easily obtain the following result.
Our task now is to estimate the term $\cs_2$.
\begin{lem} Let $1<p<2$.
%We have uniformly for all $f \in L^p(M)$ and all $0<\lambda \leq \|f\|_p$ that
For each $f \in L^p(M)$, it holds that
$$
\cs_2
=\mu\left(\left\{x \in E_i \setminus F_i^{(R_i)}:
     \left|\int_0^1 \nabla e^{-t \L}\left(\sum_k b_{i k}\right)(x) \frac{\, dt}{\sqrt{t}}\right|>\sqrt\pi \lambda\right\}\right) \lesssim \frac{\|f\|_p^p}{\lambda^p},
     \quad \forall 0<\lambda \leq \|f\|_p.
$$
\end{lem}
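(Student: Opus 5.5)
The plan is to split off the neighbourhood of the bad balls and then use the off-diagonal decay of $\nabla e^{-t\L}$ for $t\le 1$. First I will write
\[
\cs_2\le \mu\Big(\bigcup_m 2B_{im}\Big)+\mu\left(\left\{x\in E_i\setminus\Big(F^{(R_i)}_i\cup \bigcup_m 2B_{im}\Big): \sum_k\int_0^1 |\nabla e^{-t\L}b_{ik}(x)|\frac{dt}{\sqrt t}>\sqrt\pi\lambda\right\}\right).
\]
The first term is $\lesssim \|f\|_p^p/\lambda^p$ by the doubling property of $\mu_i$ and \eqref{CZ-2}, exactly as in Step 1 of the estimate of $\cs_{11}$. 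For the second term I apply Chebyshev's inequality with exponent $p$ and Minkowski's inequality. This gives the bound
\[
\frac{1}{\lambda^p}\Big[\sum_k\int_0^1 \|\nabla e^{-t\L}b_{ik}\|_{L^p(M\setminus 2B_{ik})}\frac{dt}{\sqrt t}\Big]^p .
\]

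The key step is to use the $L^p$-Davies--Gaffney estimate for $\sqrt t\nabla e^{-t\L}$ (Corollary \ref{davies-operators}), with $E=B_{ik}$ and $F=M\setminus 2B_{ik}$, so that $\dist(E,F)\ge r_{ik}$. This gives
\[
\|\nabla e^{-t\L}b_{ik}\|_{L^p(M\setminus 2B_{ik})}\lesssim t^{-1/2}e^{-cr_{ik}^2/t}\|b_{ik}\|_p ,
\]
and hence
\[
\int_0^1 t^{-1}e^{-cr_{ik}^2/t}\,dt\lesssim \min\{1,\,1/r_{ik}^2\}\le \min\{1,\,r_{ik}^{-2}\}.
\]
The difficulty is that $\int_0^1 t^{-1}e^{-cr^2/t}dt$ behaves like $\log(1/r^2)$ when $r$ is small, so balls with small radius need care. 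I expect the Calder\'on--Zygmund balls to have $r_{ik}\gtrsim 1$ in this setting. The reason is that $\mu(B_{ik})\gtrsim \|b_{ik}\|_p^p/(\delta\lambda)^p$, while non-collapsing gives $V_i(x,1)\gtrsim 1$ and $\lambda\le\|f\|_p$. If this fails, I will instead treat the balls with $r_{ik}<1$ separately using the small-time Gaussian bound Theorem \ref{thm:sim-heat-es} (i) together with Lemma \ref{lem:volume-growth} (iii), which gives a local doubling Coulhon--Duong type $L^1$ estimate
\[
\int_{M\setminus 2B}|\nabla_x h_t(x,y)|\,d\mu(x)\lesssim t^{-1/2}e^{-cr^2/t}.
\]
The singular integral in $t$ is then handled by subtracting $e^{-r_{ik}^2\L}$, mirroring the treatment of $\cs_{11}$ and $\cs_{12}$ but with $t\le 1$.

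In the main case $r_{ik}\gtrsim 1$, the sum above is bounded by $\lambda^{-p}\big(\sum_k r_{ik}^{-2}\|b_{ik}\|_p\big)^p$. I then apply H\"older's inequality as in \eqref{eq:sum-r-d}. I will use $\|b_{ik}\|_p^p\lesssim\lambda^p\mu(B_{ik})$ from \eqref{CZ-1}, together with the summability $\sum_k r_{ik}^{-2p'}\lesssim \sum_k\mu(B_{ik})/V_i(R_i)$. The latter I will obtain from $r_{ik}\le d(x_{ik},o_i)$ and the argument already carried out for $\cs_{113}$, via \eqref{isolate-balls-1}, since $r_{ik}^{-2}\le 1\lesssim \ldots$. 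Combining with \eqref{CZ-2} gives the bound $\|f\|_p^p/\lambda^p$. The main obstacle is controlling the logarithmic divergence for small balls, that is, justifying $r_{ik}\gtrsim 1$ or else running the local Calder\'on--Zygmund argument.
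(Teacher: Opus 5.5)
There is a genuine gap in what you call the main case. After Chebyshev you apply Minkowski's inequality over $k$ and then H\"older. That step would need the weights $w_{ik}:=\int_0^1 t^{-1}e^{-cr_{ik}^2/t}\,dt$ to satisfy $\sum_k w_{ik}^{p'}\lesssim 1$, and this fails.

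\emph{The weights are not small.} One has $w_{ik}\sim 1$ when $r_{ik}\sim 1$. As $r_{ik}\to 0$, $w_{ik}$ grows like $\log(1/r_{ik})$, so your displayed bound $\min\{1,r_{ik}^{-2}\}$ is wrong there.

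\emph{The balls are not few.} The number of balls of radius $\sim 1$ is controlled, through \eqref{CZ-2} and non-collapsing, only by $\|f\|_p^p/(\delta\lambda)^p\sim V_i(R_i)/\delta^p$. This is unbounded as $\lambda\to 0$. For instance, let $f=10\delta\lambda$ on $K$ well-separated unit balls far out in $E_i$. Then your bound $\lambda^{-p}\big(\sum_k w_{ik}\|b_{ik}\|_p\big)^p$ is of order $K^p$, while the target $\|f\|_p^p/\lambda^p$ is of order $K$ (up to powers of $\delta$).

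\emph{The comparison with $\cs_{113}$ does not help.} There the weights are $(r_{ik}/d(x_{ik},o_i))^2$, and summability comes from $(r_{ik}/d(x_{ik},o_i))^{N_i}\lesssim \mu(B_{ik})/V_i(R_i)$, that is, from the smallness of the ratio. The quantity $r_{ik}^{-2}$ has no such bound; take $r_{ik}\sim 1$ and $d(x_{ik},o_i)\sim R_i$.

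\emph{The reduction to $r_{ik}\gtrsim 1$ is unfounded.} \eqref{CZ-1} only bounds $\|b_{ik}\|_p$ from above. A spike of height $10\delta\lambda$ on a ball of radius $\varepsilon$, plus a part of height below $\lambda$ to ensure $\lambda\le\|f\|_p$, produces bad balls of radius comparable to $\varepsilon$. Your fallback for small balls amounts to re-running the Coulhon--Duong weak-type argument for the local Riesz transform, and it is only sketched.

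The missing observation is that no off-diagonal analysis is needed. The term $\cs_2$ involves only the small-time part $S$, which is bounded on $L^p(M)$ by Proposition \ref{local-part}. Apply $S$ to $\sum_k b_{ik}$ as a whole. Chebyshev gives $\cs_2\lesssim \lambda^{-p}\|\sum_k b_{ik}\|_p^p$. The bounded overlap of the $B_{ik}$, together with \eqref{CZ-1} and \eqref{CZ-2}, then yields
\[
\Big\|\sum_k b_{ik}\Big\|_p^p\lesssim \sum_k\|b_{ik}\|_p^p\lesssim (\delta\lambda)^p\sum_k\mu(B_{ik})\lesssim \|f\|_p^p .
\]
This is exactly the paper's argument. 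The point is that bounded overlap is used on the input side, before the operator is applied; Minkowski's inequality over $k$ on the output side throws that information away.
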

\begin{proof}
Recall that the operator $\int_0^1\nabla e^{-t\L}\frac{\,dt}{\sqrt t}$
is bounded on $L^p(M)$ (cf. Proposition \ref{local-part}).
Then, from the Chebyshev inequality and the fact that the
supports of $\{b_{ik}\}_{k}$ are of bounded overlap, it holds
\[
 \cs_2
 \lesssim \frac{1}{\lambda^p}\big\|\sum_kb_{ik}\big\|_{p}^p
 \lesssim \frac{1}{\lambda^p} \sum_k \|b_{ik}\|_{p}^p
 \lesssim \frac{\|f\|_p^p}{\lambda^p},
\]
%========THE DETAILS OF THE FIRST INEQUALITY=============
%\begin{equation*}%\label{cz-bp}
%    \int_{M_i} \left|\sum_k b_{ik}\right|^p d\mu
%    \lesssim \int_{M_i} \sum_k |b_{ik}|^p d\mu
%    \lesssim \sum_k \int_{B_{ik}}  |b_{ik}|^p d\mu.
%\end{equation*}
%where in second inequality we used the Beppo Levi Theorem or Zhou minqiang [ShiBianHanShuLun Theorem 4.6, Corrally 4.16, Theorem 4.27 (Tonelli Theorem)]
%\\==========END OF DETAILS===========================\\
where we have used \eqref{CZ-1} and \eqref{CZ-2} in the last inequality.
This completes the proof.
\end{proof}

\subsubsection{Estimate of the bad parts off the diagonal}\label{sec:off-diag}\hskip\parindent
For each $1\leq i\leq \ell$,
%we split the `bad' part off the diagonal into two parts:
one writes
\begin{eqnarray*}%\label{eq:off-dia-split}
&& \mu\left(\left\{x\in \bigcup_{1 \leq j\neq i \leq \ell}  (E_j\setminus F^{(R_j)}_j):\left|T\left(\sum_k b_{ik}\right)(x)\right|>2\lambda\right\}\right) \notag \\
&& \ \le
     \mu\left(\left\{x\in \bigcup_{1 \leq j\neq i\leq \ell}  (E_j\setminus F^{(R_j)}_j)
              :\left|T\left(\sum_k (1-e^{-r_{ik}^2\L}) b_{ik}\right)(x)\right|>\lambda\right\}\right) \nonumber\\
&& \ \ +
    \mu\left(\left\{x\in \bigcup_{1 \leq j\neq i \leq \ell}  (E_j\setminus F^{(R_j)}_j):
                       \left|T\left(\sum_k e^{-r_{ik}^2\L}b_{ik}\right)(x)\right|>\lambda\right\}\right)
   =: \mathcal{G}_1 + \mathcal{G}_2.
\end{eqnarray*}

%\textbf{Estimation of the term $\cg_1$:} We have

For the term $\cg_1$, %recall that $1 < p_0 \leq 2$ is given in \eqref{eq:p0-def}. We have
we have
\begin{lem}
Let $1<p<p_0$, where $p_0$ is as in \eqref{eq:p0-def}.
%It holds uniformly for all $f \in L^p(M)$ and $0<\lambda \leq \|f\|_p$ that
For each $f \in L^p(M)$, it holds that
$$
\cg_1=\mu\left(\left\{x \in \bigcup_{1 \leq j \neq i \leq \ell} (E_j \setminus F_j^{(R_j)})
                :\left|T\left(\sum_k(1-e^{-r_{i k}^2 \mathcal{L}}) b_{ik}\right)(x)\right|>{\lambda}\right\}\right)
     \lesssim \frac{\|f\|_p^p}{\lambda^p},
     \quad \forall 0<\lambda \leq \|f\|_p.
$$
\end{lem}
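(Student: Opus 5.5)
We follow the treatment of $\cs_{11}$ closely, replacing the on-diagonal heat kernel tool (Proposition \ref{est-integral-diagonal-general}) by the $L^p$-Davies--Gaffney estimate of Corollary \ref{davies-operators}. This substitution is available because the target set $\cup_{j\neq i}(E_j\setminus F^{(R_j)}_j)$ is far from each $B_{ik}$. Indeed, any path from $B_{ik}\subset E_i$ to $E_j$ ($j\neq i$) passes through $E_0$. Hence, by \eqref{isolate-balls} and \eqref{isolate-balls-1},
$$\dist\Big(B_{ik},\bigcup_{j\neq i}(E_j\setminus F^{(R_j)}_j)\Big)\ge \dist(B_{ik},E_0)\ge \tfrac13 d(x_{ik},o_i)\gtrsim R_i+r_{ik}.$$

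We write, as before,
$$\sqrt\pi\, T(1-e^{-r_{ik}^2\L})=\int_1^\infty\Big[\frac1{\sqrt t}-\frac{\chi_{(r_{ik}^2,\infty)}(t)}{\sqrt{t-r_{ik}^2}}\Big]\nabla e^{-t\L}\,dt \;+\; \int_{1}^{1+r_{ik}^2}\frac{\nabla e^{-t\L}}{\sqrt{t-r_{ik}^2}}\chi_{(r_{ik}^2,\infty)}(t)\,dt\cdot(\text{correction}).$$
More cleanly, $T(1-e^{-r^2\L})=\pi^{-1/2}\big(\int_1^\infty \nabla e^{-t\L}\frac{dt}{\sqrt t}-\int_{1+r^2}^\infty\nabla e^{-t\L}\frac{dt}{\sqrt{t-r^2}}\big)$. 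This equals $\pi^{-1/2}\int_1^\infty K_{ik}(t)\nabla e^{-t\L}\,dt$, where $|K_{ik}(t)|\le t^{-1/2}$ for $t\le 1+r_{ik}^2$ and $|K_{ik}(t)|\lesssim r_{ik}^2 t^{-1/2}(t-r_{ik}^2)^{-1/2}(\sqrt t+\sqrt{t-r_{ik}^2})^{-1}$ otherwise. Writing $D_{ik}:=d(x_{ik},o_i)$, we split the $t$-integral at $4D_{ik}^2$.

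\textbf{Step 1.} By Chebyshev in $L^p$ and Minkowski,
$$\cg_1\lesssim \lambda^{-p}\Big[\sum_k\int_1^\infty |K_{ik}(t)|\,\|\nabla e^{-t\L}b_{ik}\|_{L^p(\cup_{j\ne i}E_j\setminus F_j^{(R_j)})}\,dt\Big]^p.$$

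\textbf{Step 2.} For $t\le 4D_{ik}^2$ we use Corollary \ref{davies-operators}:
$$\|\nabla e^{-t\L}b_{ik}\|_{L^p(\cdots)}\lesssim t^{-1/2}e^{-cD_{ik}^2/t}\|b_{ik}\|_p.$$
Exactly as in the bound for $\cs_{1121}$, we get $\int_1^{4D_{ik}^2}|K_{ik}|\,t^{-1/2}e^{-cD_{ik}^2/t}\,dt\lesssim r_{ik}^2/D_{ik}^2$. The part $t<r_{ik}^2$ gives $\int_0^{r_{ik}^2}e^{-cD^2/t}dt/t\lesssim r_{ik}^2/D^2$, since $r_{ik}\lesssim D_{ik}$. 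The part $t\ge r_{ik}^2$ gives $r_{ik}^2D^{-3}\int dt/\sqrt{t-r^2}\lesssim r_{ik}^2/D^2$.

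\textbf{Step 3.} For $t>4D_{ik}^2$ we use Proposition \ref{mapping-gradient-heat}, which gives $\|\nabla e^{-t\L}b_{ik}\|_p\lesssim t^{-1/2}\|b_{ik}\|_p$. As in $\cs_{113}$, the integral is $\lesssim r_{ik}^2/D_{ik}^2$.

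\textbf{Step 4.} Combining the pieces,
$$\cg_1\lesssim\lambda^{-p}\Big(\sum_k \frac{r_{ik}^2}{D_{ik}^2}\|b_{ik}\|_p\Big)^p\lesssim \frac{\|f\|_p^p}{\lambda^p}$$
by \eqref{eq:sum-r-d}. This last step is where $p<p_0$ enters: it guarantees $p'>N_\infty/2$, which makes $\sum_k (r_{ik}^2/D_{ik}^2)^{p'}\lesssim 1$.

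The main point to check carefully is the distance claim above, namely that $d(B_{ik},E_j)\ge \dist(B_{ik},E_0)$ for $j\ne i$. The rest transfers verbatim from the on-diagonal estimates, without needing to excise $\cup_m 2B_{im}$.
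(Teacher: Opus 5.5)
Your proposal is correct. Its skeleton matches the paper's proof:
\begin{itemize}
\item Chebyshev plus Minkowski;
\item a Gaussian factor coming from $\dist(B_{ik},\cup_{j\neq i}E_j)\ge \dist(B_{ik},E_0)\ge d(x_{ik},o_i)/3$, a distance claim the paper also uses implicitly;
\item a time integral bounded by $r_{ik}^2/d(x_{ik},o_i)^2$;
\item and finally \eqref{eq:sum-r-d}.
\end{itemize}

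The difference is how you expose the cancellation in $1-e^{-r_{ik}^2\L}$. The paper writes $1-e^{-r_{ik}^2\L}=\int_0^{r_{ik}^2}\L e^{-s\L}\,ds$, which turns $T(1-e^{-r_{ik}^2\L})$ into a double integral of $\nabla\L e^{-(s+t)\L}$. One application of the Davies--Gaffney estimate for $t^{3/2}\nabla\L e^{-t\L}$ (Corollary \ref{cor:com-DG}) then gives the weight $e^{-c\,d(x_{ik},o_i)^2/(t+s)}(t+s)^{-3/2}t^{-1/2}$. Its integral over $(1,\infty)\times(0,r_{ik}^2)$ is bounded after a single split at $d(x_{ik},o_i)^2$. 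In that route the cancellation is encoded in the extra factor $(t+s)^{-1}$ from $\L$ and in the length $r_{ik}^2$ of the $s$-interval.

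You instead shift time and use the Coulhon--Duong difference kernel $K_{ik}(t)=t^{-1/2}-\chi_{(1+r_{ik}^2,\infty)}(t)(t-r_{ik}^2)^{-1/2}$ on $(1,\infty)$. This needs only the Davies--Gaffney bound for $\sqrt t\nabla e^{-t\L}$ (Corollary \ref{davies-operators}) and the $L^p$ bound of Proposition \ref{mapping-gradient-heat}. So you avoid the composition result behind Corollary \ref{cor:com-DG} and recycle the computations done for $\cs_{113}$ and $\cs_{1121}$. The price is a case split in $t$ and a little more bookkeeping.

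On that bookkeeping, two slips:
\begin{itemize}
\item With your kernel, the non-cancelling window where $|K_{ik}|=t^{-1/2}$ is $[1,1+r_{ik}^2]$, not $(0,r_{ik}^2)$. The cancelling region is $t>1+r_{ik}^2$, not $t\ge r_{ik}^2$. The bounds are unaffected: for instance $\int_1^{1+r_{ik}^2}e^{-cD_{ik}^2/t}\,dt/t\lesssim r_{ik}^2/D_{ik}^2$, since $e^{-cD^2/t}\lesssim t/D^2$.
\item The split at $4D_{ik}^2$ is consistent with the kernel. By \eqref{isolate-balls-1} you have $2D_{ik}\ge R_i+3r_{ik}\ge 1+3r_{ik}$, so $4D_{ik}^2>1+r_{ik}^2$. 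Moreover $t>4D_{ik}^2$ gives $t-r_{ik}^2\ge 8t/9$, which is what your Step 3 needs.
\end{itemize}
Finally, drop the first display with ``(correction)'' and keep only the clean formula.
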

\begin{proof}
  Using the Chebyshev inequality, we have
    \begin{eqnarray*}
    \cg_1
    && \lesssim \frac{1}{\lambda^p} \int_{\bigcup_{1\leq j\neq i \leq \ell} (E_j\setminus F^{(R_j)}_j) } \left|T\left(\sum_k (1-e^{-r_{ik}^2\L}) b_{ik}\right)\right|^p\,d\mu\\
    && \lesssim \frac{1}{\lambda^p} \int_{\bigcup_{1 \leq j\neq i \leq \ell} (E_j\setminus F^{(R_j)}_j)} \left|\sum_k \left(\int_1^\infty\int_0^{r_{ik}^2}|\nabla\L e^{-(s+t)\L} b_{ik}|\frac{\,ds\,dt}{\sqrt t}\right)\right|^p\,d\mu,
    \end{eqnarray*}
  which together with the Minkowski inequality and Corollary \ref{cor:com-DG} gives that
\begin{eqnarray*}
\cg_1
&& \lesssim \frac{1}{\lambda^p}\left[ \sum_k \int_1^\infty\int_0^{r_{ik}^2}
       \left(\int_{\bigcup_{1 \leq j\neq i \leq \ell} (E_j\setminus F^{(R_j)}_j) } |\nabla \L e^{-(t+s)\L}b_{ik}|^p\,d\mu\right)^{1/p}\frac{\,ds\,dt}{\sqrt t}\right]^p \\
&& \lesssim \frac{1}{\lambda^p}\left[\sum_k\int_1^\infty\int_0^{r_{ik}^2}\|b_{ik}\|_pe^{-c\frac{\dist(B_{ik},E_0)^2}{t+s}} \frac{\,ds\,dt}{(t+s)^{3/2}\sqrt t}\right]^p.
\end{eqnarray*}
From \eqref{isolate-balls-1}, the double integral can be controlled by
\begin{eqnarray*}
&&\int_1^\infty\int_0^{r_{ik}^2} e^{-c\frac{d(x_{ik},o_i)^2}{t+s}} \frac{\,ds\,dt}{(t+s)^{3/2}\sqrt t}\\
&&\ \le \int_0^{d(x_{ik},o_i)^2}\int_0^{r_{ik}^2}e^{-c\frac{d(x_{ik},o_i)^2}{t+s}} \frac{\,ds\,dt}{(t+s)^{3/2}\sqrt t}
         +\int_{d(x_{ik},o_i)^2}^\infty\int_0^{r_{ik}^2} \frac{\,ds\,dt}{(t+s)^{3/2}\sqrt t}\\
%&&\lesssim \int_1^{d(x_{ik},o_i)^2}\int_0^{r_{ik}^2} \frac{1}{d(x_{ik}, o_i)^3} \frac{1}{\sqrt t}   \,ds\,dt
%        +\int_{d(x_{ik},o_i)^2}^\infty\int_0^{r_{ik}^2} \frac{1}{t^2} \,ds\,dt \\
&&\ \lesssim \frac{1}{d(x_{ik}, o_i)^3}  \int_0^{d(x_{ik},o_i)^2}  \frac{1}{\sqrt t} \, dt \int_0^{r_{ik}^2}  \,ds
        + \int_{d(x_{ik},o_i)^2}^\infty \frac{1}{t^2}  \,dt \int_0^{r_{ik}^2}  \,ds
  \lesssim \frac{r_{ik}^2}{d(x_{ik},o_i)^2}.
\end{eqnarray*}
Consequently, using \eqref{eq:sum-r-d}, we obtain that
\[
\cg_1 \lesssim  \frac{1}{\lambda^p}
 \left(\sum_k \frac{r_{ik}^2}{d(x_{ik},o_i)^2}\|b_{ik}\|_p \right)^p
 \lesssim \frac{\|f\|_p^p}{\lambda^p}.
\]
This finishes the proof.
\end{proof}

%\textbf{Estimation of the term $\cg_2$:} Recall that $p_0 \in (1,2]$ is given in \eqref{eq:p0-def}. We have

We now turn to the term $\cg_2$.

\begin{lem} \label{prop:es-cg1}
  Let $1 < p <2$. %It holds uniformly for all $f \in L^p(M)$ and
%  $0 < \lambda \leq \|f\|_p$ that
For each $f \in L^p(M)$, it holds that
  \[
        \cg_2=\mu\left(\left\{x\in \bigcup_{1 \leq j\neq i \leq \ell}  (E_j\setminus F^{(R_j)}_j):
                       \left|T\left(\sum_k e^{-r_{ik}^2\L}b_{ik}\right)(x)\right|>\lambda\right\}\right)
        \lesssim \frac{\|f\|^p_p}{\lambda^p},  %, \quad
        \quad \forall 0< \lambda \leq \|f\|_p. %, \, f \in L^p.
  \]
\end{lem}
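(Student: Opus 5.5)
The plan is to use Chebyshev at the $L^2$ level together with the $L^2$-boundedness of $T$ (Corollary \ref{cor:L2-T}). This reduces $\cg_2$ to controlling $\|\sum_k e^{-r_{ik}^2\L}b_{ik}\|_2^2$, and that quantity was already bounded in the proof of Lemma \ref{lem:s11}. Concretely, I would write
\[
\cg_2\le \frac{1}{\lambda^2}\Big\|T\Big(\sum_k e^{-r_{ik}^2\L}b_{ik}\Big)\Big\|_2^2
\lesssim \frac{1}{\lambda^2}\int_M\Big|\sum_k e^{-r_{ik}^2\L}b_{ik}\Big|^2\,d\mu,
\]
and then split the integral over $E_i$ and over $M\setminus E_i$ exactly as in $\cs_{121}$ and $\cs_{122}$.

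On $E_i$, the balls $B_{ik}$ lie in $E_i\setminus F_i^{(R_i/4)}$ by \eqref{isolate-balls}, and $r_{ik}\lesssim \dist(B_{ik},E_0)$. So Lemma \ref{lem:es-away}, applied with $\kz\sim \dist(B_{ik},E_0)$ and $t=r_{ik}^2\le \beta\kz^2$, gives the Gaussian bound $h_{r_{ik}^2}(x,y)\lesssim V_i(x,r_{ik})^{-1}e^{-cd(x,y)^2/r_{ik}^2}$. The Coulhon--Duong duality argument with the maximal function $\mathcal M_i$ then gives $\int_{E_i}|\sum_k e^{-r_{ik}^2\L}b_{ik}|^2\lesssim \lambda^2\sum_k\mu(B_{ik})$. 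This uses \eqref{CZ-1} (to get $\fint_{B_{ik}}|b_{ik}|\lesssim\lambda$) and the bounded overlap of the $B_{ik}$.

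On $M\setminus E_i$, I would test against $\psi$ with $\|\psi\|_2\le1$ and move the semigroup onto $\psi\chi_{M\setminus E_i}$. Proposition \ref{prop:map-anot}, together with its remark allowing $E_i\setminus F_i^{(cR_i)}$, gives $\|e^{-r_{ik}^2\L}\|_{L^2(M\setminus E_i)\to L^\infty(B_{ik})}\lesssim V_i(R_i)^{-1/2}$. Combining this with \eqref{CZ-1} yields a bound of $\lambda\sum_k\mu(B_{ik})/\sqrt{V_i(R_i)}$.

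Finally, by \eqref{CZ-2}, \eqref{eq:F_i-set} and Lemma \ref{lem:volume-growth} (ii), $\sum_k\mu(B_{ik})\lesssim\|f\|_p^p/\lambda^p\sim V_i(R_i)$. Both pieces are therefore $\lesssim \lambda^2\,\|f\|_p^p/\lambda^p$, so $\cg_2\lesssim \|f\|_p^p/\lambda^p$.

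The only delicate step is checking that the hypotheses of Lemma \ref{lem:es-away} hold uniformly in $k$ on $E_i$, namely that $r_{ik}^2\lesssim\kz^2$ with $\kz$ comparable to the distance of $B_{ik}$ from $E_0$. This follows from \eqref{isolate-balls}, which gives $\dist(B_{ik},E_0)\ge (R_i+r_{ik})/2$. For $x\in E_i$ close to $E_0$, the condition $\dist(x,E_0)+\dist(y,E_0)\ge2\kz$ still holds because $y\in B_{ik}$, so no extra case split is needed. Since this is exactly the content of $\cs_{12}$ in Lemma \ref{lem:s11}, the proof reduces to citing that computation after the initial $L^2$ reduction.
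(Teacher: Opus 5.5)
Your proposal is correct and follows the paper's proof: Chebyshev's inequality and the $L^2$-boundedness of $T$ (Corollary \ref{cor:L2-T}) reduce $\cg_2$ to $\lambda^{-2}\int_M|\sum_k e^{-r_{ik}^2\L}b_{ik}|^2\,d\mu$, and this quantity was already shown to be $\lesssim \|f\|_p^p/\lambda^p$ via the $\cs_{121}$/$\cs_{122}$ split in the proof of Lemma \ref{lem:s11}. Your re-derivation of that split is also sound, including the choice $\kz=\dist(B_{ik},E_0)/2$ (with $\beta=16$) in Lemma \ref{lem:es-away} and the use of the remark after Proposition \ref{prop:map-anot} to cover $B_{ik}\subset E_i\setminus F_i^{(R_i/4)}$.
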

\begin{proof}
  The $L^2$-boundedness of $T$ implies that
%\begin{eqnarray*}%\label{est-b-off}
%\cg_2
%   && \lesssim \frac{1}{\lambda^2}\int_M \left|\sum_k e^{-r_{ik}^2\L}b_{ik}\right|^2\,d\mu\nonumber\\
%&& \sim \frac{1}{\lambda^2} \int_{E_i}\left|\sum_k e^{-r_{ik}^2\L}b_{ik}\right|^2\,d\mu +
%     \frac{1}{\lambda^2}  \sum_{0 \leq j\neq i \leq \ell} \int_{E_j}\left|\sum_k e^{-r_{ik}^2\L}b_{ik}\right|^2\,d\mu
%    =: \cg_{21} + \cg_{22}.
%\end{eqnarray*}
\[
\cg_2
 \lesssim \frac{1}{\lambda^2}\int_M \left|\sum_k e^{-r_{ik}^2\L}b_{ik}\right|^2\,d\mu
 \lesssim \frac{\|f\|_p^p}{\lambda^p},
\]
where the last inequality is obtained in the proof of Lemma \ref{lem:s11}.
This finishes the proof.
\end{proof}

\subsection{Completion of the proof}\label{sec:com-prof}\hskip\parindent
%A combination of the previous steps shows that for
%$$1<p<\min\left\{p_0,\frac{N_\infty}{N_\infty-2}\right\},$$
%the operator
%$$T=\int_1^\infty \nabla e^{-t\L}\frac{\,dt}{\sqrt t}$$
%is weakly $L^p$-bounded. This together with Lemma \ref{local-part} shows that
%$\nabla\L^{-1/2}$ weakly $L^p$-bounded. An application of the  Marcinkiewicz
%interpolation theorem completes the proof of Theorem \ref{main-result-parabolic}.
%
%Here is the details.
Combining the estimates from the previous three subsections, i.e., the estimates on the center (Proposition \ref{es:center-part}), the estimates near the center (Proposition \ref{es:around-center}), and the estimates
on the part away from the center (Proposition \ref{es:away-center}),
we finally conclude that %for any $f \in L^p(M)$ and $0<\lambda \leq \|f\|_p$, it holds
for each $f \in L^p(M)$ with $1 < p < p_0$, where $p_0$ is as in \eqref{eq:p0-def},
\begin{eqnarray*}
&&\mu\lf(\lf\{x\in \bigcup_{j=1}^\ell \, (E_j\setminus F^{(R_j)}_j):|Tf(x)|>(2\ell+1)\lambda\r\}\r)\\
&&\ \le \mu\lf(\lf\{x\in \bigcup_{j=1}^\ell \, (E_j\setminus F^{(R_j)}_j):|T(f\chi_{E_0})(x)|>\lambda\r\}\r) \\
&&\ \ +\sum_{i=1}^\ell \mu\lf(\lf\{x\in \bigcup_{j=1}^\ell \, (E_j\setminus F^{(R_j)}_j):|T(f\chi_{F^{(R_i)}_i})(x)|>\lambda\r\}\r)\\
&& \ \ +\sum_{i=1}^\ell
     \mu\lf(\lf\{x\in \bigcup_{j=1}^\ell \, (E_j\setminus F^{(R_j)}_j):|T(f\chi_{E_i\setminus F^{(R_i)}_i})(x)|>\lambda\r\}\r)
 \lesssim \frac{\|f\|_p^p}{\lambda^p},
\quad \forall 0 < \lambda \leq  \|f\|_p,
\end{eqnarray*}
%\begin{eqnarray*}
%&&\mu\lf(\lf\{x\in \bigcup_{j=1}^\ell \, (E_j\setminus F^{(R_j)}_j):|Tf(x)|>(2\ell+1)\lambda\r\}\r)\\
%&&\ \le \mu\lf(\lf\{x\in \bigcup_{j=1}^\ell \, (E_j\setminus F^{(R_j)}_j):|T(f\chi_{E_0})(x)|>\lambda\r\}\r)
%   +\sum_{i=1}^\ell \mu\lf(\lf\{x\in \bigcup_{j=1}^\ell \, (E_j\setminus F^{(R_j)}_j):|T(f\chi_{F^{(R_i)}_i})(x)|>\lambda\r\}\r)\\
%&& \ \ +\sum_{i=1}^\ell
%     \mu\lf(\lf\{x\in \bigcup_{j=1}^\ell \, (E_j\setminus F^{(R_j)}_j):|T(f\chi_{E_i\setminus F^{(R_i)}_i})(x)|>\lambda\r\}\r) \\
%&& \lesssim \frac{\|f\|_p^p}{\lambda^p},
%\quad \forall 0 < \lambda \leq  \|f\|_p,
%\end{eqnarray*}
%\begin{eqnarray*}
%\mu\lf(\lf\{x\in \bigcup_{j=1}^\ell \, (E_j\setminus F^{(R_j)}_j):|Tf(x)|>(2\ell+1)\lambda\r\}\r)
%&& \le \mu\lf(\lf\{x\in \bigcup_{j=1}^\ell \, (E_j\setminus F^{(R_j)}_j):|T(f\chi_{E_0})(x)|>\lambda\r\}\r) \\
%&& \ +\sum_{i=1}^\ell \mu\lf(\lf\{x\in \bigcup_{j=1}^\ell \, (E_j\setminus F^{(R_j)}_j):|T(f\chi_{F^{(R_i)}_i})(x)|>\lambda\r\}\r)\\
%&&  \ +\sum_{i=1}^\ell
%     \mu\lf(\lf\{x\in \bigcup_{j=1}^\ell \, (E_j\setminus F^{(R_j)}_j):|T(f\chi_{E_i\setminus F^{(R_i)}_i})(x)|>\lambda\r\}\r) \\
%&& \lesssim \frac{\|f\|_p^p}{\lambda^p},
%\quad \forall 0 < \lambda \leq  \|f\|_p,
%\end{eqnarray*}
%
%for each $1< p <p_0$, where $p_0$ is as in \eqref{eq:p0-def}.
which together with \eqref{est-small-lambda-3} yields %for any $\lambda \leq \|f\|_p$,
\[
\mu(\{x\in M:|Tf(x)|>(2\ell+1)\lambda\})\lesssim \frac{\|f\|_p^p}{\lambda^p},
\quad \forall 0 < \lambda \leq \|f\|_p.
\]
By this and Lemma \ref{lem:big-lam}, we see that the operator $T$ is weakly $(p,p)$ bounded for each $1 < p <p_0$.
%
%Recall that
%$$T=\int_1^\infty\nabla e^{-t\L}\frac{\,dt}{\sqrt t}.$$
%On the other hand, by Lemma \ref{local-part},
%$$\int_0^1\nabla e^{-t\L}\frac{\,dt}{\sqrt t}$$
%is bounded on $L^p(M)$ for all $1<p<2$ since $\nabla(1+\L)^{-1/2}$ is $L^p$-bounded by \cite{cd99}.
%We finally conclude that $\nabla \L^{-1/2}$ is weakly $L^p$ bounded for each $p$ satisfying
%\begin{equation*}
%1<p<\min\left\{p_0,\frac{N_\infty}{N_\infty-2}\right\},
%\end{equation*}
%Based on the discussion in subsection \ref{sec:main-proof-red}, we can conclude that $\nabla \L^{-1/2}$
%is bounded on $L^p(M)$ for all $1<p<2$. % via the Marcinkiewicz interpolation theorem.
%\hl{as we have said} in Subsection \ref{sec:main-proof-red}, then xxxxxx

%Then, as stated %/the argument
%in the beginning of Subsection \ref{sec:main-proof-red}, recall that
%$$
%T=\frac{1}{\sqrt{\pi}} \int_1^{\infty} \nabla e^{-t \L} \frac{\,dt}{\sqrt{t}}
%$$
%and on the other hand, by Lemma \ref{local-part},
%$$
%\frac{1}{\sqrt{\pi}} \int_0^1 \nabla e^{-t \L} \frac{\,dt}{\sqrt{t}}
%$$
%is bounded on $L^p(M)$ for all $1<p<2$.
Moreover, note that
$$
    \nabla \L^{-1/2} - T
    =  \nabla \L^{-1/2} - \frac{1}{\sqrt{\pi}} \int_1^{\infty} \nabla e^{-t \L} \frac{\,dt}{\sqrt{t}}
    = \frac{1}{\sqrt{\pi}} \int_0^1 \nabla e^{-t \L} \frac{\,dt}{\sqrt{t}}
$$
is bounded $L^p(M)$ for all $1<p<2$; see Proposition \ref{local-part}.
We finally conclude that $\nabla \L^{-1/2}$ is weakly $(p,p)$ bounded for each $1<p<p_0$,
and hence the Riesz transform $\nabla \L^{-1/2}$ is bounded on $L^q(M)$ for all $1< q <2$ by the Marcinkiewicz interpolation theorem.
This completes the proof of Theorem \ref{main-result-parabolic}.

\end{document}